\newtheorem{theorem}{Theorem}[section]
\newtheorem{lemma}[theorem]{Lemma}
\newtheorem{corollary}[theorem]{Corollary}
\newtheorem{prop}[theorem]{Proposition}
\theoremstyle{definition}
\newtheorem{definition}[theorem]{Definition}
\newtheorem{example}[theorem]{Example}
\newtheorem{remark}[theorem]{Remark}
\newtheorem{thm}{Theorem}
\newcommand{\N}{\mathbb{N}}
\newcommand{\Z}{\mathbb{Z}}
\newcommand{\Q}{\mathbb{Q}}
\newcommand{\C}{\mathbb{C}}
\newcommand{\QP}{\mathbb{QP}}
\newcommand{\CP}{\mathbb{CP}}
\newcommand{\RP}{\mathbb{RP}}
\newcommand{\bP}{\mathbb{P}}
\renewcommand{\k}{\Bbbk}
\newcommand{\HH}{\mathcal{H}}
\newcommand{\p}{\mathfrak{p}}
\newcommand{\m}{\mathfrak{m}}
\DeclareMathOperator{\rank}{rank}
\DeclareMathOperator{\im}{im}
\DeclareMathOperator{\coker}{coker}
\DeclareMathOperator{\codim}{codim}
\DeclareMathOperator{\id}{id}
\DeclareMathOperator{\ab}{{ab}}
\DeclareMathOperator{\fab}{{fab}}
\DeclareMathOperator{\supp}{supp}
\DeclareMathOperator{\GL}{GL}
\DeclareMathOperator{\T}{Tors}
\DeclareMathOperator{\Hom}{{Hom}}
\DeclareMathOperator{\spn}{span}
\DeclareMathOperator{\ann}{{ann}}
\DeclareMathOperator{\lk}{lk}
\DeclareMathOperator{\Par}{P}
\DeclareMathOperator{\Lie}{Lie}
\DeclareMathOperator{\Spm}{{maxSpec}}
\DeclareMathOperator{\Aut}{Aut}
\DeclareMathOperator{\ii}{i}
\DeclareMathOperator{\ord}{ord}
\DeclareMathOperator{\lcm}{lcm}
\DeclareMathOperator{\Tors}{Tors}
\DeclareMathOperator{\Grass}{Gr}
\DeclareMathOperator{\Epi}{{Epi}}
\DeclareMathOperator{\alg}{{alg}}
\DeclareMathOperator{\group}{{group}}
\newcommand{\sV}{\mathsf{V}}
\newcommand{\sW}{\mathsf{W}}
\newcommand{\sE}{\mathsf{E}}
\newcommand{\cS}{\mathsf{p}}
\newcommand{\wG}{\widehat{G}}
\newcommand{\wH}{\widehat{H}}
\newcommand{\wA}{\widehat{A}}
\newcommand{\oH}{\overline{H}}
\newcommand{\oA}{\overline{A}}
\newcommand{\same}{\Longleftrightarrow}
\newcommand{\surj}{\twoheadrightarrow}
\newcommand{\inj}{\hookrightarrow}
\newcommand{\isom}{\!\xymatrixcolsep{10pt}\xymatrix{\ar^{\simeq}[r]&}\!}
\newcommand{\DS}{\displaystyle}
\newcommand{\abs}[1]{\left| #1 \right|}
\newcommand{\bigmid}{\:\big|  \big.\:}
\newcommand{\dv}{\! \mid \!}
\newcommand{\pt}{\{\text{pt}\}}
\newcommand{\pullbackcorner}[1][dr]{\save*!/#1-1.2pc/#1:(-1,1)@^{|-}\restore}
\newenvironment{romenum}
{

\begin{enumerate}}{\end{enumerate}}
\title[Homological finiteness of abelian covers]%
{Homological finiteness of abelian covers}
\author[A.~Suciu]{Alexander~I.~Suciu$^1$}
\address{Department of Mathematics,
Northeastern University,
Boston, MA 02115, USA}
\email{a.suciu@neu.edu}
\thanks{$^1$Partially supported by NSA grant H98230-09-1-0021
and NSF grant DMS--1010298}
\author[Y.~Yang]{Yaping~Yang}
\address{Department of Mathematics,
Northeastern University,
Boston, MA 02115, USA}
\email{yang.yap@husky.neu.edu}
\author[G.~Zhao]{Gufang~Zhao}
\address{Department of Mathematics,
Northeastern University, Boston, MA 02115, USA}
\email{zhao.g@husky.neu.edu}
\subjclass[2010]{Primary
14F35, 
55N25; 
Secondary
20J05,  
57M07.  
}
\keywords{Abelian cover, characteristic variety,
Dwyer--Fried set, Grassmannian.}
\begin{document}

\begin{abstract}
We present a method for deciding when a regular abelian cover
of a finite CW-complex has finite Betti numbers. To start with, we
describe a natural parameter space for all regular covers of a
finite CW-complex $X$, with group of deck transformations a fixed
abelian group $A$, which in the case of free abelian covers
of rank $r$ coincides with the Grassmanian of $r$-planes in
$H^1(X,\Q)$. Inside this parameter space, there is a subset
$\Omega_A^i(X)$ consisting of all the covers with finite Betti
numbers up to degree $i$.

Building on work of Dwyer and Fried, we show how to
compute these sets in terms of the jump loci for homology
with coefficients in rank~$1$ local systems on $X$.  For
certain spaces, such as smooth, quasi-projective varieties,
the generalized Dwyer--Fried invariants that we introduce
here can be computed in terms of intersections of
algebraic subtori in the character group. For many
spaces of interest, the homological finiteness of
abelian covers can be tested through the corresponding
free abelian covers. Yet in general, abelian covers exhibit
different homological finiteness properties than their free
abelian counterparts.
\end{abstract}

\maketitle
\tableofcontents

\newpage

\section{Introduction}
\label{sect:intro}

By classical covering space theory, the connected, regular covers of a
CW-complex are classified by the quotients of its fundamental group.
In this paper, we investigate the set of covers with fixed deck-transformation
group (usually taken to be an abelian group) for which the Betti numbers
up to a fixed degree are finite.

\subsection{A parameter set for regular covers}
\label{subsec:covers}

Let $X$ be a connected CW-complex with finite $1$-skeleton.
Let $G=\pi_1(X,x_0)$ be the fundamental group, and
let $A$ be a quotient of $G$.  The regular covers of $X$
with group of deck transformations isomorphic to $A$ can
be parametrized by the set
\begin{equation}
\label{eq:param}
\Gamma(G,A)=\Epi(G,A)/\Aut(A),
\end{equation}
where $\Epi(G,A)$ is the set of all
epimorphisms from $G$ to $A$ and $\Aut(A)$ is the
group of automorphisms of $A$, acting on $\Epi(G,A)$
by composition.  For an epimorphism $\nu\colon G \surj A$,
we write its class in $\Gamma(G,A)$ by $[\nu]$, and the
corresponding cover by $X^{\nu}\to X$.

In the case when $A$ is abelian, the parameter set for $A$-covers
may be identified with $\Gamma(H,A)$, where $H=H_1(X,\Z)$.
Our first result identifies this set with a (set-theoretical) twisted
product over a rational Grassmannian.

More precisely, let $\overline{H}$ be the maximal, torsion-free
abelian quotient of $H$, and identify $\overline{H}=\Z^n$
and $\overline{A}=\Z^r$.  By linear algebra (see, e.g., 
\cite[\S{12}, Theorem 4.3]{Art}), any rational subspace 
of $\Z^n\otimes\Q$ has a primitive sublattice, so this 
allows us to identify $\Grass_{r}(\Z^n)$ with 
$\Grass_{r}(\Q^n)$. Let $\Par$ be a parabolic
subgroup of $\GL_n(\Z)$, such that $\GL_n(\Z)/\Par$
is the Grassmannian $\Grass_{n-r}(\Z^n)$.

\begin{thm}[Theorem~\ref{theorem:new bij}]
\label{ref:intro bij}
There is a bijection
\[
\Gamma(H, A) \longleftrightarrow \GL_{n}(\Z) \times_{\Par} \Gamma,
\]
where $\Gamma$ is the finite set $\Gamma(H/\oA,A/\oA)$, and
$\GL_{n}(\Z) \times_{\Par} \Gamma$ is the twisted
product of $\GL_{n}(\Z)$ and $\Gamma$ under the natural action
of $\Par$ on the two sets.
\end{thm}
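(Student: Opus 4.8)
The plan is to realize $\Gamma(H,A)$ as a set fibered over the Grassmannian $\Grass_{n-r}(\oH)\cong\GL_n(\Z)/\Par$, to identify every fiber with the finite set $\Gamma$, and then to recognize the resulting fibered set as the twisted product $\GL_n(\Z)\times_{\Par}\Gamma$. Write $T=\Tors(A)$, so that $A\cong T\oplus\oA$, and let $q\colon H\surj\oH$ be the canonical surjection. Given $\nu\in\Epi(H,A)$, composing with $A\surj\oA$ and using that $\oA$ is free produces an epimorphism $\overline\nu\colon\oH\surj\oA$; its kernel $L_\nu$ is a primitive rank-$(n-r)$ sublattice of $\oH=\Z^n$, and since $\Aut(A)$ acts on $\oA$ through $\Aut(\oA)=\GL_r(\Z)$ by post-composition, which does not change kernels, $L_\nu$ depends only on the class $[\nu]$. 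This defines a map $\Phi\colon\Gamma(H,A)\to\Grass_{n-r}(\oH)$. Fixing the basepoint $L_0=\langle e_{r+1},\dots,e_n\rangle$, whose stabilizer in $\GL_n(\Z)$ is $\Par$, and using that $\GL_n(\Z)$ acts transitively on primitive rank-$(n-r)$ sublattices of $\oH$ (a basis of any such sublattice extends to a basis of $\oH$), I identify $\Grass_{n-r}(\oH)=\GL_n(\Z)/\Par$ and conclude that $\Phi$ is onto.

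Next I would compute the fiber of $\Phi$ over $[L_0]$. Set $H_0:=q^{-1}(L_0)$. Since $L_0$ and $\oH/L_0$ are free, the sequences $0\to\Tors(H)\to H_0\to L_0\to 0$ and $0\to H_0\to H\to\oH/L_0\to 0$ split, so $H_0\cong\Tors(H)\oplus\Z^{n-r}$ — this is the group denoted $H/\oA$ in the statement — and $A/\oA\cong T$. For $[\nu]\in\Phi^{-1}([L_0])$ one has $\overline\nu(q(H_0))=0$, so $\nu$ restricts to a homomorphism $\nu|_{H_0}\colon H_0\to T$, and once $\overline\nu$ is onto the surjectivity of $\nu$ is equivalent to surjectivity of $\nu|_{H_0}$ onto $T$. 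Analyzing the $\Aut(A)$-action — which acts on the relevant data through $\Aut(T)$, through $\GL_r(\Z)$ (used to normalize $\overline\nu$ to the standard projection $\oH\surj\oH/L_0$, and leaving the $T$-component of $\nu$ untouched), and by adding to the $T$-component homomorphisms $H\to T$ that kill $H_0$ — shows that $[\nu]$ is determined by $\nu|_{H_0}$ up to $\Aut(T)$. Restriction to $H_0$ therefore gives a bijection $\Phi^{-1}([L_0])\xrightarrow{\ \sim\ }\Epi(H_0,T)/\Aut(T)=\Gamma(H/\oA,A/\oA)=\Gamma$, its surjectivity coming from the splitting $H\cong H_0\oplus\oH/L_0$, which lets one extend any $\phi\in\Epi(H_0,T)$ to an element of $\Epi(H,A)$. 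The identical argument describes $\Phi^{-1}([L])$ for every point $[L]$ as $\Gamma(q^{-1}(L),T)$.

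Finally I would assemble the twisted product. Fixing a splitting $H\cong\Tors(H)\oplus\oH$, so $H_0\cong\Tors(H)\oplus L_0$, the natural $\Par$-action on $\Gamma$ is precomposition by $\id_{\Tors(H)}\oplus\rho(p)$, where $\rho\colon\Par\to\Aut(L_0)$ is restriction; this descends to $\Epi(H_0,T)/\Aut(T)$ because it commutes with post-composition by $\Aut(T)$, and it is the action used to form $\GL_n(\Z)\times_{\Par}\Gamma$. Each $g\in\GL_n(\Z)$ lifts, through the chosen splitting, to $\widetilde g\in\Aut(H)$ restricting to an isomorphism $H_0\xrightarrow{\ \sim\ }q^{-1}(gL_0)$, hence to a bijection $\Gamma(q^{-1}(gL_0),T)\xrightarrow{\ \sim\ }\Gamma$; replacing $g$ by $gp$ with $p\in\Par$ composes this bijection with the $\Par$-action of $p$ just described, since $\widetilde{gp}=\widetilde g\,\widetilde p$ and $\widetilde p|_{H_0}=\id_{\Tors(H)}\oplus\rho(p)$. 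Feeding these fiberwise identifications into $\Phi$ then yields a bijection $\Gamma(H,A)\xrightarrow{\ \sim\ }\GL_n(\Z)\times_{\Par}\Gamma$, which is the assertion.

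The main obstacle is precisely the last compatibility: that the torsion datum $\nu|_{H_0}$ transforms under a change of lift $g\mapsto gp$ exactly by the $\Par$-action on $\Gamma$ that appears in the twisted product, equivalently that the fiberwise bijection of the second step is $\Par$-equivariant. Establishing this forces one to track the various non-canonical choices — the splittings of $H$ and of $A$, and the normalization of $\overline\nu$ — in a coordinated way; once that equivariance is in hand, the passage from the fibration picture over $\GL_n(\Z)/\Par$ to the global bijection is formal.
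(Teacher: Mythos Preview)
Your proposal is correct and follows essentially the same approach as the paper, just organized in the reverse direction: the paper explicitly writes down a map $\theta\colon \GL_n(\Z)\times\Gamma\to\Gamma(H,A)$ via a block matrix $N=\left(\begin{smallmatrix}0&\id&0\\ \gamma_1&0&\gamma_2\end{smallmatrix}\right)$ and then checks by a direct matrix computation that $\theta$ descends to the twisted product, whereas you build the fibration $\Phi\colon\Gamma(H,A)\to\Grass_{n-r}(\oH)$, identify each fiber with $\Gamma$ by analyzing the $\Aut(A)$-action, and then invoke $\Par$-equivariance. The underlying computations---normalizing $\overline\nu$ via $\GL_r(\Z)$, absorbing the $\Hom(\oA,T)$ part into the complement of $H_0$, and tracking the $\Par$-action through the $\GL_{n-r}(\Z)$ block---are the same ones that appear (compressed into the single identity $NQ^{-1}=(\text{aut of }A)\cdot N'$) in the paper's proof.
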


\subsection{Dwyer--Fried sets and their generalizations}
\label{subsec:omega-sets}
In a foundational paper on the subject, \cite{DF},
Dwyer and Fried considered the regular covers
of a finite, connected CW-complex $X$, with group
of deck transformations $A=\Z^r$.  Inside the parameter
space $\Gamma(H,\Z^r)=\Grass_{r}(\Q^n)$, where $n=\rank H$,
they isolated the sets $\Omega^i_r(X)$, consisting of those covers
for which the Betti numbers up to degree~$i$ are finite.

The Dwyer--Fried sets $\Omega^i_r(X)$ have since been
studied in depth in \cite{PS-plms, Su14},
using the characteristic varieties of $X$.  These varieties,
$V^i(X)$, are Zariski closed subsets of the character group
$\widehat{H}=\Hom(H,\C^*)$; they consist of those rank $1$
local systems on $X$ for which the corresponding cohomology groups
do not vanish, for some degree less or equal to $i$.

We further develop this theory here, by first defining the
generalized Dwyer--Fried invariants of $X$ to be the subsets
$\Omega_A^i(X)$ of $\Gamma(G, A)$ consisting of those regular
$A$-covers having finite Betti numbers up to degree~$i$.
In the case when $A$ is a finitely generated (not necessarily
torsion-free) abelian group, we establish a similar formula, computing
the invariants $\Omega^i_A(X)$, viewed now as subsets of $\Gamma(H,A)$,
in terms of the characteristic varieties of $X$.

\begin{thm}[Theorem~\ref{main theorem}]
\label{thm:intro b}
Let $X$ be a connected, finite CW-complex, and let $H=H_1(X,\Z)$.
Suppose $\nu\colon H \surj A$ is an epimorphism to an abelian
group $A$.  Then
\[
\Omega^{i}_{A}(X)= \{[\nu]\in \Gamma(H, A) \mid
\text {$\im(\hat{\nu}) \cap V^{i}(X)$ is finite} \},
\]
where $V^{i}(X)\subset \widehat{H}$ is the $i$-th characteristic
variety of $X$, and $\hat\nu\colon  \wA \to \widehat{H}$
is the induced morphism between character groups.
\end{thm}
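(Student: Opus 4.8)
The plan is to pass to $\C$-coefficients and recast the condition that $X^{\nu}$ has finite Betti numbers through degree $i$ as a statement about supports of finitely generated modules over the Noetherian group algebra $\C A$, and then to match those supports with the preimage of the characteristic variety under $\hat\nu$. Write $G=\pi_1(X,x_0)$, view $\nu$ also as the composite $G\surj H\surj A$, and let $X^{\nu}\to X$ be the associated $A$-cover. Since $X$ is a finite CW-complex, the cellular chain complex $C_*(\wX)$ of the universal cover is a bounded complex of finitely generated free $\Z G$-modules, and $C_*(X^{\nu};\Z)=\Z A\otimes_{\Z G}C_*(\wX)$; hence $D_*:=\C A\otimes_{\Z G}C_*(\wX)$ is a bounded complex of finitely generated free $\C A$-modules with $H_*(D_*)=H_*(X^{\nu};\C)$. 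As $A$ is a finitely generated abelian group, $\C A$ is Noetherian, so each $M_j:=H_j(X^{\nu};\C)$ is a finitely generated $\C A$-module; moreover $\dim_\C M_j=\dim_\Q H_j(X^{\nu};\Q)=b_j(X^{\nu})$, and a finitely generated $\C A$-module is finite-dimensional over $\C$ exactly when it has finite length, i.e. when its support is a finite set of closed points. Therefore
\[
[\nu]\in\Omega^i_A(X)\ \Longleftrightarrow\ \textstyle\bigcup_{j\le i}\supp_{\C A} M_j\ \text{is a finite set.}
\]

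Next I would bring in the characteristic varieties. Applying $\Hom(-,\C^*)$ to $0\to\ker\nu\to H\to A\to 0$ shows that $\hat\nu\colon\wA\to\wH$ is an injective homomorphism of algebraic groups, hence a closed embedding onto the subgroup $\im(\hat\nu)$, so $\im(\hat\nu)\cap V^i(X)$ is finite if and only if $\hat\nu^{-1}(V^i(X))$ is finite. For $\rho\in\wA$, with maximal ideal $\m_\rho\subset\C A$ and residue field $\C_\rho=\C A/\m_\rho$, base change gives $D_*\otimes_{\C A}\C_\rho\cong C_*(\wX)\otimes_{\Z G}\C_{\rho\circ\nu}$, which computes $H_*(X;\C_{\rho\circ\nu})$, the homology of $X$ in the rank~$1$ local system attached to $\rho\circ\nu\colon G\to\C^*$; thus $\rho\in\hat\nu^{-1}(V^j(X))$ precisely when $H_j(D_*\otimes_{\C A}\C_\rho)\ne 0$ (working with the homology characteristic varieties, which differ from the cohomology ones by the automorphism $\rho\mapsto\rho^{-1}$ of $\wH$ that preserves $\im(\hat\nu)$). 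Comparing $H_*(D_*\otimes\C_\rho)$ with $M_*\otimes\C_\rho$ via the universal-coefficients spectral sequence $E^2_{p,q}=\Tor^{\C A}_p(M_q,\C_\rho)\Rightarrow H_{p+q}(D_*\otimes\C_\rho)$, and observing that $\supp\Tor^{\C A}_p(M_q,\C_\rho)\subseteq\supp M_q\cap\{\rho\}$ so these $\Tor$ modules vanish in all degrees $p$ when $\rho\notin\supp M_q$, one gets: if $H_j(D_*\otimes\C_\rho)\ne 0$ then $\rho\in\supp M_q$ for some $q\le j$; conversely, if $j$ is minimal with $\rho\in\supp M_j$ then $E^2_{p,q}=0$ for all $q<j$, so $E^2_{0,j}=M_j\otimes\C_\rho\ne 0$ survives to $E^\infty$ (no differential enters or leaves it), forcing $H_j(D_*\otimes\C_\rho)\ne 0$. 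Taking unions over $j\le i$ gives
\[
\hat\nu^{-1}(V^i(X))=\textstyle\bigcup_{j\le i}\supp_{\C A} M_j ,
\]
and combining this with the first displayed equivalence and the closed-embedding remark yields the asserted formula for $\Omega^i_A(X)$. When $A=\Z^r$ this recovers the Dwyer--Fried theorem in the form established in \cite{PS-plms, Su11}.

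The step I expect to be the main obstacle is the second one. Because $H_j(X^{\nu};\C)$ is a module over the possibly singular and possibly disconnected ring $\C A$ rather than over a field, the naive degree-by-degree identity $\supp M_j=\hat\nu^{-1}(V^j(X))$ fails: the spectral sequence genuinely mixes adjacent degrees through the $\Tor$ terms. One must instead establish the identity only after unioning over all degrees $\le i$ --- which is exactly what the definitions of $\Omega^i_A(X)$ and $V^i(X)$ record --- and the careful part is the minimal-degree survival argument for $E^2_{0,j}$ together with the support-and-$\Tor$ book-keeping that makes it work.
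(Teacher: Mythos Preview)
Your argument is correct and takes a genuinely different route from the paper's.  The paper isolates a support--comparison lemma (Theorem~\ref{thm:supp}): for a chain complex $C_\bullet$ of finitely generated free $\C[H]$-modules and an epimorphism $\nu\colon H\surj A$, one has $\supp H_*(C_\bullet\otimes_{\C[H]}\C[A])=(\nu^*)^{-1}\big(\supp H_*(C_\bullet)\big)$.  This is proved by induction on $\rank H-\rank A$: in the base step, tensoring with $\C[A]$ is modeled by the short exact sequence $0\to C_\bullet\xrightarrow{\,x^q-1\,}C_\bullet\to C_\bullet\otimes\C[A]\to 0$, and the support identity follows from the long exact sequence together with the elementary fact that a surjective endomorphism of a finitely generated module over a Noetherian ring is injective (whence $\supp\ker\phi_*\subseteq\supp\coker\phi_*$).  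Your approach avoids this induction entirely by working pointwise over $\wA$: you invoke the K\"unneth/hyperhomology spectral sequence $E^2_{p,q}=\Tor^{\C A}_p(M_q,\C_\rho)\Rightarrow H_{p+q}(D_*\otimes\C_\rho)$ and combine Nakayama (to see $E^2_{0,j}\ne 0$) with a minimal-degree survival argument.  Both arguments share the feature you flag as the main obstacle---the identification holds only after taking the union over degrees $\le i$, not degree by degree---and both handle it correctly.  The paper's method is more elementary (only long exact sequences and basic commutative algebra) and produces a reusable base-change statement; yours is more direct, and makes visible why the cumulative definitions of $\Omega^i_A$ and $V^i$ are exactly what is needed.
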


\subsection{An upper bound for the $\Omega$-sets}
\label{subsec:omega-sets bound}

In order to estimate the size of the Dwyer--Fried sets, it
is convenient to look at various analogues of the incidence
subvarieties of the Grassmannian, known as the
special Schubert varieties.

Given a subgroup $\xi\le H$ and an abelian group $A$, let
$\sigma_A(\xi)$ be the set of all $[\nu] \in \Gamma(H,A)$ for
which $\rank (\ker(\nu) + \xi) < \rank H$, and
let $U_A(\xi)$ be the subset of those $[\nu]$
for which, additionally, $\ker(\nu)\cap \overline{\xi}\subseteq \xi$,
where $\bar{\xi}$ denotes the primitive closure of $\xi$.

Each subgroup $\xi \le H$  gives rise to an algebraic subgroup,
$V(\xi)=\widehat{H/\xi}$, of the character group $\widehat{H}$,
with identity component $V(\overline{\xi})$.
Given a subvariety $W \subset \widehat{H}$, and a positive integer $d$,
let $\Xi_d(W)$ be the collection of all subgroups $\xi \le H$
for which the determinant group $\overline{\xi}/\xi$ is cyclic
of order dividing $d$, and there is a generator
$\eta$ of $\widehat{\overline{\xi}/\xi}$ such that $\eta V(\overline{\xi})$
is a maximal, positive-dimensional translated subtorus in $W$.
(The set $\Xi_1(W)$ is essentially the same as the ``exponential
tangent cone" from \cite{DPS-duke, Su14}.)

\begin{thm}[Theorem \ref{thm:omega bound}]
\label{thm:intro c}
 Let $H=H_1(X,\Z)$, and let $A$ be a quotient
of $H$. Then:
\begin{equation*}
\label{eq:intro_up_bd}
\Omega_A^i(X)\subseteq \Gamma(H,A)
\setminus \bigcup_{d\ge 1}\bigcup_{\xi\in \Xi_{d}(V^i(X))}
U_A(\xi) .
\end{equation*}
\end{thm}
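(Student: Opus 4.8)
The plan is to deduce the inclusion from the characterization of $\Omega^i_A(X)$ given in Theorem~\ref{thm:intro b}, by showing that every element of $\bigcup_{d\ge 1}\bigcup_{\xi\in\Xi_d(V^i(X))}U_A(\xi)$ lies outside $\Omega^i_A(X)$. So fix $d\ge 1$ and $\xi\in\Xi_d(V^i(X))$, take $[\nu]\in U_A(\xi)$, and set $\kappa=\ker(\nu)$. Identifying $A$ with $H/\kappa$, the induced map $\hat\nu\colon\wA\to\widehat H$ has image the algebraic subgroup $V(\kappa)=\widehat{H/\kappa}$ of $\widehat H$, so by Theorem~\ref{thm:intro b} it suffices to show that $V(\kappa)\cap V^i(X)$ is infinite. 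By the definition of $\Xi_d(V^i(X))$ there is a generator $\eta$ of the component group $\widehat{\overline\xi/\xi}$ of $V(\xi)$ such that the component $T:=\eta V(\overline\xi)$ is positive-dimensional and contained in $V^i(X)$; it is therefore enough to prove that $V(\kappa)\cap T$ is infinite.

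The argument rests on two elementary duality identities, valid for any subgroups $\mu,\lambda\le H$: namely $V(\mu)\cap V(\lambda)=V(\mu+\lambda)$, because a character killing both $\mu$ and $\lambda$ kills $\mu+\lambda$; and $V(\mu)\cdot V(\lambda)=V(\mu\cap\lambda)$, obtained by applying the exact functor $\Hom(-,\C^*)$ to the injection $H/(\mu\cap\lambda)\hookrightarrow H/\mu\oplus H/\lambda$. I would then proceed in two steps. First, $V(\kappa)\cap T$ is nonempty: a representative of $\eta$ in $V(\xi)=\widehat{H/\xi}\subseteq\widehat H$ kills $\xi$, hence kills $\kappa\cap\overline\xi$ — this is where the defining property $\ker(\nu)\cap\overline\xi\subseteq\xi$ of $U_A(\xi)$ enters — so $\eta$ lies in $V(\kappa\cap\overline\xi)=V(\kappa)\cdot V(\overline\xi)$, which means $\eta=\chi_1\chi_2$ with $\chi_1\in V(\kappa)$ and $\chi_2\in V(\overline\xi)$; then $\chi_1=\eta\chi_2^{-1}$ lies in $V(\kappa)\cap\eta V(\overline\xi)=V(\kappa)\cap T$. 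Second, $V(\kappa)\cap T$ is then a coset of $V(\kappa)\cap V(\overline\xi)=V(\kappa+\overline\xi)$, so it is infinite once $V(\kappa+\overline\xi)$ is positive-dimensional, i.e. $\rank(\kappa+\overline\xi)<\rank H$. Since $\overline{\kappa+\xi}$ is primitively closed and contains $\xi$ it contains $\overline\xi$, and it also contains $\kappa$, so $\kappa+\overline\xi\subseteq\overline{\kappa+\xi}$ and hence $\rank(\kappa+\overline\xi)\le\rank(\kappa+\xi)<\rank H$, the last inequality being exactly the condition $[\nu]\in\sigma_A(\xi)$ built into $U_A(\xi)$. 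Thus $V(\kappa)\cap T$ is an infinite subset of $V(\kappa)\cap V^i(X)$, so $[\nu]\notin\Omega^i_A(X)$; as $d$ and $\xi$ were arbitrary, the asserted inclusion follows.

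The point to be careful about is the interplay of the two clauses in the definition of $U_A(\xi)$: the rank condition $\rank(\ker(\nu)+\xi)<\rank H$ is what forces the identity component $V(\kappa+\overline\xi)$ of the relevant intersection to be positive-dimensional, while the primitivity condition $\ker(\nu)\cap\overline\xi\subseteq\xi$ is precisely what makes the distinguished \emph{translate} $\eta V(\overline\xi)$ — as opposed to just $V(\overline\xi)$ itself — actually meet $V(\kappa)$. Beyond that, the only real bookkeeping is keeping the duality between subgroups $\xi\le H$ and algebraic subgroups $V(\xi)\le\widehat H$ straight, in particular the identification of the component group of $V(\xi)$ with $\widehat{\overline\xi/\xi}$; there is no deeper obstacle. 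Note also that the "maximality" clause in the definition of $\Xi_d(V^i(X))$ is not needed for this direction: only the positive-dimensionality of $\eta V(\overline\xi)$ and its containment in $V^i(X)$ are used.
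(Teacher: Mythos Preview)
Your argument is correct and follows essentially the same route as the paper. The paper's proof of Theorem~\ref{thm:omega bound} simply cites Theorem~\ref{main theorem} (your Theorem~\ref{thm:intro b}) together with Proposition~\ref{prop:om bd}, and the latter is proved via Lemma~\ref{lem:uaw} and Corollary~\ref{cor:uxi}, which in turn rest on Proposition~\ref{prop:int2} (from \cite{SYZ}): for subgroups $\chi,\lambda\le H$ and $\eta\in\widehat H$, one has $V(\chi)\cap\eta V(\lambda)\ne\emptyset$ iff $\eta\in V(\chi\cap\lambda)$, and in that case the intersection has dimension $\rank H-\rank(\chi+\lambda)$. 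What you do is derive exactly these two facts from scratch, using the elementary duality identities $V(\mu)\cap V(\lambda)=V(\mu+\lambda)$ and $V(\mu)\cdot V(\lambda)=V(\mu\cap\lambda)$, and then apply them with $\chi=\ker(\nu)$ and $\lambda=\overline\xi$. So the substance is identical; you have just unpacked the citation to \cite{SYZ} into a direct, self-contained computation. Your closing remark that the maximality clause in $\Xi_d$ plays no role in this direction is also in line with how the paper uses it.
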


In the case when $A$ has rank one, we show in
Theorem~\ref{thm:rank1 omega} that this upper bound
is reached; furthermore, we only need to use in this case the set
$\Xi_{c(A)}(V^i(X))$, where $c(A)$ is the largest order
of any element in $A$.  In other words, if $\rank A=1$, then
\begin{equation}
\label{eq:intro rank1}
\Omega_A^i(X)=\Gamma(H,A)
\setminus \bigcup_{\xi\in \Xi_{c(A)}(V^i(X))} U_A(\xi).
\end{equation}

\subsection{Translated subgroups in the characteristic varieties}
\label{subsec:omega-sets translated subgroup}

For a large class of spaces---for instance, smooth, complex quasi-projective
varieties---the characteristic varieties are union of translated algebraic
subgroups of $\widehat{H}$.
Using techniques from \cite{SYZ}, we obtain several explicit formulas
in this situation, expressing the Dwyer--Fried sets purely in terms of the
corresponding subgroups of $H$, and the associated translation factors.

\begin{thm}[Theorem \ref{thm:tt omega}]
\label{thm:intro d}
Suppose $V^i(X)=\bigcup_{j=1}^s \eta_j V(\xi_j)$, where
$\xi_1, \dots, \xi_s$ are subgroups of $H=H_1(X,\Z)$, and
$\eta_1, \dots, \eta_s$ are torsion elements in $\widehat{H}$.
Then
\[
\Omega_A^i(X)= \Gamma(H,A)
\setminus \bigcup_{\xi\in \Xi_{c}(V^i(X))}
U_A(\xi) ,
\]
where $c$ is the lowest common multiple of
$\ord(\eta_1)\cdot c(\overline{\xi}_1/\xi_1), \dots ,
\ord(\eta_s)\cdot c(\overline{\xi}_s/\xi_s)$.
\end{thm}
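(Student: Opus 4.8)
The plan is to reduce the problem, via Theorem~\ref{main theorem}, to the geometric statement that a class $[\nu]\in\Gamma(H,A)$ belongs to $\Omega^i_A(X)$ exactly when the algebraic subgroup $\im(\hat\nu)=V(\ker\nu)\subseteq\widehat H$ meets $V^i(X)$ in a finite set, and then to translate this into the arithmetic conditions that define $U_A(\xi)$. One inclusion is then immediate: since $\Xi_c(V^i(X))$ is one of the families $\Xi_d(V^i(X))$, $d\ge 1$, occurring in Theorem~\ref{thm:omega bound}, that result already yields $\Omega_A^i(X)\subseteq\Gamma(H,A)\setminus\bigcup_{\xi\in\Xi_c(V^i(X))}U_A(\xi)$. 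So the substance of the argument is the reverse inclusion, which I would prove in contrapositive form: \emph{if $V(\ker\nu)\cap V^i(X)$ is infinite, then $[\nu]\in U_A(\xi)$ for some $\xi\in\Xi_c(V^i(X))$.}

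First I would put $V^i(X)$ into a convenient form. Writing $V^i(X)=\bigcup_{j=1}^{s}\eta_jV(\xi_j)$ as in the hypothesis, I break each coset $\eta_jV(\xi_j)$ into its connected components: since $V(\overline{\xi_j})$ is the identity component of $V(\xi_j)$, with component group $\widehat{\overline{\xi_j}/\xi_j}$, these components are the translated subtori $\eta_j\zeta\,V(\overline{\xi_j})$, where $\zeta$ runs over a set of torsion representatives of $\widehat{\overline{\xi_j}/\xi_j}$; each translation factor $\eta_j\zeta$ is then torsion of order dividing $\ord(\eta_j)\cdot c(\overline{\xi_j}/\xi_j)$, hence dividing $c$ — this is the one point where the hypothesis that the $\eta_j$ are torsion is used. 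As $V^i(X)$ is now displayed as a \emph{finite} union of translated subtori, the assumed infiniteness of $V(\ker\nu)\cap V^i(X)$ forces $V(\ker\nu)$ to meet one of them, say $\eta_{j_0}\zeta_0\,V(\overline{\xi_{j_0}})$, in an infinite set. I would then enlarge this subtorus to a \emph{maximal} positive-dimensional translated subtorus $T$ of $V^i(X)$; because a translated subtorus is irreducible and $V^i(X)$ is a finite union of closed sets, $T$ lies in a single $\eta_jV(\xi_j)$, hence in a single one of its components, hence — being maximal — equals that component. In particular $T=\eta'V(\mu)$ for a primitive subgroup $\mu\le H$ and a torsion character $\eta'$ of order dividing $c$, and $V(\ker\nu)\cap T$ is still infinite.

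Next I would produce the required subgroup as $\xi:=\ker\bigl(\eta'|_{\mu}\colon\mu\to\C^*\bigr)$. Then $\xi$ has finite index in the primitive subgroup $\mu$, so $\overline{\xi}=\mu$; the determinant group $\overline{\xi}/\xi$ is cyclic (isomorphic to a finite subgroup of $\C^*$) of order dividing $\ord(\eta')$, hence dividing $c$; $\eta'$ lies in $V(\xi)$ and its class generates the component group $V(\xi)/V(\overline{\xi})\cong\widehat{\overline{\xi}/\xi}$; and $\eta'V(\overline{\xi})=\eta'V(\mu)=T$ is a maximal positive-dimensional translated subtorus of $V^i(X)$. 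Hence $\xi\in\Xi_c(V^i(X))$. It remains to see that $[\nu]\in U_A(\xi)$. Since $V(\ker\nu)\cap\eta'V(\overline{\xi})$ is nonempty it is a coset of $V(\ker\nu)\cap V(\overline{\xi})=V(\ker\nu+\overline{\xi})$, so its infiniteness forces $\rank(\ker\nu+\xi)=\rank(\ker\nu+\overline{\xi})<\rank H$, which is the first defining condition of $U_A(\xi)$. Picking any $t_0$ in that intersection, one has $t_0|_{\ker\nu}=1$ and $t_0|_{\overline{\xi}}=\eta'|_{\overline{\xi}}$, so evaluating on $\ker\nu\cap\overline{\xi}$ gives $\eta'|_{\overline{\xi}}\equiv 1$ there, i.e.\ $\ker\nu\cap\overline{\xi}\subseteq\ker(\eta'|_{\mu})=\xi$, which is the second defining condition. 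Therefore $[\nu]\in U_A(\xi)$, completing the reverse inclusion.

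The easy half could in fact be recovered from the very same dictionary: given $[\nu]\in U_A(\xi)$ with $\xi\in\Xi_c(V^i(X))$ and $\eta$ a generator of $\widehat{\overline{\xi}/\xi}$ as in the definition of $\Xi_c$, the two defining conditions of $U_A(\xi)$ say precisely that the partial character equal to $1$ on $\ker\nu$ and to $\eta|_{\overline{\xi}}$ on $\overline{\xi}$ is well defined on $\ker\nu+\overline{\xi}$, and divisibility of $\C^*$ extends it to a point of $V(\ker\nu)\cap\eta V(\overline{\xi})$, a coset of the positive-dimensional algebraic subgroup $V(\ker\nu+\overline{\xi})$; hence $V(\ker\nu)\cap V^i(X)$ is infinite — though quoting Theorem~\ref{thm:omega bound} is shorter. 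I expect the only genuine friction to be the primitive-closure and determinant-group bookkeeping in the middle two steps: verifying that enlarging to a maximal translated subtorus keeps the translation factor of order dividing $c$ (this is where irreducibility inside a finite union is essential, and is exactly why $\Xi_c$, rather than the union over all $d$ in Theorem~\ref{thm:omega bound}, already suffices), and verifying that the refinement $\xi=\ker(\eta'|_{\mu})$ genuinely lands in $\Xi_c(V^i(X))$ with a \emph{cyclic} determinant group even when the original groups $\overline{\xi_j}/\xi_j$ need not be cyclic. The facts about intersections of translated subtori used throughout are of the kind developed in \cite{SYZ}.
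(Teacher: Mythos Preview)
Your proposal is correct and follows essentially the same approach as the paper's own proof. The paper argues by reducing to the single-component case (its Theorem~\ref{thm:uprho1}), where it decomposes $\eta V(\xi)$ into connected components $\eta\rho\,V(\overline{\xi})$, finds one meeting $V(\ker\nu)$ in positive dimension, and constructs $\chi=\epsilon\bigl(\bigcup_m(\eta\rho)^mV(\overline{\xi})\bigr)$---which is exactly your $\ker\bigl((\eta\rho)|_{\overline{\xi}}\bigr)$---then invokes Lemma~\ref{lem:eta vxi} and Corollary~\ref{cor:uxi} to verify $\chi\in\Xi_d$ and $[\nu]\in U_A(\chi)$. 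Your write-up unfolds these citations and, in particular, spells out the maximality step in the multi-component case (enlarging to a maximal translated subtorus and observing it must coincide with one of the components $\eta_j\rho\,V(\overline{\xi_j})$, so the translation order still divides $c$); the paper handles the passage from $s=1$ to general $s$ with the phrase ``a similar argument,'' leaving this point implicit.
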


A formula of a different flavor is given in Theorem~\ref{thm:xik}.
A particular case of this formula is worth singling out: if
$V^i(X)=V(\xi_1)\cup \cdots \cup V(\xi_s)$ is
a union of algebraic subgroups of $\wH$, then
\begin{equation}
\label{eq:intro qoa}
\Omega^i_A(X) = \Gamma(H,A) \setminus \bigcup_{j=1}^{s}
q^{-1} \left( \sigma_{\oA}(\xi_j) \right) ,
\end{equation}
where $q$ is the canonical projection $\Gamma(H,A)$ onto
the rational Grassmannian $\Gamma(H,\overline{A})$, and the sets
$\sigma_{\oA}(\xi_j)$ are usual special Schubert varieties.

\subsection{Abelian versus free abelian covers}
\label{subsec:ab vs fab}

This last formula brings up a rather general question:
If a cover $X^{\bar{\nu}}$ has finite Betti numbers,
does $X^{\nu}$ also have finite Betti numbers?
This question can be answered by comparing the
generalized Dwyer--Fried invariants of $X$ with
their classical counterparts.

As before, let $A$ be a quotient of $H=H_1(X,\Z)$, and set
$r=\rank A$ and $n=\rank H$. The canonical projection
$q\colon \Gamma(H,A) \to \Gamma(H,\overline{A})$
restricts to a map $\Omega^{i}_{A}(X) \to \Omega^{i}_{\overline{A}}(X)$
between the respective Dwyer--Fried sets, thus yielding a
a commuting diagram,
\begin{equation}
\label{comm diag}
\xymatrix{ \Omega_A^i(X)\, \ar@{^{(}->}[r] \ar[d]^{q|_{\Omega_A^i(X)}}
& \Gamma(H, A)
\cong \GL_{n}(\Z) \times_{\Par} \Gamma\ar[d]^q\\
\Omega^i_{\,\oA}(X)\, \ar@{^{(}->}[r] &
\Gamma(H, \oA) \cong \Grass_{n-r}(\Z^n)
}
\end{equation}

If $q^{-1}\big(\Omega^i_{\oA}(X)\big)=\Omega^i_{A}(X)$,
then the finiteness of the Betti numbers of an $A$-cover can be
tested through the corresponding $\oA$-cover.  In general,
though, diagram \eqref{comm diag} is {\em not}\/ a pullback
diagram; in that case, the generalized Dwyer--Fried
invariants, when viewed as homotopy type invariants,
contain more information than the classical ones.

This dichotomy is illustrated by the following result.

\begin{thm}[Propositions~\ref{prop:pull-back2} and
\ref{prop:transverse}]
\label{thm:intro e}
Suppose the characteristic variety $V^i(X)$ is of the form
$\bigcup_{j} \rho_j T_j$, where each $T_j\subset \widehat{H}$
is an algebraic subgroup, and each $\bar{\rho}_j\in \widehat{H}/T_j$
has finite order.
\begin{romenum}
\item \label{d1}
If $\ord(\bar{\rho}_j)$ is coprime to the order of $\Tors(A)$, for each $j$,
then $q^{-1}\big(\Omega^i_{\oA}(X)\big)=\Omega^i_{A}(X)$.
\item \label{d2}
If the identity component of $T_1$ is
not contained in $\prod_{j\neq 1} T_j $, the order of $\rho_1$ divides $c(A)$,
and $\rank A < \rank H -\dim \prod_{j\neq 1} T_j$,
then $q^{-1}\big(\Omega^i_{\oA}(X)\big)\supsetneqq \Omega^i_{A}(X)$.
\end{romenum}
\end{thm}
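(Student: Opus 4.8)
The plan is to run everything through the intersection-theoretic description of the $\Omega$-sets supplied by Theorem~\ref{main theorem}: for an epimorphism $\nu\colon H\surj A$, put $P:=\im(\hat\nu)\subseteq\wH$ and $P^{0}:=\im(\hat{\bar\nu})$, so that $P^{0}$ is the identity component of $P$ and $P/P^{0}\cong\Tors(A)$; then $[\nu]\in\Omega^{i}_{A}(X)$ iff $P\cap V^{i}(X)$ is finite, while $[\nu]\in q^{-1}(\Omega^{i}_{\oA}(X))$ iff $P^{0}\cap V^{i}(X)$ is finite. Writing $V^{i}(X)=\bigcup_{j}\rho_{j}T_{j}$, each set $P\cap\rho_{j}T_{j}$ is either empty (exactly when $\rho_{j}\notin P\cdot T_{j}$) or a coset of $P\cap T_{j}$, and $\dim(P\cap T_{j})=\dim(P^{0}\cap T_{j}^{0})$ since the pertinent component groups are finite. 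Hence $P\cap V^{i}(X)$ is finite iff for every $j$ either $\rho_{j}\notin P\cdot T_{j}$ or $\dim(P^{0}\cap T_{j}^{0})=0$, and likewise for $P^{0}$, with $P^{0}\cdot T_{j}$ replacing $P\cdot T_{j}$. In particular $\Omega^{i}_{A}(X)\subseteq q^{-1}(\Omega^{i}_{\oA}(X))$ always, as already recorded by diagram~\eqref{comm diag}; so for (i) it remains to establish the reverse inclusion, and for (ii) to exhibit a single point of the difference.

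For part~(i), take $[\bar\nu]\in\Omega^{i}_{\oA}(X)$ and fix an index $j$ with $\rho_{j}\in P\cdot T_{j}$ (for the remaining indices $P\cap\rho_{j}T_{j}$ is empty). The class of $\rho_{j}$ in the finite group $PT_{j}/P^{0}T_{j}$ has order dividing $\ord(\bar\rho_{j})$, because $\wH/P^{0}T_{j}$ is a quotient of $\wH/T_{j}$; and it has order dividing $|\Tors(A)|$, because $PT_{j}/P^{0}T_{j}$ is itself a quotient of $P/P^{0}$. The coprimality hypothesis forces this order to be $1$, so $\rho_{j}\in P^{0}\cdot T_{j}$; since $[\bar\nu]\in\Omega^{i}_{\oA}(X)$, this in turn forces $\dim(P^{0}\cap T_{j}^{0})=0$. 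As the index $j$ was arbitrary among those with $\rho_{j}\in P\cdot T_{j}$, we conclude that $P\cap V^{i}(X)$ is finite, that is, $[\nu]\in\Omega^{i}_{A}(X)$.

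For part~(ii) I construct a single epimorphism $\nu\colon H\surj A$ in $q^{-1}(\Omega^{i}_{\oA}(X))\setminus\Omega^{i}_{A}(X)$; after replacing $H$ by $H/\Tors(H)$ I may assume $H$ is torsion-free. I first choose the free part. Since $T_{1}^{0}\not\subseteq\prod_{j\ne1}T_{j}$, every $T_{1}^{0}\cap T_{j}^{0}$ with $j\ne1$ is a proper subtorus of $T_{1}^{0}$, and a finite union of proper subtori cannot exhaust $T_{1}^{0}$; so I may pick a $1$-dimensional subtorus $\tau\subseteq T_{1}^{0}$ lying in none of them. Now I complete $\tau$ to an $r$-dimensional subtorus $P^{0}=\widehat{H/K}$---equivalently, a primitive corank-$r$ sublattice $K\le H$, and thus an epimorphism $\bar\nu\colon H\surj\oA$ with $\ker\bar\nu=K$---chosen in sufficiently general position that $\dim(P^{0}\cap T_{j}^{0})=0$ for every $j\ne1$ and that $\rho_{1}\notin P^{0}\cdot T_{1}$. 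The inequality $\rank A<\rank H-\dim\prod_{j\ne1}T_{j}$ is precisely what makes room for this: it yields $r+\dim T_{j}^{0}<\rank H$ for each $j\ne1$, so a generic $P^{0}$ through $\tau$ meets $T_{j}^{0}$ in dimension $0$, while the constraint $\tau\subseteq P^{0}\cap T_{1}^{0}$ keeps that last intersection positive-dimensional. Next I choose the torsion part: a homomorphism $\tau_{0}\colon H\to\Tors(A)$ whose restriction to $K$ is onto---so that $\nu:=(\bar\nu,\tau_{0})\colon H\surj A$ is genuinely onto---selected, using $\ord(\rho_{1})\mid c(A)$, so that $\rho_{1}\in\im(\hat\nu)=P$; it is exactly the divisibility $\ord(\rho_{1})\mid c(A)$ that lets the torsion character $\rho_{1}$ be absorbed into a non-identity component of $P$. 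For this $\nu$ we have $P^{0}\cap V^{i}(X)$ finite (intersection with $\rho_{1}T_{1}$ empty, with every other $\rho_{j}T_{j}$ zero-dimensional), so $[\bar\nu]\in\Omega^{i}_{\oA}(X)$; but $\rho_{1}\in P\cap\rho_{1}T_{1}$ and $\dim(P\cap T_{1})=\dim(P^{0}\cap T_{1}^{0})\ge1$, so $P\cap V^{i}(X)$ is infinite and $[\nu]\notin\Omega^{i}_{A}(X)$.

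The opening reduction and the arithmetic in part~(i) are routine. The substance lies in the construction for part~(ii), where the main obstacle is the simultaneous general-position requirement on $P^{0}$: it is forced to contain $\tau$, hence to meet $T_{1}^{0}$ in positive dimension, yet it must meet every other $T_{j}^{0}$ in finitely many points and avoid the translate $\rho_{1}T_{1}$ entirely. Making this precise is a dimension count with cocharacter lattices in which each of the three hypotheses of (ii) is used exactly once; together with the (more elementary) placement of $\rho_{1}$ inside $\im(\hat\nu)$ through the torsion coordinate $\tau_{0}$, this finishes the argument. I expect that general-position step to be the part demanding the most care.
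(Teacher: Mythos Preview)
Your argument for part~(i) is correct and is essentially a repackaging of the paper's
Proposition~\ref{prop:not pback1}: where the paper invokes
Proposition~\ref{prop:dimension equal}, you argue directly with the finite group
$PT_{j}/P^{0}T_{j}$, which is simultaneously a quotient of $\wH/T_{j}$ and of
$P/P^{0}\cong\widehat{\Tors(A)}$.  The two arguments are equivalent.

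Part~(ii) is where your approach diverges from the paper's, and there is a genuine
gap.  Your dimension count justifies only the conditions
$\dim(P^{0}\cap T_{j}^{0})=0$ for $j\ne1$; it does \emph{not} justify
$\rho_{1}\notin P^{0}T_{1}$, and ``general position'' alone cannot supply it.
Indeed, a generic $r$-dimensional subtorus $P^{0}$ through $\tau$ meets
$T_{1}^{0}$ in exactly $\tau$, so
$\dim(P^{0}T_{1}^{0})=r+\dim T_{1}^{0}-1$.  Nothing in the hypotheses bounds
$\dim T_{1}^{0}$; when $\dim T_{1}^{0}\ge n-r+1$ this forces
$P^{0}T_{1}^{0}=\wH_{0}$, hence $\rho_{1}\in P^{0}T_{1}$ automatically, and your
construction collapses.  (Concretely: take $n=4$, $r=2$, $T'=T_{2}$ a
$1$-torus, and $T_{1}$ a $3$-torus with $T_{1}\not\subseteq T'$.)

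The paper circumvents this by a two-step reduction rather than a direct
general-position argument.  First (Corollary~\ref{prop:transverse}) it passes to a
complement $\wH''$ of $T'=\prod_{j\ne1}T_{j}$, which kills the
conditions on $T_{j}$ for $j\ne1$ for free and reduces to a single translated
torus.  Second (Proposition~\ref{prop:not pback2}, Step~1) it confines the
candidate torus $C$ to a codimension-$1$ subgroup containing $T_{1}$ but not
$\rho_{1}$, so that $CT_{1}$ sits inside that hyperplane and $\rho_{1}\notin CT_{1}$
is automatic.  If you want to salvage your direct construction, you need this
same codimension-$1$ idea: pick $x\in\epsilon(T_{1})$ with $\rho_{1}(x)\ne1$,
set $S=V(\Z x)$, and choose $P^{0}\subseteq S$; then
$P^{0}T_{1}\subseteq S\not\ni\rho_{1}$, and the inequality
$r\le n-1-\dim T'$ is exactly what lets you meet the remaining transversality
conditions inside~$S$.
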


In other words, suppose $X^{\bar\nu}$ is a regular $\oA$-cover of $X$,
with finite Betti numbers up to degree $i$.  Then, if condition \eqref{d1} holds,
all $A$-covers $X^{\nu}$ have the same finiteness property, whereas if
condition \eqref{d2} holds, one of those $A$-covers will have an infinite
Betti number in some degree less or equal to $i$.

\subsection{Toric complexes}
\label{subsec:apps toric}

A nice class of spaces to which our  theory applies is that of toric
complexes. Every simplicial complex $L$ on $n$ vertices determines
a subcomplex $T_L$ of the $n$-torus, with fundamental group the
right-angled Artin group associated to the $1$-skeleton of $L$.
Identify the group $H=H_1(T_L,\Z)$ with $\Z^n$.  Work of Papadima
and Suciu \cite{PS-toric} shows that the characteristic varieties
$V^i(T_L)$ are unions of coordinate subspaces in $\wH=(\C^*)^n$;
equations for these subspaces can be read off directly
from the simplicial complex $L$.

Formula \eqref{eq:intro qoa} computes all the generalized
Dwyer--Fried sets of a toric complex.  More precisely, if $A$ is
a quotient of $H$, then, as noted in Corollary~\ref{cor: toric_formula},
the complement of $\Omega_A^i(T_L)$ in $\Gamma(H,A)$ fibers
over the special Schubert varieties associated to the coordinate
subspaces comprising $V^i(T_L)$, with each fiber isomorphic to
$\Gamma(H/\oA,A/\oA)$.
Thus, if $T_L^{\bar\nu}$ is a free abelian cover of $T_L$, with finite
Betti numbers up to some degree $i$, then all finite abelian covers
$T_L^{\nu}\to T_L^{\bar\nu}$ have the same homological finiteness property.

\subsection{Quasi-projective varieties}
\label{subsec:apps qp}

Another important class of spaces to which our methods apply
quite well  is that of smooth, quasi-projective varieties. For such
a space $X$, work of Arapura \cite{Ar} and others shows that
\begin{equation}
\label{eq:v1x}
V^1(X)=Z\cup \bigcup_{\xi\in \Lambda}V(\xi) \cup
\bigcup_{\xi\in \Lambda'} \big( V(\xi)\setminus V(\overline{\xi}) \big),
\end{equation}
where $Z$ is a finite set, and $\Lambda$ and $\Lambda'$ are
certain (finite) collections of subgroups of $H=H_1(X,\Z)$.

\begin{thm}[Theorem~\ref{thm:omega qp}]
\label{thm:intro f}
With notation as above, let $A$ be a quotient of $H$.  Then
\[
\Omega^1_A(X)=\Gamma(H,A)\setminus \Bigg( \bigcup_{\xi\in \Lambda}
q^{-1}\big(\sigma_{\overline{A}} (\overline\xi)\big) \cup
\bigcup_{\xi\in \Lambda'}
\Big( q^{-1}\big(\sigma_{\overline{A}} (\overline\xi)\big) \cap
\theta_A(\xi)\Big) \Bigg)\, ,
\]
where $\sigma_{\overline{A}} (\overline\xi)\subseteq\Grass_{n-r}(\Z^n)
\cong\Gamma(\overline{H},\overline{A})$ is a special Schubert variety,
and $\theta_A(\xi)$ consists of those $[\nu]\in \Gamma(H,A)$ for which
there is a subgroup $\xi\le \xi'\lvertneqq\overline\xi$ such that
$\overline\xi/\xi'$ is cyclic and
$\nu(x)\neq0$ for all $x\in\overline\xi\setminus\xi'$.
\end{thm}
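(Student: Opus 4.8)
The plan is to read off the formula from Theorem~\ref{main theorem}, which identifies $\Omega^1_A(X)$ with the set of classes $[\nu]$ for which $\im(\hat\nu)\cap V^1(X)$ is finite, and then to test this finiteness term by term along the decomposition \eqref{eq:v1x}. Since $Z$ is finite and $\im(\hat\nu)$ is the image of the injective morphism $\hat\nu\colon\wA\to\wH$ (injective because $\nu$ is onto), it has finitely many connected components, so $\im(\hat\nu)\cap Z$ is automatically finite. Hence $[\nu]\in\Omega^1_A(X)$ if and only if $\im(\hat\nu)\cap V(\xi)$ is finite for every $\xi\in\Lambda$ and $\im(\hat\nu)\cap\bigl(V(\xi)\setminus V(\overline\xi)\bigr)$ is finite for every $\xi\in\Lambda'$. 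I will treat these two families of conditions separately; the first reduces to a classical Schubert condition, while the second accounts for the extra factors $\theta_A(\xi)$.

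For the $\Lambda$-terms I would use the elementary criterion that an algebraic subgroup of $\wH$ is finite exactly when its identity component is trivial. Writing the identity component $\im(\hat\nu)^0$ as the subtorus $V(\overline{\ker\nu})$, one gets that $\im(\hat\nu)\cap V(\xi)$ is finite if and only if $\im(\hat\nu)^0\cap V(\overline\xi)$ is finite, and comparing the defining lattices of these two subtori this happens if and only if $\rank\bigl(\overline{\ker\nu}+\overline\xi\bigr)=n$, i.e.\ if and only if $[\nu]\notin\sigma_A(\xi)$. Because $\rank(\ker\nu+\xi)$ depends only on the rational spans and because $q$ carries $[\nu]$ to the class of $\overline{\ker\nu}$ in the rational Grassmannian $\Gamma(\overline H,\overline A)\cong\Grass_{n-r}(\Z^n)$, a short computation with primitive closures identifies $\sigma_A(\xi)$ with $q^{-1}\bigl(\sigma_{\overline A}(\overline\xi)\bigr)$. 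This settles the $\Lambda$-part: $\im(\hat\nu)\cap V(\xi)$ is finite for all $\xi\in\Lambda$ precisely when $[\nu]\notin\bigcup_{\xi\in\Lambda}q^{-1}\bigl(\sigma_{\overline A}(\overline\xi)\bigr)$.

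For $\xi\in\Lambda'$ I would write $V(\xi)\setminus V(\overline\xi)$ as the finite union of the non-identity connected components of $V(\xi)$, each of the form $\eta\,V(\overline\xi)$, and argue a dichotomy. If $[\nu]\notin q^{-1}\bigl(\sigma_{\overline A}(\overline\xi)\bigr)$, then $\im(\hat\nu)\cap V(\overline\xi)$ is finite, hence so is each coset $\im(\hat\nu)\cap\eta V(\overline\xi)$, and therefore so is $\im(\hat\nu)\cap\bigl(V(\xi)\setminus V(\overline\xi)\bigr)$. If instead $[\nu]\in q^{-1}\bigl(\sigma_{\overline A}(\overline\xi)\bigr)$, then $\im(\hat\nu)\cap V(\overline\xi)$ is positive-dimensional, so each nonempty coset $\im(\hat\nu)\cap\eta V(\overline\xi)$ is infinite. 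Consequently, for $\xi\in\Lambda'$ the intersection $\im(\hat\nu)\cap\bigl(V(\xi)\setminus V(\overline\xi)\bigr)$ is infinite if and only if $[\nu]\in q^{-1}\bigl(\sigma_{\overline A}(\overline\xi)\bigr)$ and some non-identity component of $V(\xi)$ meets $\im(\hat\nu)$.

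It remains to identify the locus of $[\nu]$ for which some non-identity component of $V(\xi)$ meets $\im(\hat\nu)$ with the set $\theta_A(\xi)$, and this is the step I expect to be the main obstacle, since it needs careful bookkeeping with the components of $V(\xi)$ and with restrictions of characters. A character in $\im(\hat\nu)$ has the form $\phi\circ\nu$ with $\phi\in\wA$, and as $\phi$ varies its restriction to $\overline\xi$ ranges over the image of the homomorphism $\wA\to\widehat{\overline\xi}$ dual to $\nu|_{\overline\xi}\colon\overline\xi\to A$; since $\C^*$ is divisible, that image is exactly the group of characters of $\overline\xi$ that kill $\ker\nu\cap\overline\xi$. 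Such a restriction belongs to a non-identity component of $V(\xi)$ precisely when it also kills $\xi$ and is itself nontrivial, so a non-identity component of $V(\xi)$ meets $\im(\hat\nu)$ if and only if $\overline\xi/\bigl(\xi+(\ker\nu\cap\overline\xi)\bigr)\neq 0$, i.e.\ $\xi+(\ker\nu\cap\overline\xi)\subsetneq\overline\xi$. Finally I would check that this condition is equivalent to $[\nu]\in\theta_A(\xi)$: the requirement ``$\nu(x)\neq 0$ for all $x\in\overline\xi\setminus\xi'$'' is just ``$\ker\nu\cap\overline\xi\subseteq\xi'$'', so a witnessing $\xi'$ yields $\xi+(\ker\nu\cap\overline\xi)\subseteq\xi'\subsetneq\overline\xi$; conversely, if $\xi+(\ker\nu\cap\overline\xi)\subsetneq\overline\xi$, then, using that every nonzero finitely generated abelian group has a proper subgroup with cyclic quotient, one can find $\xi'$ with $\xi+(\ker\nu\cap\overline\xi)\subseteq\xi'\subsetneq\overline\xi$ and $\overline\xi/\xi'$ cyclic. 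Combining the three parts shows that $[\nu]\notin\Omega^1_A(X)$ if and only if $[\nu]\in q^{-1}\bigl(\sigma_{\overline A}(\overline\xi)\bigr)$ for some $\xi\in\Lambda$, or $[\nu]\in q^{-1}\bigl(\sigma_{\overline A}(\overline\xi)\bigr)\cap\theta_A(\xi)$ for some $\xi\in\Lambda'$, which is the asserted formula; in particular the answer differs from \eqref{eq:intro qoa} only through the appearance of the sets $\theta_A(\xi)$, reflecting the translated pieces $V(\xi)\setminus V(\overline\xi)$ in \eqref{eq:v1x}.
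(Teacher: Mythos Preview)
Your argument is correct and follows essentially the same route as the paper: apply Theorem~\ref{main theorem}, decompose $V^1(X)$ via \eqref{eq:v1x}, discard $Z$, reduce the $\Lambda$-pieces to the Schubert condition $\sigma_A(\xi)=q^{-1}(\sigma_{\oA}(\overline\xi))$, and for the $\Lambda'$-pieces recover the content of Proposition~\ref{prop:uprho4}. The only difference is cosmetic: where the paper handles the $\Lambda'$-terms by invoking Theorem~\ref{thm:uprho2} and Lemma~\ref{lemma:equiva} (ultimately Proposition~\ref{prop:int2}), you instead compute directly with restrictions of characters to $\overline\xi$ and divisibility of $\C^*$, arriving at the same criterion $\xi+(\ker\nu\cap\overline\xi)\subsetneq\overline\xi$ and the same identification with $\theta_A(\xi)$.
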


Theorem \ref{thm:intro e}, part \eqref{d1} now shows the
following: if the order of $\overline{\xi}/\xi$ is coprime to
$c(A)$, for each $\xi \in \Lambda'$, then
$\Omega_A^1(X)=q^{-1}\big(\Omega_{\oA}^1(X)\big)$.

In general, though, such an equality does not hold.
Examples of this sort can be constructed using
quasi-homogeneous surfaces (with the singularity at the
origin removed), which are homotopy equivalent
to Brieskorn manifolds $M=\Sigma(a_1,\dots ,a_n)$.
For instance, if  $M=\Sigma(2,4,8)$,
then the universal free abelian cover of $M$ has finite $b_1$,
whereas the universal abelian cover of $M$ has infinite $b_1$.

\subsection{Organization of the paper}
\label{subsec:org}

This paper is organized as follows:

In \S\ref{sect:structure thm} and \S\ref{sect:reinterpret},
we describe the structure of the parameter set
$\Gamma(H,A)$ for regular $A$-covers of a finite,
connected CW-complex $X$ with $H_1(X,\Z)=H$,
while in \S\ref{sect:df} we define the generalized
Dwyer--Fried invariants $\Omega^i_A(X)$, and
study their basic properties.

In \S\ref{sect:inttt}, we review the Pontryagin correspondence
between subgroups of $H$ and algebraic subgroups of the
character group $\widehat{H}$, while in \S\ref{sect:gen tcone}
we associate to each subvariety $W\subset \widehat{H}$ a
family of subgroups of $H$ generalizing the exponential
tangent cone construction.  In \S\ref{sect:schubert} and
\S\ref{sect:alg df}, we introduce and study several subsets
of the parameter set $\Gamma(H,A)$, which may be viewed
as analogues of the special Schubert varieties and the
incidence varieties from classical algebraic geometry.

In \S\ref{sect:supports} we revisit the Dwyer--Fried theory
in the more general context of (not necessarily torsion-free)
abelian covers, while in \S\ref{sect:char-var} we show how
to determine the sets $\Omega_A^i(X)$ in terms of the jump
loci for homology in rank $1$ local systems on $X$.
In \S\ref{sect:classic df}, we compare the Dwyer--Fried invariants
$\Omega_A^i(X)$ with their classical counterparts,
$\Omega_{\overline{A}}^i(X)$, while in \S\ref{sect:rank1}
we discuss in more detail these invariants
in the case when $\rank A=1$.

Finally, in \S\ref{sect:translated} we study the situation when
all irreducible components of the characteristic varieties of $X$ are
(possibly translated) algebraic subgroups of the character group,
while in \S\ref{sect:quasi-proj} we consider the particular case
when $X$ is a smooth, quasi-projective variety.

\section{A parameter set for regular abelian covers}
\label{sect:structure thm}

We start by setting up a parameter set for regular covers of a
CW-complex, with special emphasis on the case when the
deck-transformation group is abelian.

\subsection{Regular covers}
\label{subsec:regcov}
Let $X$ be a connected CW-complex with finite $1$-skeleton.
Without loss of generality, we may assume $X$ has a single
$0$-cell, which we will take as our basepoint, call it $x_0$.
Let $G=\pi_1(X,x_0)$ be the fundamental group.
Since the space $X$ has only finitely many $1$-cells,
the group $G$ is finitely generated.

Consider an epimorphism $\nu\colon G \surj A$ from $G$ to a
(necessarily finitely generated) group $A$. Such an
epimorphism gives rise to a regular
cover of $X$, which we denote by $X^{\nu}$.
Note that $X^{\nu}$ is also a connected CW-complex,
the projection map $p\colon X^{\nu} \to X$ is cellular,
and the group $A$ is the group of deck transformations
of $X^{\nu}$.

Conversely, every (connected) regular cover $p\colon (Y,y_0)\to (X,x_0)$
with group of deck transformations $A$ defines a normal subgroup
$p_{\sharp}(\pi_1(Y,y_0)) \triangleleft \pi_1(X,x_0)$, with quotient
group $A$.   Moreover, if $\nu\colon G \surj A$ is the projection
map onto the quotient, then the cover $Y$ is equivalent to $X^{\nu}$,
that is, there is an $A$-equivariant homeomorphism
$Y\cong X^{\nu}$.

Let $\Epi(G, A)$ be the set of all epimorphisms from $G$ to $A$,
and let $\Aut(A)$ be the automorphism group of $A$. The
following lemma is standard.

\begin{lemma}
\label{lem:epi aut}
Let $G=\pi_1(X,x_0)$, and let $A$ be a group.  The set of
equivalence classes of connected, regular $A$-covers of $X$
is in one-to-one correspondence with
\[
\Gamma(G,A):=\Epi(G,A)/\Aut(A),
\]
the set of equivalence classes $[\nu]$ of epimorphisms
$\nu \colon G \surj A$, modulo the right-action of $\Aut(A)$.
\end{lemma}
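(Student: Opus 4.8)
The plan is to deduce this from the classical Galois correspondence of covering space theory, being careful about the role of the $\Aut(A)$-action.

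First I would recall the standard dictionary: assigning to a pointed connected cover $p\colon(Y,y_0)\to(X,x_0)$ the subgroup $p_{\sharp}\pi_1(Y,y_0)\le G$ gives a bijection between pointed equivalence classes of connected covers of $X$ and subgroups of $G=\pi_1(X,x_0)$; under this bijection, the regular covers correspond to the normal subgroups, and the deck group of the cover attached to $K\triangleleft G$ is canonically $G/K$. Since two connected covers are equivalent as unpointed covers iff the corresponding subgroups are conjugate in $G$, and a normal subgroup forms a singleton conjugacy class, for regular covers pointed and unpointed equivalence agree. So the first reduction is: equivalence classes of connected regular covers of $X$ are in bijection with normal subgroups of $G$, and those whose deck group is isomorphic to $A$ correspond to $\mathcal{N}_A(G):=\{K\triangleleft G: G/K\cong A\}$.

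Next I would build a bijection $\Gamma(G,A)\xrightarrow{\sim}\mathcal{N}_A(G)$ by $[\nu]\mapsto\ker\nu$. Well-definedness is immediate, since every map in the orbit of $\nu$ has the form $\alpha\circ\nu$ with $\alpha\in\Aut(A)$ and $\ker(\alpha\circ\nu)=\ker\nu$; surjectivity holds because for $K\in\mathcal{N}_A(G)$ any choice of isomorphism $G/K\xrightarrow{\sim}A$ composed with $G\surj G/K$ is an epimorphism with kernel $K$; and injectivity holds because two epimorphisms $\nu,\nu'$ with the same kernel $K$ induce isomorphisms $G/K\xrightarrow{\sim}A$ differing by an element of $\Aut(A)$, hence satisfy $\nu'=\alpha\circ\nu$ and so $[\nu]=[\nu']$. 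Composing with the bijection of the first step, and observing that the cover $X^{\nu}$ constructed in the text is precisely the one attached to $\ker\nu$ (this is already recorded above, in the converse discussion), yields the asserted one-to-one correspondence $[\nu]\leftrightarrow[X^{\nu}]$.

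The argument is routine, and the one place that deserves care — the ``main obstacle'', such as it is — is matching the equivalence relation on covers with the $\Aut(A)$-quotient. The point is that a homeomorphism over $X$ between two regular covers conjugates the deck group of one onto that of the other; once both deck groups are identified with $A$ via the respective epimorphisms, this conjugation becomes an element of $\Aut(A)$, and conversely every element of $\Aut(A)$ is realized this way. It is exactly this ambiguity — not insisting that the equivalence respect the chosen labeling of the deck group by $A$ — that is being divided out in $\Gamma(G,A)=\Epi(G,A)/\Aut(A)$, so no information is lost by forgetting that labeling.
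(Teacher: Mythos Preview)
Your argument is correct and complete. The paper itself does not prove this lemma; it simply declares it ``standard'' and moves on, so there is nothing to compare against beyond noting that you have supplied exactly the routine covering-space argument the paper is tacitly invoking.
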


When the group $A$ is finite, the parameter set $\Gamma(G, A)$
is of course also finite.  Efficient counting methods for determining
the size of this set were pioneered by P.~Hall in the 1930s.
New techniques (involving, among other things, characteristic
varieties over finite fields) were introduced in \cite{MS02}.
In the particular case when $A$ is a finite abelian group, a closed
formula for the cardinality of $\Gamma(G, A)$ was given in
\cite{MS02}, see Theorem \ref{thm:ms} below.

\subsection{Functoriality properties}
\label{subsec:funct}
The above construction enjoys some (partial) functoriality
properties in both arguments. First, suppose that
$\varphi\colon G_1 \surj G_2$ is an epimorphism
between two groups. Composition with $\varphi$
gives a map $\Epi(G_2, A)\to \Epi(G_1, A)$, which
in turn induces a well-defined map,
\begin{equation}
\label{eq:fsharp}
\xymatrix{\Gamma(G_2, A)\ar[r]^{\varphi^*}
&  \Gamma(G_1, A)},
\quad [\nu] \mapsto [\nu\circ \varphi].
\end{equation}

Under the correspondence from Lemma \ref{lem:epi aut},
this map can be interpreted as follows.  Let
$f\colon (X_1,x_1) \to (X_2,x_2)$ be a basepoint-preserving
map between connected CW-complexes, and suppose
$f$ induces an epimorphism
$\varphi= f_{\sharp}\colon G_1\surj G_2$ on fundamental groups.
Then the map $\varphi^*$ sends the
equivalence class of the cover $p_{\nu}\colon X_2^{\nu}\to X_2$
to that of the pull-back cover,
$p_{\nu\circ \varphi}=f^*(p_\nu)\colon X_1^{\nu\circ \varphi} \to X_1$.

Next, recall that a subgroup $K<A$ is  {\em characteristic}\/ if
$\alpha(K)=K$, for all $\alpha\in \Aut(A)$.

\begin{lemma}
\label{lem:funct}
Suppose we have an exact sequence of groups,
$1\to K\to A\xrightarrow{\pi} B\to 1$, with $K$ a characteristic
subgroup of $A$.  There is then a well-defined map between the
parameter sets for regular $A$-covers and $B$-covers
of $X$,
\begin{equation}
\label{eq:functab}
\xymatrix{
\Gamma(G, A)  \ar[r]^{\tilde\pi}& \Gamma(G, B)
},
\end{equation}
which sends $[\nu]$ to $[\pi\circ \nu]$.
\end{lemma}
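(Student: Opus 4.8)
The plan is to show that the assignment $[\nu]\mapsto[\pi\circ\nu]$ is well-defined as a map $\Epi(G,A)/\Aut(A)\to\Epi(G,B)/\Aut(B)$, which breaks into two checks: first that $\pi\circ\nu$ is again surjective, and second that the $\Aut(A)$-orbit of $\nu$ is sent into a single $\Aut(B)$-orbit. The surjectivity of $\pi\circ\nu$ is immediate, since $\nu\colon G\surj A$ and $\pi\colon A\surj B$ are both epimorphisms, and a composite of epimorphisms is an epimorphism; this needs no hypothesis on $K$.

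**The crux is the second point**, and this is where the assumption that $K$ is characteristic in $A$ enters. Given $\alpha\in\Aut(A)$, I want to produce $\beta\in\Aut(B)$ with $\pi\circ\alpha=\beta\circ\pi$, so that $[\pi\circ(\nu\circ\alpha)]=[(\pi\circ\alpha)\circ\nu]=[(\beta\circ\pi)\circ\nu]=[\beta\circ(\pi\circ\nu)]=[\pi\circ\nu]$ in $\Gamma(G,B)$. Since $\alpha(K)=K$, the automorphism $\alpha$ descends to a well-defined homomorphism $\bar\alpha\colon A/K\to A/K$; identifying $A/K$ with $B$ via $\pi$, this is the desired $\beta$. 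To see $\beta$ is an automorphism and not merely an endomorphism, apply the same construction to $\alpha^{-1}$ (which also preserves $K$, since $\alpha^{-1}(K)=\alpha^{-1}(\alpha(K))=K$) to get $\overline{\alpha^{-1}}$, and observe that $\overline{\alpha^{-1}}\circ\bar\alpha=\overline{\alpha^{-1}\circ\alpha}=\overline{\id_A}=\id_{A/K}$, and similarly on the other side. So $\beta=\overline\alpha$ is invertible with inverse $\overline{\alpha^{-1}}$.

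**Finally,** assembling these: the set map $\Epi(G,A)\to\Epi(G,B)$, $\nu\mapsto\pi\circ\nu$, is well-defined by the first point, and by the second point it carries each $\Aut(A)$-orbit into a single $\Aut(B)$-orbit, hence factors through the quotients to give the claimed map $\tilde\pi\colon\Gamma(G,A)\to\Gamma(G,B)$. I do not expect any real obstacle here—the only subtlety is remembering that descending an automorphism through a characteristic (as opposed to merely normal) subgroup is exactly what guarantees the induced map on the quotient is again an automorphism, which is precisely why the characteristic hypothesis is imposed. (If $K$ were only normal, $\alpha$ would not in general descend to $A/K$ at all, so the construction of $\beta$ would fail from the start.)
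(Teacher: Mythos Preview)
Your argument is correct and matches the paper's approach exactly: descend $\alpha\in\Aut(A)$ through the characteristic subgroup $K$ to obtain $\bar\alpha\in\Aut(B)$ satisfying $\bar\alpha\circ\pi=\pi\circ\alpha$, then conclude that $\Aut(A)$-orbits map to $\Aut(B)$-orbits. (One small typo: in your chain of equalities you wrote $\nu\circ\alpha$ where you meant $\alpha\circ\nu$, since $\alpha\colon A\to A$ post-composes with $\nu\colon G\to A$; with that fix the argument is complete, and indeed slightly more detailed than the paper's, which does not spell out why $\bar\alpha$ is invertible.)
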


\begin{proof}
Suppose $\nu_1, \nu_2\colon G \surj A$ are two
epimorphisms so that $\alpha\circ \nu_1=\nu_2$, for some
$\alpha \in \Aut(A)$.  Since the subgroup $K=\ker(\pi)$ is
characteristic, the automorphism $\alpha$ induces and
automorphism $\bar\alpha\in \Aut(B)$ such that
$\bar\alpha\circ \pi=\pi\circ\alpha$.  Hence,
$\bar\alpha \circ (\pi \circ \nu_1) =\pi\circ \nu_2$,
showing that $q$ is well-defined.
\end{proof}

The correspondence of Lemmas \ref{lem:epi aut} and
\ref{lem:funct} is summarized in the following diagram:
\begin{equation}
\label{eq:ab}
\xymatrix{
  G \ar[dr]_(.45){\pi\circ \nu} \ar[r]^{\nu}
                & A \ar[d]^(.45){\pi}  \\
                & B
}
\qquad \longleftrightarrow \qquad
 \xymatrix{
  X^{\nu} \ar[r]^{p_\pi} \ar[dr]_(.45){p_{\nu}}
  & X^{\pi\circ \nu} \ar[d]^(.45){p_{\pi\circ \nu}}\\  & X
}
\end{equation}
Under this correspondence, the map $\tilde\pi$ sends the
equivalence class of the cover $p_{\nu}$ to that of the
cover $p_{\pi\circ \nu}$.

\subsection{Regular abelian covers}
\label{subsec:regabelcov}

Let $H=G^{\ab}$ be the abelianization of our group $G$.
Recall we are assuming $G$ is finitely generated; thus,
$H$ is a finitely generated abelian group.

Now suppose $A$ is any other (finitely generated) abelian group.
In this case, every homomorphism $G\to A$ factors through the
abelianization map, $\ab\colon G\surj H$.  Composition
with this map gives a bijection between $\Epi(H, A)$ and
$\Epi(G, A)$, which induces a bijection
\begin{equation}
\label{eq:epiha}
\xymatrix{\Gamma(H, A)\ar[r]^{\cong} &  \Gamma(G, A)},
\quad [\nu] \mapsto [\nu\circ \ab].
\end{equation}

In view of Lemma \ref{lem:epi aut}, we obtain
a bijection between the set of equivalence classes of
connected, regular $A$-covers of a CW-complex $X$
and the set  $\Gamma(H, A)$, where $H=H_1(X,\Z)$
is the abelianization of $G=\pi_1(X,x_0)$.  Note that this
parameter set is empty, unless $A$ is a quotient of $H$.

Let $\T(A)$ be the torsion subgroup, consisting of finite-order
elements in $A$; clearly, this is a characteristic subgroup of $A$.
Let $\oA=A/\T(A)$ be the quotient group, and let
$\pi\colon A \to \oA$ be the canonical projection.

Under the correspondence from \eqref{eq:epiha},
an epimorphism $\nu\colon H\surj A$ determines a regular
cover, $p_{\nu\circ \ab} \colon X^{\nu\circ \ab}\to X$,
which, for economy of notation, we will write as
$p_\nu\colon X^{\nu}\to X$.  There is also a free
abelian cover, $p_{\bar{\nu}}\colon X^{\bar{\nu}}\to X$,
corresponding to the epimorphism
$\bar{\nu}=\pi\circ \nu\colon H\to \oA$.

The projection $\pi\colon A\surj \oA$ defines a map
$\Epi(H, A) \to \Epi(H, \oA)$,
$\nu\mapsto \bar\nu$, which in turn induces maps
between the parameter spaces for $A$ and $\oA$ covers,
\begin{equation}
\label{eq:epimap}
\xymatrix{\Gamma(H, A) \ar[r]^{q_H}& \Gamma(H, \oA)},
\end{equation}
sending the cover $p_{\nu}$ to the cover $p_{\bar\nu}$.
Notice that this map is compatible with the morphism $\tilde\pi$
from \eqref{eq:functab}, induced by an epimorphism $\pi\colon A\surj B$
with characteristic kernel.

\subsection{Splitting the torsion-free part}
\label{subsec:split th}
For a topological group $G$, let $G\to EG\to BG$ be the
universal principal $G$-bundle; the total space $EG$ a contractible
CW-complex endowed with a free $G$-action, while the base space
$BG$ the quotient space under this action.   We will only
consider here the situation when $G$ is discrete, in which case
$BG=K(G,1)$ and $EG=\widetilde{K(G,1)}$.

As before, let $H$ be a finitely generated abelian group.
Let $\T(H)$ be its torsion subgroup, and let $\oH=H/\T(H)$.
Fix a splitting $\oH \to H$; then $H \cong \oH
\oplus \T(H)$.

Now consider an epimorphism $\nu\colon H\surj A$.  After
fixing a splitting $\oA \inj A$, we obtain a decomposition
$A \cong \oA \oplus \T(A)$.  We may view $\oA$ as a
subgroup of $H$ by choosing a splitting $\oA \inj H$ of
the projection $H \surj \oA$.  With these identifications,
$\nu$ induces an epimorphism $\tilde\nu\colon H/\oA\surj A/\oA$.
This observation leads us to consider the set
\begin{equation}
\label{eq:GammaHA}
\Gamma=\Gamma(H/\oA, A/\oA).
\end{equation}

Clearly, the set $\Gamma$ is finite.  Theorem 3.1 from \cite{MS02}
yields an explicit formula for the size of this set.  Given a finite
abelian group $K$, and a prime $p$, write the $p$-torsion part of $K$
as $K_p=\Z_{p^{\lambda_1}}\oplus\cdots\oplus\Z_{p^{\lambda_s}}$,
for some positive integers $\lambda_1\ge \dots \ge \lambda_s$,
where $s=0$ if $p\nmid \abs{K}$. Thus, $K_p$ determines a
partition $\pi(K_p)=(\lambda_1, \dots, \lambda_s)$.
For each such partition $\lambda$, write $l(\lambda)=s$,
$\abs{\lambda}=\sum_{i=1}^{s} \lambda_i$, and
$\langle\lambda\rangle=\sum_{i=1}^{s}(i-1)\lambda_i$.

\begin{theorem}[\cite{MS02}]
\label{thm:ms}
Set $n=\rank H$ and $r=\rank A$.
For each prime $p$ dividing the order of $A/\oA$,
let $\lambda=\pi((A/\oA)_p)$ and $\tau=\pi((\Tors(H/\oA))_p)$
be the corresponding partitions.  Then,
\begin{equation*}
\label{eq:absGamma}
\abs{\Gamma(H/\oA, A/\oA)} =\prod_{p\, \mid\, \abs{A/\oA}} \frac
{\DS{p^{(\abs{\lambda}-l(\lambda)) (n-r) + \theta(\lambda^{-},\tau)}
\prod_{i=1}^{l(\lambda)}
\left( p^{n-r+\theta_i(\lambda,\tau)-\theta_i(\lambda^{-},\tau)}-p^{i-1}\right)}}
{\DS{p^{\abs{\lambda}+2\langle \lambda \rangle}
\prod_{k\ge 1}\varphi_{m_k(\lambda)} (p^{-1})}},
\end{equation*}
where $m_k(\lambda)=\#\{ i\mid \lambda_i=k\}$,
$\varphi_m(t)=\prod_{i=1}^{m} (1-t^i)$,
$\lambda^{-}$ is  the partition with $\lambda^{-}_i=\lambda_i-1$,
$\theta_i(\lambda,\tau)=\sum_{j=1}^{l(\tau)}\min(\lambda_i,\tau_{j})$, and
$\theta(\lambda,\tau)=\sum_{i=1}^{l(\lambda)}\theta_i(\lambda,\tau)$.
\end{theorem}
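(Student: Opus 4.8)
The plan is to pass from the covering-space count to a purely group-theoretic one, localize it at each prime, and then match the resulting enumeration of surjections against the classical order of the automorphism group of a finite abelian $p$-group. Set $K'=H/\oA$ and $K=A/\oA$, so that $\Gamma(H/\oA,A/\oA)=\Epi(K',K)/\Aut(K)$. First I would note that $\Aut(K)$ acts \emph{freely} on $\Epi(K',K)$: if $\alpha\in\Aut(K)$ fixes a surjection $\nu\colon K'\surj K$, then $\alpha$ fixes every element of $K=\nu(K')$, so $\alpha=\id$. Hence $\abs{\Gamma(K',K)}=\abs{\Epi(K',K)}/\abs{\Aut(K)}$. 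Using the primary decomposition $K=\bigoplus_{p}K_p$, both factors break up over the primes dividing $\abs{K}$: a homomorphism $K'\to K$ is onto iff each composite $K'\to K_p$ is onto (the coprime-order components of a subgroup of $K$ are the images of its projections), and $\Aut(K)=\prod_p\Aut(K_p)$. Thus
\[
\abs{\Gamma(K',K)}=\prod_{p\,\mid\,\abs{K}}\frac{\abs{\Epi(K',K_p)}}{\abs{\Aut(K_p)}}.
\]

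It remains to identify each factor with the corresponding term in the statement. Fix $p$, put $\lambda=\pi(K_p)$ and $\tau=\pi((\Tors K')_p)$, and let $G_\lambda$, $P_\tau$ be the abelian $p$-groups of those types. A homomorphism onto the $p$-group $K_p\cong G_\lambda$ kills the prime-to-$p$ torsion of $K'$, hence factors through $L:=K'/\bigoplus_{q\neq p}(\Tors K')_q\cong\Z^{n-r}\oplus P_\tau$, so the $p$-th factor equals $\abs{\Epi(L,G_\lambda)}/\abs{\Aut(G_\lambda)}$. The denominator is the classical formula for the order of the automorphism group of a finite abelian $p$-group, namely $\abs{\Aut(G_\lambda)}=p^{\abs\lambda+2\langle\lambda\rangle}\prod_{k\ge1}\varphi_{m_k(\lambda)}(p^{-1})$, which is exactly the denominator in the statement. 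For the numerator I would count surjections $L\surj G_\lambda$ by constructing one cyclic summand of $G_\lambda$ at a time, from the largest part down: at step $i$ one chooses the image of a generator of the $i$-th summand $\Z/p^{\lambda_i}$, constrained to lie in the correct layer of $G_\lambda$ (so that it has order $p^{\lambda_i}$ and splits off a direct summand with complement of type $(\lambda_1,\dots,\widehat{\lambda_i},\dots,\lambda_{l(\lambda)})$) and to be independent modulo $p$ of the generators already chosen together with the image of $P_\tau$. Writing $d_k=\dim_{\F_p}(p^kL/p^{k+1}L)=(n-r)+\#\{j\mid\tau_j\ge k+1\}$, the count at step $i$ is $p^{\,d_{\lambda_i-1}}-p^{\,i-1}$, while the choices at the lower layers and in $\Hom(P_\tau,G_\lambda)$ contribute the prefactor $p^{\sum_{i}\sum_{k=0}^{\lambda_i-2}d_k}$. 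A routine rearrangement of the defining double sums then gives $d_{\lambda_i-1}=(n-r)+\theta_i(\lambda,\tau)-\theta_i(\lambda^-,\tau)$ (using $\theta_i(\lambda,\tau)-\theta_i(\lambda^-,\tau)=\#\{j\mid\tau_j\ge\lambda_i\}$) and $\sum_i\sum_{k=0}^{\lambda_i-2}d_k=(\abs\lambda-l(\lambda))(n-r)+\theta(\lambda^-,\tau)$, so $\abs{\Epi(L,G_\lambda)}$ becomes precisely the numerator in the statement.

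The delicate point — and the main obstacle — is the enumeration of surjections $L\surj G_\lambda$: one must show that an element of the prescribed layer of $G_\lambda$ genuinely splits off a cyclic direct summand of the right order, and must track how the map already pinned down on $P_\tau$ interacts with these successive splittings; this is where the gap between ``surjective'' and ``surjective modulo $p$'' becomes an issue, since $L$ fails to be free once $\tau\neq\emptyset$. I would organize this as an induction on $l(\lambda)$, peeling off a largest cyclic factor of $G_\lambda$ at each stage and carefully accounting for the image of $P_\tau$ in the successive quotients. Granting that, assembling the $p$-local counts and comparing with the definitions of $\theta_i$, $\langle\lambda\rangle$, $m_k(\lambda)$, and $\varphi_m$ yields the displayed formula.
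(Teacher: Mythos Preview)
The paper does not prove this theorem; it is quoted from \cite{MS02}. So there is no ``paper's own proof'' to compare against, and your sketch must stand on its own.

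Your reductions are all correct: the action of $\Aut(K)$ on $\Epi(K',K)$ is free, the count localizes prime by prime, homomorphisms into a $p$-group factor through $L=\Z^{n-r}\oplus P_\tau$, and the denominator is indeed the classical order of $\Aut(G_\lambda)$. The combinatorial identities you state ($d_{\lambda_i-1}=(n-r)+\theta_i(\lambda,\tau)-\theta_i(\lambda^-,\tau)$ and the prefactor identity) also check out.

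The surjection count, however, is muddled as written: you speak of ``choosing the image of a generator of the $i$-th summand $\Z/p^{\lambda_i}$, constrained to lie in the correct layer of $G_\lambda$,'' which reads as if both source and target were $G_\lambda$. The clean way to run your idea is to record a surjection $\nu\colon L\surj G_\lambda$ as the tuple of projections $\nu_i\in\Hom(L,\Z/p^{\lambda_i})$. Since $G_\lambda$ is a finite $p$-group, $\nu$ is surjective iff the reductions $\bar\nu_i\in\Hom(L,\F_p)$ are linearly independent. The reduction map $\Hom(L,\Z/p^{\lambda_i})\to\Hom(L,\F_p)$ has image the subspace $V_i$ of functionals vanishing on those $\Z/p^{\tau_j}$ with $\tau_j<\lambda_i$, so $\dim V_i=d_{\lambda_i-1}$, and each nonempty fiber has size $\lvert\Hom(L,\Z/p^{\lambda_i-1})\rvert=p^{\sum_{k=0}^{\lambda_i-2}d_k}$. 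The one point your sketch omits, and which makes the count go through, is that $V_1\subseteq V_2\subseteq\cdots\subseteq V_s$ because $\lambda_1\ge\lambda_2\ge\cdots$: hence at step $i$ the already-chosen $\bar\nu_1,\dots,\bar\nu_{i-1}$ all lie inside $V_i$, and the number of admissible $\bar\nu_i$ is exactly $p^{d_{\lambda_i-1}}-p^{i-1}$. Without this nesting the subtraction would be wrong. Your phrase ``together with the image of $P_\tau$'' is a red herring; no such extra independence condition is needed, and the proposed induction on $l(\lambda)$ peeling off cyclic summands of $G_\lambda$ is unnecessary once you argue via the $\nu_i$ and the $V_i$.
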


For instance, $\abs{\Gamma(\Z^n, \Z_{p^{s}})}=
\frac{p^{sn}-p^{(s-1)n}}{p^{s}-p^{s-1}}$,
whereas $\abs{\Gamma(\Z^n, \Z_{p}^{s})}=
\prod_{i=0}^{s-1}\frac{p^{n}-p^{i}}{p^{s}-p^{i}}$.

Each homomorphism $H/\oA\to A/\oA$ defines an action of
$H/\oA$ on $A/\oA$.  This action yields a fiber bundle,
\begin{equation}
\label{eq:assoc}
A/\oA \to E(H/A)\times_{H/\oA}A/\oA \to B(H/\oA)
\end{equation}
associated to the principal bundle $H/\oA \to E(H/\oA) \to B(H/\oA)$.
The set $\Gamma=\Gamma(H/\oA, A/\oA)$
parameterizes all such associated bundles.

\subsection{A pullback diagram}
\label{subsec:square}
We return now to diagram \eqref{eq:epimap}, which relates
the parameter sets for regular $A$ and $\oA$ covers of
a connected CW-complex $X$.
This diagram can be further analyzed by using pullbacks
from the universal principal bundles over the classifying
spaces for the discrete groups $A$ and $\oA$.

Let $A \to EA \to BA$ and $\oA\to E\oA \to B\oA$
be the respective classifying bundles, and let $X\to BA$ and $X\to B\oA$
be classifying maps for the covers $X^{\nu}\to X$ and $X^{\bar\nu}\to X$,
respectively.  Upon identifying  $\T(A)$ with $A/\oA$, we obtain the
following diagram:
\begin{equation}
\label{eq:bigcd}
\xymatrixcolsep{18pt}
\xymatrixrowsep{18pt}
\xymatrix@!0{
& & & &  & & &  \T(A) \ar[lldd] \ar@{=}[ddd] & & & &  \\
& & & &                  & & &                 & & & & \\
& & A \ar[lldd] \ar'[dd][ddd] & & & A  \ar@{=}[lll] \ar[lldd] \ar[ddd]& & & & & &  \\
& & & & & & & \T(A) \ar[lldd] \ar[rrrr]^{\cong} & & & & A/\oA \ar[lldd] \\
\oA \ar[ddd] & & & \oA \ar@{=}[lll] \ar[ddd]& & & & & & & & \\
& & EA \ar[lldd] \ar'[dd][ddd] & & & X^{\nu} \ar'[ll][lll] \ar[lldd]_{p_\pi}
\ar'[dd][ddd]_(.45){p_{\nu}} \ar@{-->}[rrrr]&
& & & E(H/A)\times_{H/\oA}A/\oA \ar[lldd]_{\beta} & & \\
& & & &                  & & \pullbackcorner &           & & & & \\
 E\oA \ar[ddd] & & & X^{\bar\nu}
 \ar[lll] \ar[ddd]_{p_{\bar\nu}} \ar[rrrr]^(.55){\alpha}
 & & & & B(H/\oA) \\
& & BA \ar[lldd] & & & X \ar'[ll][lll] \ar@{=}[lldd]& & & & & &  \\
& & & &                  & & &          & & & & \\
 B\oA & & & X \ar[lll]\ar[rrrr]& & & & BH \ar[uuu]& & & & \\
}
\end{equation}

Here, the map $X\to BH$ realizes the abelianization
morphism, $\ab\colon \pi_1(X,x_0)\to H$, while
$\alpha$ denotes the composite $X^{\bar\nu} \to X
\to BH\to B(H/\oA)$.

\begin{prop}
\label{prop:bha}
The marked square in diagram \eqref{eq:bigcd} is a pullback square.
That is, the cover $p_\pi\colon X^{\nu} \to X^{\bar\nu}$ is the
pullback along the map $\alpha\colon X^{\bar\nu} \to B(H/\oA)$
of the cover
$\beta\colon E(H/A)\times_{H/\oA}A/\oA \to B(H/\oA)$
corresponding to the epimorphism $\tilde\nu\colon H/\oA\surj A/\oA$.
\end{prop}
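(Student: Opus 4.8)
The plan is to reduce the statement to a single comparison of monodromy homomorphisms on $\pi_1(X^{\bar\nu})$. The first point is that both maps in the marked square are regular covers of the \emph{connected} complex $X^{\bar\nu}$ equipped with a canonical deck group isomorphic to $A/\oA$: the pullback $\alpha^{*}\beta$ carries such an action because it is the pullback of the principal $(A/\oA)$-cover $\beta$, while $p_\pi\colon X^\nu\to X^{\bar\nu}$ is nothing but the quotient map $X^\nu\to X^\nu/\T(A)$ (one checks $X^\nu/\T(A)\cong X^{\bar\nu}$ as covers of $X$, since $\ker\pi=\T(A)$ and $\bar\nu=\pi\circ\nu$), so it is a regular cover with deck group $\T(A)$, which the chosen splitting $A\cong\oA\oplus\T(A)$ identifies with $A/\oA$. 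After fixing a basepoint $\bar x_0\in X^{\bar\nu}$ over $x_0$, the covering-space dictionary classifies such covers of $X^{\bar\nu}$, up to $(A/\oA)$-equivariant isomorphism over $X^{\bar\nu}$, by a single monodromy homomorphism $\pi_1(X^{\bar\nu},\bar x_0)\to A/\oA$. Hence the whole proposition comes down to showing that $p_\pi$ and $\alpha^{*}\beta$ have the same monodromy: the resulting equivariant isomorphism $X^\nu\xrightarrow{\ \simeq\ }\alpha^{*}\beta$ over $X^{\bar\nu}$ is exactly the assertion that the square is a pullback (the top arrow being the dashed map of \eqref{eq:bigcd}).

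Next I would compute the two homomorphisms, identifying $\pi_1(X^{\bar\nu},\bar x_0)$ with the subgroup $\ker(\bar\nu\circ\ab)\le G$ via $(p_{\bar\nu})_\sharp$. The cover $p_\pi$ corresponds, inside $X^{\bar\nu}$, to the subgroup $\ker(\nu\circ\ab)\le\ker(\bar\nu\circ\ab)$, so its monodromy is the corestriction of $\nu\circ\ab$ to this subgroup; since $\bar\nu=\pi\circ\nu$ vanishes on $\ker(\bar\nu\circ\ab)$, that corestriction lands in $\ker\pi=\T(A)$, giving $\mu_1\colon g\mapsto\nu(\ab(g))\in\T(A)\cong A/\oA$. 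On the other side, by construction $\beta$ is the regular $(A/\oA)$-cover of $B(H/\oA)$ with monodromy $\tilde\nu\colon H/\oA\surj A/\oA$, so $\alpha^{*}\beta$ has monodromy $\mu_2=\tilde\nu\circ\alpha_\sharp$, where $\alpha_\sharp\colon\ker(\bar\nu\circ\ab)\hookrightarrow G\xrightarrow{\ \ab\ }H\surj H/\oA$ is induced by the composite $X^{\bar\nu}\to X\to BH\to B(H/\oA)$ defining $\alpha$.

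Finally I would match the two formulas. The chosen section of $\bar\nu\colon H\surj\oA$ splits $H\cong\oA\oplus\ker\bar\nu$, and on the summand $\ker\bar\nu$ the induced map $\tilde\nu$ is represented simply by $\nu|_{\ker\bar\nu}$, whose image lies in $\ker\pi=\T(A)$ and is identified with $A/\oA$ by the same splitting of $A$ as above. For $g\in\ker(\bar\nu\circ\ab)$ the element $\ab(g)$ lies in $\ker\bar\nu$, so $\mu_2(g)=\tilde\nu(\alpha_\sharp(g))=\nu(\ab(g))=\mu_1(g)$, and the proof is complete. I expect the hard part to be not conceptual but bookkeeping: the argument is threaded through the several splittings fixed in \S\ref{subsec:split th} (of $\oH\to H$, of $\oA\to A$, and of $\oA\to H$) and the induced identification $\T(A)\cong A/\oA$, and one must verify with care that the map $\tilde\nu$ built into the bundle $\beta$ is the same as the map $\nu$ defining $p_\pi$ under all of these identifications before the covering-space classification can be applied. (A more diagrammatic route—recognizing $\beta$ as pulled back from the universal $(A/\oA)$-bundle over $B(A/\oA)$ and chasing the outer squares of \eqref{eq:bigcd}—is also available, but the monodromy comparison above seems the most transparent.)
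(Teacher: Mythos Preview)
Your proposal is correct and follows essentially the same approach as the paper: both arguments reduce to checking that, under the chosen splittings, the restriction $\nu|_{\ker\bar\nu}$ agrees with $\tilde\nu$. The paper packages this as a subgroup containment $(\alpha\circ p_\pi)_\sharp(\pi_1(X^\nu))\subseteq \beta_\sharp(\pi_1(\cdots))$, then invokes the lifting criterion and checks equivariance of the resulting map, whereas you package it as an equality of monodromy homomorphisms and invoke the classification of regular $(A/\oA)$-covers directly; your version is slightly more streamlined, but the underlying computation is identical.
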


\begin{proof}
Clearly, $(\alpha\circ {p_\pi})_{\sharp} (\pi_1(X^\nu))=
\im ( \ker \nu \inj \ker \bar{\nu})$, while $\beta_{\sharp}
(\pi_1(E(H/A)\times_{H/\oA}A/\oA))=
\ker (\tilde{\nu}\colon H/\oA \surj A/\oA$).
After picking a splitting $\oA \inj H$, and
identifying the group $H$ with $\ker \bar{\nu} \oplus
\oA$, we see that
\[
(\alpha\circ {p_\pi})_{\sharp} (\pi_1(X^\nu))
\subseteq \beta_{\sharp}
(\pi_1(E(H/A)\times_{H/\oA}A/\oA)).
\]
The existence of the dashed arrow in the diagram follows then
from the lifting criterion for covers. It is readily seen that
this arrow is equivariant with respect to the  actions on
source and target by $\T(A)$ and $A/\oA$; thus,
a morphism of covers.  This completes the proof.
\end{proof}

Using this proposition, and the discussion from \S\ref{subsec:split th},
we obtain the following corollary.

\begin{corollary}
\label{cor:gha}
With notation as above,
\[
\xymatrix{
\Gamma(H/\oA, A/\oA) \ar[r] &
\Gamma(H, A) \ar[r]^{q_H} &  \Gamma(H, \oA)
}
\]
is a set fibration; that is, all fibers of $q_H$ are in bijection
with the set  $\Gamma(H/\oA, A/\oA)$.
\end{corollary}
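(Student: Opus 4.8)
The plan is to fix a class $[\bar\nu]$ in $\Gamma(H,\oA)$, choose a representative $\bar\nu\colon H\surj\oA$, and exhibit a bijection between the fiber $q_H^{-1}([\bar\nu])$ and $\Gamma=\Gamma(H/\oA,A/\oA)$. We may assume $\Gamma(H,A)\neq\emptyset$, since otherwise the statement is vacuous; in that case $q_H$ is moreover surjective, because $\Aut(H)$ acts on $\Gamma(H,A)$ and on $\Gamma(H,\oA)$ compatibly with $q_H$ (by functoriality of \eqref{eq:fsharp} in the first variable), and transitively on $\Gamma(H,\oA)$ (the kernel of an epimorphism $H\surj\oA$ is a primitive subgroup of $H$ of corank $r=\rank A$ containing $\T(H)$, and any two such are interchanged by some automorphism of $H$), so all fibers are nonempty. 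As in \S\ref{subsec:split th}, pick splittings $\oA\inj H$ of $\bar\nu$ and $\oA\inj A$; then $H\cong\ker\bar\nu\oplus\oA$ and $A\cong\oA\oplus\T(A)$, and, since $\oA$ is torsion-free, $\ker\bar\nu\supseteq\T(H)$, so we may identify $H/\oA$ with $\ker\bar\nu\cong\Z^{n-r}\oplus\T(H)$ and $A/\oA$ with $\T(A)$, where $n=\rank H$.

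Next I would write down the candidate bijection $\Phi\colon q_H^{-1}([\bar\nu])\to\Gamma$. For $[\nu]$ in the fiber, Proposition~\ref{prop:bha} realizes the intermediate cover $X^{\nu}\to X^{\bar\nu}$ as the pullback, along the fixed map $\alpha\colon X^{\bar\nu}\to B(H/\oA)$, of the associated bundle $\beta$ attached to the induced epimorphism $\tilde\nu\colon H/\oA\surj A/\oA$; and, as recalled in \S\ref{subsec:split th}, these associated bundles over $B(H/\oA)$ are parametrized exactly by $\Gamma$. So I set $\Phi([\nu])=[\tilde\nu]$, where concretely $\tilde\nu$ is the composite $\ker\bar\nu\inj H\xrightarrow{\nu}A\surj\T(A)$. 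Surjectivity of $\Phi$ is immediate: for any epimorphism $\mu\colon\ker\bar\nu\surj\T(A)$, the homomorphism $\nu(h,a)=(a,\mu(h))$ is an epimorphism $H\surj A$ (it is onto because $\mu$ is) with $\pi\circ\nu=\bar\nu$ and $\tilde\nu=\mu$, so $\Phi([\nu])=[\mu]$.

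Everything then reduces to well-definedness and injectivity of $\Phi$, which I would extract from the structure of $\Aut(A)$: since $\T(A)$ is a characteristic subgroup and $\oA$ is torsion-free, each $\alpha\in\Aut(A)$ is block lower-triangular for $A\cong\oA\oplus\T(A)$, i.e.\ $\alpha(a,t)=(\bar\alpha a,\ \delta a+\epsilon t)$ with $\bar\alpha\in\Aut(\oA)$, $\epsilon\in\Aut(\T(A))$ and $\delta\in\Hom(\oA,\T(A))$ arbitrary, and every such triple yields an automorphism. Using the $\bar\alpha$-part one first normalizes a fiber class so that $\pi\circ\nu=\bar\nu$ on the nose, and then, using the $\delta$-part, to the special form $\nu_\mu\colon(h,a)\mapsto(a,\mu(h))$ with $\mu=\tilde\nu$; a short computation then shows that $\nu_{\mu_1}$ and $\nu_{\mu_2}$ are $\Aut(A)$-equivalent precisely when $\mu_2=\epsilon\circ\mu_1$ for some $\epsilon\in\Aut(\T(A))=\Aut(A/\oA)$ --- exactly the relation defining $\Gamma$ --- so that $\Phi$ is a bijection and the corollary follows. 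The main obstacle is precisely this bookkeeping: one has to verify that the two sources of redundancy downstairs, namely the off-diagonal automorphisms $\Hom(\oA,\T(A))$ and the free choice of the $\oA$-component of $\nu$, cancel against one another, that normalizing $\pi\circ\nu$ to $\bar\nu$ leaves the $\Aut(\T(A))$-action on the $\tilde\nu$'s undisturbed, and hence that $\Phi$ is simultaneously well defined and injective. The resulting bijection depends on the chosen splittings, but that is harmless --- a bijection of each fiber with $\Gamma$ is all that the assertion ``$q_H$ is a set fibration'' requires.
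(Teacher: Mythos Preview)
Your proof is correct and follows essentially the same route as the paper: both identify the fiber of $q_H$ via the assignment $[\nu]\mapsto[\tilde\nu]$, which is exactly what Proposition~\ref{prop:bha} together with the discussion in \S\ref{subsec:split th} suggests. The paper's argument is a one-liner that leaves the verification of bijectivity implicit; your explicit analysis of the block-lower-triangular structure of $\Aut(A)\cong\Aut(\oA)\ltimes\big(\Hom(\oA,\T(A))\rtimes\Aut(\T(A))\big)$ fills in precisely those details, and in fact anticipates the matrix bookkeeping used later in the proof of Theorem~\ref{theorem:new bij}.
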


Let us identify topologically the fiber of $q_H$.
A regular, $\oA$-cover of our space $X$
corresponds to an epimorphism $\bar{\nu}\colon H \surj \oA$.
A regular, $\T(A)$-cover of $X^{\bar{\nu}}$
corresponds to an epimorphism $\ker \bar{\nu} \surj \T(A)$. Given
these data, and the chosen splitting $\oA\inj A$,
we can find an epimorphism $\nu\colon H \surj A$,
such that the following diagram commutes:
\begin{equation}
\label{eq:cd tors}
\xymatrix{ \ker \bar{\nu}\: \ar@{^{(}->}[r] \ar@{->>}[d]
& H \ar@{->>}[r]^{\bar{\nu}} \ar@{-->}[d]^{\nu}
& \oA \ar@{=}[d]\\
 \T(A)\: \ar@{^{(}->}[r] & A \ar@{->>}[r]^{\pi} & \oA
}
\end{equation}

Thus, any regular, $\T(A)$-cover $X^{\nu}\to X^{\bar\nu}$ defines
a regular $A$-cover $X^{\nu} \to X$, whose corresponding free
abelian cover is $X^{\bar{\nu}}$.  Consequently, the fiber of
$[\bar{\nu}]$ under the map
$q_H\colon \Gamma(H, A)\to \Gamma(H, \oA)$
coincides with the set
\begin{equation}
\label{eq:fiber q}
\big\{\, [\nu] \in \Gamma(H, A)\mid \text{ $X^{\nu}$  is a regular
$\T(A)$-cover of $X^{\bar{\nu}}$} \big\}.
\end{equation}

\section{Reinterpreting the parameter set for $A$-covers}
\label{sect:reinterpret}
In this section, we give a geometric description of
the parameter set for regular abelian covers of a
space.

\subsection{Splittings}
\label{subsec:split}
As before, let $H$ and $A$ be finitely generated abelian groups,
and assume there is an epimorphism $H\surj A$.

\begin{lemma}
\label{lem:split}
The action $\Aut(A)$ on the set of all splittings $A/\T(A) \inj A$
induced by the natural action of $\Aut(A)$ on $A$ is transitive.
\end{lemma}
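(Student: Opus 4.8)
The plan is to reduce the statement to an elementary fact about homomorphisms landing in the torsion subgroup. Write $T=\T(A)$ and $\oA=A/\T(A)$, and let $\pi\colon A\surj\oA$ be the canonical projection. A splitting is a homomorphism $s\colon\oA\to A$ with $\pi\circ s=\id_{\oA}$. Given two splittings $s_1,s_2$, the first step is to observe that their difference $\delta:=s_1-s_2$ is a well-defined homomorphism $\oA\to A$ (using that $A$ is abelian), and that it factors through $T$: indeed $\pi(s_1(x)-s_2(x))=x-x=0$ for all $x\in\oA$, so $\delta(\oA)\subseteq\ker\pi=T$.

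The second step is to manufacture the required automorphism from $\delta$. Define $\alpha\colon A\to A$ by $\alpha(a)=a+\delta(\pi(a))$. This is a homomorphism, being the sum of $\id_A$ and $\delta\circ\pi$. To see it is bijective, I would exhibit its inverse explicitly as $a\mapsto a-\delta(\pi(a))$; the verification uses only that $\delta$ lands in $\ker\pi$, so that $\pi\circ\delta\circ\pi=0$ and hence $\pi(a-\delta(\pi(a)))=\pi(a)$. Thus $\alpha\in\Aut(A)$. Moreover $\alpha$ induces the identity on $\oA$ (again because $\pi\circ\delta=0$), so it acts on the set of splittings simply by post-composition, regardless of how one normalizes the $\Aut(A)$-action.

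The last step is to check that $\alpha$ carries $s_2$ to $s_1$: for $x\in\oA$,
\[
\alpha(s_2(x))=s_2(x)+\delta(\pi(s_2(x)))=s_2(x)+\delta(x)=s_1(x),
\]
using $\pi\circ s_2=\id_{\oA}$ and the definition of $\delta$. Since $s_1$ and $s_2$ were arbitrary splittings, this establishes transitivity.

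I do not expect a serious obstacle: once the setup is in place the construction is a single line. The only point warranting a moment's care is confirming that $\alpha$ is genuinely invertible—hence my preference for writing down the inverse by hand rather than invoking a rank count or finiteness of $T$—and, should the intended $\Aut(A)$-action on splittings be the twisted one $\alpha\cdot s=\alpha\circ s\circ\bar\alpha^{-1}$, noting that $\bar\alpha=\id_{\oA}$ makes this reduce to ordinary composition, so the same $\alpha$ works.
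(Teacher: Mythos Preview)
Your proof is correct and follows essentially the same approach as the paper: the paper fixes one splitting to decompose $A\cong\oA\oplus\T(A)$ and writes the automorphism as the matrix $\left(\begin{smallmatrix}\id&0\\\sigma_2&\id\end{smallmatrix}\right)$, which is exactly your map $a\mapsto a+\delta(\pi(a))$ expressed in those coordinates. Your coordinate-free presentation and explicit inverse are a nice touch, but the underlying construction is identical.
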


\begin{proof}
Set $\oA=A/\T(A)$, and fix a splitting $s\colon \oA  \inj A$.
Using this splitting, we may decompose the group $A$ as
$\oA \oplus \T(A)$, and view $s\colon \oA  \inj
\oA \oplus \T(A)$ as the map $a \mapsto (a,0)$.

An arbitrary splitting $\sigma\colon  \oA  \inj \oA  \oplus \T(A)$
is given by $a \mapsto (a, \sigma_2(a))$, for some homomorphism
$\sigma_2\colon \oA  \to \T(A)$.   Consider the automorphism
of $\alpha\in \Aut(\oA  \oplus \T(A))$ given by the matrix
$\left(\begin{smallmatrix}
\id & 0 \\
\sigma_2 & \id
\end{smallmatrix}\right)$.
Clearly, $\alpha\circ s=\sigma$, and we are done.
\end{proof}

Denote by $n$ the rank of $H$, and by $r$ the rank of
$A$.  Fixing splittings $\oH\inj H$ and  $\oA\inj A$,
we have $H=\oH\oplus \T(H)$, with
$\oH=\Z^n$, and $A=\oA\oplus \T(A)$, with
$\oA=\Z^r$.

Now identify the automorphism group $\Aut(\oH)$ with
the general linear group $\GL_n(\Z)$.  Let $\Par$ be the parabolic
subgroup of $\GL_n(\Z)$, consisting all matrices of the form
$\left(\begin{smallmatrix}
         *_1 & *_2 \\
         0 & *_3
\end{smallmatrix}\right)$,
where $*_1$ is of size $(n-r) \times (n-r)$. Then $\GL_n(\Z)/\Par$
is isomorphic to the Grassmannian $\Grass_{n-r}(\Z^n)$.
It is readily checked that the left action of $\Par$ on $\Z^{n-r}$,
given by multiplication of $\{*_1\} \cong \GL_{n-r}(\Z)$ on
$\Z^{n-r}$, induces an action of $\Par$ on the set
$\Gamma=\Gamma(H/\oA,A/\oA)$.
Also note that, if $A$ is torsion-free, then the set
$\Gamma$ is a singleton.

\subsection{A fibered product}
\label{subsec:bijection}
We are now ready to state and prove the main result
of this section.
\begin{theorem}
\label{theorem:new bij}
There is a bijection
\[
\Gamma(H,A) \longleftrightarrow \GL_{n}(\Z) \times_{\Par} \Gamma
\]
between the parameter set $\Gamma(H,A)=\Epi(H,A)/\Aut(A)$
and the twisted product of $\GL_{n}(\Z)$ with the set
$\Gamma=\Gamma(H/\oA,A/\oA)$ over the parabolic subgroup $\Par$.
Under this bijection, the map
$q \colon \Gamma(H, A) \to \Gamma(H, \oA)$ induced
by the projection $\pi\colon A\to \oA$ corresponds to the
canonical projection
\[
\GL_{n}(\Z) \times_{\Par} \Gamma \to \GL_{n}(\Z)/\Par =
\Grass_{n-r}(\Z^n).
\]
\end{theorem}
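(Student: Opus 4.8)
The plan is to build the bijection explicitly using the splittings fixed in \S\ref{subsec:split}, then show it intertwines the two projections. Write $H = \oH \oplus \T(H)$ with $\oH = \Z^n$ and $A = \oA \oplus \T(A)$ with $\oA = \Z^r$, and choose a splitting $\oA \inj H$ of the projection $H \surj \oA$. Given an epimorphism $\nu \colon H \surj A$, I would first record the data it determines: the composite $\bar\nu = \pi \circ \nu \colon H \surj \oA$, which via the identification $\Grass_{n-r}(\Z^n) \cong \GL_n(\Z)/\Par$ picks out a coset $g\Par$ (the point of the Grassmannian being $\ker(\bar\nu) \cap \oH$, a rank-$(n-r)$ primitive sublattice, using the linear-algebra fact cited in the introduction), together with the induced epimorphism $\tilde\nu \colon H/\oA \surj A/\oA$, i.e.\ an element of $\Gamma$. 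The assignment $[\nu] \mapsto [g,\tilde\nu]$ is the candidate map to $\GL_n(\Z)\times_\Par \Gamma$.

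\textbf{The main steps.} First I would check the map is well defined: if $\nu_2 = \alpha \circ \nu_1$ for some $\alpha \in \Aut(A)$, then by Lemma~\ref{lem:split} we may adjust $\alpha$ so it preserves the chosen splitting $\oA \inj A$, hence $\alpha$ descends to $\oA$ and to $A/\oA$, and the resulting ambiguity in $(g, \tilde\nu)$ is exactly absorbed by the $\Par$-action defining the twisted product — this is where the compatibility between the $\Aut(A)$-action and the parabolic action must be matched carefully. Second, I would construct an inverse: given $g\Par \in \GL_n(\Z)/\Par$, use $g$ to produce a rank-$(n-r)$ primitive sublattice $L \le \oH$ and hence a projection $\oH \surj \oH/L \cong \Z^r$; combined with a chosen $\tilde\nu \in \Gamma$ and the splittings, reassemble an epimorphism $\nu \colon H \surj A$. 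Third, verify the two constructions are mutually inverse on equivalence classes — this amounts to Corollary~\ref{cor:gha} (the fibers of $q_H$ are in bijection with $\Gamma$) together with the identification of the base $\Gamma(H,\oA)$ with $\Grass_{n-r}(\Z^n)$, which in turn follows from the classical fact that $\Epi(\Z^n,\Z^r)/\GL_r(\Z)$ is the Grassmannian of corank-$r$ sublattices. Finally, the statement about $q$ corresponding to the canonical projection $\GL_n(\Z)\times_\Par \Gamma \to \GL_n(\Z)/\Par$ is immediate from the construction, since $q$ by definition forgets $\tilde\nu$ and retains only $\bar\nu$, i.e.\ the coset $g\Par$.

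\textbf{The main obstacle} I anticipate is the bookkeeping in the well-definedness step: one must show precisely that the residual freedom after normalizing $\alpha$ to fix the splitting $\oA \inj A$ — which involves an automorphism of $\oA \cong \Z^r$, a compatible automorphism of $\T(A)$, and a ``unipotent'' part of the form $\left(\begin{smallmatrix}\id & 0\\ \sigma_2 & \id\end{smallmatrix}\right)$ as in the proof of Lemma~\ref{lem:split} — corresponds under the correspondence exactly to the left $\Par$-action on $\GL_n(\Z)$ (through the block $\{*_1\}\cong \GL_{n-r}(\Z)$ and the off-diagonal block $\{*_2\}$) and the induced $\Par$-action on $\Gamma$ described in \S\ref{subsec:split}. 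Once the dictionary between these two group actions is pinned down, the rest is a diagram chase; I would organize it so that the fibered-product structure of $\GL_n(\Z)\times_\Par \Gamma$ is literally read off from Corollary~\ref{cor:gha} and Proposition~\ref{prop:bha}, rather than reproved from scratch.
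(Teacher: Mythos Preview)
Your proposal is correct and shares the same underlying idea as the paper's proof---use the fixed splittings to pass between an epimorphism $\nu$ and a pair (basis matrix, finite-cover datum), and match the $\Aut(A)$-ambiguity against the $\Par$-action---but you run the construction in the opposite direction. The paper defines
\[
\theta\colon \GL_n(\Z)\times\Gamma \longrightarrow \Gamma(H,A)
\]
explicitly: given $(M,[\gamma])$, the columns of $M$ give a decomposition $H=\Z^{n-r}\oplus\Z^r\oplus\T(H)$, and $\nu$ is written down as a concrete $2\times 3$ block matrix in these coordinates; a one-line matrix computation then shows $\theta$ factors through the twisted product, and bijectivity is declared as immediate. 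Your route, starting from $[\nu]$ and extracting $(g\Par,\tilde\nu)$, forces you to make noncanonical choices (a lift $g$, a splitting $\oA\inj H$ depending on $\bar\nu$) and then argue these are absorbed by the $\Par$-action---exactly the bookkeeping you flag as the main obstacle. Both approaches work, but the paper's direction is the more economical one precisely because all the data is explicit on the $\GL_n(\Z)\times\Gamma$ side and the ambiguity enters only through the quotient. One small correction: the $\Par$-action on $\GL_n(\Z)$ is on the right (since the Grassmannian is $\GL_n(\Z)/\Par$), not on the left as you write.
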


\begin{proof}
Define a map
$\theta\colon \GL_{n}(\Z) \times \Gamma \to \Gamma(H, A)$ as follows.
Given an element $(M, [\gamma])$ of $\GL_{n}(\Z) \times \Gamma$,
let $(\gamma_1, \gamma_2)$
be a representative of $[\gamma]$, with
$\gamma_1\colon \Z^{n-r} \surj \T(A)$ and
$\gamma_2\colon \T(H) \surj \T(A)$. Let $\alpha_1, \dots, \alpha_n$ be the column vectors
of the matrix $M$, which forms a basis of
$\oH \cong \Z^n$, we can write $H=\Z^{n-r} \oplus \Z^r
\oplus \T(H)$, where $\Z^{n-r}$ is the subspace of $\oH$ generated
by the first $n-r$ column vectors of $M$.
Now define  $\theta(M, [\gamma]) = [\nu]$, where
$\nu \colon \Z^{n-r} \oplus \Z^r \oplus \T(H) \to \Z^r \oplus \T(A)$
is the homomorphism given by the matrix $N=\left(\begin{smallmatrix}
  0 & \id & 0 \\
  \gamma_1 & 0 & \gamma_2
  \end{smallmatrix}\right)$.
It is straightforward to check that the map $\theta$ is well-defined,
i.e., $\theta$ is independent of the splitting and representative we chose.

Now let's check that the map $\theta$ factors through $\GL_{n}(\Z)
\times_{\Par} \Gamma$.  Suppose we have two elements
$(M, [\gamma_1, \gamma_2])$ and
$(M',[\gamma'_1, \gamma'_2])$ of $\GL_{n}(\Z) \times \Gamma$
which are equivalent, that is, there is a matrix
$Q=\left( \begin{smallmatrix} Q_1 & Q_2 \\
0 & Q_3  \end{smallmatrix}\right)\in \Par$ such that
$M=M' Q$ and $(\gamma_1 Q_1^{-1}, \gamma_2)=(\gamma_1', \gamma_2')$.
By definition, the map $\theta$ takes the pair $(M, [\gamma_1, \gamma_2])$
to the homomorphism $\nu$ given by the matrix $N$  above.
Changing the basis of $\oH$ to the basis given by the column
vectors of $M'$, the map $\nu$ is given by the matrix
\[
NQ^{-1} =
\begin{pmatrix}
0 & Q_3^{-1} & 0 \\
\gamma_1 Q_1^{-1} & \gamma_1Q_4 & \gamma_2
\end{pmatrix}
= \begin{pmatrix}
              Q_3^{-1} & 0 \\
              \gamma_1 Q_4 & \id
              \end{pmatrix}
           \begin{pmatrix}
                    0 & \id & 0 \\
                    \gamma_1' & 0 & \gamma_2'
 \end{pmatrix}.
\]
Clearly, the matrix $\left(\begin{smallmatrix}
             Q_3^{-1} & 0 \\
              \gamma_1 Q_4 & \id \end{smallmatrix}\right)$
defines an automorphism of $A$.  Thus, the map $\theta$ factors
through a well-defined map,
$\theta\colon \GL_{n}(\Z) \times_{\Par} \Gamma \to \Gamma(H, A)$,
which is readily seen to be a bijection.  It is now a straightforward
matter to verify the last assertion.
\end{proof}

\subsection{Further discussion}
\label{subsec:further}
A particular case of the above theorem is worth singling out.
Recall $\oH=\Z^n$ and $\oA=\Z^r$.

\begin{corollary}
\label{cor:pgrass}
Suppose $\T(H)=\Z_p^s$ and $\T(A)=\Z_p^t$, for some prime $p$.
Then, the parameter set $\Gamma(H, A)$ is in bijective correspondence
with $\GL_{n}(\Z) \times_{\Par} \Grass_t(\Z_p^{n-r+s})$.
\end{corollary}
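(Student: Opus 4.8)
The plan is to derive Corollary~\ref{cor:pgrass} directly from Theorem~\ref{theorem:new bij} by identifying the finite set $\Gamma=\Gamma(H/\oA,A/\oA)$ in the special case $\T(H)=\Z_p^s$, $\T(A)=\Z_p^t$, and then checking that the $\Par$-action on $\Gamma$ matches the $\Par$-action on $\Grass_t(\Z_p^{n-r+s})$ coming from its description as $\GL_{n-r}(\Z)$ acting through the reduction-mod-$p$ map on the first $\Z^{n-r}$ factor.

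First I would unwind $\Gamma(H/\oA,A/\oA)$. By construction $H/\oA\cong \Z^{n-r}\oplus\T(H)=\Z^{n-r}\oplus\Z_p^s$, while $A/\oA=\T(A)=\Z_p^t$. Since every homomorphism out of $\Z^{n-r}\oplus\Z_p^s$ to the $p$-group $\Z_p^t$ factors through reduction mod $p$ on the free part, we have $\Hom(H/\oA,\Z_p^t)=\Hom(\Z_p^{n-r+s},\Z_p^t)=\Mat_{t\times(n-r+s)}(\Z_p)$, and the surjective ones correspond to full-rank matrices. Modding out by $\Aut(\Z_p^t)=\GL_t(\Z_p)$ acting on the left, a standard linear-algebra fact (rank-nullity over the field $\F_p$, cf. the row-reduction argument) gives a bijection $\Gamma(H/\oA,A/\oA)\longleftrightarrow \Grass_{n-r+s-t}(\Z_p^{n-r+s})\cong\Grass_t(\Z_p^{n-r+s})$, where the last identification sends a subspace to its annihilator (or one simply records the kernel of the epimorphism, a subspace of dimension $n-r+s-t$, equivalently a quotient of dimension $t$). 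I would spell out this bijection explicitly at the level of the representative $(\gamma_1,\gamma_2)$ used in the proof of Theorem~\ref{theorem:new bij}, sending it to the $\F_p$-subspace $\ker(\bar\gamma_1)\subseteq\F_p^{n-r}$ suitably augmented by the $\Z_p^s$ block, so that the next step is transparent.

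Next I would check equivariance. Recall from \S\ref{subsec:split} that $\Par$ acts on $\Gamma$ through the quotient $\Par\surj\GL_{n-r}(\Z)$ (the $*_1$-block) acting on $\Z^{n-r}$, hence on $\T(A)$-valued homomorphisms by precomposition. Under reduction mod $p$, this is exactly the action of $\GL_{n-r}(\Z)\surj\GL_{n-r}(\F_p)$ on $\F_p^{n-r}\subseteq\F_p^{n-r+s}$, which is precisely the action used to present $\Grass_t(\Z_p^{n-r+s})$ compatibly with the fibration $\GL_n(\Z)\times_\Par\Grass_t(\Z_p^{n-r+s})\to\GL_n(\Z)/\Par$. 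Since the bijection $\Gamma\cong\Grass_t(\Z_p^{n-r+s})$ intertwines these two actions, Theorem~\ref{theorem:new bij} gives $\Gamma(H,A)\cong\GL_n(\Z)\times_\Par\Gamma\cong\GL_n(\Z)\times_\Par\Grass_t(\Z_p^{n-r+s})$, as claimed.

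The main obstacle, I expect, is purely bookkeeping: making sure the dimension indices line up (the epimorphism $\Z_p^{n-r+s}\surj\Z_p^t$ has kernel of rank $n-r+s-t$, and one must decide once and for all whether to record the kernel or its annihilator, and then confirm the resulting Grassmannian is the one in the statement, using $\Grass_k(V)\cong\Grass_{\dim V-k}(V)$ via duality over $\F_p$) and that the $\Par$-action really does factor through the $*_1$-block with the $*_3$-block acting trivially after passing to the quotient $\Grass$. Neither is deep, but both need to be stated carefully; everything substantive has already been done in Theorem~\ref{theorem:new bij} and Lemma~\ref{lem:split}.
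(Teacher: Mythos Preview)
Your proposal is correct and follows essentially the same route as the paper's proof: factor epimorphisms $\Z^{n-r}\oplus\Z_p^s\surj\Z_p^t$ through reduction mod $p$ to identify $\Gamma$ with $\Epi(\Z_p^{n-r+s},\Z_p^t)/\GL_t(\Z_p)\cong\Grass_t(\Z_p^{n-r+s})$, then invoke Theorem~\ref{theorem:new bij}. You are in fact more careful than the paper, which does not explicitly address the $\Par$-equivariance of this identification; your treatment of that point is correct and a welcome addition.
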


\begin{proof}
In this case, the set $\Gamma=\Gamma(H/\oA,A/\oA)$
is in bijection with
$\Epi(\Z_p^{n-r} \oplus \Z_p^s, \Z_p^t) /\Aut(\Z_p^t)$.
This bijection is established using the diagram
\begin{equation}
\xymatrix{
\Z^{n-r} \oplus \Z_p^s \ar@{->>}[d] \ar@{->>}[r] & \Z_p^t \\
             \Z_p^{n-r} \oplus \Z_p^s \ar@{->>}[ur] }
\end{equation}
Therefore, $\Gamma= \Grass_t(\Z_p^{n-r+s})$, and we are done.
\end{proof}

\begin{remark}
\label{rem:grass}
Consider the projection map $q\colon \GL_{n}(\Z)
\times_{\Par} \Gamma \to \GL_{n}(\Z)/\Par = \Grass_{n-r}(\Z^n)$
from Theorem \ref{theorem:new bij}. It is readily seen that, for each
subspace $Q \in \Grass_{n-r}(\Z^n)$, the cardinality of the fiber
$q^{-1}(Q)$ is the same as the the cardinality of the set
$\Gamma$. In particular, $q^{-1}(Q)$ is finite,
for all $Q \in \Grass_{n-r}(\Z^n)$.
\end{remark}

\section{Dwyer--Fried sets and their generalizations}
\label{sect:df}

In this section, we define a sequence of subsets $\Omega^i_A(X)$
of the parameter set for regular $A$-covers of $X$. These sets, which
generalize the Dwyer--Fried sets $\Omega^i_r(X)$, keep
track of the homological finiteness properties of those covers.

\subsection{Generalized Dwyer--Fried sets}
\label{subsec:abel covers}
Throughout this section, $X$ will be a connected CW-complex
with finite $1$-skeleton, and $G=\pi_1(X,x_0)$ will denote its
fundamental group.

\begin{definition}
\label{def:gral df}
For each group $A$ and integer $i\ge 0$, the corresponding
{\em Dwyer--Fried set}\/ of $X$ is defined as
\[
\Omega^{i}_{A}(X)= \big\{\, [\nu] \in \Gamma(G, A) \mid
b_{j} (X^{\nu})< \infty, \text{ for all $0 \leq j \leq i$ } \big\}.
\]
\end{definition}

In other words, the sets $\Omega^{i}_{A}(X)$ parameterize those
regular $A$-covers of $X$ having finite Betti numbers up to
degree ~$i$.  In the particular case when $A$ is a free abelian
group of rank $r$, we recover the standard Dwyer--Fried sets,
$\Omega^i_r(X)=\Omega^i_{\Z^r}(X)$, viewed as subsets
of the Grassmannian $\Grass_r(\Q^n)$, where $n=b_1(X)$.

By our assumption on the $1$-skeleton on $X$, the group
$G=\pi_1(X,x_0)$ is finitely generated.  Thus, we may assume
$A$ is also finitely generated, for otherwise $\Epi(G, A)=\emptyset$,
and so $\Omega^{i}_{A}(X)=\emptyset$, too.

The $\Omega$-sets are invariant under homotopy
equivalence. More precisely, we have the following lemma,
which generalizes the analogous lemma for free abelian
covers, proved in \cite{Su14}.

\begin{lemma}
\label{lem:homotopy equiv}
Let $f\colon X\to Y$ be a (cellular) homotopy equivalence.
For any group $A$, the homomorphism
$f_{\sharp}\colon \pi_1(X,x_0) \to \pi_1(Y,y_0)$
induces a bijection
$f_{\sharp}^* \colon  \Gamma(\pi_1(Y,y_0), A)\to
\Gamma(\pi_1(X,x_0), A)$,
sending each subset $\Omega^i_A(Y)$ bijectively
onto $\Omega^i_A(X)$.
\end{lemma}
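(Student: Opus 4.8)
The plan is to reduce everything to standard facts about covering spaces and the already-established functoriality of $\Gamma(-,A)$ from Section~\ref{sect:structure thm}. First I would show that $f_\sharp^*$ is a bijection on parameter sets. Since $f$ is a homotopy equivalence, $f_\sharp\colon\pi_1(X,x_0)\to\pi_1(Y,y_0)$ is an isomorphism; composing epimorphisms onto $A$ with $f_\sharp$ (and, conversely, with $f_\sharp^{-1}$) gives mutually inverse maps $\Epi(\pi_1(Y,y_0),A)\leftrightarrow\Epi(\pi_1(X,x_0),A)$, and these are plainly $\Aut(A)$-equivariant, hence descend to a bijection $f_\sharp^*\colon\Gamma(\pi_1(Y,y_0),A)\to\Gamma(\pi_1(X,x_0),A)$. (This is a special case of the functoriality map \eqref{eq:fsharp}, now with $\varphi=f_\sharp$ an isomorphism.)

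Next I would identify the covers matched by this bijection. Given $[\nu]\in\Gamma(\pi_1(Y,y_0),A)$, the pull-back description following \eqref{eq:fsharp} says that $f_\sharp^*[\nu]=[\nu\circ f_\sharp]$ corresponds to the pulled-back cover $f^*(p_\nu)\colon X^{\nu\circ f_\sharp}\to X$. The key point is then that there is a map $\widetilde f\colon X^{\nu\circ f_\sharp}\to Y^\nu$ covering $f$, and that $\widetilde f$ is itself a homotopy equivalence. This follows from the standard fact that a homotopy equivalence of base spaces lifts to a homotopy equivalence of covers corresponding to matching subgroups (one way to see it: $f$ and a homotopy inverse $g$ lift to maps between the covers, and the lifts of the homotopies $g\circ f\simeq\id$, $f\circ g\simeq\id$, starting at appropriately chosen basepoints, provide homotopies between the composites of the lifts and the identity maps, using uniqueness of lifts of homotopies). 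Since $\widetilde f$ is an $A$-equivariant homotopy equivalence $X^{\nu\circ f_\sharp}\xrightarrow{\ \simeq\ }Y^\nu$, the two covers have the same Betti numbers in every degree: $b_j(X^{\nu\circ f_\sharp})=b_j(Y^\nu)$ for all $j$.

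Finally I would assemble the conclusion. By Definition~\ref{def:gral df}, $[\nu]\in\Omega^i_A(Y)$ iff $b_j(Y^\nu)<\infty$ for all $0\le j\le i$, which by the previous step holds iff $b_j(X^{\nu\circ f_\sharp})<\infty$ for all such $j$, i.e.\ iff $f_\sharp^*[\nu]=[\nu\circ f_\sharp]\in\Omega^i_A(X)$. Hence the bijection $f_\sharp^*$ restricts to a bijection $\Omega^i_A(Y)\to\Omega^i_A(X)$, as claimed. The one step that needs genuine care — the only place where anything beyond bookkeeping happens — is the assertion that the lifted map $\widetilde f$ between covers is again a homotopy equivalence; I would either cite this as standard covering-space theory or, if a self-contained argument is wanted, spell out the basepoint-tracking in the lifting-of-homotopies argument sketched above. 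Everything else is a routine unwinding of definitions and of the functoriality already set up in \eqref{eq:fsharp}.
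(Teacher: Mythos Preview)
Your proposal is correct and follows essentially the same route as the paper: use that $f_\sharp$ is an isomorphism to get a bijection on parameter sets, lift $f$ to a map $\widetilde f\colon X^{\nu\circ f_\sharp}\to Y^\nu$ between the corresponding covers, observe that $\widetilde f$ is again a homotopy equivalence, and conclude equality of Betti numbers. The paper simply asserts the lifted map is ``clearly'' a homotopy equivalence, whereas you supply the standard lifting-of-homotopies justification; otherwise the arguments are identical.
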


\begin{proof}
Since $f$ is a homotopy equivalence, the induced homomorphism
on fundamental groups, $f_{\sharp}$, is a bijection.
Thus, the corresponding map between parameter sets,
$f_{\sharp}^*$, is a bijection.  To finish the proof, it remains
to verify that $f_{\sharp}^*(\Omega^i_A(Y))= \Omega^i_A(X)$.

Let $\nu\colon \pi_1(Y,y_0) \surj A$ be an epimorphism.
Composing with $f_{\sharp}$, we get an epimorphism
$\nu \circ f_{\sharp}\colon \pi_1(X,x_0) \surj A$.
By the lifting criterion, $f$ lifts to a map $\bar{f}$ between
the respective $A$-covers. This map fits into the following
pullback diagram:
\begin{equation}
\label{eq:lift}
\xymatrix{
X^{\nu\circ f_{\sharp}} \ar^{\bar{f}}[r] \ar[d]& Y^{\nu} \ar[d] \\
X \ar^{f}[r] & Y .}
\end{equation}

Clearly, $\bar{f}\colon X^{\nu\circ f_{\sharp}} \to Y^{\nu}$
is also a homotopy equivalence.
Thus, $b_j(Y^{\nu})< \infty$ if and only if $b_j(X^{\nu\circ f_{\sharp}})< \infty$,
which means that $f_{\sharp}^*(\Omega^i_A(Y))= \Omega^i_A(X)$.
\end{proof}

Based on this lemma, we may define the $\Omega$-sets
of a (discrete, finitely generated) group $G$ as
$\Omega^i_A(G):=\Omega^i_A(BG)$, where $BG=K(G,1)$
is a classifying space for $G$.

\subsection{Naturality properties}
\label{subsec:natural}

The Dwyer--Fried sets (or their complements) enjoy certain
naturality properties in both variables, which we now describe.

\begin{prop}
\label{prop:surj omega}
Let $\varphi\colon G \surj Q$ be an epimorphism of groups. Then,
for each group $A$, there is an inclusion
$\Omega^1_A(Q)^c \inj \Omega^1_A(G)^c$.
\end{prop}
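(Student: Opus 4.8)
The plan is to reduce the statement to a comparison of the covers themselves. Given an epimorphism $\varphi\colon G \surj Q$, composition induces the map $\varphi^*\colon \Gamma(Q,A)\to\Gamma(G,A)$ from \eqref{eq:fsharp}, sending $[\nu]$ to $[\nu\circ\varphi]$. The first thing I would record is that $\varphi^*$ is injective: if $\nu_1\circ\varphi$ and $\nu_2\circ\varphi$ differ by an automorphism $\alpha\in\Aut(A)$, then $\alpha\circ\nu_1$ and $\nu_2$ agree on $\im(\varphi)=Q$, hence $[\nu_1]=[\nu_2]$ in $\Gamma(Q,A)$. So it suffices to show that whenever $[\nu]\in\Gamma(Q,A)$ lies in $\Omega^1_A(Q)^c$, i.e.\ $b_1(BQ^{\nu})=\infty$, the pulled-back cover $BG^{\nu\circ\varphi}$ also has $b_1=\infty$.

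Next I would identify the relevant covering spaces at the level of classifying spaces. Realize $\varphi$ by a map $f\colon BG\to BQ$; then by the discussion following \eqref{eq:fsharp}, the cover $BG^{\nu\circ\varphi}\to BG$ is the pullback $f^*$ of $BQ^{\nu}\to BQ$. Equivalently, on fundamental groups, $\pi_1(BG^{\nu\circ\varphi})=\ker(\nu\circ\varphi)=\varphi^{-1}(\ker\nu)$, which surjects onto $\ker\nu=\pi_1(BQ^{\nu})$ with kernel $\ker\varphi$. Thus $BG^{\nu\circ\varphi}$ is a $K(\ker(\nu\circ\varphi),1)$ that fits over $BQ^{\nu}=K(\ker\nu,1)$, and the induced map on $\pi_1$ is the surjection $\varphi^{-1}(\ker\nu)\surj\ker\nu$.

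The key step is then a purely group-theoretic statement about $H_1$: if $\psi\colon N\surj M$ is a surjection of groups, then $\dim_\Q H_1(N,\Q)\ge\dim_\Q H_1(M,\Q)$, since $\psi$ induces a surjection $H_1(N,\Q)\surj H_1(M,\Q)$ on abelianizations tensored with $\Q$. Applying this with $N=\varphi^{-1}(\ker\nu)$ and $M=\ker\nu$ gives $b_1(BG^{\nu\circ\varphi})=\dim_\Q H_1(N,\Q)\ge\dim_\Q H_1(M,\Q)=b_1(BQ^{\nu})=\infty$, so $b_1(BG^{\nu\circ\varphi})=\infty$ as well. Hence $[\nu\circ\varphi]\in\Omega^1_A(G)^c$, and $\varphi^*$ restricts to the desired inclusion $\Omega^1_A(Q)^c\inj\Omega^1_A(G)^c$.

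The only mild subtlety — the ``main obstacle'' such as it is — is bookkeeping: one must be careful that $\Omega^1_A$ is being computed via $BG$ and $BQ$ (which is legitimate by Lemma~\ref{lem:homotopy equiv} and the definition $\Omega^i_A(G):=\Omega^i_A(BG)$), and that the degree-$1$ restriction is genuinely needed, because the argument only controls $H_1$ of the covers, not higher homology; a surjection of groups does not in general induce surjections on $H_j$ for $j\ge 2$. With $i=1$ everything goes through cleanly.
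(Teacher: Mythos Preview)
Your proof is correct and follows essentially the same route as the paper: compose $\nu$ with $\varphi$, observe that $\varphi$ restricts to a surjection $\ker(\nu\circ\varphi)\surj\ker(\nu)$, pass to abelianizations to get a surjection on $H_1$, and conclude that infinite $b_1$ is inherited. The paper's proof is terser and omits the injectivity check for $\varphi^*$ and the classifying-space framing, but the core argument is identical.
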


\begin{proof}
Let $\nu\colon Q \surj A$ be an epimorphism. Composing with $\varphi$,
we get an epimorphism $\nu \circ \varphi\colon G \surj A$. So there is an
epimorphism $\varphi\colon \ker(\nu \circ \varphi) \surj \ker(\nu)$.
Taking abelianizations, we get an epimorphism
$\varphi_{\ab}\colon\ker(\nu \circ \varphi)_{\ab} \surj \ker(\nu)_{\ab}$.
Thus, if $\ker(\nu)_{\ab}$ has infinite rank, then
$\ker(\nu \circ \varphi)_{\ab}$ also has infinite
rank.  The desired conclusion follows.
\end{proof}

Before proceeding, let us recall a well-known result regarding the
homology of finite covers, which can be proved via a standard transfer
argument (see, for instance, \cite{Hatcher}).

\begin{lemma}
\label{lem:transfer}
Let $p\colon Y \to X$ be a regular cover defined
by a properly discontinuous action of a finite group
$A$ on $Y$, and let $\k$ be a coefficient field of
characteristic $0$, or a prime not dividing the order of $A$.
Then, the induced homomorphism in cohomology,
$p^*\colon H^{*} (X; \k) \to H^{*} (Y; \k)$, is injective,
with image the subgroup $H^*(Y; \k)^{A}$ consisting of those
classes $\alpha$ for which $\gamma^*(\alpha)=\alpha$, for
all $\gamma \in A$.
\end{lemma}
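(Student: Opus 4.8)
The statement to prove is Lemma~\ref{lem:transfer}, the transfer lemma for cohomology of finite regular covers. The plan is to use the classical averaging (transfer) homomorphism over the field $\k$, which is available precisely because the order $m=\abs{A}$ is invertible in $\k$.

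First I would set up the transfer map. Since $p\colon Y\to X$ is a regular cover with finite deck group $A$, for each $\gamma\in A$ we have the induced map $\gamma^*\colon H^*(Y;\k)\to H^*(Y;\k)$, and the composite $p^*\colon H^*(X;\k)\to H^*(Y;\k)$ lands in the fixed subspace $H^*(Y;\k)^A$, because $\gamma\circ p=p$ for all $\gamma\in A$. Define the transfer $\tau\colon H^*(Y;\k)\to H^*(X;\k)$ at the chain level: lift a singular (or cellular) chain on $X$ to the sum of its $m$ distinct lifts on $Y$; dually, on cochains one averages over the $A$-action. The key elementary computation, which I would not grind through, is that $\tau\circ p^*=m\cdot\id$ on $H^*(X;\k)$ and $p^*\circ\tau=\sum_{\gamma\in A}\gamma^*$ on $H^*(Y;\k)$. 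Both identities are standard; the first is because each chain on $X$ has exactly $m$ lifts, the second because $p^*$ followed by lifting-and-summing reproduces the orbit sum.

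Next I would extract the conclusions. Since $m$ is invertible in $\k$, the relation $\tau\circ p^*=m\cdot\id$ shows $p^*$ is injective (it has a left inverse $\tfrac1m\tau$). For the image: clearly $\im(p^*)\subseteq H^*(Y;\k)^A$ as noted. Conversely, if $\alpha\in H^*(Y;\k)^A$, then $p^*\big(\tfrac1m\tau(\alpha)\big)=\tfrac1m\sum_{\gamma\in A}\gamma^*(\alpha)=\tfrac1m\cdot m\,\alpha=\alpha$, so $\alpha\in\im(p^*)$. Hence $\im(p^*)=H^*(Y;\k)^A$, as claimed. The same argument works verbatim with cellular cochains, which is the relevant setting since $X$ and $X^\nu$ are CW-complexes and $p$ is cellular.

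The only genuine point requiring care—and the main (mild) obstacle—is justifying the chain-level transfer when $p$ is merely a covering with finite fibers rather than, say, a finite simplicial map: one must know every singular simplex in $X$ lifts to exactly $m$ singular simplices in $Y$ (true since a simplex is simply connected, so each of its $m$ lifts is determined by a choice of lift of one point) and that the resulting map commutes with the boundary operator. This is entirely standard and the paper in fact cites \cite{Hatcher} for it, so I would simply invoke that reference for the construction of $\tau$ and the two composite identities, then assemble the injectivity and image statements as above.
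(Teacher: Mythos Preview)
Your proposal is correct and is precisely the standard transfer argument the paper has in mind: the paper does not actually give a proof of Lemma~\ref{lem:transfer}, but simply labels it ``well-known'' and refers to \cite{Hatcher} for the transfer construction. Your write-up supplies exactly that argument, so there is nothing to compare.
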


\begin{corollary}
\label{cor:transfer}
Let $p\colon Y \to X$ be a finite, regular cover.
Then $b_{i}(X)\leq b_{i}(Y)$, for all $i\ge 0$.
\end{corollary}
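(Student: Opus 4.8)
The plan is to deduce Corollary~\ref{cor:transfer} directly from Lemma~\ref{lem:transfer}, by running the transfer argument over a field of characteristic zero. First I would fix an integer $i\ge 0$ and take $\k=\Q$; since $\Q$ has characteristic $0$, the hypothesis of Lemma~\ref{lem:transfer} is satisfied for the finite, regular cover $p\colon Y\to X$, regardless of the order of the deck group $A$. Applying that lemma in degree $i$, the induced map $p^*\colon H^i(X;\Q)\to H^i(Y;\Q)$ is injective. Hence $\dim_\Q H^i(X;\Q)\le \dim_\Q H^i(Y;\Q)$, which is exactly the inequality $b_i(X)\le b_i(Y)$ once we identify the $\Q$-Betti numbers with the ranks of the integral homology groups via the universal coefficient theorem (and the fact that $H^i(-;\Q)$ is dual to $H_i(-;\Q)$).

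One small point to address is that the corollary is stated for $b_i$ as ordinary (integral) Betti numbers, while the lemma is phrased cohomologically over an arbitrary admissible field; I would note that $b_i(X)=\dim_\Q H^i(X;\Q)=\operatorname{rank} H_i(X;\Z)$, and likewise for $Y$, so the cohomological inequality transfers verbatim to the homological statement. If $Y$ (equivalently $X$) has infinite $i$-th Betti number the inequality is vacuous, so we may as well assume $b_i(Y)<\infty$, in which case everything is a genuine inequality of nonnegative integers.

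There is essentially no obstacle here: the content is entirely in Lemma~\ref{lem:transfer}, and the corollary is just the observation that injectivity of a linear map forces the domain to have dimension at most that of the codomain. The only thing to be slightly careful about is to invoke the lemma with a characteristic-zero coefficient field so that no coprimality condition on $|A|$ is needed; using $\Q$ makes this automatic. I would therefore keep the proof to one or two sentences, citing Lemma~\ref{lem:transfer} with $\k=\Q$ and concluding $b_i(X)=\dim_\Q H^i(X;\Q)\le \dim_\Q H^i(Y;\Q)=b_i(Y)$ for all $i\ge 0$.
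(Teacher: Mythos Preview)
Your proposal is correct and is exactly the intended argument: the paper gives no separate proof of this corollary, treating it as immediate from Lemma~\ref{lem:transfer} applied with a field of characteristic zero. Your observation that taking $\k=\Q$ makes the coprimality condition automatic, and that injectivity of $p^*$ then yields $b_i(X)=\dim_{\Q}H^i(X;\Q)\le \dim_{\Q}H^i(Y;\Q)=b_i(Y)$, is precisely the point.
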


Now fix a CW-complex $X$ as above, with fundamental group
$G=\pi_1(X,x_0)$.  Suppose $1 \to K  \to A \xrightarrow{\pi} B \to 1$
is a short exact sequence of groups, with $K$ a characteristic
subgroup of $A$.  As noted in Lemma \ref{lem:funct}, the
homomorphism $\pi$ induces a map
$\tilde{\pi} \colon \Gamma(G, A) \to \Gamma(G, B)$,
$[\nu]\mapsto [\pi\circ \nu]$, between the parameter sets for
regular $A$-covers and $B$-covers of $X$.

\begin{prop}
\label{prop:natb}
Suppose $K=\ker(\pi\colon A\surj B)$ is a finite, characteristic
subgroup of $A$.  Then the map
$\tilde{\pi} \colon \Gamma(G, A) \to \Gamma(G, B)$
restricts to a map
$\tilde{\pi} \colon \Omega^{i}_{A}(X) \to \Omega^{i}_{B}(X)$
between the respective Dwyer--Fried sets.
\end{prop}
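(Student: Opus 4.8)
The plan is to start from an arbitrary class $[\nu]\in\Omega^i_A(X)$, realized by an epimorphism $\nu\colon G\surj A$, and show that its image $[\pi\circ\nu]$ lands in $\Omega^i_B(X)$; since $\tilde\pi$ is already known to be well defined by Lemma~\ref{lem:funct}, it only remains to control Betti numbers. The key geometric observation is that the $A$-cover $X^\nu$ factors through the $B$-cover $X^{\pi\circ\nu}$, as in diagram~\eqref{eq:ab}: there is a cover $p_\pi\colon X^\nu\to X^{\pi\circ\nu}$ whose group of deck transformations is $K=\ker(\pi)$, which is finite by hypothesis. Thus $p_\pi$ is a \emph{finite}, regular cover.

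First I would invoke Corollary~\ref{cor:transfer} applied to the finite regular cover $p_\pi\colon X^\nu\to X^{\pi\circ\nu}$: it gives $b_j(X^{\pi\circ\nu})\le b_j(X^\nu)$ for all $j\ge 0$. Since $[\nu]\in\Omega^i_A(X)$ means $b_j(X^\nu)<\infty$ for all $0\le j\le i$, we immediately conclude $b_j(X^{\pi\circ\nu})<\infty$ for all $0\le j\le i$, i.e.\ $[\pi\circ\nu]\in\Omega^i_B(X)$. This shows $\tilde\pi\big(\Omega^i_A(X)\big)\subseteq\Omega^i_B(X)$, which is exactly the asserted restriction.

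A minor point to address carefully is that $X^{\pi\circ\nu}$ need not be a finite CW-complex, so one cannot speak of its \emph{total} Betti number being finite; but the statement only concerns degrees $j\le i$, and Corollary~\ref{cor:transfer} is a degreewise inequality, so this causes no trouble. One should also note that $X^{\pi\circ\nu}$ is a connected CW-complex with finite $1$-skeleton (being a cover of such an $X$), so the Betti numbers $b_j$ are well defined as dimensions of possibly-infinite-dimensional $\Q$-vector spaces, and the transfer argument of Lemma~\ref{lem:transfer} goes through verbatim with the finite group $K$ acting freely on $X^\nu$.

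The main (and essentially the only) obstacle is bookkeeping: making sure that the cover $p_\pi$ really does have deck group $K$ with $K$ finite and that it is a \emph{regular} cover, so that Corollary~\ref{cor:transfer} applies. This is precisely the content of the correspondence recorded in diagram~\eqref{eq:ab} together with standard covering-space theory: the subgroup $\ker(\pi\circ\nu)\triangleleft G$ contains $\ker(\nu)$ with quotient $\ker(\pi\circ\nu)/\ker(\nu)\cong K$, and normality of $\ker(\nu)$ in $\ker(\pi\circ\nu)$ (inherited from normality in $G$) makes $p_\pi$ regular. Everything else is a one-line appeal to the transfer corollary.
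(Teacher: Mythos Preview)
Your proof is correct and follows essentially the same line as the paper's: factor $X^\nu \to X^{\pi\circ\nu} \to X$ with the first map a finite regular $K$-cover, then apply Corollary~\ref{cor:transfer} degreewise. One small slip in your side remarks: $X^{\pi\circ\nu}$ need \emph{not} have finite $1$-skeleton when $B$ is infinite (infinite covers multiply the number of $1$-cells), but this is irrelevant to the argument since Betti numbers are defined for any CW-complex and the transfer inequality only requires finiteness of $K$.
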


\begin{proof}
Let $\nu\colon G \surj A$ be an epimorphism,
and suppose $[\nu]\in \Omega^{i}_{A}(X)$, that is,
$b_j(X^{\nu})<\infty$, for all $j\le i$. Then
$X^{\nu}\to X^{\pi\circ \nu}$ is a regular $K$-cover.
By Corollary \ref{cor:transfer}, we have that $b_j(X^{\pi\circ \nu})<\infty$,
for all $j\le i$; in other words, $[\pi\circ \nu]\in \Omega^{i}_{B}(X)$.
\end{proof}

Proposition \ref{prop:natb} may be summarized in the following
commuting diagram:
\begin{equation}
\label{eq:omega ab}
\xymatrix{ \Omega_A^i(X)\, \ar@{^{(}->}[r] \ar[d]^{\tilde\pi |_{\Omega_A^i(X)}} &
\Gamma(G, A)\ar[d]^{\tilde\pi}\\
\Omega^i_{B}(X)\, \ar@{^{(}->}[r] &
\Gamma(G, B)}
\end{equation}
This diagram is a pullback diagram precisely when
\begin{equation}
\label{eq:pull}
\tilde{\pi}^{-1}(\Omega^i_{B}(X)) = \Omega^i_A(X).
\end{equation}

As we shall see later on, this condition is not always satisfied.
For now, let us just single out a simple situation
when \eqref{eq:omega ab} is tautologically
a pullback diagram.

\begin{corollary}
\label{cor:ab empty}
With notation as above, if\/ $\Omega^{i}_{B}(X)=\emptyset$,
then $\Omega^{i}_{A}(X)=\emptyset$.
\end{corollary}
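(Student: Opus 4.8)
The statement is Corollary~\ref{cor:ab empty}: if $\Omega^i_B(X)=\emptyset$, then $\Omega^i_A(X)=\emptyset$. The plan is to deduce this immediately from Proposition~\ref{prop:natb}. Recall that Proposition~\ref{prop:natb} provides, under the hypothesis that $K=\ker(\pi\colon A\surj B)$ is a finite characteristic subgroup of $A$, a restriction of the map $\tilde\pi\colon \Gamma(G,A)\to\Gamma(G,B)$ to a map $\tilde\pi\colon \Omega^i_A(X)\to\Omega^i_B(X)$ between the respective Dwyer--Fried sets.

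First I would observe that the existence of a map of sets $\Omega^i_A(X)\to\Omega^i_B(X)$ forces $\Omega^i_A(X)$ to be empty whenever its target $\Omega^i_B(X)$ is empty: there is no map from a nonempty set to the empty set. Concretely, suppose for contradiction that $[\nu]\in\Omega^i_A(X)$; then $[\pi\circ\nu]=\tilde\pi([\nu])$ is a well-defined element of $\Omega^i_B(X)$ by Proposition~\ref{prop:natb}, contradicting $\Omega^i_B(X)=\emptyset$. Hence $\Omega^i_A(X)=\emptyset$.

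Alternatively, and perhaps more transparently, one can argue directly without even invoking the language of the induced map: if $[\nu]\in\Omega^i_A(X)$, then $b_j(X^\nu)<\infty$ for all $j\le i$, and since $X^\nu\to X^{\pi\circ\nu}$ is a regular cover with finite deck group $K$, Corollary~\ref{cor:transfer} gives $b_j(X^{\pi\circ\nu})\le b_j(X^\nu)<\infty$ for all $j\le i$, so $[\pi\circ\nu]\in\Omega^i_B(X)$; this again contradicts the hypothesis. Either way the corollary follows in one line.

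There is essentially no obstacle here: the only thing to be careful about is that the hypotheses of Proposition~\ref{prop:natb} (that $K$ is finite and characteristic) are assumed to be in force, since the corollary is stated "with notation as above," i.e.\ in the setting of that proposition. Given that, the proof is purely formal. I would therefore write the proof as a two-sentence argument: either "This is immediate from Proposition~\ref{prop:natb}, since there is no map from a nonempty set to the empty set," or the slightly more self-contained version using Corollary~\ref{cor:transfer} directly.
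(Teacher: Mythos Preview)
Your proposal is correct and matches the paper's reasoning exactly: the corollary is stated immediately after Proposition~\ref{prop:natb} and diagram~\eqref{eq:omega ab} with no separate proof, precisely because it follows formally from the existence of the map $\tilde\pi\colon \Omega^i_A(X)\to\Omega^i_B(X)$. Your alternative direct argument via Corollary~\ref{cor:transfer} simply unwinds the proof of Proposition~\ref{prop:natb} and is equally valid.
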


\subsection{Abelian versus free abelian covers}
\label{subsec:fab-df}
Let us now consider in more detail the case when $A$ is
an abelian group.  As usual, we are only interested in the
case when $A$ is a quotient of the (finitely generated)
group $H=G_{\ab}$, and thus we may assume $A$ is
also finitely generated.

Consider the exact sequence $0\to \T(A)\to A \to \oA \to 0$.
Clearly, $\T(A)$ is a finite, characteristic subgroup of $A$.
Thus, Proposition \ref{prop:natb} applies, giving a map
\begin{equation}
\label{eq:qmap}
q\colon \Omega^i_{A}(X) \to \Omega^i_{\oA}(X).
\end{equation}

In particular, if $\Omega^{i}_{\oA}(X)=\emptyset$, then
$\Omega^{i}_{A}(X)=\emptyset$.

\begin{example}
\label{ex:surfaces}
Let $\Sigma_g$ be a Riemann surface of genus $g\ge 2$.
It is readily seen that $\Omega^{i}_{r}(\Sigma_g)=\emptyset$,
for all $r\ge 1$ and $i\ge 1$, cf.~\cite{Su14}.  Thus, if $A$ is
any finitely generated abelian group with $\rank A \geq 1$, then
$\Omega^{i}_{A}(\Sigma_g)=\emptyset$, for all $i\ge 1$.
\end{example}

Suppose now we have a short exact sequence
$1 \to K  \to A \xrightarrow{\pi} B \to 1$, with $K$ characteristic.
Let  $\bar\pi\colon \oA \surj \overline{B}$ be the induced
epimorphism between maximal torsion-free
quotients.  Since $K=\ker(\pi)$ is finite, $\bar\pi$ is an isomorphism.
Using Proposition \ref{prop:natb} again, and the identification
from \eqref{eq:epiha}, we obtain the following commutative diagram:
\begin{equation}
\label{eq:natural diag}
\xymatrixcolsep{8pt}
\xymatrixrowsep{12pt}
\xymatrix{
&& \Omega_B^i(X)\ar[rr] \ar'[d][dd] & &
\Gamma(H, B) \ar[dd]^(.38){q_B}\\
\Omega_A^i(X) \ar[rrr] \ar[rru] \ar[dd]  & & &
\Gamma(H, A)\ar[ru]^(.42){\tilde{\pi}} \ar[dd]^(.38){q_A} \\
&& \Omega_{\overline{B}}^i(X) \ar'[r][rr] & &
\Gamma(H, \overline{B}) \\
\Omega_{\oA}^i(X) \ar[rrr] \ar@{=}[rru] & & &
\Gamma(H, \oA)\ar@{=}[ru]^{\tilde{\bar\pi}}
}
\end{equation}

\begin{prop}
\label{cyclic to arbitrary}
Assume the function
$\tilde{\pi} \colon \Gamma(H, A) \to \Gamma(H, B)$ is surjective.
Then, if the front square in diagram (\ref{eq:natural diag}) is a pullback
square, so is the back square; that is,
\[
q_A^{-1}\Big(\Omega_{\oA}^i\Big)=\Omega_A^i \implies
q_B^{-1}\Big(\Omega_{\overline{B}}^i\Big)=\Omega_B^i.
\]
\end{prop}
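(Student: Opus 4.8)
The plan is to chase the cube in diagram \eqref{eq:natural diag}, exploiting the fact that the vertical maps on the two right-hand faces fit into the set fibration of Corollary \ref{cor:gha}, whose fibers are copies of the finite set $\Gamma(H/\oA, A/\oA)$ (respectively $\Gamma(H/\overline{B}, B/\overline{B})$), and that the bottom map $\tilde{\bar\pi}\colon \Gamma(H,\oA)\to\Gamma(H,\overline{B})$ is a bijection because $\bar\pi$ is an isomorphism. First I would fix an epimorphism $\mu\colon H\surj B$ with $[\mu]\in q_B^{-1}\big(\Omega_{\overline{B}}^i(X)\big)$, meaning $[\bar\mu]\in\Omega_{\overline{B}}^i(X)$; I must show $[\mu]\in\Omega_B^i(X)$. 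Since $\tilde\pi$ is assumed surjective, pick $[\nu]\in\Gamma(H,A)$ with $\tilde\pi([\nu])=[\mu]$, i.e.\ $\pi\circ\nu$ is equivalent to $\mu$. The commutativity of the cube gives $q_B(\tilde\pi([\nu]))=\tilde{\bar\pi}(q_A([\nu]))$, so $[\bar\mu]=\tilde{\bar\pi}([\bar\nu])$; since $\tilde{\bar\pi}$ is a bijection carrying $\Omega_{\oA}^i(X)$ to $\Omega_{\overline{B}}^i(X)$ (this last point needs a small check — it is just naturality of the $\Omega$-sets under the isomorphism $\bar\pi$, via Lemma \ref{lem:homotopy equiv} applied to the identity of $X$ with changed structure group, or more directly because $X^{\bar\nu}$ and $X^{\bar\mu}$ are the same cover), we get $[\bar\nu]\in\Omega_{\oA}^i(X)$, i.e.\ $[\nu]\in q_A^{-1}\big(\Omega_{\oA}^i(X)\big)$.

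Now invoke the hypothesis that the front square is a pullback, which by \eqref{eq:pull} means exactly $q_A^{-1}\big(\Omega_{\oA}^i(X)\big)=\Omega_A^i(X)$. Hence $[\nu]\in\Omega_A^i(X)$, so $b_j(X^\nu)<\infty$ for all $j\le i$. But $X^\nu\to X^{\pi\circ\nu}=X^\mu$ is a regular cover with deck group $K=\ker(\pi)$, which is finite; so by Corollary \ref{cor:transfer} (applied to this finite regular cover) we conclude $b_j(X^\mu)\le b_j(X^\nu)<\infty$ for all $j\le i$, that is, $[\mu]\in\Omega_B^i(X)$. This proves $q_B^{-1}\big(\Omega_{\overline{B}}^i(X)\big)\subseteq\Omega_B^i(X)$; the reverse inclusion is Proposition \ref{prop:natb} (the map $q_B$ already restricts to $\Omega_B^i(X)\to\Omega_{\overline{B}}^i(X)$), so the two sets coincide and the back square is a pullback.

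The one genuinely delicate point — the part I expect to cost the most care — is the very first reduction: pulling the class $[\mu]$ back to a class $[\nu]$ along $\tilde\pi$ in a way that is compatible with the vertical projections $q_A,q_B$, and then correctly identifying $\tilde{\bar\pi}(q_A([\nu]))$ with $q_B([\mu])$. Surjectivity of $\tilde\pi$ gives the lift, but one must be scrupulous that the square of maps $q_A,q_B,\tilde\pi,\tilde{\bar\pi}$ in \eqref{eq:natural diag} genuinely commutes on the nose (not just up to $\Aut$-equivalence) — this is where the hypothesis ``$\tilde\pi$ surjective'' is doing real work, since without it there might be classes in $\Gamma(H,B)$ over $\Omega_{\overline{B}}^i(X)$ that are not hit, and the implication would fail. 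Everything after that reduction is formal diagram-chasing plus the two cited tools: the pullback hypothesis on the front face and the transfer inequality for finite covers.

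Assembling these observations, the proof reads as follows.

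\begin{proof}
We must show that $q_B^{-1}\big(\Omega_{\overline{B}}^i(X)\big)=\Omega_B^i(X)$.  The inclusion $\Omega_B^i(X)\subseteq q_B^{-1}\big(\Omega_{\overline{B}}^i(X)\big)$ follows from Proposition \ref{prop:natb}, since $q_B$ restricts to a map $\Omega_B^i(X)\to\Omega_{\overline{B}}^i(X)$.

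For the reverse inclusion, let $[\mu]\in q_B^{-1}\big(\Omega_{\overline{B}}^i(X)\big)$, so that $[\bar\mu]=q_B([\mu])\in\Omega_{\overline{B}}^i(X)$.  Since $\tilde\pi\colon\Gamma(H,A)\to\Gamma(H,B)$ is surjective, choose $[\nu]\in\Gamma(H,A)$ with $\tilde\pi([\nu])=[\mu]$.  The right-hand face of diagram \eqref{eq:natural diag} commutes, so
\[
\tilde{\bar\pi}\big(q_A([\nu])\big)=q_B\big(\tilde\pi([\nu])\big)=q_B([\mu])=[\bar\mu].
\]
Because $\bar\pi\colon\oA\to\overline{B}$ is an isomorphism, $\tilde{\bar\pi}$ is a bijection; moreover the covers classified by $[\bar\nu]$ and $[\bar\mu]$ coincide, whence $[\bar\nu]\in\Omega_{\oA}^i(X)$, i.e.\ $[\nu]\in q_A^{-1}\big(\Omega_{\oA}^i(X)\big)$.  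By hypothesis the front square of \eqref{eq:natural diag} is a pullback, which by \eqref{eq:pull} means $q_A^{-1}\big(\Omega_{\oA}^i(X)\big)=\Omega_A^i(X)$; therefore $[\nu]\in\Omega_A^i(X)$, and so $b_j(X^{\nu})<\infty$ for all $j\le i$.

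Finally, $p_\pi\colon X^{\nu}\to X^{\pi\circ\nu}$ is a regular cover with deck group $K=\ker(\pi)$, which is finite by assumption.  By Corollary \ref{cor:transfer}, $b_j(X^{\pi\circ\nu})\le b_j(X^{\nu})<\infty$ for all $j\le i$.  Since $[\pi\circ\nu]=[\mu]$, this gives $[\mu]\in\Omega_B^i(X)$, as desired.
\end{proof}
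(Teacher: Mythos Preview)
Your proof is correct and follows essentially the same approach as the paper's: lift a class along the surjection $\tilde\pi$, use commutativity of the right-hand face together with the bijection $\tilde{\bar\pi}$ to land in $\Omega_{\oA}^i$, apply the pullback hypothesis on the front face, and then push forward using that $\tilde\pi$ carries $\Omega_A^i$ into $\Omega_B^i$. The only cosmetic difference is that the paper argues by contrapositive (start with $[\nu]\in\Gamma(H,B)\setminus\Omega_B^i$ over $\Omega_{\overline{B}}^i$ and produce a witness that the front square fails to be a pullback), whereas you argue directly; the logical content is identical.
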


\begin{proof}
Suppose the back square is not pullback square.  Then there exist
elements $[\bar{\nu}] \in \Omega^i_{\overline{B}}$ and
$[\nu] \in  \Gamma(H, B) \setminus \Omega^i_{B}$
such that $q_B([\nu])=\bar{\nu}$. By assumption, the map $\tilde{\pi}$
is surjective; thus, $\tilde{\pi}^{-1}([\nu])$ is nonempty.
Pick an element $[\sigma] \in \tilde{\pi}^{-1}([\nu])$. Then
$[\sigma] \in \Gamma(H,A)\setminus \Omega_A^i$, for otherwise
$[\nu]=\tilde{\pi}([\sigma]) \in \Omega^i_B$.
On the other hand,
\[
q_A([\sigma])=q_B(\tilde{\pi}([\sigma]))=q_B([\nu])=
[\bar{\nu}] \in \Omega^i_{\overline{B}}=\Omega^i_{\oA}\, .
\]
Thus, the front square is not a pullback diagram, either.
\end{proof}

\subsection{The comparison diagram}
\label{subsec:compare}
Now fix a splitting $\oA \inj A$, which gives rise to an isomorphism
$A \cong \oA \oplus \T(A)$.  Similarly, after fixing a splitting $\oH \inj H$,
the abelianization $H=G_{\ab}$ also decomposes as
$H \cong \oH\oplus \T(H)$. Theorem \ref{theorem:new bij} yields
an identification
\begin{equation}
\label{eq:isos}
\Gamma(H, A) \cong \GL_{n}(\Z) \times_{\Par} \Gamma,
\end{equation}
where $n=\rank H$, the group $\Par$ is a parabolic subgroup
of $\GL_{n}(\Z)$ so that $\GL_{n}(\Z)/\Par=\Grass_{n-r}(\Z^n)$,
and $\Gamma=\Gamma(H/\oA, A/\oA)$.

Putting things together, we obtain a commutative diagram,
which we shall refer to as the {\em comparison diagram},
\begin{equation}
\label{commutative diagram}
\xymatrix{ \Omega_A^i(X)\, \ar@{^{(}->}[r] \ar[d]^{q|_{\Omega_A^i(X)} }
& \Gamma(H, A)
\cong \GL_{n}(\Z) \times_{\Par} \Gamma\ar[d]^{q}\\
\Omega^i_{\,\oA}(X)\, \ar@{^{(}->}[r] &
\Gamma(H, \oA) \cong \Grass_{n-r}(\Z^n)
}
\end{equation}

The next result reinterprets the condition that this diagram
is a pull-back  in terms of Betti numbers of abelian covers.

\begin{prop}
\label{prop:pullbetti}
The following conditions are equivalent:

\begin{romenum}
\item \label{c1}
Diagram \eqref{commutative diagram} is a
pull-back diagram.
\item \label{c2}
$q^{-1}\big(\Omega^i_{\,\oA}(X)\big) = \Omega_A^i(X)$.
\item \label{c3}
If $X^{\bar{\nu}}$ is a regular $\oA$-cover with finite
Betti numbers up to degree $i$, then any regular
$\T(A)$-cover of $X^{\bar{\nu}}$ has the same finiteness
property.
\end{romenum}
\end{prop}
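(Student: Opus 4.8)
The plan is to prove the cycle of equivalences by first disposing of \eqref{c1}$\Leftrightarrow$\eqref{c2}, which is purely formal, and then translating \eqref{c2} into the assertion \eqref{c3} about Betti numbers in a tower of covers.

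For \eqref{c1}$\Leftrightarrow$\eqref{c2}: by the abelian instance of Proposition~\ref{prop:natb} --- the map $q$ of \eqref{eq:qmap} --- the projection $q\colon \Gamma(H,A)\to\Gamma(H,\oA)$ carries $\Omega_A^i(X)$ into $\Omega^i_{\oA}(X)$, so the square \eqref{commutative diagram} commutes and, in particular, $\Omega_A^i(X)\subseteq q^{-1}\big(\Omega^i_{\oA}(X)\big)$. In the category of sets, the fibered product of the cospan $\Gamma(H,A)\xrightarrow{q}\Gamma(H,\oA)\hookleftarrow\Omega^i_{\oA}(X)$ is canonically the subset $q^{-1}\big(\Omega^i_{\oA}(X)\big)$ of $\Gamma(H,A)$, equipped with its inclusion and with the restriction of $q$. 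Since in \eqref{commutative diagram} the top map is the inclusion $\Omega_A^i(X)\hookrightarrow\Gamma(H,A)$ and the left map is the restriction of $q$, the comparison map from $\Omega_A^i(X)$ to this fibered product is simply the inclusion $\Omega_A^i(X)\hookrightarrow q^{-1}\big(\Omega^i_{\oA}(X)\big)$. A square is a pullback precisely when this comparison map is a bijection, which for an inclusion means equality; hence \eqref{c1} holds if and only if $\Omega_A^i(X)=q^{-1}\big(\Omega^i_{\oA}(X)\big)$, i.e.\ if and only if \eqref{c2} holds.

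For \eqref{c2}$\Leftrightarrow$\eqref{c3}: the idea is to read off both sides of the identity in \eqref{c2} in terms of covers. Fix $[\nu]\in\Gamma(H,A)$ and set $[\bar\nu]=q([\nu])$, so that $X^{\bar\nu}\to X$ is the free abelian cover attached to the composite $\bar\nu=\pi\circ\nu\colon H\surj\oA$; by definition $\Omega_A^i(X)$ is the set of $[\nu]$ with $b_j(X^\nu)<\infty$ for all $j\le i$, while $q^{-1}\big(\Omega^i_{\oA}(X)\big)$ is the set of $[\nu]$ with $b_j(X^{\bar\nu})<\infty$ for all $j\le i$. By the analysis in \S\ref{subsec:square} --- in particular the commutative diagram \eqref{eq:cd tors} and the description \eqref{eq:fiber q} of the fiber of $q_H$ over $[\bar\nu]$ --- the classes $[\nu]$ lying over $[\bar\nu]$ are exactly those for which $X^\nu\to X^{\bar\nu}$ is a regular $\T(A)$-cover, and every regular $\T(A)$-cover of $X^{\bar\nu}$ arises, up to equivalence, from such a $[\nu]$. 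Moreover, for any such $[\nu]$ the cover $X^\nu\to X^{\bar\nu}$ is finite with deck group $\T(A)$, so Corollary~\ref{cor:transfer} gives $b_j(X^{\bar\nu})\le b_j(X^\nu)$; this makes the inclusion $\Omega_A^i(X)\subseteq q^{-1}\big(\Omega^i_{\oA}(X)\big)$ automatic, and \eqref{c2} becomes equivalent to the reverse inclusion, namely: for every $[\nu]$ with $b_j(X^{\bar\nu})<\infty$ for all $j\le i$ one has $b_j(X^\nu)<\infty$ for all $j\le i$. Since, as $[\bar\nu]$ ranges over $\Gamma(H,\oA)$ and $[\nu]$ over its fiber, the pairs $(X^{\bar\nu},X^\nu)$ run over all pairs consisting of a regular $\oA$-cover of $X$ and a regular $\T(A)$-cover of it, this reformulation is precisely statement \eqref{c3}, which completes the chain of equivalences.

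The only genuinely nontrivial ingredient is the dictionary between classes $[\nu]$ in a fiber of $q$ and regular $\T(A)$-covers of the corresponding free abelian cover $X^{\bar\nu}$ --- both that each such $[\nu]$ yields such a cover and that every such cover is realized --- but this has already been established in \S\ref{subsec:square} via \eqref{eq:cd tors}--\eqref{eq:fiber q}, so here it may simply be invoked; everything else is a formal manipulation of set-theoretic pullbacks together with the transfer inequality of Corollary~\ref{cor:transfer}.
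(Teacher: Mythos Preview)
Your proof is correct and follows essentially the same approach as the paper's: the equivalence \eqref{c1}$\Leftrightarrow$\eqref{c2} is formal, and \eqref{c2}$\Leftrightarrow$\eqref{c3} hinges on the identification \eqref{eq:fiber q} of the fiber $q^{-1}([\bar\nu])$ with the set of regular $\T(A)$-covers of $X^{\bar\nu}$. You have spelled out more detail than the paper does (in particular, invoking Corollary~\ref{cor:transfer} to make the automatic inclusion explicit), but the strategy is the same.
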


\begin{proof}
The equivalence \eqref{c1} $\Leftrightarrow$ \eqref{c2}
is immediate.  To prove \eqref{c2} $\Leftrightarrow$ \eqref{c3},
consider an epimorphism $\nu\colon G \surj A$, and let
$\bar{\nu}=\pi\circ \nu\colon G \surj \oA$.  We
know from \eqref{eq:fiber q} that $q^{-1}([\bar\nu])$
coincides with the set of equivalence classes of
regular $\T(A)$-covers $X^{\nu}\to X^{\bar{\nu}}$.
The desired conclusion follows.
\end{proof}

In other words, \eqref{commutative diagram} is a
pull-back diagram if and only if the homological finiteness
of an arbitrary abelian cover of $X$ can be tested through the
corresponding free abelian cover.

\section{Pontryagin duality}
\label{sect:inttt}

Following the approach from \cite{Hi, SYZ}, we now discuss
a functorial correspondence between finitely generated
abelian groups and abelian, complex algebraic reductive groups.

\subsection{A functorial correspondence}
\label{subsec:functor}

Let $\C^*$ be the multiplicative group of units in the field of complex
numbers.  Given a group $G$, let $\wG=\Hom(G, \C^{*})$ be the group
of complex-valued characters of $G$, with pointwise multiplication
inherited from $\C^*$, and identity the character taking constant
value $1 \in \C^*$ for all $g \in G$. If the group $G$ is finitely generated,
then $\wG$ is an abelian, complex reductive algebraic group.
Given a homomorphism $\varphi \colon G_1\to G_2$, let
$\hat{\varphi}\colon \widehat{G}_2 \to \widehat{G}_1$,
$\rho\mapsto \rho\circ \varphi$ be
the induced morphism between character groups.
Since the group $\C^*$ is divisible, the functor
$G\leadsto \wG=\Hom(H,\C^*)$ is exact.

Now let $H=G_{\ab}$ be the maximal abelian quotient of $G$.
The abelianization map, $\ab\colon G\to H$, induces an
isomorphism $\widehat{\ab} \colon \wH\isom \wG$.
If $H$ is torsion-free, then $\wH$ can be identified
with the complex algebraic torus $(\C^{*})^{n}$,
where $n=\rank(H)$.  If $H$ is a finite abelian group,
then $\wH$ is, in fact, isomorphic to $H$.

More generally,
let $\oH$ be the maximal torsion-free quotient of $H$.
Fixing a splitting $\oH \to H$ yields a decomposition
$H\cong \oH\oplus \Tors(H)$, and thus an isomorphism
$\wH \cong \widehat{\oH} \times \Tors(H)$.
For simplicity, write $T=\wH$, and $T_0$ for the
identity component of this abelian, reductive, complex algebraic group;
clearly, $T_0=\widehat{\oH}$ is an algebraic torus.

Conversely, we can associate to $T$ its weight group,
$\check{T}=\Hom_{\alg}(T,\C^*)$,
where the hom set is taken in the category of algebraic groups.
The (discrete) group $\check{T}$ is a finitely generated abelian group.
Let $\C[\check{T}]$ be its group algebra.  We then have
natural identifications,
\begin{equation}
\label{eq:check}
\Spm\, (\C[\check{T}]) = \Hom_{\alg}(\C[\check{T}], \C)
= \Hom_{\group}(\check{T}, \C^*)= T.
\end{equation}

The correspondence $H \leftrightsquigarrow T$ extends
to a duality
\begin{equation}
\label{eq:pont dual}
\xymatrix{
\text{Subgroups of $H$}
\ar@/^1pc/@<1ex>[r]|-{\ V\ }
&
\text{Algebraic subgroups of $T$}
\ar@/^1pc/@<1ex>[l]|-{\ \epsilon\ }
}
\end{equation}
where $V$ sends a subgroup $\xi \le H$ to $\Hom(H/\xi,\C^*) \subseteq T$,
while $\epsilon$ sends an algebraic subgroup $C \subseteq T$
to $\ker (\check{T} \surj \check{C}) \le H$.

Both sides of \eqref{eq:pont dual} are partially ordered
sets, with naturally defined meets and joins. As
showed in \cite{SYZ}, the above correspondence
is an order-reversing equivalence of lattices.

\subsection{Primitive lattices and connected subgroups}
\label{subsec:primitive}

Given a subgroup $\xi\le H$, set
\begin{equation}
\label{eq:primitive closure}
\overline{\xi}:=\big\{ x\in H \mid \text{$m x\in \xi$ for some $m\in\N$}\big\}.
\end{equation}
Clearly, $\overline{\xi}$ is again a subgroup of $H$, and
$H/\overline{\xi}$ is torsion-free.
By definition, $\xi$ is a finite-index subgroup of $\overline{\xi}$; in
particular, $\rank(\xi) = \rank(\overline{\xi})$.  The quotient
group, $\overline{\xi}/\xi$, called the {\em determinant group}\/
of $\xi$, fits into the exact sequence
\begin{equation}
\label{eq:ex}
\xymatrix{0 \ar[r] & H/\overline{\xi}  \ar[r] & H/\xi
\ar[r]&\overline{\xi}/\xi  \ar[r]& 0}.
\end{equation}
The inclusion $\overline{\xi} \inj H$
induces a splitting $\overline{\xi}/\xi \inj H/\xi$,
showing that $\overline{\xi}/\xi \cong \Tors(H/\xi)$.
Since the (abelian) group $\overline{\xi}/\xi$ is finite, it is
isomorphic to its character group,
$\widehat{\overline{\xi}/\xi}$, which in turn can be
viewed as a (finite) subgroup of $\widehat{H}=T$.

The subgroup $\xi$ is called {\em primitive}\/ if $\overline{\xi}=\xi$.
Under the correspondence $H \leftrightsquigarrow T$,
primitive subgroups of $H$ correspond to connected
algebraic subgroups of $T$.   For an arbitrary subgroup $\xi\le H$,
we have an isomorphism of algebraic groups,
\begin{equation}
\label{eq:vdecomp}
V(\xi) \cong \widehat{\overline{\xi}/\xi} \cdot V(\overline{\xi}).
\end{equation}
In particular, the irreducible components of $V(\xi)$ are indexed
by the determinant group, $\overline{\xi}/\xi$, while the identity
component is $V(\overline\xi)$.

\subsection{Pulling back algebraic subgroups}
\label{subsec:pullback}

Now let  $\nu\colon H \surj A$ be an epimorphism,
and let $\bar\nu \colon \oH \surj \oA$ be the
induced epimorphism between maximal torsion-free
quotients.   Applying the $\Hom(-,\C^*)$ functor
to the left square of \eqref{eq:two sq} yields the
commuting right square in the display below:
\begin{equation}
\label{eq:two sq}
\xymatrix{
H \ar@{->>}[r]^{\nu} \ar[d] & A \ar[d]\\
\oH \ar@{->>}[r]^{\bar{\nu}} & \oA
}
\xymatrix{ \qquad \ar@{}[d]^{\DS{\leadsto}}&  \\ \: }
\xymatrix{
\wH & \:\wA\ar@{_{(}->}[l]_{\hat\nu} \\
\widehat{\oH}  \ar[u]
& \:\widehat{\oA} \ar@{_{(}->}[l]_{\hat{\bar\nu}} \ar[u]
}
\end{equation}

The morphism $\hat{\nu}\colon \wA\to \wH$
sends the identity component $\wA_0$
to the identity component $\wH_0$, thereby
defining a morphism $\hat{\nu}_0\colon \wA_0\to \wH_0$.
Fixing a splitting $\oA \to A$ yields an isomorphism
$\wA \cong \widehat{\oA} \times \Tors(A)$.
The following lemma is now clear.

\begin{lemma}
\label{lem:nunubar}
Let  $\nu\colon H \surj A$ be an epimorphism.
Upon identifying $\widehat{\oA}=\wA_0$ and $\widehat{\oH}=\wH_0$,
we have:
\begin{romenum}
\item \label{nu1}
$\hat{\bar\nu} = \hat{\nu}_0$.
\item \label{nu2}
$\im(\hat{\nu} ) = V(\ker (\nu) )$.
\end{romenum}
\end{lemma}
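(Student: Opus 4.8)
The plan is to unwind the definitions on both sides and match them up. Recall that $\wA = \Hom(A,\C^*)$, and fix the splitting $\oA \inj A$, so that $\wA \cong \widehat{\oA} \times \Tors(A)$; the identity component $\wA_0$ is precisely $\widehat{\oA}$, since $\Tors(A)$ is a finite group and $\widehat{\oA} \cong (\C^*)^r$ is connected. Likewise $\wH_0 = \widehat{\oH}$. For part \eqref{nu1}, the morphism $\hat{\nu}\colon \wA \to \wH$ is given by $\rho \mapsto \rho\circ\nu$; restricting to the identity component $\wA_0 = \widehat{\oA}$ and composing with the inclusion $\widehat{\oH} = \wH_0 \inj \wH$, I observe that the right-hand square of \eqref{eq:two sq} commutes by functoriality of $\Hom(-,\C^*)$ applied to the left-hand square, which commutes by the very definition of $\bar\nu$ as the map induced by $\nu$ on torsion-free quotients. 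Since $\hat{\bar\nu}\colon \widehat{\oA}\to\widehat{\oH}$ and $\hat{\nu}_0\colon \wA_0\to\wH_0$ are both characterized as the unique morphism making that square commute, they coincide under the identifications $\widehat{\oA}=\wA_0$, $\widehat{\oH}=\wH_0$. This is essentially the content the paper already anticipates in the sentence preceding the lemma ("defining a morphism $\hat{\nu}_0$"), so the argument here is a one-line verification.

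For part \eqref{nu2}, I will use the Pontryagin duality \eqref{eq:pont dual}. A character $\chi \in \wH = \Hom(H,\C^*)$ lies in $\im(\hat{\nu})$ iff $\chi = \rho\circ\nu$ for some $\rho\in\wA$. Since $\nu$ is surjective, such a $\rho$ exists iff $\chi$ kills $\ker(\nu)$, i.e. iff $\chi$ factors through $H/\ker(\nu)$; and when it exists it is unique. Thus $\im(\hat{\nu}) = \{\chi\in\wH \mid \chi|_{\ker(\nu)} = 1\} = \Hom(H/\ker(\nu),\C^*)$, which is exactly $V(\ker(\nu))$ by the definition of the functor $V$ in \eqref{eq:pont dual}. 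The only small point to check is that this set-theoretic image is the whole algebraic subgroup $V(\ker(\nu))$ and not merely a dense subset; but $\hat{\nu}$ is a morphism of algebraic groups with a section over its image (induced by a splitting of $\nu$), or alternatively one notes that $\wA \to \Hom(H/\ker\nu,\C^*)$ is an isomorphism of algebraic groups since $A \cong H/\ker(\nu)$, so the image is literally all of $V(\ker(\nu))$.

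I expect no serious obstacle: both parts are formal consequences of the exactness of the character functor (already noted in \S\ref{subsec:functor}) together with the surjectivity of $\nu$. The only thing requiring a moment's care is the scheme-theoretic versus set-theoretic image in part \eqref{nu2}, which is handled by observing that $\hat{\nu}$ factors as $\wA \xrightarrow{\ \cong\ } \Hom(H/\ker\nu,\C^*) \inj \wH$, the first map being the isomorphism induced by $A\cong H/\ker\nu$ and the second being a closed immersion with image $V(\ker\nu)$ by definition of $V$.
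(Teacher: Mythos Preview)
Your proof is correct; both parts are indeed formal consequences of the definitions and the exactness of $\Hom(-,\C^*)$, exactly as you argue. The paper itself offers no proof, declaring the lemma ``clear'' from the preceding discussion, so your write-up simply makes explicit the routine verification the authors leave to the reader.
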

Consequently, $\im(\hat{\bar\nu})=V(\ker (\nu) )_0$.

\subsection{Intersections of translated subgroups}
\label{subsec:itt}

Before proceeding, we need to recall some results from \cite{SYZ},
which build on work of Hironaka \cite{Hi}. In what follows,
$T$ will be an abelian, reductive complex algebraic group.

\begin{prop}[\cite{SYZ}]
\label{prop:int2}
Let $\xi_1$ and $\xi_2$ be two subgroups of $H$,
and let $\eta_1$ and $\eta_2$ be two elements in
$T=\widehat{H}$.  Then
$\eta_1 V(\xi_1) \cap \eta_2 V(\xi_2) \ne \emptyset $
if and only if $\eta_1 \eta_2^{-1} \in V(\xi_1\cap\xi_2)$,
in which case
\[
\dim \eta_1 V(\xi_1) \cap \eta_2 V(\xi_2)=
\rank H - \rank (\xi_1+\xi_2).
\]
\end{prop}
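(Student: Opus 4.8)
The plan is to reduce the statement about translated algebraic subgroups of $T=\widehat{H}$ to purely group-theoretic facts about the subgroups $\xi_1,\xi_2\le H$, using the Pontryagin duality $V$ of \eqref{eq:pont dual}. The key point is that $V$ is an order-reversing equivalence of lattices (as recalled from \cite{SYZ}), so that $V(\xi_1)\cap V(\xi_2)=V(\xi_1+\xi_2)$; in particular the intersection of two (untranslated) algebraic subgroups is again an algebraic subgroup, with weight group $H/(\xi_1+\xi_2)$. I would begin by establishing the nonemptiness criterion: if $\eta_1 V(\xi_1)\cap \eta_2 V(\xi_2)\ne\emptyset$, pick a point $\rho$ in the intersection, write $\rho=\eta_1\rho_1=\eta_2\rho_2$ with $\rho_j\in V(\xi_j)$, and deduce $\eta_1\eta_2^{-1}=\rho_2\rho_1^{-1}$. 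This element lies in $V(\xi_1)\cdot V(\xi_2)$, but more is true: since $V(\xi_1\cap\xi_2)$ is the join of $V(\xi_1)$ and $V(\xi_2)$ in the lattice of algebraic subgroups (duality turns the meet $\xi_1\cap\xi_2$ into a join), we get $\eta_1\eta_2^{-1}\in V(\xi_1\cap\xi_2)$. Conversely, if $\eta_1\eta_2^{-1}\in V(\xi_1\cap\xi_2)=V(\xi_1)\cdot V(\xi_2)$, then we may factor $\eta_1\eta_2^{-1}=\zeta_1\zeta_2^{-1}$ with $\zeta_j\in V(\xi_j)$, whence $\eta_1\zeta_1^{-1}=\eta_2\zeta_2^{-1}$ is a common point of the two cosets.

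**Computing the dimension.** Once the intersection is known to be nonempty, I would translate the whole picture by $\eta_1^{-1}$, so that one coset becomes the subgroup $V(\xi_1)$ itself and the other becomes $\eta V(\xi_2)$ with $\eta=\eta_1^{-1}\eta_2\in V(\xi_1\cap\xi_2)$ (using the criterion just proved, possibly after noting $V(\xi_1\cap\xi_2)$ is symmetric in the two indices). The intersection $V(\xi_1)\cap \eta V(\xi_2)$ is then a coset of the algebraic group $V(\xi_1)\cap V(\xi_2)=V(\xi_1+\xi_2)$: indeed if $x,y$ both lie in it, then $xy^{-1}\in V(\xi_1)$ and also $xy^{-1}\in \eta V(\xi_2)\cdot(\eta V(\xi_2))^{-1}=V(\xi_2)$. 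Hence the dimension of the intersection equals $\dim V(\xi_1+\xi_2)$. Finally, $\dim V(\zeta)=\rank(H/\zeta)=\rank H-\rank\zeta$ for any subgroup $\zeta\le H$, because $V(\zeta)\cong\widehat{\overline\zeta/\zeta}\cdot V(\overline\zeta)$ has identity component the torus of rank $\rank(H/\overline\zeta)=\rank H-\rank\zeta$ (this is \eqref{eq:vdecomp} together with $\rank\xi=\rank\overline\xi$). Applying this with $\zeta=\xi_1+\xi_2$ gives $\dim\eta_1 V(\xi_1)\cap\eta_2 V(\xi_2)=\rank H-\rank(\xi_1+\xi_2)$, as claimed.

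**Main obstacle.** The routine parts are the coset-juggling; the step that requires genuine care is the identification $V(\xi_1\cap\xi_2)=V(\xi_1)\cdot V(\xi_2)$, i.e. that the lattice-theoretic join of the two algebraic subgroups coincides with their set-theoretic product. The subtlety is that $\xi_1\cap\xi_2$ need not be primitive even when $\xi_1$ and $\xi_2$ are, so $V(\xi_1\cap\xi_2)$ may be disconnected, and one must check that all of its components — not just the identity component — are hit by $V(\xi_1)\cdot V(\xi_2)$. This is where invoking \cite{SYZ} for the order-reversing lattice equivalence (so that $V$ sends the meet $\xi_1\cap\xi_2$ to the join of $V(\xi_1)$ and $V(\xi_2)$, and joins of algebraic subgroups are computed as products) does the essential work; I would cite it rather than reprove it. A secondary point worth a sentence is that the coset $V(\xi_1)\cap\eta V(\xi_2)$, being a torsor over a possibly disconnected group, is itself possibly disconnected, but its dimension is still that of $V(\xi_1+\xi_2)$, which is all the statement asserts.
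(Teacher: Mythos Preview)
Your argument is correct. The paper does not actually prove Proposition~\ref{prop:int2}; it is quoted from \cite{SYZ} without proof, so there is nothing in the present paper to compare your approach against. That said, your proof is the natural one in the duality framework of \S\ref{sect:inttt}: the identification $V(\xi_1)\cap V(\xi_2)=V(\xi_1+\xi_2)$ is immediate from the definition $V(\xi)=\{\rho\in\widehat{H}\mid \rho|_{\xi}=1\}$, and the identification $V(\xi_1)\cdot V(\xi_2)=V(\xi_1\cap\xi_2)$ follows from applying the exact functor $\Hom(-,\C^*)$ to the short exact sequence
\[
0\longrightarrow H/(\xi_1\cap\xi_2)\longrightarrow H/\xi_1\oplus H/\xi_2\longrightarrow H/(\xi_1+\xi_2)\longrightarrow 0,
\]
which makes the ``main obstacle'' you flag entirely routine (and gives the disconnected components for free, since exactness of $\Hom(-,\C^*)$ uses only divisibility of $\C^*$). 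The rest of your argument---the coset manipulation and the dimension count via \eqref{eq:vdecomp}---is straightforward and correct.
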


\begin{prop}[\cite{SYZ}]
\label{prop:dimension equal}
Let $C$ and $V$ be two algebraic subgroups of $T$.
\begin{romenum}
\item \label{e1}
Suppose $\alpha_1$, $\alpha_2$, and $\eta$ are  torsion
elements in $T$ such that $\alpha_i C \cap \eta V \neq \emptyset$,
for $i=1, 2$.  Then
\[
\dim\, (\alpha_1 C \cap \eta V)=\dim\, (\alpha_2 C \cap \eta V).
\]
\item \label{e2}
Suppose $\alpha$
and $\eta$ are torsion elements in $T$, of coprime order. Then
\[
C \cap \eta V = \emptyset \implies \alpha C \cap  \eta V= \emptyset.
\]
\end{romenum}
\end{prop}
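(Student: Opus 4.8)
The plan is to reduce both statements to elementary coset manipulations inside the abelian group $T$, bringing in the Pontryagin dictionary of \S\ref{sect:inttt} and Proposition~\ref{prop:int2} only where it is convenient.

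For part~\eqref{e1}, I would start from the observation that, whenever $\alpha C\cap \eta V\neq\emptyset$, this intersection is a \emph{single coset} of the algebraic subgroup $C\cap V$: picking $x\in \alpha C\cap \eta V$, we have $\alpha C=xC$ and $\eta V=xV$, hence $\alpha C\cap \eta V=x(C\cap V)$. Since translation by $x$ is an isomorphism of varieties, $\dim(\alpha C\cap \eta V)=\dim(C\cap V)$, a number that does not depend on the translating element. Taking first $\alpha=\alpha_1$ and then $\alpha=\alpha_2$ yields the asserted equality; note that this argument uses neither the torsion hypothesis on $\alpha_1,\alpha_2,\eta$ nor Proposition~\ref{prop:int2}. (If one prefers, writing $C=V(\xi)$ and $V=V(\zeta)$ under the correspondence \eqref{eq:pont dual}, Proposition~\ref{prop:int2} gives at once that both dimensions equal $\rank H-\rank(\xi+\zeta)$.)

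For part~\eqref{e2}, I would prove the contrapositive: assuming $\alpha C\cap \eta V\neq\emptyset$, I want to conclude $C\cap \eta V\neq\emptyset$. Since $T$ is abelian, the product $C\cdot V$ is again an algebraic subgroup of $T$, and a short coset computation shows that $\alpha C\cap \eta V\neq\emptyset$ is equivalent to $\alpha\eta^{-1}\in C\cdot V$, while $C\cap \eta V\neq\emptyset$ is equivalent to $\eta\in C\cdot V$. So everything comes down to deducing $\eta\in C\cdot V$ from $\alpha\eta^{-1}\in C\cdot V$. Let $m=\ord(\alpha)$ and $n=\ord(\eta)$, which are coprime by hypothesis. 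Raising $\alpha\eta^{-1}$ to the $m$-th power removes $\alpha$, so $\eta^{-m}\in C\cdot V$, whence $\eta^{m}\in C\cdot V$. Choosing $a,b\in\Z$ with $am+bn=1$, we get $\eta=(\eta^{m})^{a}(\eta^{n})^{b}=(\eta^{m})^{a}\in C\cdot V$, using $\eta^{n}=1$. This completes the contrapositive, and hence part~\eqref{e2}.

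The two equivalences of the form ``intersection nonempty $\Longleftrightarrow$ difference lies in the product subgroup'' are routine coset bookkeeping (and are a particular case of the nonemptiness criterion in Proposition~\ref{prop:int2} via Pontryagin duality). The one place where some genuine care is required is the coprimality step in part~\eqref{e2}: the whole implication rests on being able to kill the factor $\alpha$ by exponentiation and then recover $\eta$ through a Bézout identity. That step is brief, but it is the crux of the matter, and it is exactly where the hypothesis that $\alpha$ and $\eta$ have coprime order enters.
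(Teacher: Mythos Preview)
Your argument is correct in both parts. Note, however, that the paper does not supply its own proof of this proposition: it is quoted from \cite{SYZ} and used as a black box. So there is no ``paper's proof'' to compare against here.

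That said, your approach is worth recording because it is more elementary than one might expect from the surrounding machinery. For part~\eqref{e1} you bypass Proposition~\ref{prop:int2} entirely: the observation that a nonempty intersection of two cosets $\alpha C$ and $\eta V$ is itself a coset of $C\cap V$ is pure group theory, valid in any abelian group, and immediately gives that the dimension is $\dim(C\cap V)$ independently of the translates. You are right that the torsion hypotheses on $\alpha_1,\alpha_2,\eta$ are not needed for this part. For part~\eqref{e2}, your reduction to the membership problem $\alpha\eta^{-1}\in C\cdot V \implies \eta\in C\cdot V$ and the B\'ezout trick is clean and isolates exactly where coprimality enters. The only structural fact you use beyond elementary abelian group theory is that $C\cdot V$ is again a subgroup, which holds because $T$ is abelian; everything else is coset bookkeeping.
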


Here is a corollary, which will be useful later on.

\begin{corollary}
\label{cor:empty}
Let $C$ and $V$ be two algebraic subgroups of $T$.
Suppose $\alpha$ and $\rho$ are torsion
elements in $T$, such that
$\rho \notin CV$, $\alpha^{-1}\rho \in CV$, and $\dim(C \cap V)> 0$.
Then $C \cap \rho V=\emptyset$ and $\dim(\alpha C \cap \rho V) > 0$.
\end{corollary}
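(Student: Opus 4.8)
The plan is to reduce everything to the elementary observation that a nonempty intersection of two translates of algebraic subgroups is itself a translate of the intersection of those subgroups, and in particular has the same dimension. Since $T$ is abelian, the product $CV$ is again an algebraic subgroup of $T$, and the two hypotheses $\rho\notin CV$ and $\alpha^{-1}\rho\in CV$ control precisely the (non)emptiness of the intersections in question.

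First I would prove that $C\cap\rho V=\emptyset$: if some $x$ lay in this set, then $x\in C$ and $x=\rho v$ for some $v\in V$, so $\rho=xv^{-1}\in CV$, contradicting the hypothesis. (Note this uses neither $\alpha$ nor the positivity of $\dim(C\cap V)$.) Next, to see that $\alpha C\cap\rho V\neq\emptyset$, I would unwind $\alpha^{-1}\rho\in CV$: writing $\alpha^{-1}\rho=cv$ with $c\in C$ and $v\in V$, the element $\alpha c=\rho v^{-1}$ lies in both $\alpha C$ and $\rho V$. Finally, for the dimension, I would pick any $t\in\alpha C\cap\rho V$; since $C$ and $V$ are subgroups and $t$ lies in each of the cosets $\alpha C$ and $\rho V$, we get $\alpha C=tC$ and $\rho V=tV$, whence $\alpha C\cap\rho V=t(C\cap V)$ and therefore $\dim(\alpha C\cap\rho V)=\dim(C\cap V)>0$.

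I do not expect a real obstacle here; the only points requiring a word of care are that $CV$ is a genuine (algebraic) subgroup---which holds because $T$ is abelian, so $CV=VC$---and that translates of algebraic subgroups may be manipulated as cosets. If one prefers to stay within the language already set up, the dimension count can instead be obtained from Proposition~\ref{prop:int2}: writing $C=V(\xi_1)$ and $V=V(\xi_2)$ under the correspondence \eqref{eq:pont dual}, one has $CV=V(\xi_1\cap\xi_2)$, so $\alpha^{-1}\rho\in CV$ is exactly the nonemptiness criterion of that proposition, and it yields $\dim(\alpha C\cap\rho V)=\rank H-\rank(\xi_1+\xi_2)=\dim(C\cap V)$. (The hypothesis that $\alpha$ and $\rho$ are torsion is not actually needed for this argument.)
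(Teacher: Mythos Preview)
Your argument is correct and follows essentially the same coset-based reasoning as the paper. The paper's proof simply packages the (non)emptiness criteria by citing Proposition~\ref{prop:int2} and then obtains the dimension equality by invoking Proposition~\ref{prop:dimension equal}\eqref{e1} (applied with $\alpha_1=1$, $\alpha_2=\alpha$, $\eta=\rho$), whereas you unpack both steps directly via the elementary observation $\alpha C\cap\rho V=t(C\cap V)$; your remark that the torsion hypothesis on $\alpha$ and $\rho$ is not actually needed is correct, and indeed your direct argument (or the alternative via Proposition~\ref{prop:int2} that you mention) bypasses Proposition~\ref{prop:dimension equal}, which is where that hypothesis enters in the paper's version.
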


\begin{proof}
By Proposition \ref{prop:int2},
\[
\rho \notin C \cdot V \Leftrightarrow C \cap \rho V=\emptyset \text{ and }
\alpha^{-1}\rho \in C\cdot V  \Leftrightarrow \alpha C \cap \rho V \neq\emptyset.
\]
By Proposition \ref{prop:dimension equal}, $\dim(\alpha C \cap \rho V)=
\dim(C \cap V)> 0$.  The conclusion follows.
\end{proof}

\section{An algebraic analogue of the exponential tangent cone}
\label{sect:gen tcone}

We now associate to each subvariety $W \subset T$
and integer $d\ge 1$ a finite collection, $\Xi_d(W)$,
of subgroups of the weight group $H=\check{T}$,
which allows us to generalize the exponential tangent
cone construction from \cite{DPS-duke}.

\subsection{A collection of subgroups}
\label{subsec:tau1}
Let $T$ be an abelian, reductive, complex algebraic group,
and consider a Zariski closed subset $W\subset T$.
The translated subtori contained in $W$ define an
interesting collection of subgroups of the discrete
group $H=\check{T}$.

\begin{definition}
\label{def:tau1}
Given a subvariety $W \subset T$, and a positive integer $d$,
let $\Xi_d(W)$ be the collection of all subgroups $\xi \le H$
for which the following two conditions are satisfied:
\begin{romenum}
\item \label{xi1}
The determinant group
$\overline{\xi}/\xi$ is cyclic of order dividing $d$.
\item \label{xi2}
There is a generator $\eta\in \widehat{\overline{\xi}/\xi}$ such that
$\eta\cdot V(\overline{\xi})$ is a maximal, positive-dimensional,
torsion-translated subtorus in $W$.
\end{romenum}
\end{definition}

Clearly, if $d\mid m$, then $\Xi_d(W) \subseteq \Xi_m(W)$.
Although this is not a priori clear from the definition, we shall see in
Proposition \ref{prop:bigxi} that $\Xi_d(W)$ is finite,
for each $d\ge 1$.

To gain more insight into this concept, let us work out
what the sets $\Xi_d(W)$ look like in the case when $W$
is a coset of an algebraic subgroup of $T$.

\begin{lemma}
\label{lem:eta vxi}
Suppose $W=\eta V(\chi)$, where $\chi$ is a subgroup of $H$
and $\eta\in \widehat{H}$ is a torsion element.
Write $V(\chi)=\bigcup_{\rho\in \widehat{\overline\chi/\chi}}
\rho V(\overline\chi)$.
Then
\[
\Xi_d(W) = \Big\{\, \xi \le H \,\big|\, \text{$\exists\,
\rho\in \widehat{\overline\chi/\chi}$ such that
$\ord(\eta \rho)\, |\, d$ and $\widehat{H/\xi} = \bigcup_{m \ge 1}
(\eta\rho)^m V(\overline\chi)$}\, \Big.
\Big\} .
\]
\end{lemma}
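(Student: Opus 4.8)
The plan is to unravel both sides of the claimed set equality directly from Definition~\ref{def:tau1}, using the structural description of cosets of algebraic subgroups from \S\ref{subsec:primitive}. First I would recall that for any subgroup $\xi \le H$, the irreducible components of $V(\xi)$ are the translates $\rho' V(\overline{\xi})$ with $\rho'$ ranging over $\widehat{\overline{\xi}/\xi}$, by \eqref{eq:vdecomp}; in particular, a subgroup $\xi$ belongs to $\Xi_d(W)$ precisely when $\overline{\xi}/\xi$ is cyclic of order dividing $d$ and, for a generator $\eta'$ of $\widehat{\overline{\xi}/\xi}$, the translated subtorus $\eta' V(\overline{\xi}) = \bigcup_{m\ge 1}(\eta')^m V(\overline{\xi})$ is a maximal positive-dimensional torsion-translated subtorus of $W$. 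The equality $\eta' V(\overline{\xi}) = \bigcup_{m\ge 1}(\eta')^m V(\overline{\xi})$ holds because $\eta'$ has finite order, so the union over powers is just the full coset of $\widehat{\overline{\xi}/\xi}\cdot V(\overline{\xi})$ intersected with... — more carefully, the cyclic group generated by $\eta'$ modulo $V(\overline{\xi})$ is exactly $\overline{\xi}/\xi$, so $\bigcup_m (\eta')^m V(\overline{\xi})$ is a single coset under $\widehat{H/\overline\xi}$-translation only when $\eta'$ generates; this is the bookkeeping I need to get right.

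Next I would analyze the right-hand side. Since $W = \eta V(\chi) = \bigcup_{\rho \in \widehat{\overline\chi/\chi}} \eta\rho\, V(\overline\chi)$, the maximal positive-dimensional torsion-translated subtori contained in $W$ are, modulo a dimension/maximality argument, exactly the irreducible components $\eta\rho\, V(\overline\chi)$ for $\rho \in \widehat{\overline\chi/\chi}$: no translated subtorus of $W$ can have dimension exceeding $\dim V(\overline\chi)$, since each such subtorus lies in some component $\eta\rho\,V(\overline\chi)$ (a coset of a torus can only meet a finite union of disjoint torus-cosets if it lies in one of them, by irreducibility), and the components themselves realize that maximal dimension, hence are maximal. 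So I would argue that a subgroup $\xi$ lies in $\Xi_d(W)$ iff $V(\overline\xi) = V(\overline\chi)$ (equivalently $\overline\xi = \overline\chi$, by the lattice equivalence of \S\ref{subsec:functor}) and there is a generator $\eta'$ of $\widehat{\overline\xi/\xi}$ with $\eta' V(\overline\chi) = \eta\rho\, V(\overline\chi)$ for some $\rho \in \widehat{\overline\chi/\chi}$, and $\ord(\eta') = \abs{\overline\xi/\xi}$ divides $d$. Writing $\eta' = \eta\rho$ (any element of the coset generating the same cyclic translate works), the condition $\ord(\eta\rho) \mid d$ matches, and $\widehat{H/\xi} = V(\xi) = \bigcup_{m\ge 1}(\eta\rho)^m V(\overline\chi)$ since $\overline\xi = \overline\chi$ and $\eta\rho$ generates $\widehat{\overline\xi/\xi}$. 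This gives containment of the right-hand set in $\Xi_d(W)$, and conversely.

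The main obstacle, and the step I would be most careful about, is the maximality bookkeeping together with the passage between ``$\eta'$ a generator of $\widehat{\overline\xi/\xi}$ with $\eta' V(\overline\xi)$ a component of $W$'' and ``$\exists \rho \in \widehat{\overline\chi/\chi}$ with $\ord(\eta\rho)\mid d$ and $\widehat{H/\xi} = \bigcup_m (\eta\rho)^m V(\overline\chi)$.'' Concretely: from $\eta' V(\overline\xi)$ being a component of $W = \eta V(\chi)$ one must deduce $\overline{\xi} = \overline{\chi}$ (so that $V(\overline\xi) = V(\overline\chi)$), which uses that two torsion-translated subtori coincide only if their underlying tori coincide, plus the maximal-dimension property to exclude a lower-dimensional subtorus sitting inside a component. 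Then one must check that the cyclic subgroup of $\widehat{H/\overline\chi}$ generated by the image of $\eta'$ equals the image of the subgroup generated by $\eta$ and $\widehat{\overline\chi/\chi}$'s relevant element — i.e. that $\eta' \equiv \eta\rho$ for a suitable $\rho$, and that $\eta'$ generating $\widehat{\overline\xi/\xi}$ is equivalent to the displayed union $\bigcup_{m\ge1}(\eta\rho)^m V(\overline\chi)$ reconstructing $V(\xi) = \widehat{H/\xi}$. This is elementary finite-abelian-group and torus-coset combinatorics, but it is the crux; everything else is a direct translation through the Pontryagin correspondence of \S\ref{sect:inttt}. I would present it by proving the two inclusions separately, each reduced to the statement ``$\xi \le H$ with $\overline\xi = \overline\chi$, and $\eta\rho$ generates $\widehat{\overline\xi/\xi}$ for some torsion $\rho \in \widehat{\overline\chi/\chi}$'' being the common reformulation of membership in both sides.
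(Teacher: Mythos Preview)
The paper states this lemma without proof, so there is no argument of the paper's to compare against. Your approach---identifying the maximal positive-dimensional torsion-translated subtori of $W=\eta V(\chi)$ as exactly the irreducible components $\eta\rho\,V(\overline\chi)$ for $\rho\in\widehat{\overline\chi/\chi}$, deducing $\overline\xi=\overline\chi$ for any $\xi\in\Xi_d(W)$, and then translating the generator condition through the Pontryagin correspondence---is the natural one and is what the paper evidently has in mind.

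There is, however, a genuine subtlety in the step you label ``the crux,'' and you pass over it too quickly. When you write ``$\eta'=\eta\rho$,'' you only have $\eta' V(\overline\chi)=\eta\rho\,V(\overline\chi)$; the two elements agree modulo $V(\overline\chi)$ but need not coincide in $\widehat{H}$, so $\ord(\eta')\mid d$ does not automatically give $\ord(\eta\rho)\mid d$. Concretely, take $H=\Z^2$, $\chi=\Z\oplus 0$ (primitive, so $\rho=1$ is forced) and $\eta=(i,\zeta_3)$ with $\zeta_3$ a primitive cube root of unity: then $W=\{i\}\times\C^*$ and $4\Z\oplus 0\in\Xi_4(W)$, yet $\ord(\eta\rho)=12\nmid 4$, so the right-hand side is empty for $d=4$. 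Thus the inclusion $\Xi_d(W)\subseteq\text{RHS}$ needs either the convention that $\ord(\eta\rho)$ is taken modulo $V(\overline\chi)$, or the harmless extra hypothesis that $\eta$ is chosen of minimal order in its coset $\eta V(\chi)$; under either reading your argument goes through cleanly. The reverse inclusion $\text{RHS}\subseteq\Xi_d(W)$---which is the only direction invoked later, in the proof of Theorem~\ref{thm:uprho1}---is unaffected, and your sketch for it is correct.
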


\begin{corollary}
\label{cor:eta vxi}
If $\chi$ is a primitive subgroup of $H$ and $\eta$ is an element of
order $d$ in $\wH$, then $\Xi_d(\eta V(\chi))$ consists of the single
subgroup $\xi\le \chi$ for which $\overline{\xi}=\chi$ and
$\widehat{\chi/\xi}=\langle \eta \rangle$.
\end{corollary}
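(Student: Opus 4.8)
The plan is to derive this as a special case of Lemma \ref{lem:eta vxi}. Take $\chi$ primitive, so $\overline{\chi}=\chi$, and take $\eta\in\wH$ of order exactly $d$. Since $\chi$ is primitive, the determinant group $\overline{\chi}/\chi$ is trivial, so the decomposition $V(\chi)=\bigcup_{\rho\in\widehat{\overline{\chi}/\chi}}\rho V(\overline{\chi})$ has a single term, namely $V(\chi)=V(\overline{\chi})$ itself, and the only available $\rho$ is the trivial character. Feeding this into the formula of Lemma \ref{lem:eta vxi}, the condition on $\xi$ becomes: $\ord(\eta)\mid d$ (which holds, since $\ord(\eta)=d$) and $\widehat{H/\xi}=\bigcup_{m\ge1}\eta^m V(\chi)$. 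So $\Xi_d(\eta V(\chi))$ is exactly the set of subgroups $\xi\le H$ with $\widehat{H/\xi}=\bigcup_{m\ge1}\eta^m V(\chi)$.

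Next I would translate the condition $\widehat{H/\xi}=\bigcup_{m\ge1}\eta^m V(\chi)$ through the Pontryagin duality of \S\ref{sect:inttt}. The right-hand side $\bigcup_{m\ge1}\eta^m V(\chi)$ is the algebraic subgroup of $T$ generated by $\eta$ and $V(\chi)$; since $\eta$ has order $d$ and $V(\chi)=V(\overline{\chi})$ is connected, this subgroup has $V(\chi)$ as its identity component and component group the cyclic group $\langle\eta\rangle$ (of order dividing $d$, hence of order dividing $d$; the order is $d/\gcd$ of $d$ with the order of the image of $\eta$ in $T/V(\chi)$, but in any event it is cyclic generated by the class of $\eta$). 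By \eqref{eq:vdecomp}, any $V(\xi)$ decomposes as $\widehat{\overline{\xi}/\xi}\cdot V(\overline{\xi})$ with identity component $V(\overline{\xi})$ and component group $\overline{\xi}/\xi$. Matching identity components gives $V(\overline{\xi})=V(\chi)$, hence $\overline{\xi}=\chi$ by the lattice equivalence of \cite{SYZ} (primitive subgroups correspond bijectively to connected algebraic subgroups). Matching component groups then forces $\widehat{\overline{\xi}/\xi}=\widehat{\chi/\xi}=\langle\eta\rangle$, which is condition (ii) of Definition \ref{def:tau1}; and since $\langle\eta\rangle$ is cyclic of order dividing $d$, condition (i) is automatic. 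Conversely, the unique $\xi$ with $\overline{\xi}=\chi$ and $\widehat{\chi/\xi}=\langle\eta\rangle$ plainly satisfies the displayed condition.

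Finally I would check uniqueness: $\xi$ is determined by $\chi=\overline{\xi}$ together with the subgroup $\widehat{\chi/\xi}\le\widehat{H}$, because $\widehat{\chi/\xi}$ identifies $\chi/\xi$ as a finite subgroup of $\widehat{H}$ and hence pins down $\xi$ as its preimage kernel inside $\chi$ under $\chi\surj\widehat{\widehat{\chi/\xi}}$; concretely, $\xi=\ker(\chi\to\Hom(\langle\eta\rangle,\C^*))$. I do not expect a genuine obstacle here — the content is entirely in correctly unwinding Lemma \ref{lem:eta vxi} and the duality dictionary of \S\ref{sect:inttt}. The one point that needs a little care is the bookkeeping of component groups: one should verify that the subgroup $\langle\eta\rangle\cdot V(\chi)$ really has component group generated by the image of $\eta$ and that this image has the order one expects, using that $\eta\cdot V(\chi)$ being a \emph{maximal} positive-dimensional torsion-translated subtorus in $W=\eta V(\chi)$ is consistent with $V(\chi)$ being the whole identity component. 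This maximality is where the hypothesis that $\chi$ is primitive (rather than an arbitrary subgroup) does its work, and it is the step I would write out most carefully.
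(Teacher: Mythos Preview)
Your proposal is correct and follows precisely the paper's intended route: the corollary is stated immediately after Lemma~\ref{lem:eta vxi} with no separate proof, so the paper's argument is exactly the specialization you carry out (take $\chi$ primitive, so the only $\rho$ is trivial, and then read off $\xi$ via the duality of \S\ref{sect:inttt} and \eqref{eq:vdecomp}). Your flagged caveat about the component-group bookkeeping is the only place requiring care, and you have identified it accurately.
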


Now note that $\Xi_d$ commutes with unions:
if  $W_1$ and $W_2$ are two subvarieties of $T$, then
\begin{equation}
\label{eq:xid union}
\Xi_{d}(W_1\cup W_2)= \Xi_{d}(W_1) \cup  \Xi_{d}(W_2).
\end{equation}
Lemma \ref{lem:eta vxi}, then, provides an algorithm for computing
the sets $\Xi_d(W)$, whenever $W$ is a (finite) union of torsion-translated
algebraic subgroups of $T$.

\begin{example}
\label{ex:xidu}
Let $H=\Z^2$, and consider the subvariety
$W=\{ (t,1) \mid t \in \C^*\} \cup \{ (-1,t) \mid t \in \C^*\}$ inside $T=(\C^*)^2$.
Note that $W=V(\xi_1) \cup \eta V(\xi_2)$, where $\xi_1=0\oplus \Z$,
$\xi_2=\Z\oplus 0$, and $\eta=(-1,1)$. Hence,
$\Xi_d(W)=\{\xi_1\}$ if $d$ is odd, and $\Xi_d(W)=\{\xi_1,2 \xi_2\}$
if $d$ is even.
\end{example}

\subsection{The exponential map}
\label{subsec:vexp}

Consider now the lattice
\begin{equation}
\label{eq:dual}
\HH=H^{\vee} :=\Hom(H, \Z).
\end{equation}
Evidently, $\HH \cong H/\Tors(H)$.  Moreover, each subgroup
$\xi\le H$ gives rise to a sublattice $(H/\xi)^{\vee} \le H^{\vee}$.

Let $\Lie(T)$ be the Lie algebra of the complex algebraic group $T$.
The exponential map $\exp\colon \Lie(T) \to T$
is an analytic map, whose image is $T_0$.
Let us identify $T_0= \Hom(\HH^{\vee}, \C^*)$ and
$\Lie(T) =\Hom (\HH^{\vee},\C)$.  Under these
identifications, the corestriction to the image of
the exponential map can be written as
\begin{equation}
\label{eq:exp}
\exp=\Hom(-, e^{2 \pi \ii z})\colon\Hom(\HH^{\vee}, \C) \to
\Hom(\HH^{\vee}, \C^*),
\end{equation}
where $z\mapsto e^z$ is the usual exponential map from $\C$ to $\C^*$.
Finally, upon identifying $\Hom (\HH^{\vee},\C)$ with $\HH\otimes \C$,
we see that $T_0=\exp(\HH\otimes \C)$.

The correspondence $T\leadsto  \HH=(\check{T})^{\vee}$
sends an algebraic subgroup $W$ inside $T$ to the
sublattice $\chi=(\check{W})^{\vee}$
inside $\HH$.  Clearly, $\chi=\Lie(W) \cap \HH$  is a
primitive lattice; furthermore,
$\exp(\chi \otimes \C)=W_0$.
As shown in \cite{SYZ}, we have
\begin{equation}
\label{eq:chi}
V((\HH/\chi)^{\vee})= \exp(\chi \otimes \C),
\end{equation}
where both sides are connected algebraic subgroups
inside $T_0=\exp(\HH \otimes \C)$.

\subsection{Exponential interpretation}
\label{subsec:tau exp}

The construction from \S\ref{subsec:tau1} allows us to associate
to each subvariety $W\subset T$
and each integer $d\ge 1$ a subset $\tau_d(W) \subseteq H^{\vee}$,
given by
\begin{equation}
\label{eq:tau1w}
\tau_d(W) = \bigcup_{\xi \in \Xi_d(W)} (H/\xi)^{\vee}.
\end{equation}

The next lemma reinterprets the set $\tau_1(W)$ in terms of the ``exponential
tangent cone" construction introduced in \cite{DPS-duke} and studied in detail
in \cite{Su14}.

\begin{lemma}
\label{lemma:tau1}
For every subvariety $W\subset T$,
\begin{equation}
\label{eq:tau1 again}
\tau_1(W)= \{ x\in H^{\vee} \mid \exp(\lambda x) \in W,
\text{ for all $\lambda\in \C$} \}.
\end{equation}
\end{lemma}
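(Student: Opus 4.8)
The plan is to unwind both sides of \eqref{eq:tau1 again} through the exponential description of algebraic subtori recalled in \S\ref{subsec:vexp}, reducing the statement to the characterization of $\Xi_1(W)$ coming from Lemma~\ref{lem:eta vxi} (or rather its $d=1$ specialization). Write $\HH=H^{\vee}$ and recall from \eqref{eq:tau1w} that $\tau_1(W)=\bigcup_{\xi\in\Xi_1(W)}(H/\xi)^{\vee}$. Since $d=1$ forces the determinant group $\overline\xi/\xi$ to be trivial, every $\xi\in\Xi_1(W)$ is primitive, so $(H/\xi)^{\vee}=\Lie(V(\xi))\cap\HH$ is itself a primitive sublattice, and by \eqref{eq:chi} we have $\exp\bigl((H/\xi)^{\vee}\otimes\C\bigr)=V(\xi)$. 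Thus $\Xi_1(W)$ is exactly the set of primitive subgroups $\xi\le H$ such that $V(\xi)$ is a maximal positive-dimensional (necessarily untranslated, by $d=1$) subtorus contained in $W$.

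First I would prove the inclusion $\tau_1(W)\subseteq\{x\mid \exp(\lambda x)\in W\ \forall\lambda\in\C\}$. Take $x\in(H/\xi)^{\vee}$ for some $\xi\in\Xi_1(W)$. Then $\lambda x\in (H/\xi)^{\vee}\otimes\C=\Lie(V(\xi))$ for all $\lambda\in\C$, so $\exp(\lambda x)\in\exp(\Lie(V(\xi)))=V(\xi)\subseteq W$, using that $V(\xi)$ is connected so the exponential surjects onto it. This direction is essentially formal once the identifications of \S\ref{subsec:vexp} are in place.

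For the reverse inclusion, suppose $x\in\HH$ satisfies $\exp(\lambda x)\in W$ for all $\lambda\in\C$; I want to produce $\xi\in\Xi_1(W)$ with $x\in(H/\xi)^{\vee}$. If $x=0$ the statement is vacuous, so assume $x\neq0$. The one-parameter image $\{\exp(\lambda x)\mid\lambda\in\C\}$ is a connected analytic subgroup of $T_0$; its Zariski closure $C$ is a connected algebraic subtorus of $T$, and $C\subseteq W$ since $W$ is Zariski closed and already contains the dense analytic curve. Now enlarge $C$ to a \emph{maximal} connected positive-dimensional subtorus $C'$ of $T$ contained in $W$ (possible since the chain of subtori containing $C$ and lying in $W$ is finite in length, being a chain of subgroups of $H$ under the correspondence of \S\ref{subsec:primitive}). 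Let $\xi=\epsilon(C')\le H$ be the corresponding primitive subgroup, so $V(\xi)=C'$ and $\xi\in\Xi_1(W)$ by the description above. Finally, $\{\exp(\lambda x)\}\subseteq C'=\exp(\Lie(C'))=\exp((H/\xi)^{\vee}\otimes\C)$; I would conclude $x\in(H/\xi)^{\vee}$ by noting that $\exp$ restricted to $\Lie(C')\to C'$ has kernel the lattice $(H/\xi)^{\vee}$ (the weight lattice of $C'$), and that $x$, being an integral point of $\HH$ lying in the subspace $\Lie(C')=(H/\xi)^{\vee}\otimes\C$, must lie in the primitive sublattice $\HH\cap\Lie(C')=(H/\xi)^{\vee}$.

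The main obstacle, and the step deserving the most care, is the reverse inclusion—specifically verifying that the subtorus $C'$ can be taken maximal and that $x$ genuinely lands in the \emph{integral} lattice $(H/\xi)^{\vee}$ rather than merely in its complexification. The maximality is clean because of the lattice-theoretic finiteness from \S\ref{subsec:primitive}, but one must be slightly careful that "maximal positive-dimensional torus in $W$" in Definition~\ref{def:tau1}\eqref{xi2} agrees with what the chain argument produces; the $d=1$ case is forgiving here since there is no translation to worry about. The integrality point is handled by the observation that $(H/\xi)^{\vee}$ is a \emph{primitive} sublattice of $\HH$ (as $H/\xi$ is torsion-free for $\xi$ primitive), so $\HH\cap\bigl((H/\xi)^{\vee}\otimes\C\bigr)=(H/\xi)^{\vee}$ exactly.
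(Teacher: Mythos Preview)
Your argument is correct and follows essentially the same route as the paper's proof. Both directions match: the forward inclusion is the same formal computation $\exp(\C x)\subseteq V(\xi)\subseteq W$, and for the reverse inclusion the paper, like you, passes from the one-dimensional subtorus $\exp(\C x)$ to a maximal subtorus $V(\xi)\subseteq W$ containing it, then deduces $x\in(H/\xi)^{\vee}$. The only cosmetic difference is that the paper phrases the last step via the order-reversing correspondence ($V(\ker x)\subseteq V(\xi)\Rightarrow\xi\subseteq\ker x\Rightarrow x\in(H/\xi)^{\vee}$), whereas you use the equivalent lattice-intersection argument $\HH\cap\Lie(V(\xi))=(H/\xi)^{\vee}$; your Zariski-closure step is in fact unnecessary since for integral $x$ the image $\exp(\C x)$ is already the algebraic subtorus $V(\ker x)$.
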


\begin{proof}
Denote by $\tau$ the right-hand side of \eqref{eq:tau1 again}.
Given a non-zero homomorphism $x\colon H\to \Z$
such that $x\in \tau$, the subgroup $\ker(x)\le H$ is primitive
and $V(\ker(x))=\exp(\C x)\subseteq W$. Hence, we can
find a subgroup $\xi\le H$ such that $\xi$ is primitive,
$V(\ker(x)) \subseteqq V(\xi)$, and $V(\xi)\subseteq W$
is a maximal subtorus.  By \eqref{eq:chi}, we have that
$V(\xi)=\exp((H/\xi)^{\vee}\otimes \C)$, which implies
$x \in \tau_1(W)$.

Conversely, for any non-zero element $x\in \tau_1(W)$,
there is a subgroup $\xi\le H$ such that $\xi \in \Xi_1(W)$ and
$x \in (H/\xi)^{\vee}$.  Thus, $\C x \subseteq (H/\xi)^{\vee} \otimes \C$,
and so $\exp(\C x) \subseteq \exp((H/\xi)^{\vee} \otimes \C)=V(\xi) \subseteq W$.
Since $x \neq 0$, the map $x\colon H \to \Z$ is surjective; thus,
$V(\ker(x))=V((\HH/\chi)^{\vee})$, where $\chi$ is the rank $1$
sublattice of $\HH$ generated by $x$. Hence,
$V(\ker(x))=\exp(\chi \otimes \C) \subseteq W$, and so $x\in \tau$.
\end{proof}

Using now the characterization of exponential tangent cones
given in \cite{DPS-duke, Su14}, we obtain the following
immediate corollary.

\begin{corollary}
\label{cor:exp tcone}
$\tau_1(W)$ is a finite union of subgroups of $H^{\vee}$.
\end{corollary}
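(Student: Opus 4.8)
The plan is to reduce the corollary to the known linear structure of the classical exponential tangent cone. By Lemma~\ref{lemma:tau1}, $\tau_1(W)$ is precisely the set of $x\in H^\vee$ for which the one-parameter subgroup $\exp(\C x)$ is contained in $W$. First I would dispose of the trivial case: if $1\notin W$ then $\exp(0\cdot x)=1\notin W$ for every $x$, so $\tau_1(W)=\emptyset$, which is (vacuously) a finite union of subgroups; hence assume $1\in W$ from now on. Then, under the identifications $\Lie(T)=\HH\otimes\C$ and $T_0=\exp(\HH\otimes\C)$ from \S\ref{subsec:vexp}, one has $\tau_1(W)=\TC_1(W)\cap H^\vee$, where $\TC_1(W):=\{\,v\in\HH\otimes\C\mid \exp(\lambda v)\in W\ \text{for all}\ \lambda\in\C\,\}$ is the exponential tangent cone of $W$ at the identity, in the sense of \cite{DPS-duke,Su11}.

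The second step is to invoke the structure theorem for $\TC_1(W)$ from those references: it is a finite union of rational linear subspaces of $\HH\otimes\C$. For a self-contained account I would recall the mechanism: if $\exp(\C v)\subseteq W$, then the map $\lambda\mapsto\exp(\lambda v)$ is a one-parameter subgroup of $T_0$, its Zariski closure is a connected algebraic subgroup of $T_0$ hence a subtorus $S\subseteq W$, and $v\in\Lie(S)$; conversely, $\Lie(S)\subseteq\TC_1(W)$ for every subtorus $S\subseteq W$. Therefore $\TC_1(W)=\bigcup_{k=1}^{m}\Lie(S_k)$, where $S_1,\dots,S_m$ are the maximal subtori contained in $W$; and each $\Lie(S_k)$ is defined over $\Q$, being $\chi_k\otimes\C$ for the primitive sublattice $\chi_k=\Lie(S_k)\cap\HH$, cf.~\eqref{eq:chi}.

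The final step is purely formal: intersecting with the lattice gives $\tau_1(W)=\bigcup_{k=1}^{m}\bigl(\Lie(S_k)\cap H^\vee\bigr)$, and the intersection of a rational subspace with the ambient lattice $H^\vee$ is a subgroup of $H^\vee$ (here $\Lie(S_k)\cap H^\vee=\chi_k$, since $\chi_k$ is primitive). Hence $\tau_1(W)$ is a finite union of subgroups of $H^\vee$, in agreement with the formula $\tau_1(W)=\bigcup_{\xi\in\Xi_1(W)}(H/\xi)^\vee$ of \eqref{eq:tau1w}.

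The one substantive ingredient is the finiteness of the collection of maximal subtori contained in $W$ — equivalently, the finiteness of $\Xi_1(W)$ — and this is where essentially all the work lies; I would simply quote it from \cite{DPS-duke,Su11}. Alternatively, it is the $d=1$ case of Proposition~\ref{prop:bigxi}, proved later in the paper, which makes the present corollary a formal consequence of that finiteness statement together with \eqref{eq:tau1w}. Everything else in the argument is routine bookkeeping over $\Z$.
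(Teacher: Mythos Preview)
Your main argument is correct and is exactly the paper's own approach: Lemma~\ref{lemma:tau1} identifies $\tau_1(W)$ with the exponential tangent cone at $1$, and the structure theorem from \cite{DPS-duke,Su11} then gives the finite-union-of-subgroups conclusion directly. The paper states this in one line; you have usefully unpacked the mechanism (maximal subtori, rationality of their Lie algebras, intersection with the lattice), but the substance is the same.

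One caution about your closing ``alternatively'': you cannot appeal to Proposition~\ref{prop:bigxi} here, because in the paper's logical order that proposition \emph{uses} Corollary~\ref{cor:exp tcone} in its proof (it reduces the finiteness of $\Xi_d(W)$ to that of various $\Xi_1(\eta^{-1}W)$, and then invokes the present corollary). So that alternative route would be circular. The external citation to \cite{DPS-duke,Su11} is the actual load-bearing input, both in your argument and in the paper's.
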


Thus, the set $\tau_1^{\Q}(W) =
\bigcup_{\xi\in \Xi_1(W)} (H/\xi)^{\vee} \otimes \Q$
is a finite union of linear subspaces in the vector space $\Q^n$,
where $n=\rank H$.

\begin{example}
\label{ex:om1}
Let $T=(\C^*)^n$, and
suppose $W = Z(f)$, for some Laurent polynomial $f$
in $n$ variables.
Write $f(t_1,\dots,t_n)= \sum_{a\in S} c_a t_1^{a_1}\cdots t_n^{a_n}$,
where $S$ is a finite subset of $\Z^n$, and $c_a\in \C^*$
for each $a=(a_1,\dots,a_n)\in S$. We say a partition
$\cS=(\cS_1 \dv \cdots \dv \cS_q)$ of the support $S$ is
admissible if $\sum_{a\in \cS_j} c_a =0$, for each
$1\le j\le q$.  To such a partition, we associate the subgroup
\begin{equation}
\label{eq:rat sub}
L(\cS)= \{ x\in \Z^n \mid (a-b)\cdot x =0,
\; \forall a,b\in \cS_j, \; \forall\, 1\le j\le q \}.
\end{equation}
Then $\tau_1 (W)$ is the union of all subgroups $L(\cS)$,
where $\cS$ runs through the set of admissible partitions of $S$.
In particular, if $f(1)\ne 0$, then $\tau_1 (W)=\emptyset$.
\end{example}

\begin{prop}
\label{prop:bigxi}
For each $d\ge 1$, the set $\Xi_d(W)$ is finite.
\end{prop}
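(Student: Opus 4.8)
The plan is to reduce the statement to the case $d=1$---which is essentially the content of Corollary \ref{cor:exp tcone}---by keeping track of the finitely many possible ``translation factors''. So I would start by recording an auxiliary claim: for \emph{every} subvariety $V\subseteq T$, the set $\Xi_1(V)$ is finite. Unwinding Definition \ref{def:tau1} in the case $d=1$, where the determinant group $\overline{\zeta}/\zeta$ is forced to be trivial and the generator $\eta$ is forced to equal $1$, one sees that $\Xi_1(V)$ consists precisely of the primitive subgroups $\zeta\le H$ for which $V(\zeta)=\exp\big((H/\zeta)^\vee\otimes\C\big)$ is a maximal positive-dimensional subtorus of $V$. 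A standard argument (any such subtorus is the Zariski closure of the union of the $1$-dimensional subtori it contains, and these all lie in $V$) identifies this collection with the set of maximal subgroups of $H^\vee$ contained in $\tau_1(V)=\bigcup_{\zeta\in\Xi_1(V)}(H/\zeta)^\vee$. Since $\tau_1(V)$ is a finite union of subgroups of $H^\vee$ by Corollary \ref{cor:exp tcone}, it has only finitely many maximal subgroups, so $\Xi_1(V)$ is finite.

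Next I would decompose $\Xi_d(W)$ according to the translation factor. Fix $\xi\in\Xi_d(W)$ and choose a generator $\eta$ of $\widehat{\overline{\xi}/\xi}\subseteq\widehat H=T$ as in Definition \ref{def:tau1}(\ref{xi2}), so that $\eta\,V(\overline{\xi})$ is a maximal, positive-dimensional, torsion-translated subtorus of $W$ and $\ord(\eta)=\abs{\overline{\xi}/\xi}$ divides $d$. Translating by $\eta^{-1}$, the (positive-dimensional) subtorus $V(\overline{\xi})$ lies in the subvariety $\eta^{-1}W$, and it is maximal there among positive-dimensional torsion-translated subtori: if $V(\overline{\xi})\subsetneq C\subseteq\eta^{-1}W$ with $C$ positive-dimensional torsion-translated, then $\eta C$ is again such a subtorus and $\eta\,V(\overline{\xi})\subsetneq\eta C\subseteq W$, contradicting maximality of $\eta\,V(\overline{\xi})$ in $W$. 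Since $\overline{\xi}$ is primitive, this means precisely that $\overline{\xi}\in\Xi_1(\eta^{-1}W)$.

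To conclude, I would assemble the count. Since $H$ is finitely generated, the group $T=\widehat H$ has only finitely many elements of order dividing $d$; hence, as $\xi$ ranges over $\Xi_d(W)$, its translation factor $\eta$ takes only finitely many values, and for each of them $\overline{\xi}$ lies in the finite set $\Xi_1(\eta^{-1}W)$. Therefore only finitely many primitive subgroups arise as $\overline{\xi}$. For each fixed primitive subgroup $\overline{\xi}$, there are only finitely many subgroups $\xi\le\overline{\xi}$ with $\abs{\overline{\xi}/\xi}$ dividing $d$, since any such $\xi$ contains $d\,\overline{\xi}$ and the quotient $\overline{\xi}/d\,\overline{\xi}$ is finite. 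Combining the two finiteness statements shows that $\Xi_d(W)$ is finite.

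The only step that uses real input is the auxiliary claim, i.e.\ the finiteness of $\Xi_1(V)$---equivalently, the fact that a subvariety of $T$ contains only finitely many maximal positive-dimensional subtori---which is exactly where the structure theory of exponential tangent cones (Corollary \ref{cor:exp tcone}, via \cite{DPS-duke,Su11}) is needed. The passage from $d=1$ to general $d$ is then purely formal, relying only on the finiteness of $\{t\in T:t^d=1\}$ and of the bounded-index subgroups of a finitely generated abelian group.
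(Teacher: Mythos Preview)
Your proof is correct and follows essentially the same route as the paper's: both arguments stratify $\Xi_d(W)$ by the (finitely many) torsion translation factors $\eta$ of order dividing $d$, observe that $\overline{\xi}\in\Xi_1(\eta^{-1}W)$, and then invoke the finiteness of $\Xi_1$ coming from Corollary~\ref{cor:exp tcone}. The only cosmetic difference is in the final count: the paper notes that, for fixed $\eta$, the assignment $\xi\mapsto\overline{\xi}$ is injective (since $\xi$ is recovered from $\overline{\xi}$ and $\langle\eta\rangle$ via $V(\xi)=\langle\eta\rangle\cdot V(\overline{\xi})$), whereas you instead bound the fibers by observing that any $\xi$ with $[\overline{\xi}:\xi]\mid d$ contains $d\,\overline{\xi}$.
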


\begin{proof}
Fix an integer $d\ge 1$. For any torsion point $\eta \in T$
whose order divides $d$, consider the set $\Xi_d(W,\eta)$ of
subgroups $\xi\le H$ for which
$\widehat{\overline{\xi}/\xi}=\langle \eta\rangle$ and
$\eta\cdot V(\overline{\xi})$ is a maximal, positive-dimensional,
torsion-translated subtorus in $W$.  Then
\begin{equation}
\label{eq:xidw}
\Xi_d(W)=\bigcup_{\eta} \Xi_d(W,\eta),
\end{equation}
where the
union runs over the (finite) set of torsion points $\eta\in T$
whose order divides $d$.

For each such point $\eta$,
we have a map $\Xi_d(W,\eta)\to  \Xi_1(\eta^{-1}W)$,
$\xi \mapsto \overline{\xi}$.  Clearly, this map is an injection.
Now, Corollary \ref{cor:exp tcone} insures that the set
$\Xi_1(W)$ is finite. Thus, the set $\Xi_1(\eta^{-1}W)$
is also finite, and we are done.
\end{proof}

\section{The incidence correspondence for subgroups of $H$}
\label{sect:schubert}

We now single out certain subsets $\sigma_A(\xi)$
and $U_A(\xi)$ of the parameter set
$\Gamma(H,A)$, which may be viewed as analogues of
the special Schubert varieties in Grassmann geometry.

\subsection{The sets $\sigma_A(\xi)$}
\label{subsec:sigma xi}

We start by recalling a classical geometric construction.
Let $V$ be a variety in $\Q^n$ defined by homogeneous 
polynomials.  Set $m=\dim V$, and assume $m>0$. Consider 
the locus of $r$-planes in $\Q^n$ intersecting $V$ non-trivially,
\begin{equation}
\label{eq:incident}
\sigma_r(V) = \big\{\, P \in \Grass_{r}(\Q^n)
\bigmid \dim( P \cap  V )>0 \,\big\}.
\end{equation}
This set is a Zariski closed subset of
$\Grass_{r}(\Q^n)$, called the {\em variety of incident
$r$-planes}\/ to $V$. For all $0<r< n-m$, this is an irreducible
subvariety, of dimension $(r-1)(n-r)+m-1$.

Particularly simple is the case when $V$ is
a linear subspace $L\subset \Q^n$.  The corresponding
incidence variety, $\sigma_r(L)$, is known as the
{\em special Schubert variety}\/ defined by $L$.
Clearly, $\sigma_1(L)=\bP(L)$, viewed as a projective
subspace in $\QP^{n-1}:=\bP(\Q^n)$.

Now let $H$ be a finitely generated abelian group, let
$A$ be a factor group of $H$, and let 
$\Gamma(H,A)=\Epi(H,A)/\Aut(A)$.

\begin{definition}
\label{def:sigmaxi}
Given a subgroup $\xi\le H$, let
$\sigma_A(\xi)$ be the set of all $[\nu] \in \Gamma(H,A)$ for
which $\rank (\ker(\nu) + \xi) < \rank H$.
\end{definition}

When $A$ is torsion-free, we recover the classical definition
of special Schubert varieties.  More precisely, set $n=\rank H$
and $r=\rank A$.  We then have the following  lemma.

\begin{lemma}
\label{lem:schubert}
Under the natural isomorphism
$\Gamma(H,\oA) \cong \Grass_r(\Q^n)$, the set
$\sigma_{\oA}(\xi)$  corresponds to the special Schubert variety
$\sigma_r((H/\xi)^{\vee} \otimes \Q)$.
\end{lemma}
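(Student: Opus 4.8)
\emph{Proof plan.} The plan is to unwind both sides of the claimed correspondence rationally, inside $W:=H\otimes\Q\cong\Q^n$ and its dual $W^{*}=H^{\vee}\otimes\Q$, and then reduce everything to a single dimension count. First I would make the natural bijection $\Gamma(H,\oA)\cong\Grass_r(\Q^n)$ explicit: an epimorphism $\nu\colon H\surj\oA$ kills $\Tors(H)$ (since $\oA$ is torsion-free), so it factors through $\bar\nu\colon\oH\surj\oA$, and $H/\ker(\nu)\cong\oA$ being torsion-free means $\ker(\nu)$ is primitive of rank $n-r$. Dualizing, $\nu$ induces an injection $\nu^{\vee}\colon\oA^{\vee}\inj H^{\vee}$ whose rationalization $\nu^{*}\colon\Q^r\inj H^{\vee}\otimes\Q$ has image the $r$-plane $P_\nu:=(H/\ker(\nu))^{\vee}\otimes\Q=\{f\in\Hom(H,\Q)\mid f|_{\ker(\nu)}=0\}$; the assignment $[\nu]\mapsto P_\nu$ is well defined (it depends only on $\ker(\nu)$) and is precisely the identification in the statement. (It is compatible with the presentation $\Gamma(H,\oA)\cong\Grass_{n-r}(\Z^n)$ of Theorem~\ref{theorem:new bij} via $P_\nu\leftrightarrow\ker(\nu)\otimes\Q$, the two subspaces being mutual annihilators.)

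Next I would record the elementary dictionary: for any subgroup $\zeta\le H$, writing $U_\zeta:=\zeta\otimes\Q\subseteq W$ (a subspace of dimension $\rank\zeta$), one has $(H/\zeta)^{\vee}\otimes\Q=U_\zeta^{\perp}\subseteq W^{*}$, of dimension $n-\rank\zeta$. In particular $L:=(H/\xi)^{\vee}\otimes\Q=U_\xi^{\perp}$ is the subspace of $\Q^n$ appearing in the Schubert variety, and it is positive-dimensional exactly when $\rank\xi<n$; when $\rank\xi=n$ both $\sigma_{\oA}(\xi)$ and $\sigma_r(L)$ are empty and the lemma is vacuous, so I may assume $\rank\xi<n$. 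The computation then runs
\[
P_\nu\cap L=U_{\ker(\nu)}^{\perp}\cap U_\xi^{\perp}=\big(U_{\ker(\nu)}+U_\xi\big)^{\perp}=\big((\ker(\nu)+\xi)\otimes\Q\big)^{\perp},
\]
so that, using $\dim\big((\ker(\nu)+\xi)\otimes\Q\big)=\rank(\ker(\nu)+\xi)$,
\[
\dim\big(P_\nu\cap L\big)=n-\rank\big(\ker(\nu)+\xi\big).
\]
Hence $\dim(P_\nu\cap L)>0$ iff $\rank(\ker(\nu)+\xi)<n=\rank H$, which by Definition~\ref{def:sigmaxi} is exactly the condition $[\nu]\in\sigma_{\oA}(\xi)$. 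On the other hand $\dim(P_\nu\cap L)>0$ is, by \eqref{eq:incident}, exactly the condition $P_\nu\in\sigma_r(L)=\sigma_r\big((H/\xi)^{\vee}\otimes\Q\big)$. Transporting through the bijection $[\nu]\leftrightarrow P_\nu$ gives the asserted identification of $\sigma_{\oA}(\xi)$ with the special Schubert variety.

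I expect no real obstacle; the only points that need care are bookkeeping ones: (i) fixing which of the two natural presentations of $\Gamma(H,\oA)$ is in force, so that the annihilators land on the correct side of the pairing $H\otimes\Q$ versus $H^{\vee}\otimes\Q$ (torsion in $H$ is irrelevant after $\otimes\Q$); and (ii) the additivity of rank, $\rank(M+N)+\rank(M\cap N)=\rank M+\rank N$ for subgroups of $H$, which underlies the displayed dimension count and follows from the exact sequence $0\to M\cap N\to M\oplus N\to M+N\to 0$ together with flatness of $\Q$ over $\Z$.
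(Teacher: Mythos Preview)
Your proof is correct and follows essentially the same approach as the paper: both use the identification $[\nu]\mapsto (H/\ker(\nu))^{\vee}\otimes\Q$ and reduce the membership condition to the equivalence $\rank(\ker(\nu)+\xi)<\rank H\iff\dim\big((H/\ker(\nu))^{\vee}\otimes\Q\cap(H/\xi)^{\vee}\otimes\Q\big)>0$. The paper states this equivalence in one line, whereas you supply the annihilator computation $U_{\ker(\nu)}^{\perp}\cap U_\xi^{\perp}=(U_{\ker(\nu)}+U_\xi)^{\perp}$ explicitly, but there is no substantive difference.
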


\begin{proof}
Let $\Grass_r(H^{\vee}\otimes \Q)$ be the Grassmannian of $r$-dimensional
subspaces in the vector space $H^{\vee}\otimes \Q \cong \Q^n$.
Given an epimorphism $\nu\colon H\surj \overline{A}$ and a
subgroup $\xi\le H$, we have
\[
\rank (\ker(\nu) + \xi) < \rank H \same
\dim((H/\ker (\nu))^{\vee}\otimes\Q\cap (H/\xi)^{\vee}\otimes\Q)>0.
\]
Thus, the isomorphism
\[
\Gamma(H,\oA) \isom \Grass_r(H^{\vee}\otimes \Q),
\quad [\nu] \mapsto (H/\ker (\nu))^{\vee}\otimes \Q
\]
establishes a one-to-one correspondence between
$\sigma_{\oA}(\xi)$ and $\sigma_r((H/\xi)^{\vee} \otimes \Q)$.
\end{proof}

For instance, if $A$ is infinite cyclic, then the parameter set $\Gamma(H,\Z)$
may be identified with the projective space $\bP(H^{\vee})$,
while the set  $\sigma_{\Z}(\xi)$ coincides with the
projective subspace $\bP((H/\xi)^{\vee})$.

\begin{example}
\label{ex:rk1}
Let $\xi\le \Z^2$ be the sublattice
spanned by the vector $(a,b)\in \Z^2$.  Then
$\sigma_{\Z}(\xi)\subset \Gamma(\Z^2,\Z)$
corresponds to the point $(-b,a)\in \QP^1$.
\end{example}

The sets $\sigma_A(\xi)$ can be reconstructed from the
classical Schubert varieties $\sigma_{\oA}(\xi)$ associated
to the lattice $\oA=A/\Tors(A)$ by means
of the set fibration described in Theorem \ref{theorem:new bij}.
More precisely, we have the following proposition.

\begin{prop}
\label{prop:comp schubert}
Let $q\colon \Gamma(H,A) \to \Gamma(H,\oA)$ be the
natural projection map.  Then
\begin{romenum}
\item $q(\sigma_A(\xi) )= \sigma_{\oA}(\xi)$.
\item $q^{-1}(\sigma_{\oA}(\xi)) =\sigma_A(\xi)$.
\end{romenum}
Therefore, $\sigma_A(\xi)$ fibers over the Schubert variety
$\sigma_{\oA}(\xi)$, with each fiber isomorphic to the set
$\Gamma(H/\oA, A/\oA)$.
\end{prop}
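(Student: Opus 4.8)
The plan is to deduce both parts of the proposition from a single equivalence, valid for every epimorphism $\nu\colon H\surj A$: namely, $[\nu]\in\sigma_A(\xi)$ holds if and only if $q([\nu])\in\sigma_{\oA}(\xi)$. Write $\bar\nu=\pi\circ\nu\colon H\surj\oA$, so that $q([\nu])=[\bar\nu]$ by \eqref{eq:epimap}. The first step is to identify the kernel of $\bar\nu$: since $\T(A)$ is the torsion subgroup of $A$ and $\ker(\bar\nu)=\nu^{-1}(\T(A))$, an element $x\in H$ lies in $\ker(\bar\nu)$ exactly when $\nu(x)$ is of finite order, i.e.\ when $mx\in\ker(\nu)$ for some $m\ge1$. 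Thus $\ker(\bar\nu)=\overline{\ker(\nu)}$, the primitive closure of $\ker(\nu)$ in $H$ (see \S\ref{subsec:primitive}).

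The second step records the elementary fact that the inequality defining $\sigma_A(\xi)$ is unaffected by replacing $\ker(\nu)$ with its primitive closure. Indeed, $\overline{\ker(\nu)}/\ker(\nu)$ is finite (\S\ref{subsec:primitive}), and the evident surjection from $\overline{\ker(\nu)}$ onto $\big(\overline{\ker(\nu)}+\xi\big)/\big(\ker(\nu)+\xi\big)$ kills $\ker(\nu)$, so its target is finite; hence
\[
\rank\big(\ker(\bar\nu)+\xi\big)=\rank\big(\overline{\ker(\nu)}+\xi\big)=\rank\big(\ker(\nu)+\xi\big).
\]
Since $\rank H$ enters identically into the definitions of $\sigma_A(\xi)\subseteq\Gamma(H,A)$ and $\sigma_{\oA}(\xi)\subseteq\Gamma(H,\oA)$, the asserted equivalence follows at once. (Alternatively, one may run the argument through Pontryagin duality, just as in the proof of Lemma~\ref{lem:schubert}: by Lemma~\ref{lem:nunubar}, $\im(\hat\nu)=V(\ker\nu)$ and $\im(\hat{\bar\nu})=V(\overline{\ker\nu})$, and Proposition~\ref{prop:int2} turns the rank condition into the positivity of $\dim\big(\im(\hat\nu)\cap V(\xi)\big)$, which by \eqref{eq:vdecomp} and Proposition~\ref{prop:dimension equal} coincides with $\dim\big(\im(\hat{\bar\nu})\cap V(\xi)\big)$.)

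Granting the equivalence, part (ii) --- the identity $q^{-1}\big(\sigma_{\oA}(\xi)\big)=\sigma_A(\xi)$ --- is merely its restatement. Part (i) then follows because $q$ is surjective (by Corollary~\ref{cor:gha} it is a set fibration, with fibers in bijection with $\Gamma(H/\oA,A/\oA)$), whence
\[
q\big(\sigma_A(\xi)\big)=q\big(q^{-1}\big(\sigma_{\oA}(\xi)\big)\big)=\sigma_{\oA}(\xi).
\]
The concluding assertion is now immediate: as $\sigma_A(\xi)=q^{-1}\big(\sigma_{\oA}(\xi)\big)$, the fiber of the restricted map $q\colon\sigma_A(\xi)\to\sigma_{\oA}(\xi)$ over any $[\bar\nu]\in\sigma_{\oA}(\xi)$ is $q^{-1}([\bar\nu])$, which is in bijection with $\Gamma(H/\oA,A/\oA)$ by Corollary~\ref{cor:gha}.

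The argument presents no genuine obstacle; the only points needing a modicum of care are the torsion bookkeeping --- recognizing $\ker(\bar\nu)$ as the primitive closure $\overline{\ker(\nu)}$, and verifying that passing to this closure leaves $\rank(\ker(\nu)+\xi)$ unchanged --- together with the preliminary observation that $\sigma_A(\xi)$ is a well-defined subset of $\Gamma(H,A)$, which is so because the condition $\rank(\ker(\nu)+\xi)<\rank H$ depends only on $\ker(\nu)$ and is thus invariant under the $\Aut(A)$-action.
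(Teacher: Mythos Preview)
Your proof is correct. The paper actually states Proposition~\ref{prop:comp schubert} without proof, so there is nothing to compare against; your argument supplies precisely the routine verification the authors presumably had in mind. The key step---identifying $\ker(\bar\nu)$ with the primitive closure $\overline{\ker(\nu)}$ and observing that rank is insensitive to finite-index enlargements---is clean, and deducing (i) from (ii) via the surjectivity of $q$ (guaranteed by Corollary~\ref{cor:gha}, since $\Gamma(H/\oA,A/\oA)\ne\emptyset$ once $A$ is a quotient of $H$) is the natural move.
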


\subsection{The sets $U_A(\xi)$}
\label{subsec:uxi}

Although simple to describe, the sets $\sigma_A(\xi)$ do not
behave too well with respect to the correspondence
between subgroups of $H$ and algebraic subgroups of
$T=\wH$.  This is mainly due to the fact that the $\sigma_A$-sets
do not distinguish between a subgroup $\xi\le H$ and its primitive
closure, $\overline{\xi}$.  To remedy this situation, we
identify certain subsets $U_A(\xi)\subseteq \sigma_A(\xi)$
which turn out to be better suited for our purposes.

\begin{definition}
\label{def:uxi}
Given a subgroup $\xi\le H$, let
$U_A(\xi)$ be the set of all $[\nu] \in \sigma_A(\xi)$ for which
$\ker(\nu)\cap \overline{\xi}\subseteq \xi$.
\end{definition}

In particular, if $\overline{\xi}=\xi$, then $U_A(\xi)=\sigma_A(\xi)$.
In general, though, $U_A(\xi)\subsetneqq \sigma_A(\xi)$.
In order to reinterpret this definition in more geometric terms,
we need a lemma.

\begin{lemma}
\label{lemma:equiva}
Let $\xi \le H$ be a subgroup such that $\overline{\xi}/\xi$ is cyclic,
and let $\chi\le H$ be another a subgroup.
Then the following conditions are equivalent.
\begin{romenum}
\item \label{w1}
$\chi \cap \overline{\xi}\subseteq \xi$.
\item \label{w2}
$V(\chi) \cap \eta V(\bar{\xi}) \neq \emptyset$, for some
generator $\eta$ of $\widehat{\overline{\xi}/\xi}$.

\item \label{w3}
$V(\chi) \cap \eta V(\bar{\xi}) \neq \emptyset$, for any
generator $\eta$ of $\widehat{\overline{\xi}/\xi}$.

\item \label{w4}
$\eta\in V(\ker(\nu) \cap \overline{\xi})$,  for some
generator $\eta$ of $\widehat{\overline{\xi}/\xi}$.

\item \label{w5}
$\eta\in V(\ker(\nu) \cap \overline{\xi})$,  for any
generator $\eta$ of $\widehat{\overline{\xi}/\xi}$.
\end{romenum}
\end{lemma}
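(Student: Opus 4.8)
The plan is to prove the five conditions equivalent by showing that each is equivalent to the single inclusion $\widehat{\overline{\xi}/\xi}\subseteq V(\chi\cap\overline{\xi})$; here and below I read conditions (iv)--(v) with $\chi=\ker(\nu)$, which is the only case in which they are meaningful. Two of the equivalences come for free. First, Proposition~\ref{prop:int2} applied to $V(\chi)\cap\eta V(\overline{\xi})=1\cdot V(\chi)\cap\eta\,V(\overline{\xi})$ says this set is nonempty if and only if $\eta^{-1}\in V(\chi\cap\overline{\xi})$, equivalently $\eta\in V(\chi\cap\overline{\xi})$ since $V(\chi\cap\overline{\xi})$ is a subgroup of $T$; this gives (ii)$\Leftrightarrow$(iv) and (iii)$\Leftrightarrow$(v). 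Second, because $\overline{\xi}/\xi$ is cyclic, a generator $\eta$ of $\widehat{\overline{\xi}/\xi}$ satisfies $\langle\eta\rangle=\widehat{\overline{\xi}/\xi}$, so $\eta\in V(\chi\cap\overline{\xi})$ for one generator already forces the whole subgroup $\widehat{\overline{\xi}/\xi}$ into $V(\chi\cap\overline{\xi})$, hence membership of every generator; this yields (iv)$\Leftrightarrow$(v), and together with the first step, (ii)$\Leftrightarrow$(iii).

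It then remains to establish (i)$\Leftrightarrow$(iv). For the forward direction: if $\chi\cap\overline{\xi}\subseteq\xi$, then order-reversal of the correspondence $V$ gives $V(\chi\cap\overline{\xi})\supseteq V(\xi)$, and $\widehat{\overline{\xi}/\xi}\subseteq V(\xi)$ by the coset decomposition~\eqref{eq:vdecomp}, so every generator $\eta$ lies in $V(\chi\cap\overline{\xi})$. For the converse, set $\mu=\xi+(\chi\cap\overline{\xi})$, so that $\xi\subseteq\mu\subseteq\overline{\xi}$ and the desired conclusion $\chi\cap\overline{\xi}\subseteq\xi$ is equivalent to $\mu=\xi$. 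Since $\eta\in V(\xi)$ as well, the hypothesis $\eta\in V(\chi\cap\overline{\xi})$ gives $\eta\in V(\chi\cap\overline{\xi})\cap V(\xi)=V(\mu)$. Applying the exact functor $\Hom(-,\C^*)$ to the sequences $0\to\overline{\xi}/\xi\to H/\xi\to H/\overline{\xi}\to 0$ and $0\to\overline{\xi}/\mu\to H/\mu\to H/\overline{\xi}\to 0$ identifies $V(\xi)/V(\overline{\xi})$ with the cyclic group $\widehat{\overline{\xi}/\xi}$, in which the class of $\eta$ is again a generator, and $V(\mu)/V(\overline{\xi})$ with a subgroup of it; but a subgroup of a finite cyclic group containing a generator is the whole group, so $V(\mu)=V(\xi)$, whence $\mu=\xi$ because the correspondence $V$ is injective (indeed an equivalence of lattices, \cite{SYZ}).

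The genuinely routine parts are the two short exact sequences (together with the fact, read off from~\eqref{eq:vdecomp}, that under $V(\xi)/V(\overline{\xi})\cong\widehat{\overline{\xi}/\xi}$ the class of a generator $\eta$ of $\widehat{\overline{\xi}/\xi}$ is again a generator) and the elementary remark about subgroups of $\Z/m$. The only step requiring any care is that last identification---making sure that ``$\eta$ generates $\widehat{\overline{\xi}/\xi}$'' indeed means its image generates the component group $V(\xi)/V(\overline{\xi})$---and it is precisely here, and only here, that the cyclicity hypothesis on $\overline{\xi}/\xi$ enters. I expect that small bookkeeping point, rather than any real difficulty, to be the main obstacle; everything else is formal Pontryagin duality combined with Proposition~\ref{prop:int2}.
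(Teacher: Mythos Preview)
Your proof is correct and proceeds along the same lines as the paper's: both hinge on Proposition~\ref{prop:int2} to convert the nonemptiness of $V(\chi)\cap\eta V(\overline{\xi})$ into the membership $\eta\in V(\chi\cap\overline{\xi})$, and both use the cyclicity of $\overline{\xi}/\xi$ to pass between ``some generator'' and ``any generator.''

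The only difference is in how the equivalence (i)$\Leftrightarrow$(iv) is packaged. The paper first records the identity $\epsilon(\langle\eta\rangle)\cap\overline{\xi}=\xi$ (which follows because a generator of the dual of a cyclic group is a faithful character), and then observes that $\eta\in V(\chi\cap\overline{\xi})$ is equivalent, via the lattice correspondence, to $\chi\cap\overline{\xi}\subseteq\epsilon(\langle\eta\rangle)$; intersecting with $\overline{\xi}$ and invoking the identity gives $\chi\cap\overline{\xi}\subseteq\xi$ directly. Your route---introducing $\mu=\xi+(\chi\cap\overline{\xi})$ and arguing that the class of $\eta$ in the cyclic component group $V(\xi)/V(\overline{\xi})$ already generates, hence $V(\mu)=V(\xi)$---is a slightly more hands-on unpacking of the same duality. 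The paper's formulation is more compact; yours makes the role of cyclicity more visible. Neither buys anything the other does not.
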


\begin{proof}
Let $\eta$ be a generator of the finite cyclic group $\widehat{\overline{\xi}/\xi}$.
We then have
\begin{equation}
\label{eq:epxi}
\epsilon(\langle \eta \rangle) \cap \overline{\xi} = \xi.
\end{equation}

By Proposition \ref{prop:int2}, the intersection $V(\chi) \cap \eta V(\bar{\xi})$
is non-empty if and only if $\eta\in V(\chi \cap \bar{\xi})$,
that is, $\langle \eta \rangle \subseteq V(\chi \cap \bar{\xi})$, which
in turn is equivalent to
\begin{equation}
\label{eq:inc}
\epsilon(\langle \eta \rangle) \supseteq \chi \cap \bar{\xi}.
\end{equation}
In view of equality \eqref{eq:epxi}, inclusion \eqref{eq:inc} is
equivalent to $\chi \cap \overline{\xi}\subseteq \xi$.  This shows
\eqref{w1} $\Leftrightarrow$ \eqref{w2}.

The other equivalences are proved similarly.
\end{proof}

\begin{corollary}
\label{cor:uxi}
Let $\xi\le H$ be a subgroup, and assume $\overline{\xi}/\xi$ is cyclic.
Let $\nu\colon H\surj A$ be an epimorphism. Then
\[
[\nu] \in U_A(\xi) \: \same  \:
\dim\, ( V(\ker(\nu))\cap \eta V(\overline\xi))>0
\]
for any (or, equivalently, for some)
generator $\eta\in \widehat{\overline{\xi}/\xi}$.
\end{corollary}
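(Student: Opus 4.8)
The plan is to read the corollary off from Definition~\ref{def:uxi}, Lemma~\ref{lemma:equiva}, and the dimension count in Proposition~\ref{prop:int2}; no genuinely new input is needed.

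First I would unwind what membership in $U_A(\xi)$ means. By Definition~\ref{def:uxi}, $[\nu]\in U_A(\xi)$ precisely when two conditions hold: $[\nu]\in\sigma_A(\xi)$, i.e.\ $\rank(\ker(\nu)+\xi)<\rank H$; and $\ker(\nu)\cap\overline{\xi}\subseteq\xi$. Taking $\chi=\ker(\nu)$ in Lemma~\ref{lemma:equiva} (whose hypothesis $\overline\xi/\xi$ cyclic is the one assumed here), the second condition is exactly \eqref{w1}, hence equivalent to \eqref{w2} and to \eqref{w3}: the intersection $V(\ker(\nu))\cap\eta V(\overline{\xi})$ is nonempty for some --- equivalently, for any --- generator $\eta$ of $\widehat{\overline{\xi}/\xi}$. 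This already disposes of the ``some versus any'' clause.

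Next I would apply Proposition~\ref{prop:int2} with $\eta_1=1$, $\xi_1=\ker(\nu)$, $\eta_2=\eta$, $\xi_2=\overline{\xi}$: when this intersection is nonempty,
\[
\dim\bigl(V(\ker(\nu))\cap\eta V(\overline{\xi})\bigr)=\rank H-\rank\bigl(\ker(\nu)+\overline{\xi}\bigr).
\]
Since $\xi$ has finite index in $\overline{\xi}$, the subgroup $\ker(\nu)+\xi$ has finite index in $\ker(\nu)+\overline{\xi}$, so $\rank(\ker(\nu)+\overline{\xi})=\rank(\ker(\nu)+\xi)$; thus the displayed dimension equals $\rank H-\rank(\ker(\nu)+\xi)$ whenever the intersection is nonempty. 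This little rank bookkeeping is the only place needing an explicit line of argument; everything else is citation.

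Finally I would assemble the equivalence. The inequality $\dim\bigl(V(\ker(\nu))\cap\eta V(\overline{\xi})\bigr)>0$ forces the intersection to be nonempty, and under that hypothesis it holds iff $\rank(\ker(\nu)+\xi)<\rank H$. Hence ``$\dim>0$'' is equivalent to the conjunction of ``$V(\ker(\nu))\cap\eta V(\overline{\xi})\neq\emptyset$'' and ``$\rank(\ker(\nu)+\xi)<\rank H$'', which by the first paragraph is exactly $[\nu]\in U_A(\xi)$. Independence of the choice of representative of $[\nu]$ is immediate, since $\ker(\nu)$ is unchanged upon postcomposing $\nu$ with an automorphism of $A$; independence of the generator $\eta$ follows from \eqref{w2}$\Leftrightarrow$\eqref{w3} together with the fact that the dimension formula of Proposition~\ref{prop:int2} does not involve the translation factors. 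There is thus no real obstacle here: the only thing to be careful about is keeping the two defining conditions of $U_A(\xi)$ straight and matching them against, respectively, the nonemptiness and the positivity parts of the dimension statement.
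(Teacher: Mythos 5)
Your proposal is correct and matches the paper's intended argument: the paper states this as an immediate corollary of Lemma~\ref{lemma:equiva}, and your proof spells out exactly that deduction — use Lemma~\ref{lemma:equiva} with $\chi=\ker(\nu)$ to translate the condition $\ker(\nu)\cap\overline{\xi}\subseteq\xi$ into nonemptiness of $V(\ker(\nu))\cap\eta V(\overline\xi)$, then invoke the dimension formula of Proposition~\ref{prop:int2} together with $\rank(\ker(\nu)+\xi)=\rank(\ker(\nu)+\overline{\xi})$ to match the Schubert condition $\rank(\ker(\nu)+\xi)<\rank H$ with positivity of the dimension. Nothing further is needed.
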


Despite their geometric appeal, the sets $U_A(\xi)$ do not
enjoy a naturality property analogous to the
one from Proposition \ref{prop:comp schubert}.
In particular, the projection map
$q\colon \Gamma(H,A) \to \Gamma(H,\oA)$
may {\em not}\/ restrict to a map
$U_A(\xi) \to U_{\oA}(\xi)$. Here is a simple
example.

\begin{example}
\label{ex:rk3}
Let $\nu\colon H\surj A$ be the
canonical projection from $H=\Z^2$ to  $A=\Z\oplus \Z_2$,
and let  $\xi=\ker(\nu)$.  Then $[\nu]\in U_A(\xi)$,
but $[\bar\nu]\notin U_{\oA}(\xi)$.
\end{example}

\section{The incidence correspondence for subvarieties of $\wH$}
\label{sect:alg df}

In this section, we introduce and study certain subsets
$\Upsilon_A(W)\subseteq \Gamma(H,A)$, which can be
viewed as the toric analogues of the classical incidence
varieties $\sigma_r(V)\subseteq \Grass_r(\Q^n)$.

\subsection{The sets $U_{A}(W)$}
\label{subsec:lub}
Let us start by recalling some constructions we discussed
previously. In \S\ref{subsec:tau1}, we associated to
each subvariety $W$ of the algebraic group $T=\wH$
and each integer $d\ge 1$ a
certain collection $\Xi_{d}(W)$ of subgroups of $H$.
In \S\ref{subsec:uxi}, we associated
to each subgroup $\xi\le H$ and each abelian group $A$
a certain subset $U_A(\xi)$ of the parameter set
$\Gamma(H,A)=\Epi(H,A)/\Aut(A)$.
Putting together these two constructions, we associate
now to $W$ a family of subsets of $\Gamma(H,A)$, as follows.

\begin{definition}
\label{def:uad w}
Given a subvariety $W\subset T$, an abelian group $A$,
let
\begin{equation}
\label{eq:ua}
U_A(W)= \bigcup_{d\ge 1} U_{A,d} (W) ,
\end{equation}
where
\begin{equation}
\label{eq:uad}
U_{A,d} (W) = \bigcup_{\xi\in \Xi_{d}(W)} U_A(\xi) .
\end{equation}
\end{definition}

By Proposition \ref{prop:bigxi}, the union in \eqref{eq:uad} is a finite union.

\begin{lemma}
\label{lem:uaw}
The set $U_{A,d} (W)$ consists of all $[\nu]\in \Gamma(H,A)$
for which there is a subgroup $\xi\le H$ and an element $\eta\in \wH$
of order dividing $d$ such that $\eta V(\overline{\xi})$ is a maximal,
positive-dimensional translated subtorus in $W$, and
$\dim\, ( V(\ker \nu)\cap \eta V(\overline\xi))>0$.
\end{lemma}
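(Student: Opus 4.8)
The plan is to derive the lemma straight from the definitions of $U_{A,d}(W)$ in \eqref{eq:uad} and of $\Xi_d(W)$ in Definition~\ref{def:tau1}, using Corollary~\ref{cor:uxi} to convert membership in the sets $U_A(\xi)$ into the stated intersection-dimension condition. Let $P$ denote the set on the right-hand side of the claimed equality, i.e., the set of $[\nu]\in\Gamma(H,A)$ admitting a subgroup $\xi\le H$ and a torsion element $\eta\in\wH$ with $\ord(\eta)\mid d$ such that $\eta V(\overline{\xi})$ is a maximal, positive-dimensional translated subtorus in $W$ and $\dim\bigl(V(\ker\nu)\cap\eta V(\overline{\xi})\bigr)>0$. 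I plan to prove $U_{A,d}(W)=P$ by establishing both inclusions.

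The inclusion $U_{A,d}(W)\subseteq P$ is immediate: given $[\nu]\in U_A(\xi)$ for some $\xi\in\Xi_d(W)$, condition \eqref{xi1} says $\overline{\xi}/\xi$ is cyclic of order dividing $d$, so a generator $\eta$ of $\widehat{\overline{\xi}/\xi}$, viewed in $\wH$ as in \eqref{eq:vdecomp}, is a torsion element with $\ord(\eta)\mid d$; condition \eqref{xi2} makes $\eta V(\overline{\xi})$ a maximal, positive-dimensional torsion-translated subtorus in $W$; and, since $\overline{\xi}/\xi$ is cyclic, Corollary~\ref{cor:uxi} gives $\dim\bigl(V(\ker\nu)\cap\eta V(\overline{\xi})\bigr)>0$. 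Hence $[\nu]\in P$.

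The substantive inclusion is $P\subseteq U_{A,d}(W)$, and the main obstacle is that a pair $(\xi,\eta)$ witnessing $[\nu]\in P$ need not be normalized the way elements of $\Xi_d(W)$ are: $\overline{\xi}/\xi$ need not be cyclic, and $\eta$ need not be a generator of $\widehat{\overline{\xi}/\xi}$. To repair this I would replace $\xi$ by $\xi':=\ker(\eta)\cap\overline{\xi}$, regarding $\eta$ as a homomorphism $H\to\C^*$. Since $\eta$ has finite order, $H/\ker(\eta)$ is cyclic of order $\ord(\eta)\mid d$; as $\overline{\xi}$ is primitive, $\xi'$ is a finite-index subgroup of $\overline{\xi}$, so $\overline{\xi'}=\overline{\xi}$, and $\overline{\xi'}/\xi'$ embeds into $H/\ker(\eta)$, which is condition \eqref{xi1}. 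For \eqref{xi2}, note $\eta$ is trivial on $\xi'\subseteq\ker(\eta)$, hence $\eta\in V(\xi')$; writing $V(\xi')=\bigcup_{\rho\in\widehat{\overline{\xi'}/\xi'}}\rho V(\overline{\xi'})$ via \eqref{eq:vdecomp}, pick $\eta'\in\widehat{\overline{\xi'}/\xi'}$ with $\eta'V(\overline{\xi'})=\eta V(\overline{\xi'})=\eta V(\overline{\xi})$. Restricting characters to $\overline{\xi'}=\overline{\xi}$ annihilates $V(\overline{\xi'})$, so $\eta'$ and $\eta$ agree on $\overline{\xi}$; this common restriction has kernel exactly $\xi'$, hence is a faithful --- therefore generating --- character of the cyclic group $\overline{\xi'}/\xi'$, so $\eta'$ generates $\widehat{\overline{\xi'}/\xi'}$. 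Since $\eta'V(\overline{\xi'})=\eta V(\overline{\xi})$ is a maximal, positive-dimensional torsion-translated subtorus in $W$, we get $\xi'\in\Xi_d(W)$. Finally, Corollary~\ref{cor:uxi} applied to $\xi'$ with the generator $\eta'$, together with $\dim\bigl(V(\ker\nu)\cap\eta'V(\overline{\xi'})\bigr)=\dim\bigl(V(\ker\nu)\cap\eta V(\overline{\xi})\bigr)>0$, shows $[\nu]\in U_A(\xi')\subseteq U_{A,d}(W)$, finishing the argument.
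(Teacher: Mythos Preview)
Your proof is correct. The forward inclusion $U_{A,d}(W)\subseteq P$ is handled exactly as in the paper: unpack Definition~\ref{def:tau1} for a $\xi\in\Xi_d(W)$ and invoke Corollary~\ref{cor:uxi}.

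Where you go beyond the paper is in the reverse inclusion $P\subseteq U_{A,d}(W)$. The paper's own proof stops after establishing the forward direction; it does not address the issue you raise, namely that an arbitrary witnessing pair $(\xi,\eta)$ with $\ord(\eta)\mid d$ and $\eta V(\overline{\xi})\subseteq W$ maximal need not have $\xi\in\Xi_d(W)$. Your normalization $\xi'=\ker(\eta)\cap\overline{\xi}$ is exactly the right move: it forces $\overline{\xi'}=\overline{\xi}$, makes $\overline{\xi'}/\xi'$ embed in the cyclic group $H/\ker(\eta)$ of order $\ord(\eta)\mid d$, and---via the canonical identification $V(\xi')/V(\overline{\xi'})\cong\widehat{\overline{\xi'}/\xi'}$ given by restriction to $\overline{\xi'}$---the class of $\eta$ becomes the faithful character $\eta|_{\overline{\xi}}$ of $\overline{\xi}/\xi'$, hence a generator. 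Corollary~\ref{cor:uxi} then finishes the job. So your argument fills a gap the paper's proof leaves open, and does so correctly.
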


\begin{proof}
Let $\nu\colon H \surj A$ be an epimorphism such that
$[\nu] \in U_A(\xi)$, for some $\xi\in \Xi_{d}(W)$.
According to Definition \ref{def:tau1}, this means that
the group $\overline{\xi}/\xi$ is cyclic of order dividing $d$,
and there is a generator $\eta\in \widehat{\overline{\xi}/\xi}$
such that $\eta V(\overline{\xi})$ is a maximal, positive-dimensional
translated subtorus in $W$.  In view of Corollary \ref{cor:uxi},
the fact that $[\nu] \in U_A(\xi)$ insures that
$V(\ker(\nu))\cap \eta V(\overline\xi)$ has positive dimension.
\end{proof}

The case $d=1$ is worth singling out.

\begin{corollary}
\label{cor:ua1w}
Let $W\subset T$ be a subvariety.
Set $n=\rank H$ and $r=\rank A$. Then:
\begin{romenum}
\item
$U_{A,1}(W)=\bigcup_{\xi\in \Xi_1(W)} \sigma_A(\xi)$.
\item
Under the isomorphism
$\Gamma(H,\oA) \cong \Grass_r(\Q^n)$, the set
$U_{\oA,1}(W)$  corresponds to the incidence variety
$\sigma_r(\tau_1^{\Q}(W))$.
\end{romenum}
\end{corollary}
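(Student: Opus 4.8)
The plan is to reduce both assertions to the case $d=1$ of the constructions in \S\ref{subsec:tau1}--\S\ref{subsec:uxi}, combined with Lemma~\ref{lem:schubert}.

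For part (i), the first point I would make is that every subgroup $\xi\in\Xi_1(W)$ is primitive: the requirement in Definition~\ref{def:tau1} that $\overline{\xi}/\xi$ be cyclic of order dividing $d=1$ forces $\overline{\xi}/\xi$ to be trivial, i.e.\ $\overline{\xi}=\xi$. As recorded right after Definition~\ref{def:uxi}, for a primitive subgroup one has $U_A(\xi)=\sigma_A(\xi)$. Substituting this into the definition $U_{A,1}(W)=\bigcup_{\xi\in\Xi_1(W)}U_A(\xi)$ gives part (i) at once.

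For part (ii), I would specialize part (i) to $A=\oA$ and transport everything through the isomorphism $\Gamma(H,\oA)\cong\Grass_r(\Q^n)$. By Lemma~\ref{lem:schubert}, each $\sigma_{\oA}(\xi)$ then corresponds to the special Schubert variety $\sigma_r\big((H/\xi)^{\vee}\otimes\Q\big)$, so $U_{\oA,1}(W)$ corresponds to $\bigcup_{\xi\in\Xi_1(W)}\sigma_r\big((H/\xi)^{\vee}\otimes\Q\big)$. Since by definition $\tau_1^{\Q}(W)=\bigcup_{\xi\in\Xi_1(W)}(H/\xi)^{\vee}\otimes\Q$, it remains only to check that taking incident $r$-planes commutes with this finite union, that is,
\[
\sigma_r\Big(\bigcup_{j}L_j\Big)=\bigcup_{j}\sigma_r(L_j)
\]
for any finite family of linear subspaces $L_j\subseteq\Q^n$. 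This is elementary: for an $r$-plane $P$ one has $P\cap\bigcup_j L_j=\bigcup_j(P\cap L_j)$, and the dimension of a finite union of varieties is the maximum of the dimensions, whence $\dim\big(P\cap\bigcup_j L_j\big)>0$ if and only if $\dim(P\cap L_j)>0$ for some $j$. Combining these observations, $U_{\oA,1}(W)$ corresponds to $\sigma_r(\tau_1^{\Q}(W))$.

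I do not anticipate any genuine obstacle; the remaining points are bookkeeping. One should note that if $\Xi_1(W)=\emptyset$ then both sides are empty and there is nothing to prove, so one may assume $\Xi_1(W)\neq\emptyset$; in that case each subspace $(H/\xi)^{\vee}\otimes\Q$ has dimension $\rank(H/\xi)=\dim V(\xi)>0$, because membership in $\Xi_1(W)$ forces $V(\overline{\xi})=V(\xi)$ to be positive-dimensional. Hence $\tau_1^{\Q}(W)$ is a finite (by Corollary~\ref{cor:exp tcone}) union of positive-dimensional linear subspaces, so the incidence variety $\sigma_r(\tau_1^{\Q}(W))$ is defined in the sense of \eqref{eq:incident} and the computation above goes through verbatim.
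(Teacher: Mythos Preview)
Your proof is correct and follows exactly the route the paper intends: the corollary is stated without proof because it is immediate from the observation that $\xi\in\Xi_1(W)$ forces $\overline{\xi}=\xi$ (hence $U_A(\xi)=\sigma_A(\xi)$), together with Lemma~\ref{lem:schubert} and the definition of $\tau_1^{\Q}(W)$ given after Corollary~\ref{cor:exp tcone}. Your verification that $\sigma_r$ distributes over finite unions of linear subspaces is the only point that needed spelling out, and you handled it cleanly.
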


In general, the set $U_{A}(W)$ is larger than $U_{A,1}(W)$.
Here is a simple example; a more general situation will be
studied in \S\ref{subsec:z2}.

\begin{example}
\label{ex:xidu again}
Let $H=\Z^2$ and let $W\subset (\C^*)^2$ be the subvariety
from Example \ref{ex:xidu}. Pick  $A=\Z\oplus \Z_2$, and identify
$\Gamma(H,A)$ with $\QP^1$.  Then $U_{A,d}(W)=\{(1,0)\}$ or
$\{(1,0),(0,1)\}$, according to whether $d$ is odd or even.
\end{example}

\subsection{The sets $\Upsilon_A(W)$}
\label{subsec:omw}

As before, let $H$ be a
finitely-generated abelian group, and let $T=\wH$ be
its Pontryagin dual.  The next definition will prove to
be key to the geometric interpretation of the (generalized)
Dwyer--Fried invariants.

\begin{definition}
\label{def:omega w}
Given a subvariety $W\subset T$, and an abelian
group $A$, define a subset $\Upsilon_A(W)$ of the
parameter set $\Gamma(H,A)$ by setting
\begin{equation}
\label{eq:upsa}
\Upsilon_{A}(W)= \big\{\, [\nu]\in \Gamma(H,A) \mid
\dim( V(\ker \nu) \cap W) >0\, \big\}.
\end{equation}
\end{definition}

Roughy speaking, the set $\Upsilon_A(W)\subset \Gamma(H,A)$
associated to a variety $W\subset T$ is  the toric analogue
of the incidence variety $\sigma_r(V)\subset \Grass_r(\Q^n)$
associated to a homogeneous variety $V\subset \Q^n$.

It is readily seen that $\Upsilon_A$ commutes with unions:
if  $W_1$ and $W_2$ are two subvarieties of $T$, then
\begin{equation}
\label{eq:uaw union}
\Upsilon_{A}(W_1\cup W_2)= \Upsilon_{A}(W_1) \cup  \Upsilon_{A}(W_2).
\end{equation}
Moreover, $\Upsilon_A(W)$ depends only on the positive-dimensional
components of $W$.  Indeed, if $Z$ is a finite algebraic set, then
$\Upsilon_A(W\cup Z)=\Upsilon_A(W)$.

The next result gives a convenient lower bound
for the $\Upsilon$-sets.

\begin{prop}
\label{prop:om bd}
Let $A$ be a quotient of $H$.  Then
\begin{equation}
\label{eq:upsw}
U_A(W)\subseteqq \Upsilon_A(W) .
\end{equation}
\end{prop}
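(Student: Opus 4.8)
The plan is to reduce the inclusion to Corollary~\ref{cor:uxi} by unwinding the two constructions that make up $U_A(W)$. First I would fix a class $[\nu]\in U_A(W)$. By Definition~\ref{def:uad w}, there exist an integer $d\ge 1$ and a subgroup $\xi\in\Xi_d(W)$ with $[\nu]\in U_A(\xi)$. The membership $\xi\in\Xi_d(W)$ supplies, via Definition~\ref{def:tau1}, two pieces of data: the determinant group $\overline{\xi}/\xi$ is cyclic, and there is a generator $\eta\in\widehat{\overline{\xi}/\xi}$ for which $\eta V(\overline{\xi})$ is a (maximal) positive-dimensional translated subtorus \emph{contained in} $W$.

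Since $\overline{\xi}/\xi$ is cyclic, Corollary~\ref{cor:uxi} applies, and tells us that $[\nu]\in U_A(\xi)$ is equivalent to $\dim\bigl(V(\ker\nu)\cap\eta V(\overline{\xi})\bigr)>0$ for any generator $\eta\in\widehat{\overline{\xi}/\xi}$; in particular this holds for the generator singled out above. Now the containment $\eta V(\overline{\xi})\subseteq W$ gives $V(\ker\nu)\cap\eta V(\overline{\xi})\subseteq V(\ker\nu)\cap W$, whence $\dim\bigl(V(\ker\nu)\cap W\bigr)\ge\dim\bigl(V(\ker\nu)\cap\eta V(\overline{\xi})\bigr)>0$. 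By Definition~\ref{def:omega w}, this says precisely that $[\nu]\in\Upsilon_A(W)$, which is the desired conclusion. One could equally well invoke Lemma~\ref{lem:uaw}, which already identifies $U_{A,d}(W)$ with the set of $[\nu]$ admitting a maximal subtorus $\eta V(\overline{\xi})\subseteq W$ with $\dim\bigl(V(\ker\nu)\cap\eta V(\overline{\xi})\bigr)>0$; the inclusion into $\Upsilon_A(W)$ is then immediate.

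There is no real obstacle here: the statement is a formal consequence of the definitions together with Corollary~\ref{cor:uxi} (or Lemma~\ref{lem:uaw}). The only point requiring a moment's care is to check that the hypothesis of that corollary---cyclicity of $\overline{\xi}/\xi$---is automatically satisfied, but this is exactly condition~(i) in the definition of $\Xi_d(W)$, so nothing beyond bookkeeping is needed.
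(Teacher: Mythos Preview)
Your proof is correct and follows essentially the same approach as the paper's own proof. The paper simply invokes Lemma~\ref{lem:uaw} directly (as you note is an option in your final paragraph), obtaining $\dim(V(\ker\nu)\cap W)>0$ in one line; your version just unpacks that lemma back to Definition~\ref{def:tau1} and Corollary~\ref{cor:uxi}, which is the same argument with more detail shown.
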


\begin{proof}
Let $\nu\colon H \surj A$ be an
epimorphism such that  $[\nu] \in U_{A,d}(W)$,
for some $d\ge 1$.
By Lemma \ref{lem:uaw}, we have that
$\dim (V(\ker \nu)\cap W)>0$.  Thus,
$[\nu]\in \Upsilon_A(W)$.
\end{proof}

As we shall see in Example \ref{example:ruled},
inclusion \eqref{eq:upsw} may well be strict.

\subsection{Translated subgroups}
\label{subsec:omw tt}

If the variety $W$ is a torsion-translated algebraic
subgroup of $T$, we can be more precise.

\begin{theorem}
\label{thm:uprho1}
Let $W=\eta V(\xi)$, where $\xi \le H$ is a subgroup,
and $\eta\in \wH$ has finite order.  Then
$\Upsilon_{A}(W) = U_{A,c}(W)$, where
$c=\ord(\eta)\cdot c(\overline{\xi}/\xi)$.
\end{theorem}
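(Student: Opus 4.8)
The plan is to prove the claimed equality by two opposite inclusions, the first of which is essentially automatic. Indeed, $U_{A,c}(W)\subseteq\Upsilon_A(W)$: by Definition~\ref{def:uad w} one has $U_{A,c}(W)\subseteq U_A(W)$, and $U_A(W)\subseteq\Upsilon_A(W)$ is precisely Proposition~\ref{prop:om bd}. So everything comes down to the reverse inclusion $\Upsilon_A(W)\subseteq U_{A,c}(W)$, which I would prove by decomposing $W$ into the translates of a single subtorus, singling out one such translate meeting $V(\ker\nu)$ in positive dimension, and then quoting Lemma~\ref{lem:uaw}. First I would dispose of a trivial case: if $\rank\xi=\rank H$, then $\overline\xi=H$, hence $V(\overline\xi)=\{1\}$ and $W=\eta V(\xi)$ is a finite set, so $\Upsilon_A(W)=\emptyset$; moreover $\Xi_c(W)=\emptyset$ (a finite set contains no positive-dimensional translated subtorus), so $U_{A,c}(W)=\emptyset$, and the equality is trivial. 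From now on I would assume $\rank\xi<\rank H$, so that $V(\overline\xi)$ is a positive-dimensional subtorus.

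Now take $[\nu]\in\Upsilon_A(W)$ and put $K=\ker\nu$. Using the decomposition $V(\xi)\cong\widehat{\overline\xi/\xi}\cdot V(\overline\xi)$ from \eqref{eq:vdecomp}, I would write
\[
W=\eta V(\xi)=\bigcup_{\rho\in\widehat{\overline\xi/\xi}}\eta\rho\,V(\overline\xi),
\]
a finite union of torsion-translates of the one subtorus $V(\overline\xi)$. Since $\dim\bigl(V(K)\cap W\bigr)>0$ and this intersection is the corresponding finite union of the sets $V(K)\cap\eta\rho\,V(\overline\xi)$, some summand must contribute: there is a $\rho\in\widehat{\overline\xi/\xi}$ with $\dim\bigl(V(K)\cap\eta\rho\,V(\overline\xi)\bigr)>0$. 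Each coset $\eta\rho\,V(\overline\xi)$ is irreducible, being a translate of the connected subtorus $V(\overline\xi)$ (primitivity of $\overline\xi$); and as $W$ is a finite union of translates of $V(\overline\xi)$, any translated subtorus contained in $W$ lies in an irreducible component of $W$, hence has dimension at most $\dim V(\overline\xi)$, with equality forcing equality of the two closed irreducible sets. Thus $\eta\rho\,V(\overline\xi)$ is a maximal, positive-dimensional, torsion-translated subtorus of $W$.

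It remains to check the order bound and invoke Lemma~\ref{lem:uaw}. Since $\rho$ is a character of the finite abelian group $\overline\xi/\xi$, one has $\rho^{\,c(\overline\xi/\xi)}=1$, so $\ord(\rho)$ divides $c(\overline\xi/\xi)$, and therefore $\ord(\eta\rho)$ divides $\ord(\eta)\cdot\ord(\rho)$, which divides $\ord(\eta)\cdot c(\overline\xi/\xi)=c$. I would then apply Lemma~\ref{lem:uaw} with the primitive subgroup $\overline\xi$ playing the role of ``$\xi$'' there (so that $\overline{\overline\xi}=\overline\xi$) and with $\eta\rho\in\wH$ playing the role of ``$\eta$'': we have exhibited a subgroup of $H$ and an element of $\wH$ of order dividing $c$ for which $\eta\rho\,V(\overline\xi)$ is a maximal, positive-dimensional translated subtorus of $W$ and $\dim\bigl(V(K)\cap\eta\rho\,V(\overline\xi)\bigr)>0$; Lemma~\ref{lem:uaw} then yields $[\nu]\in U_{A,c}(W)$, completing the argument. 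The one point I expect to need a little care is the maximality of $\eta\rho\,V(\overline\xi)$ in $W$: it rests on the fact that all irreducible components of $W$ are translates of the \emph{same} subtorus $V(\overline\xi)$, so that no strictly larger translated subtorus can sit inside $W$; the remainder is routine bookkeeping within the Pontryagin dictionary of \S\ref{sect:inttt}.
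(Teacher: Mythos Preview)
Your argument is correct and follows essentially the same route as the paper: both decompose $W=\bigcup_{\rho}\eta\rho\,V(\overline\xi)$, pick a coset meeting $V(\ker\nu)$ in positive dimension, and bound $\ord(\eta\rho)$ by $c$. The only cosmetic difference is that the paper explicitly builds the subgroup $\chi=\epsilon\big(\bigcup_m(\eta\rho)^mV(\overline\xi)\big)$ and then cites Lemma~\ref{lem:eta vxi} and Corollary~\ref{cor:uxi} to land in $\Xi_d(W)$ and $U_A(\chi)$, whereas you invoke the packaged characterization of Lemma~\ref{lem:uaw} directly; note that you are using the direction of Lemma~\ref{lem:uaw} whose proof is not spelled out there, so if you want your write-up to be self-contained it is worth constructing that $\chi$ explicitly, exactly as the paper does.
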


\begin{proof}
Inclusion $\supseteq$ follows from Proposition \ref{prop:om bd},
so we only need to prove the opposite inclusion.
Write
\[
V(\xi)=\bigcup_{\rho \in \widehat{\overline\xi/\xi}}\rho V(\overline\xi).
\]

Let $\nu\colon H\surj A$ be an epimorphism such that
$[\nu] \in \Upsilon_A(\eta V(\xi))$.
Hence, there is a character $\rho \colon \overline{\xi}/\xi \to \C^*$
such that $\dim\, (V(\ker(\nu)) \cap \eta\rho V(\overline\xi)) >0$.
Consider the subgroup
\[
\chi = \epsilon\, \bigg( \bigcup_m (\eta\rho)^m V(\overline\xi)\bigg).
\]
Lemma \ref{lem:eta vxi} implies that $\chi\in \Xi_{d}(\eta V(\xi))$,
where $d=\ord (\eta\rho)$.  Using
Corollary \ref{cor:uxi}, we conclude that $[\nu]\in U_{A}(\chi)$.

Finally, set $c=\ord(\eta)\cdot c(\overline{\xi}/\xi)$.
Then clearly $[\nu]\in U_{A,c}(W)$, and we are done.
\end{proof}

Here is an alternate description of the set $\Upsilon_{A}(W)$,
in the case when $W$ is an algebraic subgroup of $T$, translated
by an element $\eta\in T$, not necessarily of finite order.

\begin{theorem}
\label{thm:uprho2}
Let $\xi \le H$ be a subgroup, and let $\eta\in \wH$.
Then
\[
\Upsilon_{A}(\eta V(\xi))
=\sigma_A(\xi) \cap \big\{[\nu] \in \Gamma(H,A) \mid
 \eta \in V(\ker(\nu)\cap \xi)
\big\}.
\]
In particular, $\Upsilon_{A}(V(\xi))=\sigma_A(\xi)$.
\end{theorem}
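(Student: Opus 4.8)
The plan is to unwind both sides of the claimed equality using the description of $V(\xi)$ as a union of cosets of its identity component $V(\overline\xi)$, together with the intersection results from Section~\ref{subsec:itt}. Writing $V(\xi)=\bigcup_{\rho\in \widehat{\overline\xi/\xi}} \rho\, V(\overline\xi)$, the translate becomes $\eta V(\xi)=\bigcup_\rho \eta\rho\, V(\overline\xi)$, so for an epimorphism $\nu\colon H\surj A$ the set $\Upsilon_A(\eta V(\xi))$ consists exactly of those $[\nu]$ for which $\dim\big(V(\ker\nu)\cap \eta\rho\, V(\overline\xi)\big)>0$ for at least one coset representative $\rho$. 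First I would apply Proposition~\ref{prop:int2} with the two subgroups $\ker\nu$ and $\overline\xi$, and with the translating characters $1$ (on the $V(\ker\nu)$ factor) and $\eta\rho$: the intersection $V(\ker\nu)\cap \eta\rho\, V(\overline\xi)$ is nonempty if and only if $\eta\rho\in V(\ker\nu\cap\overline\xi)$, and when nonempty its dimension equals $\rank H-\rank(\ker\nu+\overline\xi)$, which is independent of $\rho$ and of $\eta$.

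Thus the positivity of the dimension is equivalent to the purely numerical condition $\rank(\ker\nu+\overline\xi)<\rank H$, which (since $\rank\overline\xi=\rank\xi$ and $\ker\nu+\overline\xi\supseteq \ker\nu+\xi$ with the same rank) is exactly $\rank(\ker\nu+\xi)<\rank H$, i.e.\ $[\nu]\in\sigma_A(\xi)$ by Definition~\ref{def:sigmaxi}. Next I would handle the nonemptiness condition: $[\nu]\in\Upsilon_A(\eta V(\xi))$ also requires that for \emph{some} $\rho\in\widehat{\overline\xi/\xi}$ we have $\eta\rho\in V(\ker\nu\cap\overline\xi)$. I would observe that $V(\ker\nu\cap\overline\xi) = V(\ker\nu\cap\xi)$ up to the finite group $\widehat{\overline\xi/\xi}$; more precisely, $V(\ker\nu\cap\xi)$ is a union of $\widehat{\overline\xi/\xi}$-cosets of $V(\ker\nu\cap\overline\xi)$ (using the exact sequence \eqref{eq:ex} applied to $\ker\nu\cap\xi$ and its primitive closure, together with the lattice duality of Section~\ref{subsec:functor}). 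Hence ``$\eta\rho\in V(\ker\nu\cap\overline\xi)$ for some $\rho\in\widehat{\overline\xi/\xi}$'' translates into ``$\eta\in V(\ker\nu\cap\xi)$''. Combining the two conditions gives exactly the description
\[
\Upsilon_A(\eta V(\xi))=\sigma_A(\xi)\cap\{[\nu]\mid \eta\in V(\ker\nu\cap\xi)\}.
\]

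The main obstacle I anticipate is the bookkeeping in the second step: matching up the cosets $\eta\rho$ ranging over $\widehat{\overline\xi/\xi}$ with the $\widehat{\overline\xi/\xi}$-coset structure of $V(\ker\nu\cap\xi)$ inside $V(\ker\nu\cap\overline\xi)$, and checking that the two finite groups involved (namely $\widehat{\overline\xi/\xi}$ acting on the $\eta$-side, and $\widehat{\overline{\ker\nu\cap\xi}/(\ker\nu\cap\xi)}$ appearing on the $V$-side) are compatible so that ``$\exists\rho$'' on one side becomes a single membership statement on the other. This requires care about primitive closures: note $\overline{\ker\nu\cap\xi}\subseteq\overline\xi$ but need not equal $\ker\nu\cap\overline\xi$, so I would argue instead directly that $\eta\rho\in V(\ker\nu\cap\overline\xi)$ for some $\rho$ iff $\eta\in V(\ker\nu\cap\overline\xi)\cdot\widehat{\overline\xi/\xi} = V(\ker\nu\cap\xi)$, the last equality coming from the meet/join compatibility of the correspondence \eqref{eq:pont dual} established in \cite{SYZ}.

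For the final assertion, taking $\eta=1$ (the identity character), the condition $1\in V(\ker\nu\cap\xi)$ holds automatically, since the trivial character lies in every algebraic subgroup of $\wH$. Therefore the intersection collapses and $\Upsilon_A(V(\xi))=\sigma_A(\xi)$, as claimed.
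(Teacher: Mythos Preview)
Your proof is correct, but it takes an unnecessarily circuitous route compared to the paper. You first decompose $V(\xi)=\bigcup_{\rho}\rho\,V(\overline\xi)$ into cosets of the identity component, apply Proposition~\ref{prop:int2} to each coset (with the primitive subgroup $\overline\xi$), and then have to reassemble the nonemptiness conditions via the identity $V(\xi)\cdot V(\ker\nu\cap\overline\xi)=V(\ker\nu\cap\xi)$. The paper instead applies Proposition~\ref{prop:int2} \emph{directly} with $\xi_1=\ker\nu$ and $\xi_2=\xi$: that proposition is stated for arbitrary subgroups of $H$, not just primitive ones, so in one stroke it gives both that $V(\ker\nu)\cap\eta V(\xi)\neq\emptyset$ iff $\eta\in V(\ker\nu\cap\xi)$, and that the dimension (when nonempty) equals $\rank H-\rank(\ker\nu+\xi)$. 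Your passage through $\overline\xi$ and the coset bookkeeping is then entirely avoided. What your approach buys is perhaps a clearer picture of how the components of $V(\xi)$ interact with $V(\ker\nu)$ individually, but for the statement at hand the direct application is cleaner and shorter.
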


\begin{proof}
By Proposition \ref{prop:int2}, we have
\begin{align*}
&\{[\nu] \mid \dim ( V(\ker(\nu))\cap \eta V(\xi))>0  \} \\
&\qquad =\{[\nu] \mid V(\ker(\nu))\cap \eta V(\xi) \ne
\emptyset\} \cap \{[\nu] \mid \rank(\ker(\nu)+ \xi)< \rank H\}.
\end{align*}
Moreover, $V(\ker(\nu))\cap \eta V(\xi) \ne
\emptyset \same \eta \in V(\ker(\nu)\cap \xi)$, and we are done.
\end{proof}

\begin{remark}
\label{rem:srxieta}
In the case when $A$ is a free abelian group of rank $r$
and $\xi$ is a primitive subgroup of $H=\Z^n$, the set
$\Upsilon_A(\eta V(\xi))$ coincides with the set $\sigma_r(\xi, \eta)$
defined in \cite{Su14}.
\end{remark}

When the translation factor $\eta$ from Theorem \ref{thm:uprho2}
has finite order, a bit more can be said.
\begin{corollary}
\label{cor:uprho3}
 Let $W=\eta V(\xi)$ be a torsion-translated subgroup of $T$.
Then
\[
\Upsilon_{A}(W)
=\sigma_A(\xi) \cap \big\{[\nu] \in \Gamma(H,A) \mid
 \epsilon(\langle \eta \rangle) \supseteq \ker(\nu)\cap \xi
\big\}.
\]
\end{corollary}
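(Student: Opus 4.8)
The plan is to derive Corollary~\ref{cor:uprho3} as a direct consequence of Theorem~\ref{thm:uprho2} by rewriting the condition $\eta\in V(\ker(\nu)\cap\xi)$ in terms of the Pontryagin correspondence $\epsilon$ from \S\ref{sect:inttt}, using the hypothesis that $\eta$ has finite order. Since $\eta$ is a torsion element of $T=\wH$, the cyclic group $\langle\eta\rangle$ it generates is a finite algebraic subgroup of $T$, so it lies in the image of the correspondence \eqref{eq:pont dual} and $\epsilon(\langle\eta\rangle)\le H$ makes sense. The key algebraic fact to invoke is the adjunction between $V$ and $\epsilon$: for a subgroup $\zeta\le H$ and a (finite) algebraic subgroup $C\le T$, one has $C\subseteq V(\zeta)$ if and only if $\zeta\subseteq\epsilon(C)$; this is exactly the order-reversing Galois correspondence recorded in \S\ref{subsec:functor} (and used already in the proof of Lemma~\ref{lemma:equiva}, cf.\ \eqref{eq:inc}).

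First I would start from the description of $\Upsilon_A(\eta V(\xi))$ given by Theorem~\ref{thm:uprho2}, namely $\Upsilon_A(\eta V(\xi))=\sigma_A(\xi)\cap\{[\nu]\mid \eta\in V(\ker(\nu)\cap\xi)\}$, which is valid for any $\eta\in\wH$. Then I would observe that $\eta\in V(\ker(\nu)\cap\xi)$ is equivalent to $\langle\eta\rangle\subseteq V(\ker(\nu)\cap\xi)$, since $V(\ker(\nu)\cap\xi)$ is a subgroup of $T$ and hence contains the subgroup generated by any of its elements. By the adjunction just cited, $\langle\eta\rangle\subseteq V(\ker(\nu)\cap\xi)$ holds if and only if $\ker(\nu)\cap\xi\subseteq\epsilon(\langle\eta\rangle)$, i.e.\ $\epsilon(\langle\eta\rangle)\supseteq\ker(\nu)\cap\xi$. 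Substituting this reformulation into the expression from Theorem~\ref{thm:uprho2} yields exactly the claimed formula.

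I expect the only point requiring a little care—really the ``main obstacle,'' though it is quite mild—is justifying that $\epsilon(\langle\eta\rangle)$ is well-defined and that the adjunction $\langle\eta\rangle\subseteq V(\zeta)\iff\zeta\subseteq\epsilon(\langle\eta\rangle)$ applies here; this uses that $\langle\eta\rangle$ is a genuine algebraic subgroup of $T$, which in turn uses the finiteness of $\ord(\eta)$. (For an element of infinite order the Zariski closure of $\langle\eta\rangle$ would be larger, and the clean statement of the corollary would fail; compare the discussion of translated subtori in \S\ref{subsec:tau1}.) Everything else is a formal manipulation: no dimension count beyond what is already packaged in $\sigma_A(\xi)$ and in Proposition~\ref{prop:int2} is needed, because Theorem~\ref{thm:uprho2} has already absorbed the condition $\rank(\ker(\nu)+\xi)<\rank H$ into the factor $\sigma_A(\xi)$. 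Thus the proof is essentially two lines once the adjunction is invoked, and I would present it in that compressed form.
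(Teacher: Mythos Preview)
Your proposal is correct and is exactly the intended derivation: the paper presents Corollary~\ref{cor:uprho3} without proof, as an immediate consequence of Theorem~\ref{thm:uprho2}, and the passage from $\eta\in V(\ker(\nu)\cap\xi)$ to $\epsilon(\langle\eta\rangle)\supseteq\ker(\nu)\cap\xi$ via the order-reversing correspondence is precisely the maneuver already used in the proof of Lemma~\ref{lemma:equiva} (see \eqref{eq:inc}). Your remark that the torsion hypothesis on $\eta$ is what makes $\langle\eta\rangle$ an algebraic subgroup, and hence makes $\epsilon(\langle\eta\rangle)$ meaningful, is the right explanation for why the corollary is stated only in the torsion-translated case.
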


\subsection{Deleted subgroups}
\label{subsec:xixi}
We now analyze in more detail the case when the variety
$W$ is obtained from an algebraic subgroup of $T$
by deleting its identity component.
First, we need to introduce one more bit
of notation.

\begin{definition}
\label{def:theta}
Given a subgroup $\xi\le H$, and a quotient
$A$ of $H$, consider the subset $\theta_A(\xi)\subseteq \Gamma(H, A)$
given by
\begin{equation}
\label{eq:theta}
\theta_A(\xi)=\bigcup_{\xi\le \xi'\lvertneqq\overline\xi \: : \:
\text{$\overline\xi/\xi'$ is cyclic}}
\big\{ [\nu] \in \Gamma(H,A) \mid
\nu(x)\neq0\hbox{ for all }
x\in\overline\xi\setminus\xi'
\big\}.
\end{equation}
\end{definition}

Note that the indexing set for this union is a finite set, which is empty
if $\xi$ is primitive. On the
other hand, the condition that $\nu(x)\neq0$ depends on the actual
element $x$ in the (typically) infinite set $\overline{\xi} \setminus \xi'$,
not just on the class of $x$ in the finite group $\overline\xi/\xi'$.
Thus, even when $A=\Z^r$, the set $\theta_A(\xi)$ need not be open
in the Grassmannian $\Gamma(H,A)=\Grass_r(\Q^n)$,
where $n=\rank(H)$, although each of the sets
$\{ [\nu]  \mid\nu(x)\neq0\}$ is open.

\begin{prop}
\label{prop:uprho4}
Suppose $W=V(\xi)\setminus V(\overline{\xi})$, for
some non-primitive subgroup $\xi\le H$.
Then
\begin{equation}
\label{eq:gxi}
\Upsilon_{A}(W)
=\sigma_A(\xi) \cap
\theta_A(\xi).
\end{equation}
\end{prop}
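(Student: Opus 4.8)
The statement to prove is Proposition~\ref{prop:uprho4}: for a non-primitive subgroup $\xi \le H$ and $W = V(\xi) \setminus V(\overline\xi)$, one has $\Upsilon_A(W) = \sigma_A(\xi) \cap \theta_A(\xi)$. The plan is to unwind the definition of $\Upsilon_A(W)$ using the decomposition $V(\xi) = \bigcup_{\rho \in \widehat{\overline\xi/\xi}} \rho V(\overline\xi)$ from \eqref{eq:vdecomp}; the deleted piece $W$ is precisely the union over the \emph{nontrivial} characters $\rho$ of the cosets $\rho V(\overline\xi)$. So $[\nu] \in \Upsilon_A(W)$ iff $\dim\big(V(\ker\nu) \cap \rho V(\overline\xi)\big) > 0$ for some $\rho \ne 1$ in $\widehat{\overline\xi/\xi}$. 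By Proposition~\ref{prop:int2}, each such intersection, when nonempty, has dimension $\rank H - \rank(\ker\nu + \overline\xi) = \rank H - \rank(\ker\nu + \xi)$ (since $\xi$ has finite index in $\overline\xi$); hence the positive-dimensionality condition factors as the $\sigma_A(\xi)$-condition $\rank(\ker\nu+\xi) < \rank H$, together with a nonemptiness condition $V(\ker\nu) \cap \rho V(\overline\xi) \ne \emptyset$ for some nontrivial $\rho$.

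First I would establish the inclusion $\Upsilon_A(W) \subseteq \sigma_A(\xi) \cap \theta_A(\xi)$. Given $[\nu]$ with the nonemptiness holding for some nontrivial $\rho$, Proposition~\ref{prop:int2} gives $\rho \in V(\ker\nu \cap \overline\xi)$. The key translation step is: $\rho \in V(\ker\nu \cap \overline\xi)$, together with $\rho$ being a nontrivial character of $\overline\xi/\xi$, should be repackaged in terms of a subgroup $\xi' = \epsilon(\langle\text{all of } \ker\nu\cap\overline\xi\text{ relevant to }\rho\rangle)$ — more precisely I want to produce an intermediate subgroup $\xi \le \xi' \lvertneqq \overline\xi$ with $\overline\xi/\xi'$ cyclic and $\nu(x) \ne 0$ for all $x \in \overline\xi \setminus \xi'$. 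The natural candidate is $\xi' := \{x \in \overline\xi \mid \rho(x+\xi) = 1 \text{ after identifying via } \epsilon\}$, or equivalently the preimage in $\overline\xi$ of $\ker\big(\overline\xi/\xi \to \C^*\big)$ under the character determined by $\rho$ through the Pontryagin correspondence $\epsilon$ of \eqref{eq:pont dual}. Then $\overline\xi/\xi'$ is cyclic (being a quotient of the cyclic-or-not group... here I must be careful: $\overline\xi/\xi$ need not be cyclic, but $\overline\xi/\xi'$ is the image of a single character, hence cyclic), and the condition $\rho \in V(\ker\nu\cap\overline\xi)$, i.e. $\rho$ vanishes on $\ker\nu\cap\overline\xi$, combined with $\rho$ nontrivial on $\overline\xi\setminus\xi'$, should translate — via Lemma~\ref{lemma:equiva} applied to the cyclic determinant group $\overline\xi/\xi'$ — into $\ker\nu \cap \overline\xi \subseteq \xi'$, which is exactly $\nu(x) \ne 0$ for $x \in \overline\xi\setminus\xi'$. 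Hence $[\nu] \in \theta_A(\xi)$, and $[\nu] \in \sigma_A(\xi)$ from the dimension count above.

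For the reverse inclusion $\sigma_A(\xi) \cap \theta_A(\xi) \subseteq \Upsilon_A(W)$, I would start from $[\nu] \in \theta_A(\xi)$ witnessed by some $\xi \le \xi' \lvertneqq \overline\xi$ with $\overline\xi/\xi'$ cyclic and $\nu(x)\ne 0$ on $\overline\xi\setminus\xi'$, and pick a generator $\eta$ of the cyclic group $\widehat{\overline\xi/\xi'}$, viewed as a torsion element of $\widehat H$. Lemma~\ref{lemma:equiva} (conditions \eqref{w1}$\Leftrightarrow$\eqref{w4}, with $\chi = \ker\nu$ and $\xi$ there replaced by $\xi'$) turns $\ker\nu\cap\overline\xi\subseteq\xi'$ into $\eta \in V(\ker\nu\cap\overline\xi)$, hence $V(\ker\nu)\cap\eta V(\overline\xi) \ne\emptyset$ by Proposition~\ref{prop:int2}; and $\eta$ is a nontrivial character of $\overline\xi/\xi$ (since it is nontrivial on $\overline\xi/\xi'$, and $\xi \subseteq \xi'$ means... wait — I need $\eta$ to factor through $\overline\xi/\xi$, i.e. $\eta|_\xi = 1$; this holds because $\xi \subseteq \xi'$ and $\eta$ is a character of $\overline\xi/\xi'$). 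So $\eta V(\overline\xi) \subseteq V(\xi)$ is one of the nontrivial cosets, i.e.\ $\eta V(\overline\xi) \subseteq W$. Combined with $[\nu]\in\sigma_A(\xi)$, Proposition~\ref{prop:int2} upgrades the nonempty intersection to a positive-dimensional one inside $W$, so $[\nu]\in\Upsilon_A(W)$.

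The main obstacle I anticipate is the bookkeeping in the first inclusion: passing from an arbitrary nontrivial character $\rho$ of the possibly non-cyclic group $\overline\xi/\xi$ to a canonically associated intermediate subgroup $\xi'$ with $\overline\xi/\xi'$ \emph{cyclic}, and verifying that the ``$\nu(x)\ne 0$ for all $x\in\overline\xi\setminus\xi'$'' condition is precisely what survives — this requires careful use of the $V$–$\epsilon$ duality \eqref{eq:pont dual} and of Lemma~\ref{lemma:equiva}, whose five equivalent conditions are stated for cyclic determinant groups, so I must always work one character at a time. Everything else is a routine assembly of Proposition~\ref{prop:int2}, the decomposition \eqref{eq:vdecomp}, and the definitions of $\sigma_A(\xi)$ and $\theta_A(\xi)$.
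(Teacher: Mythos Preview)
Your proposal is correct and follows essentially the same approach as the paper: decompose $W$ into the nontrivial cosets $\rho V(\overline\xi)$, use Proposition~\ref{prop:int2} to split positive-dimensionality into the $\sigma_A(\xi)$ rank condition plus a nonemptiness condition, and then translate nonemptiness via Lemma~\ref{lemma:equiva} (and the $V$--$\epsilon$ duality) into the inclusion $\ker\nu\cap\overline\xi\subseteq\xi'$ that defines $\theta_A(\xi)$. The only cosmetic difference is that the paper invokes Theorem~\ref{thm:uprho2} (itself a repackaging of Proposition~\ref{prop:int2}) and runs the argument as a single chain of equalities rather than two separate inclusions.
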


\begin{proof}
Write $W=\bigcup_{\eta \in \widehat{\overline\xi/\xi} \setminus\{1\}}
\eta V(\overline{\xi})$. By Theorem \ref{thm:uprho2} and Lemma \ref{lemma:equiva},
we have
\begin{align*}
\Upsilon_{A}(W) &= \Upsilon_{A}\Bigg(\bigcup_{\eta \in \widehat{\overline\xi/\xi}
\setminus\{1\}} \eta V(\overline{\xi}) \Bigg)\\
& =\bigcup_{\eta \in \widehat{\overline\xi/\xi} \setminus\{1\}}
\Big( \sigma_A(\overline\xi) \cap \big\{[\nu] \in \Gamma(H,A) \mid
  \eta \in V(\ker(\nu)\cap \overline\xi)
\big\} \Big) \\
& =\sigma_A(\overline\xi) \cap \bigcup_{\eta \in \widehat{\overline\xi/\xi}
\setminus\{1\}} \big\{[\nu] \in \Gamma(H,A) \mid
V(\ker(\nu)) \cap \eta V(\overline{\xi}) \ne \emptyset
\big\}\\
& =\sigma_A(\xi) \cap \bigcup_{\xi\le \xi'\lvertneqq\overline\xi \: : \:
\text{$\overline\xi/\xi'$ is cyclic}}
\big\{[\nu] \in \Gamma(H,A) \mid
\ker(\nu) \cap\overline{\xi}\le \xi'
\big\}.
\end{align*}
The desired conclusion follows at once.
\end{proof}

\subsection{Comparing the sets $\Upsilon_A(W)$ and $\Upsilon_{\oA}(W)$}
\label{subsec:comp ups}

Fix a decomposition $A=\oA\oplus \Tors(A)$.
Clearly, the projection map $q=q_A\colon \Gamma(H,A) \to \Gamma (H,\oA)$
sends $\Upsilon_A(W)^c$ to $\Upsilon_{\oA}(W)^c$.
On the other hand, as we shall see in Example \ref{example:ruled},
the map $q$ does {\em not}\/ always send $\Upsilon_A(W)$ to $\Upsilon_{\oA}(W)$.
Nevertheless, in some special cases it does.
Here is one such situation.

\begin{prop}
\label{prop:not pback1}
Suppose $W=\rho T$, where $T\subset \widehat{H}$
is an algebraic subgroup, and $\bar{\rho}\in \widehat{H}/T$
has finite order, coprime to the order of $\Tors(A)$.  Then
\[
q(\Upsilon_A(W))= \Upsilon_{\oA}(W)\quad \text{and} \quad
q^{-1}(\Upsilon_{\oA}(W)) =\Upsilon_A(W).
\]
Therefore, $\Upsilon_A(W)$ fibers over
$\Upsilon_{\oA}(W)$, with each fiber isomorphic
to $\Gamma(H/\oA, A/\oA)$.
\end{prop}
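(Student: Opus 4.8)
The plan is to reduce everything to the single equality $q^{-1}\big(\Upsilon_{\oA}(W)\big)=\Upsilon_{A}(W)$: granting this, the identity $q\big(\Upsilon_{A}(W)\big)=\Upsilon_{\oA}(W)$ and the fibration statement follow at once from Corollary \ref{cor:gha}, which tells us that $q$ is a set fibration with every fiber in bijection with $\Gamma(H/\oA,A/\oA)$, hence in particular surjective as soon as $\Gamma(H,A)\neq\emptyset$ (and if $\Gamma(H,A)=\emptyset$ the statement is vacuous). Restricting such a fibration to the $q$-saturated set $\Upsilon_A(W)=q^{-1}(\Upsilon_{\oA}(W))$ then exhibits it as a set fibration over $\Upsilon_{\oA}(W)$ with the same fibers.

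To analyze $q^{-1}\big(\Upsilon_{\oA}(W)\big)$, write $T=V(\xi)$ for $\xi=\epsilon(T)\le H$, and fix a representative $\rho\in\wH$ of $\bar\rho$, so that $W=\rho V(\xi)$. Theorem \ref{thm:uprho2} (which needs only that $W$ be a single coset of an algebraic subgroup, with no finiteness hypothesis on the translation factor, so $\eta=\rho$ is allowed even if $\rho$ has infinite order) identifies $\Upsilon_{A}(W)=\sigma_A(\xi)\cap\{[\nu]\mid \rho\in V(\ker(\nu)\cap\xi)\}$, and likewise for $\oA$. For an epimorphism $\nu\colon H\surj A$ with $\bar\nu=\pi\circ\nu$, I would record three routine facts: $\ker(\bar\nu)=\overline{\ker(\nu)}$ (an element lies in the left side iff $\nu$ sends it into $\Tors(A)$); $\overline{\ker(\nu)}/\ker(\nu)\cong\Tors(A)$ via $\nu$; and $q^{-1}(\sigma_{\oA}(\xi))=\sigma_A(\xi)$, by Proposition \ref{prop:comp schubert}. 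Consequently $q^{-1}\big(\Upsilon_{\oA}(W)\big)=\sigma_A(\xi)\cap\{[\nu]\mid \rho\in V(\overline{\ker(\nu)}\cap\xi)\}$, so the whole question comes down to comparing the vanishing conditions $\rho\in V(\ker(\nu)\cap\xi)$ and $\rho\in V(\overline{\ker(\nu)}\cap\xi)$. One inclusion is free: since $\ker(\nu)\cap\xi\subseteq\overline{\ker(\nu)}\cap\xi$ and $V$ is order-reversing, $\rho\in V(\overline{\ker(\nu)}\cap\xi)$ forces $\rho\in V(\ker(\nu)\cap\xi)$, giving $q^{-1}\big(\Upsilon_{\oA}(W)\big)\subseteq\Upsilon_{A}(W)$ with no hypothesis on orders.

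The heart of the argument is the reverse inclusion, and this is exactly where coprimality is used. Suppose $\rho$ vanishes on $\ker(\nu)\cap\xi$; I claim it then vanishes on $\overline{\ker(\nu)}\cap\xi$. Fix $x$ in the latter group. On one hand, the order $m$ of $x+\ker(\nu)$ in $\overline{\ker(\nu)}/\ker(\nu)\cong\Tors(A)$ divides $|\Tors(A)|$, and $mx\in\ker(\nu)\cap\xi$, so $\rho(x)^{m}=1$. On the other hand, with $k=\ord(\bar\rho)$ we have $\rho^{k}\in T=V(\xi)$, and since $x\in\xi$ this gives $\rho(x)^{k}=\rho^{k}(x)=1$. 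Because $m\mid|\Tors(A)|$ and $\gcd\big(k,|\Tors(A)|\big)=1$, we get $\gcd(m,k)=1$, so a B\'ezout combination $am+bk=1$ yields $\rho(x)=\rho(x)^{am+bk}=1$. Hence $\rho\in V(\overline{\ker(\nu)}\cap\xi)=V(\ker(\bar\nu)\cap\xi)$, i.e. $q([\nu])=[\bar\nu]\in\Upsilon_{\oA}(W)$, proving $\Upsilon_{A}(W)\subseteq q^{-1}\big(\Upsilon_{\oA}(W)\big)$.

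I expect the main obstacle to be bookkeeping rather than substance: being careful that the identifications $\ker(\bar\nu)=\overline{\ker(\nu)}$ and $\overline{\ker(\nu)}/\ker(\nu)\cong\Tors(A)$ are set up correctly, that $\sigma_A(\xi)$ and the condition $\rho\in V(\ker(\nu)\cap\xi)$ are well defined on $\Aut(A)$-classes (they depend on $\nu$ only through $\ker(\nu)$, and the latter condition is independent of the chosen representative $\rho\in W$ since $t(\xi)=1$ for $t\in V(\xi)$), and that Theorem \ref{thm:uprho2} is applied in its stated generality. Once these are in place, the content is just the short divisibility step above.
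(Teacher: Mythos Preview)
Your argument is correct, and it takes a genuinely different route from the paper's proof. The paper works directly with the geometric description $\Upsilon_A(W)=\{[\nu]\mid \dim(\im(\hat\nu)\cap W)>0\}$: it decomposes $\im(\hat\nu)=\bigcup_{\alpha\in\Tors(A)}\hat\nu(\alpha)\cdot\im(\hat{\bar\nu})$ and then invokes Proposition~\ref{prop:dimension equal} (from \cite{SYZ}) to show that, under the coprimality hypothesis, each translated piece $\hat\nu(\alpha)\cdot\im(\hat{\bar\nu})$ meets $\rho T$ in a set of the same dimension as $\im(\hat{\bar\nu})\cap\rho T$ (or not at all). You instead first pass through Theorem~\ref{thm:uprho2} to convert membership in $\Upsilon_A(W)$ into the algebraic condition $\rho\in V(\ker(\nu)\cap\xi)$, identify $\ker(\bar\nu)=\overline{\ker(\nu)}$, and then settle the comparison with a short B\'ezout argument on the orders $m\mid|\Tors(A)|$ and $k=\ord(\bar\rho)$. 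Your approach is more elementary at the decisive step---it avoids the dimension-equality machinery of \cite{SYZ} and makes the role of coprimality completely transparent---while the paper's approach stays closer to the geometric picture and reuses Proposition~\ref{prop:dimension equal}, which is needed elsewhere anyway. Both deductions of $q(\Upsilon_A(W))=\Upsilon_{\oA}(W)$ and of the fibration statement from $q^{-1}(\Upsilon_{\oA}(W))=\Upsilon_A(W)$ via Corollary~\ref{cor:gha} are fine.
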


Here, $\bar{\rho}$ is the image of
$\rho$ under the quotient map $\widehat{H}\to \widehat{H}/T$.

\begin{proof}
Let $\nu \colon H\surj A$ be an epimorphism such that
$[\bar{\nu}]=q([\nu])$ does not belong to $\Upsilon_{\oA}(W)$, that is,
the subtorus $\im(\hat{\bar{\nu}})=\im(\hat{\nu})_1$ intersects
$W$ in only finitely many points.
We want to show that $ \im(\hat{\nu})_\alpha \cap W$
is also finite, for all $\alpha \in \T(A)$.

First assume $\im(\hat{\nu})_1 \bigcap W$ is non-empty.
If $\im(\hat{\nu})_\alpha \bigcap W = \emptyset$, we are done.
Otherwise, using
Proposition~\ref{prop:dimension equal}(\ref{e1}) with $C=\im(\hat{\nu})_1$,
$\alpha_1=1$, $\alpha_2=\alpha$ and $\eta V = \rho T $,
we infer that
$\dim\, (\im(\hat{\nu})_\alpha \cap \rho T)=
\dim\, (\im(\hat{\nu})_1 \cap \rho T)$,
and the desired conclusion follows.

Now assume $\im(\hat{\nu})_1 \bigcap W$ is empty.
Using Proposition~\ref{prop:dimension equal}(\ref{e2})
with $C=\im(\hat{\nu})_1$ and $\eta V = \rho T$,
our assumption that
$\im(\hat{\nu})_1 \bigcap \rho T=\emptyset$ implies that
$\im(\hat{\nu})_\alpha \bigcap \rho T=\emptyset$.
Thus, the desired conclusion follows in this case, too,
and we are done.
\end{proof}

\begin{corollary}
\label{cor:pback omega}
For every subgroup $\xi\le H$, the set
$q(\Upsilon_A(V(\xi)))$ is contained in $\Upsilon_{\oA}(V(\xi))$.
\end{corollary}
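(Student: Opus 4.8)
The plan is to reduce the statement to the naturality of the special Schubert varieties already recorded in Proposition~\ref{prop:comp schubert}. First I would invoke Theorem~\ref{thm:uprho2}, whose ``in particular'' clause gives $\Upsilon_A(V(\xi)) = \sigma_A(\xi)$, and, applying the same theorem with $\oA$ in place of $A$, $\Upsilon_{\oA}(V(\xi)) = \sigma_{\oA}(\xi)$. This works uniformly whether or not $\xi$ is primitive, since $V(\xi)$ is then a (possibly disconnected) algebraic subgroup of $T=\wH$, which is exactly the level of generality at which Theorem~\ref{thm:uprho2} is stated; no primitivity hypothesis is needed.

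With these two identifications in hand, the assertion $q(\Upsilon_A(V(\xi))) \subseteq \Upsilon_{\oA}(V(\xi))$ becomes $q(\sigma_A(\xi)) \subseteq \sigma_{\oA}(\xi)$, which is precisely part~(i) of Proposition~\ref{prop:comp schubert}; in fact that proposition gives the equality $q(\sigma_A(\xi)) = \sigma_{\oA}(\xi)$, so the corollary holds with equality. As an alternative, the result also follows directly from Proposition~\ref{prop:not pback1} by taking $W = V(\xi)$, $T = V(\xi)$, and $\rho = 1$: then $\bar\rho$ has order $1$, which is trivially coprime to the order of $\Tors(A)$, and the proposition yields $q(\Upsilon_A(V(\xi))) = \Upsilon_{\oA}(V(\xi))$.

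Since both ingredients are already available, there is essentially no obstacle here. The only point deserving a moment's care is bookkeeping: one must check that the map $q = q_A$ occurring in this statement is the same canonical projection $\Gamma(H,A) \to \Gamma(H,\oA)$, $[\nu]\mapsto[\bar\nu]$, that appears in Proposition~\ref{prop:comp schubert} and in Theorem~\ref{theorem:new bij}. I would include one short line making this definitional identification explicit, after which the inclusion (indeed equality) is immediate.
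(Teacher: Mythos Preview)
Your proposal is correct. The paper gives no explicit proof, placing the corollary immediately after Proposition~\ref{prop:not pback1}; the intended argument is precisely your second alternative, taking $W=V(\xi)$, $T=V(\xi)$, and $\rho=1$ in that proposition. Your primary route via Theorem~\ref{thm:uprho2} and Proposition~\ref{prop:comp schubert} is a valid and equally short alternative that yields the same equality $q(\Upsilon_A(V(\xi)))=\Upsilon_{\oA}(V(\xi))$; it has the minor advantage of not requiring any coprimality bookkeeping, since it bypasses the case analysis in the proof of Proposition~\ref{prop:not pback1} entirely.
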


In general, though, the projection map
$q\colon \Gamma(H,A) \to \Gamma (H,\oA)$
does not restrict to a map $\Upsilon_A(W)\to \Upsilon_{\oA}(W)$.
Proposition \ref{prop:not pback2} below describes a situation
when this happens.  First, we need a lemma, whose proof
is similar to the proof of Proposition \ref{cyclic to arbitrary}.

\begin{lemma}
\label{lem:fun a}
Let $\pi\colon A\surj B$ be an epimorphism, and let
$\tilde\pi\colon \Gamma(H,A)\surj \Gamma(H,B)$ be
the induced homomorphism. Then
\[
q_A(\Upsilon_A(W)) \subseteq \Upsilon_{\overline{A}}(W) \implies
q_B(\Upsilon_B(W)) \subseteq \Upsilon_{\overline{B}}(W).
\]
\end{lemma}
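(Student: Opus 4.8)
The plan is to argue by contraposition, closely following the proof of Proposition~\ref{cyclic to arbitrary}. So suppose $q_B(\Upsilon_B(W))\not\subseteq\Upsilon_{\overline{B}}(W)$; the goal is to exhibit a class in $\Upsilon_A(W)$ whose $q_A$-image lies outside $\Upsilon_{\overline{A}}(W)$, thereby contradicting the hypothesis $q_A(\Upsilon_A(W))\subseteq\Upsilon_{\overline{A}}(W)$. By assumption there is an epimorphism $\nu\colon H\surj B$ with $[\nu]\in\Upsilon_B(W)$ but $[\bar\nu]:=q_B([\nu])\notin\Upsilon_{\overline{B}}(W)$; that is, $\dim\big(V(\ker\nu)\cap W\big)>0$, whereas $V(\ker\bar\nu)\cap W$ is finite.

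Since $\tilde\pi\colon\Gamma(H,A)\surj\Gamma(H,B)$ is surjective, I would then pick $[\sigma]\in\tilde\pi^{-1}([\nu])$, say with $\pi\circ\sigma=\beta\circ\nu$ for some $\beta\in\Aut(B)$, so that $\ker\sigma\le\ker(\pi\circ\sigma)=\ker\nu$. As the correspondence $V$ of \S\ref{subsec:functor} is order-reversing, $V(\ker\sigma)\supseteq V(\ker\nu)$, hence $\dim\big(V(\ker\sigma)\cap W\big)\ge\dim\big(V(\ker\nu)\cap W\big)>0$, i.e.\ $[\sigma]\in\Upsilon_A(W)$. This is the routine half.

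For the remaining half I would use that $\ker\pi$ is finite, so that (as recorded before diagram~\eqref{eq:natural diag}) the induced map $\bar\pi\colon\overline{A}\to\overline{B}$ on maximal torsion-free quotients is an isomorphism and the right-hand face of that cube commutes: $q_B\circ\tilde\pi=\tilde{\bar\pi}\circ q_A$. Thus, writing $[\bar\sigma]:=q_A([\sigma])$, one gets $\tilde{\bar\pi}([\bar\sigma])=q_B(\tilde\pi([\sigma]))=q_B([\nu])=[\bar\nu]$, i.e.\ $\bar\pi\circ\bar\sigma\sim\bar\nu$; since $\bar\pi$ is injective this forces $\ker\bar\sigma=\bar\sigma^{-1}(\ker\bar\pi)=\ker(\bar\pi\circ\bar\sigma)=\ker\bar\nu$, whence $V(\ker\bar\sigma)=V(\ker\bar\nu)$ and therefore $V(\ker\bar\sigma)\cap W$ is finite, i.e.\ $[\bar\sigma]\notin\Upsilon_{\overline{A}}(W)$. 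Combined with the previous paragraph, $[\sigma]\in\Upsilon_A(W)$ while $q_A([\sigma])\notin\Upsilon_{\overline{A}}(W)$, giving the desired contradiction.

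The only delicate point is the injectivity of $\bar\pi$ in the last step: if $\ker\pi$ were infinite, $\ker\bar\sigma$ could be a proper subgroup of $\ker\bar\nu$, so $V(\ker\bar\sigma)$ might strictly contain $V(\ker\bar\nu)$ and meet $W$ in positive dimension even when $V(\ker\bar\nu)$ does not, and the argument would break down. I expect this to be the main (if minor) obstacle; it is handled exactly as in the discussion preceding Proposition~\ref{cyclic to arbitrary}, where $\ker\pi$ is likewise finite and characteristic.
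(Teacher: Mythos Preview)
Your proof is correct and follows exactly the approach the paper intends: the paper does not spell out a proof for this lemma but simply says it is ``similar to the proof of Proposition~\ref{cyclic to arbitrary},'' and you have faithfully transplanted that contrapositive argument to the $\Upsilon$-setting. Your observation that the finiteness of $\ker\pi$ (hence the bijectivity of $\bar\pi\colon\overline{A}\to\overline{B}$) is what makes the second half go through is spot on; this hypothesis is implicit in the paper's setup (cf.\ the discussion preceding diagram~\eqref{eq:natural diag}) and is indeed satisfied in the sole application, Proposition~\ref{prop:not pback2}.
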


\begin{prop}
\label{prop:not pback2}
Let $H$ be a finitely generated, free abelian group, and
let $A$ be a quotient of $H$ such that $\rank A< \rank H$.
Let $W$ be a subvariety of $\wH$ of the form $\rho T\cup Z$,
where $Z$ is a finite set, $T$ is an algebraic subgroup, and
$\rho$ is a torsion element whose order divides $c(A)$.
Then $q\big(\Upsilon_{A}(W)\big)\not\subseteqq \Upsilon_{\oA}(W)$.
\end{prop}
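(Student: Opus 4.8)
First I would reduce to the non‑degenerate case. Since $\Upsilon_A$ and $\Upsilon_{\oA}$ are insensitive to finite components (the remark following Definition~\ref{def:omega w}), we may replace $W$ by $\rho T$. I would also observe that the hypotheses tacitly require $\dim T>0$, $\rho\notin T$ and $\rank A\ge 1$: without these, $q\big(\Upsilon_A(W)\big)\subseteq\Upsilon_{\oA}(W)$ holds --- either because $\Upsilon_A(W)=\emptyset$, or, when $\rho\in T$, by Theorem~\ref{thm:uprho2} together with Proposition~\ref{prop:comp schubert}. Set $n=\rank H$, $r=\rank A$ (so $1\le r<n$), and $\xi=\epsilon(T)\le H$, so that $T=V(\xi)$, $T_0=V(\overline\xi)$, $\dim T=n-\rank\overline\xi=:m\ge 1$, and $\rho\notin T$ becomes $\xi\not\subseteq\ker\rho$. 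The goal is then to exhibit a single epimorphism $\nu\colon H\surj A$ with $[\nu]\in\Upsilon_A(\rho T)$ but $q([\nu])=[\bar\nu]\notin\Upsilon_{\oA}(\rho T)$; this yields $q\big(\Upsilon_A(W)\big)\not\subseteqq\Upsilon_{\oA}(W)$ at once.

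Next I would unpack the structure of $V(\ker\nu)$. By Lemma~\ref{lem:nunubar}, $V(\ker\nu)=\im(\hat\nu)$ is a union of cosets of its identity component $C:=V(\ker\bar\nu)=\im(\hat{\bar\nu})$, a subtorus of dimension $r$; the other cosets are $\delta C$ with $\delta$ running over a finite subgroup $D'\subset\wH$ isomorphic to $\Tors(A)$. Here $C$ is determined by the free abelian quotient $\bar\nu\colon H\surj\oA$, and $D'$ by the auxiliary $\Tors(A)$‑cover $X^{\nu}\to X^{\bar\nu}$, that is, by the torsion component $\nu_t\colon H\to\Tors(A)$ of $\nu$ (cf.\ \eqref{eq:cd tors}). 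Using $V(\ker\bar\nu)\cdot V(\overline\xi)=V(\ker\bar\nu\cap\overline\xi)$, $V(\ker\bar\nu)\cdot V(\xi)=V(\ker\bar\nu\cap\xi)$, the decomposition \eqref{eq:vdecomp}, and Propositions~\ref{prop:int2} and~\ref{prop:dimension equal}, I would reduce the problem to constructing $\nu$ so that:
\begin{romenum}
\item\label{p1} $\rank(\ker\bar\nu+\overline\xi)<n$, i.e.\ $\dim(C\cap T_0)>0$;
\item\label{p2} $\ker\bar\nu\cap\xi\not\subseteq\ker\rho$, i.e.\ $\rho\notin C\cdot T$;
\item\label{p3} $\rho\in D'\cdot C\cdot T$.
\end{romenum}
Indeed, \eqref{p2} forces $C\cap\rho T=\emptyset$, so $V(\ker\bar\nu)\cap(\rho T\cup Z)$ is finite and $[\bar\nu]\notin\Upsilon_{\oA}(W)$; while \eqref{p3} produces a component $\delta C$ of $V(\ker\nu)$ meeting some component $\rho\mu T_0$ of $\rho T$ in dimension $\dim(C\cap T_0)>0$ (by \eqref{p1}), whence $[\nu]\in\Upsilon_A(\rho T)$. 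Corollary~\ref{cor:empty} packages this dichotomy neatly.

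The construction I have in mind is this. Fix a basis $e_1,\dots,e_n$ of $H$ with $\overline\xi=\langle e_{m+1},\dots,e_n\rangle$, and set $H'=\langle e_i : i\ne m\rangle$, a rank $n-1$ direct summand containing $\overline\xi$. Pick $y\in\xi$ with $\rho(y)\ne 1$ (possible since $\xi\not\subseteq\ker\rho$) and a primitive $v\in H$ with $y\in\langle v\rangle$; then $v\in\overline\xi\subseteq H'$. Extend $\langle v\rangle$ to a primitive subgroup $\eta:=\ker\bar\nu\le H'$ of rank $n-r$ (possible since $1\le n-r\le n-1$). Then $\eta+\overline\xi\subseteq H'\subsetneqq H$, giving \eqref{p1}, and $y\in\eta\cap\xi$ with $\rho(y)\ne 1$, giving \eqref{p2}. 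It then remains to choose the auxiliary $\Tors(A)$‑cover --- an epimorphism $\eta\surj\Tors(A)$, which assembles with $\bar\nu$ into an epimorphism $\nu=(\bar\nu,\nu_t)\colon H\surj A$ --- extended to $\nu_t\colon H\to\Tors(A)$, in such a way that \eqref{p3} holds. The hard part will be this last step: arranging $D'$ to meet the coset $\rho\cdot(C\cdot T)$, which I would reduce to the containment $\ker\nu_t\cap\ker\bar\nu\cap\xi\subseteq\ker\rho$. To obtain it I would use that $n-r$ is at least the minimal number of generators of $\Tors(A)$ (since $A\cong\Z^r\oplus\Tors(A)$ is a quotient of $\Z^n$), which provides enough freedom, and --- crucially --- that $\ord(\rho)$ divides $c(A)=\exp\Tors(A)$, which allows $\rho$ to be routed through a chosen surjection $\Tors(A)\surj\Z/\ord(\rho)$ after a Smith normal form reduction of $\rho|_\eta$.
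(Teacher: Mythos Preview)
Your proposal is correct, and the route differs from the paper's in how the torsion part of $A$ is handled. Both arguments reduce to exhibiting a single $\nu$ with $C=V(\ker\bar\nu)$ meeting $T_0$ in positive dimension while $\rho\notin C\cdot T$, and then arranging a torsion translate of $C$ to hit $\rho T$; both lean on Propositions~\ref{prop:int2}--\ref{prop:dimension equal} and Corollary~\ref{cor:empty} for this dichotomy. The paper, however, proceeds in two stages: it first treats the case where $\Tors(A)$ is cyclic (building $C$ inside a codimension-one subgroup $T'=V(\chi)\supseteq T$ with $\rho\notin T'$, then lifting the resulting epimorphism $H\surj\oA\oplus\Z_d$ to $A$), and for general $A$ it passes to a cyclic summand $B\le\Tors(A)$ of order $c(A)$ and invokes Lemma~\ref{lem:fun a} to transport the non-containment from $\oA\oplus B$ back to $A$. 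Your construction is more direct: you place $\eta=\ker\bar\nu$ inside a corank-one summand $H'\supseteq\overline\xi$ to secure \eqref{p1}--\eqref{p2} simultaneously, and then choose $\nu_t|_\eta\colon\eta\surj\Tors(A)$ by a Smith normal form argument so that $\ker(\nu_t|_\eta)\subseteq\ker(\rho|_\eta)$, which gives \eqref{p3}; this uses precisely that the number of invariant factors of $\Tors(A)$ is at most $n-r$ and that $\ord(\rho)$ divides the largest one. Your approach avoids the two-step reduction and the auxiliary lemma, trading them for a little lattice bookkeeping. One small imprecision: the map $\Tors(A)\to\Z_{\ord(\rho)}$ through which you route $\rho|_\eta$ need not be a surjection when the image of $\rho|_\eta$ is a proper subgroup of $\Z_{\ord(\rho)}$; what you actually need---and what your Smith normal form construction delivers---is only the containment $\ker(\nu_t|_\eta)\subseteq\ker(\rho|_\eta)$.
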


\begin{proof}
We need to construct an epimorphism $\nu\colon H \surj A$
such that $[\nu] \in \Upsilon_A(W)$, yet
$[\bar{\nu}] \notin \Upsilon_{\oA}(W)$.
\par
\smallskip
{\bf Step 1.} First, we assume $\Tors(A)$ is a cyclic group.
In this case, we claim there exists a subtorus $C$ of
$\widehat{H}$, and a torsion element $\alpha \in \widehat{H}$,
such that $\ord{\alpha}=\ord{\rho}$
and $\dim(C \cap \rho T)\leq 0$, yet $\dim(\alpha C \cap \rho T) > 0$.

To prove the claim, set $\epsilon(\langle\rho\rangle)=L$ and $\xi=\epsilon(T)$.
Since $\rho \notin T$, we have that $\xi \nsubseteq L$.
Thus, there exist a sublattice $\chi$ of rank $1$, such that
$\chi \subseteq \xi$ and $\chi \nsubseteq L$.
Set $T'=V(\chi)$. Then $T'$ is a codimension $1$ subgroup with
$T \subseteq T' \subset \widehat{H}$, and $\rho \notin T'$.
Thus, $T_0 \subseteq T'_0 \subset \widehat{H}$.
Let $C$ be any dimension $r$ subtorus of $T'_0$
intersecting $T$ with positive dimension.
Then $\rho \notin CT \subseteq T'$ and $\dim(C \cap T)>0$.
Choose an element $\alpha \in \widehat{H}$ such that
$\alpha^{-1}\rho=1 \in CT$. Clearly, $\ord(\alpha)=\ord(\rho)\mid c(A)$.
Using Corollary \ref{cor:empty}, we conclude that $C \cap \rho T=\emptyset$
and $\dim(\alpha C \cap \rho T) > 0$, thus finishing the proof of the claim.

Now, the algebraic
subgroup $\bigcup_k\alpha^k C$ corresponds to an
epimorphism $H \surj \overline{A} \oplus \Z_d$, where $d=\ord(\alpha)$.
Since $H$ is torsion-free,  $\Tors(A)$ is cyclic, and
$d$ divides $c(A)$, this epimorphism can be lifted to
an epimorphism $\nu\colon H \surj A$.

\smallskip
{\bf Step 2.} For the general case, let $B$ be the cyclic subgroup
of $\Tors(A)$ for which $\abs{B}=c(A)$ and $\ord(\rho)\mid \abs{B}$.
Notice that $B$ is a direct summand of $\Tors(A)$.  We then have
the following commuting diagram:
\[
\xymatrix{
\Gamma(H/\oA, B) \ar[r] & \Gamma(H, \overline{A} \oplus B)
\ar[r]^{q_{\overline{A} \oplus B}}
&  \Gamma(H, \oA)\\
\Gamma(H/\oA, \T(A))\ar[u]^{\pi_1} \ar[r]
& \Gamma(H, A)\ar[u]^{\pi_2} \ar[r]^{q_A} &  \Gamma(H, \oA)\ar@{=}[u]
}
\]

Since $H/\oA$ is torsion-free, the group $\Aut(H/\oA)$
acts transitively on $\Gamma(H/\oA, B)$.  Using the
assumption that $\Gamma(H/\oA, \T(A))\neq \emptyset$,
we deduce that the map $\pi_1$ is surjective.
Thus, the map $\pi_2$ is surjective.
From Step 1, we know that
$q\big(\Upsilon_{\oA\oplus B}(W)\big)$ is not contained in
$\Upsilon_{\oA}(W)$.  Using Lemma \ref{lem:fun a}, we conclude that
$q\big(\Upsilon_{A}(W)\big)$ is not contained in
$\Upsilon_{\oA}(W)$, either.
\end{proof}

\section{Support varieties for homology modules}
\label{sect:supports}

We now switch gears, and revisit the Dwyer--Fried theory
in a slightly more general context.  In particular, we show
that the support varieties of the homology modules of two
related chain complexes coincide.

\subsection{Support varieties}
\label{subsec:supports}
Let $H$ be a finitely generated abelian group, and
let $F$ be a finitely generated $\C$-algebra.  Then the group ring
$R=F[H]$ is a Noetherian ring.  Let $\Spm(R)$
be the set of maximal ideals in $R$, endowed with the
Zariski topology.

Given a module $M$ over $F[H]$, denote by $\supp M$
its support, consisting of those maximal ideals $\m\in \Spm(F[H])$ for
which the localization $M_{\m}$ is non-zero.

Now let $A$ be another finitely generated abelian group,
and let $\nu\colon  H \surj A$ be an epimorphism.  Denote
by $S=F[A]$ the group ring of $A$.  The extension of $\nu$
to group rings, $\nu\colon  R \surj S$, is a ring epimorphism.
Let $\nu^*\colon  \Spm(S) \inj \Spm(R)$ be the induced
morphism between the corresponding affine varieties.

In the case when $F=\C$, the group ring $R=\C[H]$ is the
coordinate ring of the algebraic group $\wH=\Hom(G,\C^*)$,
and $\Spm(R)=\wH$.  Furthermore, if $M$ is an $R$-module, then
\begin{equation}
\label{eq:vann}
\supp(M) = Z(\ann M),
\end{equation}
where $\ann M \subset R$ is the annihilator of $M$, and
$Z(\ann M) \subset \wH$ is its zero-locus.

\begin{lemma}
\label{lem:suppz}
If $\nu\colon H\surj A$ is an epimorphism, then
\begin{equation}
\label{eq:cap}
(\nu^*)^{-1} (\supp(M)) \cong  \im(\hat\nu)\cap Z(\ann M).
\end{equation}
\end{lemma}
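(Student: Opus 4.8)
The plan is to recognize that, under the standard identification of the maximal spectrum with the character group, the morphism $\nu^{*}$ coincides with the map $\hat\nu\colon\wA\to\wH$ on character groups, which is a closed immersion; the asserted isomorphism \eqref{eq:cap} then follows by restricting this closed immersion to the appropriate closed subvarieties and invoking \eqref{eq:vann}.

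First I would make the identification $\Spm(\C[H])=\wH$ explicit: since $\C[H]$ is a finitely generated $\C$-algebra, its maximal ideals correspond via the Nullstellensatz to $\C$-algebra maps $\C[H]\to\C$, hence to characters $H\to\C^{*}$, and similarly $\Spm(\C[A])=\wA$. Under these identifications the morphism $\nu^{*}\colon\Spm(\C[A])\to\Spm(\C[H])$, $\m\mapsto\nu^{-1}(\m)$, sends a character $\rho\colon A\to\C^{*}$ to $\rho\circ\nu$; that is, $\nu^{*}=\hat\nu$. Since $\nu\colon H\surj A$ is onto, $\hat\nu$ is injective, and since the induced ring map $\nu\colon\C[H]\to\C[A]$ is surjective, $\hat\nu$ is an isomorphism onto the closed subvariety $\im(\hat\nu)=V(\ker\nu)\subseteq\wH$ (cf.\ Lemma~\ref{lem:nunubar}, part~\eqref{nu2}).

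Next comes the computation. Using \eqref{eq:vann} to write $\supp(M)=Z(\ann M)$, a point $\rho\in\wA=\Spm(\C[A])$ lies in $(\nu^{*})^{-1}(\supp(M))$ precisely when $\hat\nu(\rho)\in Z(\ann M)$; as $\hat\nu(\rho)$ automatically lies in $\im(\hat\nu)$, this is the same as $\hat\nu(\rho)\in\im(\hat\nu)\cap Z(\ann M)$. Hence $\hat\nu$ carries the closed subvariety $(\nu^{*})^{-1}(\supp(M))=\hat\nu^{-1}\bigl(Z(\ann M)\bigr)$ of $\wA$ bijectively onto $\im(\hat\nu)\cap Z(\ann M)$; because $\hat\nu$ is a closed immersion, this bijection is an isomorphism of varieties, which is exactly \eqref{eq:cap}.

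I do not expect any serious obstacle here. The only point requiring a bit of (routine) care is to confirm that the two descriptions of $\nu^{*}$ — as $\Spm$ applied to the surjection $\C[H]\to\C[A]$, and as the map $\hat\nu$ of character groups — really agree, and that the bijection obtained above respects the variety structures rather than being merely set-theoretic; both are immediate from the Pontryagin dictionary reviewed in \S\ref{sect:inttt}.
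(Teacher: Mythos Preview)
Your proof is correct and follows essentially the same approach as the paper's: both identify $\Spm(\C[H])\cong\wH$, $\Spm(\C[A])\cong\wA$, observe that under these identifications $\nu^{*}$ becomes $\hat\nu$ (the paper packages this as a commuting square), and then invoke \eqref{eq:vann}. The paper's argument is terser---it simply writes down the commuting diagram and says ``the conclusion readily follows''---whereas you have spelled out the Nullstellensatz identification and the closed-immersion point explicitly, but there is no substantive difference.
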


\begin{proof}
From the definitions, we see that the diagram
\begin{equation}
\label{eq:spec}
\xymatrix{
\Spm(S)\, \ar@{^{(}->}[r]^{\nu^*} \ar[d]^{\cong} &
\Spm(R) \ar[d]^{\cong}\\
\wA\: \ar@{^{(}->}[r]^{\hat{\nu}} & \wH
}
\end{equation}
commutes.  The conclusion readily follows.
\end{proof}

\subsection{Homology modules}
\label{subsec:hom mod}
We are now ready to state and prove the main result of this section.
An abbreviated proof was given by Dwyer and Fried in \cite{DF},
in the special case when $A$ is free abelian. For the convenience
of the reader, we give here a complete proof, modeled on the
one from \cite{DF}.

\begin{theorem}
\label{thm:supp}
Let $F$ be a finitely generated $\C-$algebra, let $C_\bullet$ be a chain complex
of finitely generated free modules over $F[H]$, and let $\nu\colon H\surj A$
be an epimorphism.   Viewing $F[A]$ as a module over $F[H]$ by extension
of scalars via $\nu$, we have
\begin{equation}
\label{conclusion}
\supp H_* (C_\bullet \otimes_{F[H]} F[A])=
(\nu^*)^{-1} \big(\!\supp H_* (C_\bullet) \big).
\end{equation}
\end{theorem}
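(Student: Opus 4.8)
The plan is to treat this as a local statement and verify it ideal by ideal. Write $R=F[H]$, $S=F[A]$, and $I=\ker(\nu\colon R\surj S)$. For a maximal ideal $\mathfrak{p}\in\Spm(S)$, its image $\mathfrak{q}:=\nu^{-1}(\mathfrak{p})=(\nu^*)(\mathfrak{p})$ is a maximal ideal of $R$ (since $\nu$ is onto), and $\nu$ induces an isomorphism of residue fields $R/\mathfrak{q}\cong S/\mathfrak{p}=:k$. By definition of support, equality \eqref{conclusion} amounts to showing, for every such $\mathfrak{p}$, that $H_*(C_\bullet\otimes_R S)_{\mathfrak{p}}\neq 0$ if and only if $H_*(C_\bullet)_{\mathfrak{q}}\neq 0$. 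Since $C_\bullet$ is a complex of free modules, $C_\bullet\otimes_R S$ is a complex of flat $R$-modules, and localization is exact, so it commutes with homology; thus $H_*(C_\bullet\otimes_R S)_{\mathfrak{p}}=H_*\big((C_\bullet\otimes_R S)\otimes_S S_{\mathfrak{p}}\big)$. Because every element of $R\setminus\mathfrak{q}$ maps to a unit in $S_{\mathfrak{p}}$, the ring map $R\to S_{\mathfrak{p}}$ factors through $R_{\mathfrak{q}}$, and $S_{\mathfrak{p}}=R_{\mathfrak{q}}/IR_{\mathfrak{q}}=:\bar R$, a Noetherian local ring with residue field $k$. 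Hence, writing $D_\bullet:=(C_\bullet)_{\mathfrak{q}}$ — a bounded complex of finitely generated free $R_{\mathfrak{q}}$-modules (bounded because $C_\bullet$ is; in the applications it is the cellular chain complex of a finite CW-complex) — we get $H_*(C_\bullet\otimes_R S)_{\mathfrak{p}}=H_*(D_\bullet\otimes_{R_{\mathfrak{q}}}\bar R)$, and the assertion reduces to the equivalence $H_*(D_\bullet)=0\iff H_*(D_\bullet\otimes_{R_{\mathfrak{q}}}\bar R)=0$.

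The key input is a standard lemma that I would state separately: if $(\Lambda,\mathfrak{m},k)$ is a Noetherian local ring and $E_\bullet$ is a bounded complex of finitely generated free $\Lambda$-modules, then $E_\bullet$ is acyclic if and only if $E_\bullet\otimes_\Lambda k$ is acyclic. (Sketch: using row and column operations over $\Lambda$ one splits off elementary contractible summands $\Lambda\xrightarrow{\id}\Lambda$ from $E_\bullet$ whenever some entry of a differential is a unit; since the ranks strictly drop, this terminates in a ``minimal'' complex whose differentials land in $\mathfrak{m}E_\bullet$. A minimal bounded acyclic complex of finitely generated frees over a local ring is zero — a bounded acyclic complex of finitely generated frees over a local ring is split exact, and a split-injective map whose image lies in $\mathfrak{m}$ times the target is zero by Nakayama, so one peels off the top term by term — while a minimal complex has vanishing differentials after $\otimes k$; and both acyclicity and acyclicity mod $\mathfrak{m}$ are insensitive to the elementary summands.) Granting this, apply the lemma over $R_{\mathfrak{q}}$ to get $H_*(D_\bullet)=0\iff H_*(D_\bullet\otimes_{R_{\mathfrak{q}}}k)=0$, and over $\bar R$ to get $H_*(D_\bullet\otimes_{R_{\mathfrak{q}}}\bar R)=0\iff H_*\big(D_\bullet\otimes_{R_{\mathfrak{q}}}\bar R\otimes_{\bar R}k\big)=0$. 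The two complexes $D_\bullet\otimes_{R_{\mathfrak{q}}}k$ and $D_\bullet\otimes_{R_{\mathfrak{q}}}\bar R\otimes_{\bar R}k$ coincide (each is $D_\bullet$ base-changed to the common residue field $k$), so the two equivalences chain together. Letting $\mathfrak{p}$ range over $\Spm(S)$, and using Lemma~\ref{lem:suppz} to match $\nu^*$ with the embedding $\wA\hookrightarrow\wH$, yields \eqref{conclusion}.

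The step I expect to be the main obstacle is the inclusion ``$\supseteq$'' — that non-vanishing of $H_*(C_\bullet)$ at $\mathfrak{q}$ persists after base change to $\mathfrak{p}$. The reverse inclusion ``$\subseteq$'' comes essentially for free from the hyper-$\Tor$ (universal coefficients) spectral sequence $E^2_{p,q}=\Tor^R_p\big(H_q(C_\bullet),S\big)\Rightarrow H_{p+q}(C_\bullet\otimes_R S)$: each $E^2$-term is an $S$-module supported inside $(\nu^*)^{-1}(\supp H_q(C_\bullet))$, where for $p=0$ one uses that $H_q(C_\bullet)$ is finitely generated over the Noetherian ring $R$ together with Nakayama, so $H_q(C_\bullet)\otimes_R S$ is nonzero precisely over the pullback of $\supp H_q(C_\bullet)$. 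But this spectral sequence does not obviously give ``$\supseteq$'', since higher differentials could a priori annihilate the $E^2_{0,q}$-term at $\mathfrak{p}$; it is exactly here that the structural local lemma above is needed in place of spectral-sequence bookkeeping. Two small points to verify carefully along the way are that $\nu^{-1}(\mathfrak{p})$ really is maximal and that $R_{\mathfrak{q}}$ and $\bar R=S_{\mathfrak{p}}$ share the residue field $k$; both hold because $\nu$ is surjective.
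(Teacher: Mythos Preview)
Your argument is correct, and it takes a genuinely different route from the paper's. The paper proceeds by a case analysis on the structure of $H$: when $H$ is torsion-free it inducts on $\rank H-\rank A$, reducing to the corank-$1$ case $S=R/(x^q-1)$, where multiplication by $x^q-1$ on $C_\bullet$ gives a short exact sequence of complexes and the familiar fact that a surjective endomorphism of a finitely generated module over a Noetherian ring is injective yields $\supp\ker\phi_*\subseteq\supp\coker\phi_*$; the finite-$H$ case is handled by observing that $\Spm F[A]\hookrightarrow\Spm F[H]$ is an open immersion; and the general case is obtained by factoring $\nu$ through these two types. Your approach instead localizes at a single maximal ideal and reduces everything to the Nakayama/minimal-complex lemma over a Noetherian local ring, with no reference to the group-ring structure of $R$ and $S$ at all---so your proof in fact establishes the statement for an arbitrary surjection of finitely generated $\C$-algebras. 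The trade-off is that the paper's long-exact-sequence argument works degree by degree and hence needs no boundedness hypothesis on $C_\bullet$, whereas your key lemma requires the complex to be bounded (bounded below is enough, and suffices for the intended application, but the theorem as stated does not assume it). Two cosmetic points: your remark that ``$C_\bullet\otimes_R S$ is a complex of flat $R$-modules'' is not right (and not needed---exactness of localization as a functor on $S$-modules is all you use there), and the appeal to Lemma~\ref{lem:suppz} at the end is superfluous, since your pointwise argument already gives \eqref{conclusion} directly.
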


\begin{proof}
Set $n=\rank H$ and $r=\rank A$.  There are three cases to consider.
\par
\smallskip
{\bf Case 1: $H$ is torsion-free.}
We use induction on $n-r$ to reduce to the case $r=n-1$, in which
case $\T(A)= \Z_q$, for some $q\geq1$ (if $A$ is torsion-free, $q=1$).
We then have a short exact sequence of chain complexes,
\[
\xymatrix{0 \ar[r] & C_\bullet \ar[r]^{\phi=x^q-1} & C_\bullet \ar[r]
& C_\bullet \otimes_{F[H]} F[A] \ar[r] & 0},
\]
which yields a long exact sequence of homology groups.  Consider
the map $\phi_*\colon M \to M$, where $M=H_*(C_\bullet)$, viewed
as a module over $F[H]$.  Localizing at a maximal ideal $\m$,
we obtain an endomorphism $\phi_{\m}$ of the finitely-generated module
$M_{\m}$  over the Noetherian ring $F[H]_{\m}$.

As a standard fact, if
$\phi_{\m}$ is surjective, then $\phi_{\m}$ is injective. Using the exact
sequence
\[
0 \to \ker \phi_{\m} \to M_{\m} \to M_{\m} \to \coker \phi_{\m} \to 0,
\]
we see that $\coker \phi_{\m}=0 \Rightarrow \ker \phi_{\m} =0$.
Therefore, $\supp \ker \phi_* \subseteq \supp \coker \phi_*$,
and so
\begin{align*}
   \supp H_*(C_{\bullet} \otimes_{F[H]} F[A])
                             & =  \supp \coker \phi_* \cup \supp \ker \phi_* \\
                             & =  \supp \coker \phi_* \\
                             & =  \supp  M/ (x^q-1) M \\
                             & =  (\supp M) \cap Z(x^q-1) \\
                             & =  (\supp M) \cap \im (\hat\nu).
\end{align*}
\par
When $n-r > 1$, one can change the basis of $H$
and $\oA$ so that, the epimorphism $\nu\colon H \to A$ is the composite
\[
\xymatrix{
H'\oplus \Z \ar@{->>}[r]^{\nu_1} & H'\oplus \Z_q \ar@{->>}[r]^{\nu_2}
& A'\oplus \Z_q},
\]
where $\nu_1$ is of the form
$\left(
         \begin{smallmatrix}
         \id & 0 \\
          0 & \nu|_{\Z}\\
         \end{smallmatrix}
         \right)$, and $\nu_2$ is of the form
$\left(
         \begin{smallmatrix}
         \nu|_{H'} & 0 \\
          0 & \id \\
         \end{smallmatrix}
         \right)$.
By the induction hypothesis, equality \eqref{conclusion} holds for
$\nu_1$ and $\nu_2$.  Thus, the theorem holds for the map
$\nu=\nu_2\circ \nu_1$.

\smallskip
{\bf Case 2: $H$ is finite.} In this situation, $\nu\colon  H \surj A$ is an
epimorphism between two finite abelian groups.  As above, $\nu$ induces
a ring epimorphism $\nu\colon F[H] \surj F[A]$. The corresponding
map, $i=\nu^*\colon \Spm F[A] \inj \Spm F[H]$,
is a closed immersion.  Consider the  commuting diagram
\begin{equation}
\label{eq:spec com}
\xymatrix{ \Spm F[A] \, \ar@{^{(}->}[r]^i \ar[d] & \Spm F[H] \ar[d]\\
           \Spm \C[A]\, \ar@{^{(}->}[r]^j & \Spm \C[H],
}
\end{equation}
where the morphism $j$ is induced by $\nu\colon H \surj A$.
Clearly, $j$ is an open immersion. By commutativity of \eqref{eq:spec com},
we have that $\Spm F[A]$ is an open subset of $\Spm F[H]$.

It suffices to show that
\begin{equation}
\label{eq:supphk}
\supp( H_k(i^*\tilde{C}_\bullet))= i^{-1}(\supp H_k (\tilde{C}_\bullet) )
\end{equation}
for any $k \in \Z$,
where $\tilde{C}_\bullet$ is the sheaf of modules over $\Spm (F[H])$
corresponding to the module $C_\bullet$ over $F[H]$,
and $i^*\tilde{C}_\bullet$ is the sheaf of modules over $\Spm (F[A])$
obtained by pulling back the sheaf  $\tilde{C}_\bullet$.
\par
For any $\m \in \Spm(F[A])$, we have $(H_k(\tilde{C}_{\bullet}))_\m=
H_k ((\tilde{C}_{\bullet})_\m)$,
since localization is an exact functor, and also
$(\tilde{C}_{\bullet})_{\m}=(i^* \tilde{C}_{\bullet})_\m$,
since $i$ is an open immersion.
Thus,
\begin{align*}
   \supp H_k(i^*\tilde{C}_\bullet) & =   \{\m \in \Spm(F[A])
   \mid  (H_k (i^*\tilde{C}_\bullet))_{\m} \neq 0  \} \\
                  & =  \{\m \in \Spm(F[A])
                  \mid  ( i^*\tilde{C}_\bullet)_{\m} \text{ is not exact at $k$}  \}\\
                  & =  \{ \m \in \Spm(F[A])
                  \mid (H_k (\tilde{C}_\bullet))_{\m} \neq 0 \} \\
                  & =  \supp H_k(\tilde{C}_\bullet) \cap \Spm(F[A])\\
                  & =  i^{-1}(\supp H_k(\tilde{C}_\bullet)).
\end{align*}

\smallskip
{\bf Case 3: $H$ is arbitrary.}
As in the proof of Theorem~\ref{theorem:new bij},
we can choose splittings $H=\oH\oplus \Tors(H)$ and
$A=\oA\oplus \Tors(A)$ such that $\oH=\oA\oplus H'$,
and the epimorphism $\nu\colon H \surj A$ is the composite
\[
\xymatrix{
\oA \oplus H' \oplus \T(H) \ar@{->>}[r]^(.46){\nu_1}
& \oA \oplus \nu(H') \oplus \T(H) \ar@{->>}[r]^(.56){\nu_2}
& \oA \oplus \T(A)},
\]
where $\nu(H')$ is finite,
$\nu_1$ is of the form
$\left(  \begin{smallmatrix}
         \id & 0 & 0\\
          0 & \nu|_{H'} & 0\\
          0 & 0 & \id
         \end{smallmatrix}
         \right)$,
and $\nu_2$ is of the form
$\left(
         \begin{smallmatrix}
         \id & 0 &  0\\
         0 & i & \nu|_{\T(H)}
         \end{smallmatrix}
         \right)$,
with $i\colon \nu(H') \inj \T(A)$ the
inclusion map.

Set $F= \C[\oA \oplus \T(H)]$; by Case 1, equality \eqref{conclusion}
holds for $\nu_1$.  Now set $F= \C[\oA]$; by Case 2, equality
\eqref{conclusion} holds for $\nu_2$.  Thus, the theorem holds
for the map $\nu=\nu_2\circ \nu_1$.
\end{proof}

\subsection{Finite supports}
\label{subsec:fin supp}
We conclude this section with a result which is presumably
folklore.  For completeness, we include a proof.

\begin{prop}
\label{prop:supp finite}
Let $A$ be a finitely generated abelian group, and let
$M$ be a finitely generated module over the group ring $S=\C[A]$.
Then $M$ as a $\C$-vector space is finite-dimensional if
and only if $\supp(M)$ is finite.
\end{prop}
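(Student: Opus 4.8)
The plan is to transfer the question to the quotient ring $R'=S/\ann_S(M)$. First I would record that, as in \eqref{eq:vann}, since $M$ is finitely generated over the Noetherian ring $S=\C[A]$ one has $\supp(M)=Z(\ann_S M)$, which under the identification $\Spm(S)=\wA$ is precisely $\Spm(R')$. The point is that $R'$ is again a finitely generated $\C$-algebra, that $M$ is a finitely generated module over $R'$, and that $R'$ acts faithfully on $M$; hence $\dim_\C M<\infty$ forces $\dim_\C R'<\infty$, while conversely $\dim_\C R'<\infty$ together with finite generation of $M$ gives back $\dim_\C M<\infty$. So the whole statement reduces to: $\dim_\C R'<\infty$ if and only if $\Spm(R')$ is finite.

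For the direction $\dim_\C M<\infty \Rightarrow \supp(M)$ finite, I would note that the $S$-action on $M$ factors through an embedding of $R'$ into $\Hom_\C(M,M)$, so $R'$ is a finite-dimensional, hence Artinian, $\C$-algebra; an Artinian ring has only finitely many maximal ideals, so $\supp(M)=\Spm(R')$ is finite.

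For the converse, suppose $\supp(M)$ is finite. Since $M$ is finitely generated over the Noetherian ring $S$, it admits a finite filtration $0=M_0\subset M_1\subset\cdots\subset M_k=M$ with $M_j/M_{j-1}\cong S/\p_j$ for primes $\p_1,\dots,\p_k$ of $S$. Because $\supp$ of an extension is the union of the supports of sub and quotient, $\supp(M)=\bigcup_j \supp(S/\p_j)$, so each $\supp(S/\p_j)\cong\Spm(S/\p_j)$ is finite. Now $S/\p_j$ is a domain and a finitely generated $\C$-algebra, hence a Jacobson ring; a Jacobson domain with only finitely many maximal ideals is $0$-dimensional, hence a field. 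So each $\p_j$ is maximal, and the Nullstellensatz --- using that $\C$ is algebraically closed --- gives $S/\p_j\cong\C$. Therefore $M$ has a finite filtration with one-dimensional quotients, whence $\dim_\C M=k<\infty$.

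The substantive half is the converse, and the one genuinely essential input there is the Nullstellensatz, which is what makes a zero-dimensional finitely generated $\C$-algebra finite-dimensional over $\C$ (equivalently, forces the residue fields $S/\p_j$ to collapse to $\C$). The prime filtration of a finitely generated module over a Noetherian ring, the additivity of supports along short exact sequences, and the elementary structure theory of Artinian rings are all standard, and I would merely cite them rather than prove them here.
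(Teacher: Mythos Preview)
Your proof is correct, but it takes a different route from the paper's. The paper reduces both directions to the cyclic case $M=S/\ann(M)$ by induction on the number of generators, and then uses Noether normalization as the single workhorse: for ``$\supp$ finite $\Rightarrow \dim_\C M<\infty$'' it observes that a finitely generated $\C$-algebra with finite max-spectrum is integral over $\C$; for the converse it argues by contradiction, producing a prime $\p\supseteq\ann(M)$ with $\dim S/\p>0$ and deriving $\dim_\C S/\p=\infty$ from normalization. Your argument, by contrast, splits the two directions with different tools: for ``$\dim_\C M<\infty\Rightarrow\supp$ finite'' you embed $R'=S/\ann(M)$ into $\End_\C(M)$ and invoke the Artinian property (cleaner than the paper's contradiction argument), while for the converse you bypass the cyclic reduction in favor of a prime filtration of $M$ and then use the Jacobson property plus the Nullstellensatz to force each $S/\p_j\cong\C$. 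One minor expository point: you announce a reduction to showing $\dim_\C R'<\infty\Leftrightarrow\Spm(R')$ finite, but your converse argument does not actually pass through $R'$; it works directly on $M$. This is harmless mathematically, but you might tighten the opening paragraph accordingly.
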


\begin{proof}
Since $M$ is a finitely generated $S$-module,
we can argue by induction on the number of
generators of $M$.  Using the short exact sequence
$0 \to  \langle m \rangle \to  M \to M/ \langle m \rangle \to  0$,
where $m$ is a generator of $M$, and the fact that
$\supp(M)=\supp( \langle m \rangle) \cup \supp(M/ \langle m \rangle)$,
we see that it suffices to consider the case when $M$ is a cyclic module.
In this case, $M=S/\ann(M)$ and $\supp(M)=Z(\ann(M))=\Spm(S/\ann(M))$.
From the assumption that $\supp{M}$ is finite, and
using the Noether Normalization lemma, we infer that
$S/\ann(M)$ is an integral extension of $\C$.  Thus,
$\dim_{\C} (S/\ann(M))<\infty$.

Conversely, suppose $\supp(M)$ is infinite. Then $\Spm(S/\ann(M))$ is infinite,
which implies $\Spm(S/\ann(M))$ has positive dimension.  Choose a prime ideal $\p$
containing $\ann(M)$, such that the Krull dimension of $S/\p$
is positive.  From the condition that $\dim_{\C}S/\ann(M)<\infty$,
we deduce that $\dim_{\C}S/\p<\infty$.  By the Noether Normalization
lemma, $S/\p$ is an integral extension of $\C[x_1, \dots , x_n]$,
with $n>0$. Thus, $\dim_{\C}S/\p=\infty$. This is a contradiction,
and so we are done.
\end{proof}

\section{Characteristic varieties and generalized Dwyer--Fried sets}
\label{sect:char-var}

In this section, we finally tie together several strands,
and show how to determine the sets $\Omega^i_A(X)$
in terms of the jump loci for homology in rank $1$ local systems
on $X$.

\subsection{The equivariant chain complex}
\label{seubsec:cx}

Let $X$ be a connected CW-complex. As usual, we will
assume that $X$ has finite $k$-skeleton, for some $k\ge 1$.
Without loss of generality, we may assume that $X$ has a single
$0$-cell $e^0$, which we will take as our basepoint $x_0$.
Moreover, we may assume that all attaching maps
$(S^i, *) \to (X^i, x_0) $ are basepoint-preserving.  Let
$G =\pi_1(X, x_0)$ be the fundamental group of $X$, and denote by
$(C_i(X, \C), \partial_i)_{i \geq 0}$ the cellular chain complex of $X$,
with coefficients in $\C$.

Let $p\colon X^{\ab} \to X$ be the universal abelian cover.
The cell structure on $X$ lifts in a natural
fashion to a cell structure on $X^{\ab}$. Fixing a lift
$\tilde{x}_0 \in p^{-1}(x_0)$ identifies the group
$H=G_{\ab}$ with the group of deck transformations of
$X^{\ab}$, which permute the cells. Therefore, we may view
the cellular chain complex $C_{\bullet}=C_{\bullet} (X^{\ab}, \C)$
as a chain complex of left-modules over the group algebra $R=\C[H]$.
This chain complex has the form
\begin{equation}
\label{eq:equiv cc}
\xymatrixcolsep{16pt}
\xymatrix{\cdots \ar[r]
& C_{i} \ar^(.45){\tilde{\partial}_{i}}[r]
& C_{i-1} \ar[r] & \cdots  \ar[r]
& C_{2} \ar^(.45){\tilde{\partial}_{2}}[r]
& C_{1} \ar^(.45){\tilde{\partial}_{1}}[r]
& C_0
},
\end{equation}

The first two boundary maps can be written down explicitly.
Let $e^1_1, \dots , e^1_m$ be the $1$-cells of $X$. Since
we have a single $0$-cell, each $e^1_i$ is a loop,
representing an element $x_i\in G$.
Let $ \tilde{e}^0= \tilde{x}_0$, and let $\tilde{e}^1_i$ be the lift of
$e^1_i$ at $\tilde{x}_0$; then $\tilde{\partial}_1(\tilde{e}^1_i)= (x_i-1) \tilde{e}^0$.
Next, let $e^2$ be a $2$-cell, and let $\tilde{e}^2$ be its lift at $\tilde{x}_0$;
then
\begin{equation}
\label{eq:fox}
\tilde{\partial}_2 (\tilde{e}^2) =
\sum_{i=1}^{m} \phi \big(\partial r/\partial x_i \big) \cdot  \tilde{e}^1_i,
\end{equation}
where $r$ is the word in the free group $F_m=\langle x_1,\dots , x_m\rangle$
determined by the attaching map of the $2$-cell, $\partial r/\partial x_i\in \C[F_m]$
are the Fox derivatives of $r$, and $\phi\colon \C[F_m] \to \C[H]$
is the extension to group rings of the projection map
$F_m \to G \xrightarrow{\ab} H$, see  \cite{Fo}.

\subsection{Characteristic varieties}
\label{seubsec:cvs}
Since $X$ has finite $1$-skeleton, the group $H=H_1(X,\Z)$
is finitely generated, and its dual, $\wH=\Hom(H,\C^*)$, is a
complex algebraic group.
As is well-known, the character group $\wH$ parametrizes rank $1$
local systems on $X$: given a character $\rho\colon H \to \C^*$,
denote by $\C_{\rho}$ the $1$-dimensional $\C$-vector space,
viewed as a right $R$-module
via $a \cdot g = \rho(g)a$, for $g \in H$ and $a \in \C$.
The homology groups of $X$ with coefficients in $\C_{\rho}$
are then defined as
\begin{equation}
\label{eq:twist hom}
 H_i(X, \C_{\rho}) := H_i \big(C_{\bullet} (X^{\ab}, \C)
 \otimes_{R}\C_{\rho}\big).
\end{equation}

\begin{definition}
\label{def:cv}
The {\em characteristic varieties}\/ of $X$ (over $\C$) are the  sets
\[
V^i (X) = \left\{ \, \rho \in \Hom(H, \C^*) \mid \dim_{\C} H_j(X,
\C_{\rho}) \ne 0\: \text{ for some $1 \leq j\leq i$}\, \right\},
\]
\end{definition}

The identity component of the character group $T=\wH$
is a complex algebraic torus, which we will denote by $T_0$.
Let $\oH=H/\Tors(H)$ be the maximal torsion-free quotient of $H$.
The projection map $\pi\colon H \surj \oH$
induces an identification $\hat\pi \colon \widehat{\oH}  \isom \wH_0$.
Denote by $W^i(X)$ the intersection of $V^i(X)$ with $T_0=\wH_0$.
If $H$ is torsion-free, then $W^i(X)=V^i(X)$; in general,
though, the two varieties differ.

For each $1 \le i \le k$, the set $V^i(X)$ is a Zariski closed
subset of the complex algebraic group $T$, and $W^i(X)$
is a Zariski closed subset of the complex algebraic torus $T_0$.
Up to isomorphism, these varieties depend only on the homotopy
type of $X$.  Consequently, we may define the characteristic
varieties of a group $G$ admitting a classifying space $K(G,1)$
with finite $k$-skeleton as $V^i(G)=V^i(K(G,1))$, for $i\le k$.
It is readily seen that $V^1(X)=V^1(\pi_1(X))$.
For more details on all this, we refer to \cite{Su14}.

The characteristic varieties of a space can be reinterpreted
as the support varieties of its Alexander invariants,
as follows.

\begin{theorem}[\cite{PS-plms}]
\label{thm:supp cv}
For each $1\le i\le k$, the characteristic variety $V^{i}(X)$
coincides with the support of the $\C[H]$-module
$\bigoplus_{j=1}^i H_{j} (X^{\ab}, \C)$, while
$W^{i}(X)$ coincides with the support of the
$\C[\oH]$-module $\bigoplus_{j=1}^i H_{j} (X^{\fab}, \C)$.
\end{theorem}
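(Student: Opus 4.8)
The plan is to reduce the assertion, one character at a time, to a local computation over the group ring $R=\C[H]$. Under the standard identification $\wH=\Spm\C[H]$, a character $\rho\colon H\to\C^{*}$ corresponds to the maximal ideal $\m_{\rho}=\ker\bigl(R\to\C,\ g\mapsto\rho(g)\bigr)$, and the rank~$1$ local system $\C_{\rho}$ of \eqref{eq:twist hom} is precisely the cyclic module $R/\m_{\rho}$. Writing $C_{\bullet}=C_{\bullet}(X^{\ab},\C)$ for the equivariant chain complex \eqref{eq:equiv cc}---a complex of finitely generated free $R$-modules, since $X$ has finite $k$-skeleton---and $M_{j}=H_{j}(X^{\ab},\C)=H_{j}(C_{\bullet})$, we have $H_{j}(X,\C_{\rho})=H_{j}(C_{\bullet}\otimes_{R}R/\m_{\rho})$, whereas $\m_{\rho}\in\supp\bigl(\bigoplus_{j=1}^{i}M_{j}\bigr)$ exactly when $(M_{j})_{\m_{\rho}}\neq 0$ for some $1\le j\le i$. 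Taking complements, the first assertion thus reduces to proving, at every maximal ideal $\m$ of $R$ and every $0\le i\le k$, the equivalence
\[
H_{j}\bigl(C_{\bullet}\otimes_{R}R/\m\bigr)=0\ \text{ for all }1\le j\le i
\quad\Longleftrightarrow\quad
(M_{j})_{\m}=0\ \text{ for all }1\le j\le i .
\]

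For this I would localize at $\m$: since $R$ is Noetherian and $H$ finitely generated, $R_{\m}$ is a Noetherian local ring with residue field $\C$, the complex $D_{\bullet}:=(C_{\bullet})_{\m}$ is a bounded-below complex of finitely generated free $R_{\m}$-modules, exactness of localization gives $H_{j}(D_{\bullet})=(M_{j})_{\m}$, and $D_{\bullet}\otimes_{R_{\m}}\C=C_{\bullet}\otimes_{R}R/\m$. Now induct on $j\ge 1$: assuming $H_{l}(D_{\bullet})=0$ for all $l<j$ (for $l=0$ this holds away from the augmentation ideal, and the trivial character is handled separately), the truncation in degrees $\le j$ fits into a finite exact sequence $0\to K\to D_{j}\to D_{j-1}\to\cdots\to D_{0}\to 0$ whose successive syzygies---hence $K$ itself---are finitely generated projective, and therefore free, over the local ring $R_{\m}$; this sequence splits and stays exact after $-\otimes_{R_{\m}}\C$, which identifies $H_{j}(D_{\bullet}\otimes_{R_{\m}}\C)\cong H_{j}(D_{\bullet})\otimes_{R_{\m}}\C$. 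Since $H_{j}(D_{\bullet})=(M_{j})_{\m}$ is finitely generated over $R_{\m}$, Nakayama's lemma gives $(M_{j})_{\m}=0$ if and only if $H_{j}(D_{\bullet}\otimes_{R_{\m}}\C)=0$, which settles degree $j$ and advances the induction. (Equivalently one can run the universal-coefficients spectral sequence $E^{2}_{pq}=\Tor^{R}_{p}(M_{q},\C_{\rho})\Rightarrow H_{p+q}(C_{\bullet}\otimes_{R}\C_{\rho})$ localized at $\m_{\rho}$: once $(M_{l})_{\m}=0$ for $l<q$, no differential can enter or leave $E^{2}_{0,q}$, so the bottom $\Tor_{0}$-term survives to $E^{\infty}$ and detects exactly the homology in that degree.) This degree-by-degree local comparison is the main point of the proof; keeping precise track of which homological degrees are forced to vanish is the only delicate issue.

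Globalizing yields $V^{i}(X)=\supp\bigl(\bigoplus_{j=1}^{i}H_{j}(X^{\ab},\C)\bigr)$ inside $\Spm R=\wH$. For the statement about $W^{i}(X)$, one intersects with $T_{0}=\wH_{0}$: the characters lying in $T_{0}$ are exactly those factoring through the projection $H\surj\oH$, and the universal free abelian cover has equivariant chain complex $C_{\bullet}(X^{\fab},\C)=C_{\bullet}(X^{\ab},\C)\otimes_{\C[H]}\C[\oH]$, a complex of finitely generated free $\C[\oH]$-modules. Repeating the argument verbatim over the ring $\C[\oH]$, with $\wH_{0}=\Spm\C[\oH]$ and $H_{j}(X^{\fab},\C)=H_{j}\bigl(C_{\bullet}(X^{\fab},\C)\bigr)$, gives $W^{i}(X)=V^{i}(X)\cap T_{0}=\supp\bigl(\bigoplus_{j=1}^{i}H_{j}(X^{\fab},\C)\bigr)$, as claimed.
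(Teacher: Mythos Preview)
The paper does not supply a proof of this theorem; it is quoted from \cite{PS-plms}. So there is no argument in the paper to compare against, and I assess your proposal on its own.

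Away from the augmentation ideal your localization argument is correct and is the standard route. The key step---that if $H_{l}(D_{\bullet})=0$ for $0\le l<j$ over the Noetherian local ring $R_{\m}$, then $H_{j}(D_{\bullet}\otimes_{R_{\m}}\C)\cong H_{j}(D_{\bullet})\otimes_{R_{\m}}\C$---follows just as you sketch: the successive cycle modules $Z_{l}=\ker d_{l}$ are finitely generated projective, hence free over $R_{\m}$, the short exact sequences $0\to Z_{l}\to D_{l}\to Z_{l-1}\to 0$ split, and right-exactness of the tensor product gives the identification. Nakayama's lemma then closes the induction in both directions simultaneously, since at each stage vanishing of $H_{j}(D_{\bullet})$ and of $H_{j}(D_{\bullet}\otimes\C)$ become equivalent once the hypothesis in degrees below $j$ is in place. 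The passage to $\C[\oH]$ for the $W^{i}$ statement is likewise fine.

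Your parenthetical ``the trivial character is handled separately'' conceals a genuine gap, however. At $\rho=1$ the assertion, read literally, can fail: take $X=S^{1}$, so $H=\Z$ and $X^{\ab}\simeq\R$; then $H_{1}(X^{\ab},\C)=0$ has empty support, whereas $H_{1}(X,\C_{1})=H_{1}(S^{1},\C)=\C$, so $1\in V^{1}(X)$. Your induction breaks down exactly here, because $(M_{0})_{\m_{1}}\cong\C\neq 0$ and the low-degree truncation is no longer exact. This is not a defect of your method but an imprecision in the statement as recorded: the identification holds away from the identity, or equivalently once degree $0$ is included on both sides. For the only application in this paper, Theorem~\ref{main theorem}, one needs merely finiteness of $\im(\hat\nu)\cap V^{i}(X)$, and since $1\in\im(\hat\nu)$ always, the single extra point is harmless downstream. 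Still, you should say so explicitly rather than promise a separate treatment that cannot exist.
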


\subsection{The first characteristic variety of a group}
\label{subsec:v1g}
Let $G$ be a finitely presented group.
The chain complex \eqref{eq:equiv cc} corresponding to
a presentation
$G= \langle x_1, \dots, x_q \mid r_1, \dots, r_m \rangle$
has second boundary map, $\tilde\partial_2$, an $m$ by $q$
matrix, with rows given by \eqref{eq:fox}.  Making use of
Theorem \ref{thm:supp cv}, we see that
$V^1(G)$ is defined by the vanishing of the
codimension~$1$ minors of the Alexander matrix
$\tilde\partial_2$, at least away from the trivial
character $1$.  This interpretation allows us to construct
groups with fairly complicated characteristic varieties.

\begin{lemma}
\label{lem:realize}
Let $f=f(t_1,\dots, t_n)$ be a Laurent polynomial with integral coefficients. 
There is then a finitely presented group $G$ with
$G_{\ab}=\Z^n$ and $V^1(G)= \{z\in (\C^{*})^n \mid f(z)=0\} \cup \{1\}$.
\end{lemma}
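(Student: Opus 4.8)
The plan is to realize $f$ as the abelianized Fox Jacobian of a presentation obtained from the standard presentation of $\Z^{n}$ by ``weighting'' each commutator relator by $f$. (Since the Alexander matrix of a finitely presented group has entries in $\Z[G_{\ab}]$, the variety $V^{1}$ is always cut out by integral equations; so I may assume $f\in\Z[t_{1}^{\pm1},\dots,t_{n}^{\pm1}]$, say $f=\sum_{a\in S}c_{a}\,\mathbf t^{a}$ with $S\subset\Z^{n}$ finite, all $c_{a}\in\Z\setminus\{0\}$, and $\mathbf t^{a}=t_{1}^{a_{1}}\cdots t_{n}^{a_{n}}$. I also take $n\ge 2$; the case $n=1$ is handled by a similar, Baumslag--Solitar type construction.) Writing $Z(f)=\{z\in(\C^{*})^{n}\mid f(z)=0\}$, I would consider the finitely presented group
\[
G=\big\langle\, t_{1},\dots,t_{n}\ \big|\ w_{ij}\ (1\le i<j\le n)\,\big\rangle,
\qquad
w_{ij}=\prod_{a\in S}\mathbf t^{a}\,[t_{i},t_{j}]^{c_{a}}\,\mathbf t^{-a},
\]
the product taken in any fixed order. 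Each $w_{ij}$ is a finite word in $F_{n}=\langle t_{1},\dots,t_{n}\rangle$ lying in the commutator subgroup $[F_{n},F_{n}]$, so every relator dies in $F_{n}/[F_{n},F_{n}]$; hence $H:=H_{1}(BG,\Z)=G_{\ab}=\Z^{n}$, and $\wH=(\C^{*})^{n}$.

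The crux is the computation of the Alexander matrix. Let $X$ be the presentation $2$-complex of $G$ and let $\tilde\partial_{2}$ be the second boundary map of the equivariant chain complex of $X^{\ab}$ from \S\ref{seubsec:cx}; it is the matrix over $\C[H]=\C[t_{1}^{\pm1},\dots,t_{n}^{\pm1}]$ whose $(i,j)$-th row is $\big(\overline{\partial w_{ij}/\partial t_{k}}\big)_{k=1}^{n}$. I would expand these abelianized Fox derivatives using the product and conjugation rules together with the fact that each commutator $[t_{i},t_{j}]$ maps to $1$ in $H$: this forces every inner factor $\mathbf t^{a}[t_{i},t_{j}]^{c_{a}}\mathbf t^{-a}$ to contribute exactly $c_{a}\mathbf t^{a}\cdot\overline{\partial[t_{i},t_{j}]/\partial t_{k}}$, so that $\overline{\partial w_{ij}/\partial t_{k}}=f\cdot\overline{\partial[t_{i},t_{j}]/\partial t_{k}}$ for all $i,j,k$. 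In other words $\tilde\partial_{2}=f\cdot J_{0}$, where $J_{0}$ is the Alexander matrix of the standard presentation $\Z^{n}=\langle t_{1},\dots,t_{n}\mid[t_{i},t_{j}]\rangle$. This Fox-calculus identity is the step that needs the most care.

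Finally I would read off $V^{1}(G)$. By the discussion in \S\ref{subsec:v1g} (equivalently, by Theorem~\ref{thm:supp cv}), $V^{1}(G)\setminus\{1\}$ is the zero locus in $\wH\setminus\{1\}$ of the ideal $I$ generated by the codimension-$1$, i.e.\ $(n-1)\times(n-1)$, minors of $\tilde\partial_{2}$. Since $\tilde\partial_{2}=f\cdot J_{0}$, each such minor equals $f^{\,n-1}$ times the corresponding minor of $J_{0}$, so $I=f^{\,n-1}\cdot I_{n-1}(J_{0})$ and therefore $Z(I)=Z(f)\cup Z(I_{n-1}(J_{0}))$. Applying the same principle to the torus, $Z(I_{n-1}(J_{0}))\setminus\{1\}=V^{1}(\Z^{n})\setminus\{1\}=\emptyset$, because rank-$1$ local systems with nontrivial monodromy have vanishing homology on $T^{n}=K(\Z^{n},1)$. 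Hence $V^{1}(G)\setminus\{1\}=Z(f)\setminus\{1\}$. On the other hand $b_{1}(G)=n\ge 1$, so $H_{1}(BG,\C)\neq 0$ and thus $1\in V^{1}(G)$. Combining the last two statements gives $V^{1}(G)=Z(f)\cup\{1\}$, as claimed. The main obstacle, as indicated, is the bookkeeping in the second paragraph; everything else is formal, given the Fox-calculus description of characteristic varieties and the triviality of $V^{1}(\Z^{n})$.
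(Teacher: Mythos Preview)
Your argument is correct and follows the same underlying strategy as the paper: produce a presentation of a group $G$ with $G_{\ab}=\Z^n$ whose abelianized Alexander matrix equals $f\cdot J_0$, where $J_0$ is the Alexander matrix of the standard presentation of $\Z^n$, and then read off $V^1(G)$ from the ideal of $(n-1)\times(n-1)$ minors, using that $V^1(\Z^n)\setminus\{1\}=\emptyset$.  The only substantive difference is in how the relators are obtained.  The paper appeals to Lyndon's theorem---any tuple $(v_k)$ in $\Z[\Z^n]$ satisfying $\sum_k (t_k-1)v_k=0$ is the abelianized Fox gradient of some word in $F_n'$---to assert the existence of relators $r_{i,j}$ with the prescribed derivatives $f\cdot\overline{\partial[x_i,x_j]/\partial x_k}$.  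You instead write down such relators explicitly, as $w_{ij}=\prod_{a\in S}\mathbf t^{a}[t_i,t_j]^{c_a}\mathbf t^{-a}$, and verify the Fox-derivative identity directly via the product and conjugation rules.  Your route is self-contained and constructive, while the paper's is shorter but relies on an external black box; the resulting groups (and hence their $V^1$) coincide.  Both treatments tacitly take $f$ to have integer coefficients---necessary, as you correctly observe---and both leave the degenerate case $n=1$ to a separate one-relator construction.
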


\begin{proof}
Let $F_n=\langle x_1,\dots, x_n\rangle$ be the
free group of rank $n$, with abelianization map
$\ab\colon F_n\to \Z^n$, $x_k \mapsto t_k$.
Recall the following result of R.~Lyndon (as
recorded in \cite{Fo}):  if $v_1, \dots ,v_n$ are elements
in the ring $\Z[\Z^n]=\Z[t_1^{\pm 1},\dots ,t_n^{\pm 1}]$,
satisfying the equation $\sum_{k=1}^n (t_k -1) v_k=0$, then
there exists an element $r\in F_n'$ such that
$v_k= \ab(\partial r/\partial x_k)$, for $1\le k \le n$.

Making use of this result, we may find elements $r_{i,j}\in F_n'$,
$1\le i<j\le n$ such that
\[
\ab(\partial r_{i,j}/\partial x_k) =
\begin{cases}
f \cdot (t_i-1), & \text{if $k=i$}\\
f \cdot (1-t_j), & \text{if $k=j$}\\
0, & \text{otherwise}.
\end{cases}
\]
It is now readily checked that the group
$G$ with generators $x_1, \dots, x_n$ and relations
$r_{ij}$ has the prescribed first characteristic variety.
\end{proof}

In certain situations, one may realize a Laurent
polynomial as the defining equation for the characteristic
variety by a more geometric construction.

\begin{example}
\label{ex:links}
Let $L$ be an $n$-component link in $S^3$, with
complement $X$.  Choosing orientations on the link
components yields a meridian basis for $H_1(X,\Z)=\Z^n$.
Then
\begin{equation}
\label{eq:v1link}
V^1(X)= \{z\in (\C^{*})^n \mid \Delta_L(z)=0\} \cup \{1\},
\end{equation}
where $\Delta_L=\Delta_L(t_1,\dots ,t_n)$ is
the (multi-variable) Alexander polynomial of the link.
\end{example}

\subsection{A formula for the generalized Dwyer--Fried sets}
\label{subsec:gen df}

Recall that, in Definition \ref{def:omega w} we associated to
each subvariety $W\subset \widehat{H}$, and each abelian
group $A$ a subset
\begin{equation}
\label{eq:ups again}
\Upsilon_{A}(W)= \big\{\, [\nu]\in \Gamma(H,A) \mid
\dim( \im(\hat\nu) \cap W) >0\, \big\}.
\end{equation}
The next theorem expresses the Dwyer--Fried sets
$\Omega^{i}_{A}(X)$ in terms of the $\Upsilon$-sets
associated to the $i$-th characteristic variety of $X$.

\begin{theorem}
\label{main theorem}
Let $X$ be a connected CW-complex, with finite $k$-skeleton.
Set $G=\pi_1(X,x_0)$ and $H=G_{\ab}$. For any abelian
group $A$, and for any $i\le k$,
\[
\Omega^{i}_{A}(X)= \Gamma(H,A) \setminus \Upsilon_A (V^{i}(X)) .
\]
\end{theorem}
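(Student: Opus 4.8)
The asserted identity is, upon unwinding Definition \ref{def:omega w}, equivalent to the statement that $[\nu]\in\Omega^i_A(X)$ exactly when $\im(\hat\nu)\cap V^i(X)$ is a finite set. The plan is to reduce the homological-finiteness condition defining $\Omega^i_A(X)$ to the finiteness of a support variety over $\C[A]$, transport that support down to $\wH$ by Theorem \ref{thm:supp}, and recognize the outcome as $\im(\hat\nu)\cap V^i(X)$.

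Fix an epimorphism $\nu\colon H\surj A$, identified via \eqref{eq:epiha} with an element of $\Gamma(G,A)$. The cellular chain complex of the cover $X^\nu$ is $C_\bullet(X^{\ab},\C)\otimes_{\C[H]}\C[A]$, where $\C[A]$ is a module over $\C[H]$ through $\nu$. Because $X$ has finite $k$-skeleton and $i\le k$, a routine truncation replaces $C_\bullet(X^{\ab},\C)$ by a bounded chain complex of finitely generated free $\C[H]$-modules whose homology in degrees $\le i$ is unchanged, both before and after applying $-\otimes_{\C[H]}\C[A]$; in particular each $H_j(X^\nu,\C)$ with $j\le i$ is a finitely generated module over the Noetherian ring $\C[A]$. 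By Definition \ref{def:gral df} and Proposition \ref{prop:supp finite}, $[\nu]\in\Omega^i_A(X)$ if and only if $\bigcup_{j=1}^i\supp H_j(X^\nu,\C)$ is a finite subset of $\wA$.

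Next, apply Theorem \ref{thm:supp} with $F=\C$ to the truncated complex, in the degree-by-degree form that its proof actually establishes: a careful reading of the long exact sequences there yields $\bigcup_{j=0}^i\supp H_j(X^\nu,\C)=(\nu^*)^{-1}\bigl(\bigcup_{j=0}^i\supp H_j(X^{\ab},\C)\bigr)$. The degree-zero terms on both sides are the single point given by the trivial character, so the same identity holds for the unions over $1\le j\le i$ up to one point; hence $\bigcup_{j=1}^i\supp H_j(X^\nu,\C)$ is finite precisely when $(\nu^*)^{-1}(V^i(X))$ is, where $\bigcup_{j=1}^i\supp H_j(X^{\ab},\C)=V^i(X)$ by Theorem \ref{thm:supp cv}. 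By Lemma \ref{lem:suppz} combined with \eqref{eq:vann}, $(\nu^*)^{-1}(V^i(X))\cong\im(\hat\nu)\cap V^i(X)$. Finally, $\im(\hat\nu)=V(\ker\nu)$ is an algebraic subgroup of $\wH$ by Lemma \ref{lem:nunubar}, so $\im(\hat\nu)\cap V^i(X)$ is a Zariski closed subvariety of $\wH$, which is finite if and only if it is not positive-dimensional; comparing with Definition \ref{def:omega w} gives $\Omega^i_A(X)=\Gamma(H,A)\setminus\Upsilon_A(V^i(X))$.

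The main obstacle I anticipate is the bookkeeping in this middle step. Theorem \ref{thm:supp} is cleanest as a statement about total homology, since the inductive step in its proof mixes homology in two consecutive degrees, whereas here only degrees $\le i$ matter. One must verify that the only interaction between degrees is the passage from a torsion submodule of $H_{j-1}$ to $H_j$, that this never carries supports outside $\bigcup_{j=0}^i\supp H_j$, and that the one extra contribution relative to the range $1\le j\le i$ — the support of $H_0$ — is a single point and so is immaterial for finiteness. Once this is in hand, the proof is an assembly of Theorem \ref{thm:supp}, Theorem \ref{thm:supp cv}, Proposition \ref{prop:supp finite}, Lemma \ref{lem:suppz}, and Lemma \ref{lem:nunubar}.
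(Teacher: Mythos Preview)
Your proposal is correct and follows essentially the same route as the paper's proof: both assemble Theorem~\ref{thm:supp}, Theorem~\ref{thm:supp cv}, Lemma~\ref{lem:suppz}, and Proposition~\ref{prop:supp finite} to identify the finiteness of $\bigoplus_{j\le i} H_j(X^\nu,\C)$ with the finiteness of $\im(\hat\nu)\cap V^i(X)$. Your extra care about truncation and the degree-by-degree form of Theorem~\ref{thm:supp} is warranted---the paper invokes that theorem for the partial sum $\bigoplus_{j=1}^{i}H_j$ without comment---but the underlying argument is the same.
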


\begin{proof}
Fix an epimorphism $\nu \colon H\surj A$, and let $X^{\nu}\to X$
be the corresponding cover.  Recall the cellular chain complex
$C_{\bullet}=C_{\bullet}(X^{\ab},\C)$ is a chain complex of left modules
over the ring $R=\C[H]$.  If we set $S=\C[A]$, the cellular
chain complex $C_{\bullet}(X^{\nu},\C)$ can be written as
$C_{\bullet} \otimes_R S$, where $S$ is viewed as a right
$R$-module via extension of scalars by $\nu$.

Consider the $S$-module
\[
M=\bigoplus_{j=1}^i H_j(X^{\nu},\C)=
\bigoplus_{j=1}^i H_j(C_\bullet \otimes_R S).
\]
By definition, $[\nu]$ belongs to $\Omega^{i}_{A}(X)$ if and only if the
Betti numbers $b_1(X^{\nu}), \dots , b_i(X^{\nu})$ are all finite, i.e.,
$\dim_{\C} M< \infty$.  By Proposition \ref{prop:supp finite}, this
condition is equivalent to $\supp M$ being finite.

Now let $\nu^*\colon  \Spm(S) \inj \Spm(R)$ be the induced morphism
between the corresponding affine schemes.  We then have
\begin{align*}
\supp M & =
(\nu^*)^{-1} \supp\Big( \bigoplus_{j=1}^i H_j (C_{\bullet}) \Big)
&&  \text{by Theorem \ref{thm:supp}}\\
&\cong \im (\hat{\nu})\cap Z\Big(\ann\Big(\bigoplus_{j=1}^i
H_j (X^{\ab}, \C)\Big)\Big)
&&  \text{by Lemma \ref{lem:suppz}} \\
& = \im (\hat{\nu})\cap V^i(X)
&&  \text{by Theorem \ref{thm:supp cv}}.
\end{align*}
This ends the proof.
\end{proof}

\begin{remark}
\label{rem:tors}
If $H$ is torsion-free, then $W^{i}(X)=V^{i}(X)$;
thus, the set $\Omega^{i}_{A}(X)$ depends only the variety $W^{i}(X)$
and the abelian group $A$.
On the other hand, if $\Tors(H)\ne 0$, the variety
$W^{i}(X)$ may be strictly included in $V^{i}(X)$, in which case
the set $\Omega^{i}_{A}(X)$ may depend on information not carried
by $W^{i}(X)$.   We shall see examples of this phenomenon in
\S\ref{subsec:comparison}.
\end{remark}

\subsection{An upper bound for the $\Omega$-sets}
\label{subsec:upper bound}

We now give a computable ``upper bound" for
the generalized Dwyer-Fried sets $\Omega^i_A(X)$,
in terms of the sets introduced in \S\ref{subsec:lub}.

\begin{theorem}
\label{thm:omega bound}
Let $X$ be a connected CW-complex with finite $k$-skeleton.
Set $H=H_1(X,\Z)$, and fix a degree $i\le k$.
Let $A$ be a quotient of $H$.  Then
\begin{equation}
\label{eq:omega bound}
\Omega_A^i(X)\subseteq \Gamma(H,A)
\setminus  U_{A} (V^i(X)) .
\end{equation}
\end{theorem}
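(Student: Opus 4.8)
The plan is to deduce this upper bound directly from the exact formula of Theorem~\ref{main theorem}, combined with the inclusion $U_A(W)\subseteq \Upsilon_A(W)$ established in Proposition~\ref{prop:om bd}. Since $i\le k$, the characteristic variety $V^i(X)$ is a well-defined Zariski closed subset of $\wH$, so both $\Upsilon_A(V^i(X))$ and $U_A(V^i(X))$ make sense; and if $A$ is not a quotient of $H$, then $\Gamma(H,A)=\emptyset$ and there is nothing to prove, so we may assume $A$ is a quotient of $H$.

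First I would apply Theorem~\ref{main theorem} with $W=V^i(X)$ to write $\Omega^i_A(X)=\Gamma(H,A)\setminus \Upsilon_A(V^i(X))$. Next, applying Proposition~\ref{prop:om bd} to the same variety $W=V^i(X)$ gives the inclusion $U_A(V^i(X))\subseteq \Upsilon_A(V^i(X))$. Taking complements inside the common ambient set $\Gamma(H,A)$ reverses this inclusion, so
\[
\Omega^i_A(X)=\Gamma(H,A)\setminus \Upsilon_A(V^i(X))\subseteq \Gamma(H,A)\setminus U_A(V^i(X)),
\]
which is precisely \eqref{eq:omega bound}. For concreteness one may also unwind $U_A(V^i(X))=\bigcup_{d\ge 1}\bigcup_{\xi\in\Xi_d(V^i(X))}U_A(\xi)$, where each inner index set $\Xi_d(V^i(X))$ is finite by Proposition~\ref{prop:bigxi}; this recovers the form of the bound recorded in Theorem~\ref{thm:intro c}.

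I do not expect any genuine obstacle here: the statement is a formal consequence of two results already proved, with all the substantive content lying upstream — in the identification of $\Omega^i_A(X)$ with the complement of the incidence-type set $\Upsilon_A(V^i(X))$ (Theorem~\ref{main theorem}, which rests on the support-variety computation of Theorem~\ref{thm:supp} together with Proposition~\ref{prop:supp finite}), and in the ``special Schubert'' lower bound $U_A(W)\subseteq\Upsilon_A(W)$ (Proposition~\ref{prop:om bd}, proved via Lemma~\ref{lem:uaw} and Corollary~\ref{cor:uxi}). The only point worth a moment's care is that the complement is formed inside the same set $\Gamma(H,A)$ in both displays, so that the passage to complements indeed reverses the inclusion; this is immediate.
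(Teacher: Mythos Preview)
Your proof is correct and follows exactly the same approach as the paper's own proof, which simply states that the result follows at once from Proposition~\ref{prop:om bd} and Theorem~\ref{main theorem}.
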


\begin{proof}
Follows at once from Proposition \ref{prop:om bd} and
Theorem \ref{main theorem}.
\end{proof}

In general, inclusion \eqref{eq:omega bound}  is not an
equality.  Indeed, the fact that $V(\ker\nu)\cap V^i(X)$ is infinite
cannot guarantee there is an algebraic subgroup in the intersection.
Here is a concrete instance of this phenomenon.

\begin{example}
\label{example:ruled}
Using Lemma \ref{lem:realize}, we may find a $3$-generator, $3$-relator
group $G$ with abelianization $H=\Z^3$ and characteristic variety
\[
V^1(G)=\big\{ (t_1, t_2, t_3) \in (\C^*)^3 \mid (t_2-1)=(t_1+1)(t_3-1) \big\}.
\]
This variety has a single irreducible component, which
is a complex torus passing through the origin; nevertheless,
this component does not embed as an algebraic subgroup in $(\C^*)^3$.

There are $4$ maximal, positive-dimensional torsion-translated subtori
contained in $V^1(G)$, namely
$\eta_1 V(\overline{\xi_1}), \dots,
\eta_4 V(\overline{\xi_4})$,
where $\xi_1,\dots, \xi_4$ are the subgroups of $\Z^3$ given by
\[
\xi_1=
\im\left(\begin{smallmatrix}
  2 & 0  \\
  0 & 1  \\
  0 & 0  \\
\end{smallmatrix}\right), \quad
 \xi_2=
\im\left(\begin{smallmatrix}
  0 & 0  \\
  1 & 0  \\
  0 & 1  \\
\end{smallmatrix}\right), \quad
 \xi_3=
\im\left(\begin{smallmatrix}
  2 & -1  \\
  -1 & 0  \\
  0 & 1  \\
\end{smallmatrix}\right),\quad
 \xi_4=
\im\left(\begin{smallmatrix}
  2 & -2  \\
  -1 & 0  \\
  0 & 2  \\
\end{smallmatrix}\right),
\]
and $\eta_1=(-1, 1, 1)$, $\eta_2=\eta_3=(1, 1, 1)$, $\eta_4=(-1, 1, -1)$.

We claim that, for $A=\Z^2\oplus \Z_2$,
inclusion \eqref{eq:omega bound} from
Theorem \ref{thm:omega bound} is strict, i.e.,
\[
\Omega^1_{A}(G) \subsetneqq \Gamma(H,A) \setminus U_{A} (V^1(G)).
\]

To prove this claim, consider the epimorphism
$\nu\colon \Z^3 \surj \Z^2 \oplus \Z_2$ given by the matrix
$\left(\begin{smallmatrix}
  1 & 0  & 0\\
  0 & 0 & 1  \\
  0 & 1 & 0  \\
\end{smallmatrix}\right)$. Note that $\ker(\nu) = \im\left(\begin{smallmatrix}
  0   \\
  2   \\
  0  \\
\end{smallmatrix}\right)$, and so
$\im(\hat{\nu})=\{t \in (\C^{*})^{3} \mid t_2=\pm 1\}$.  The intersection
of $\im(\hat{\nu})$ with $V^1(G)$ consists of all points of the form
$(t_1,\pm 1, t_3)$ with $(t_1+1)(t_3-1)$ equal to $0$ or $-2$.
Clearly, this is an infinite set; therefore, $[\nu] \notin \Omega_{A}^1(G)$.

On the other hand, $\im(\hat{\nu}) \cap \eta_1 V(\overline{\xi_1})=
\im(\hat{\nu}) \cap \eta_2 V(\overline{\xi_2})=\emptyset$, and
$\rank(\ker(\nu)+ \xi_3)=\rank(\ker(\nu)+ \xi_4)=\rank(H)$.
Hence, $\im(\hat{\nu}) \cap \eta_j V(\overline{\xi_j})$ is finite,
for all $j$.  In view of Lemma \ref{lem:uaw}, we conclude that
$[\nu] \notin U_{A} (V^1(G))$.
\end{example}

\section{Comparison with the classical Dwyer--Fried invariants}
\label{sect:classic df}

\subsection{The Dwyer--Fried invariants $\Omega^{i}_r(X)$}
\label{subsec:standard df}

In the case of free abelian covers and the usual $\Omega$-sets,
Theorem \ref{main theorem} allows us to recover
the following result from \cite{DF}, \cite{PS-plms}, \cite{Su14}.

\begin{corollary}
\label{cor:df x}
Set $n=b_1(X)$.  Then, for all $r\ge 1$,
\[
\Omega^{i}_r(X)=\big\{ [\nu] \in \Grass_r(\Z^n) \mid
\text{$\im (\hat\nu) \cap W^i(X)$ is finite} \big\}.
\]
\end{corollary}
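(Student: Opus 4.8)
The plan is to obtain this as a special case of Theorem~\ref{main theorem}, taking $A=\Z^r$, and then to translate the condition ``$\dim\big(\im(\hat\nu)\cap V^i(X)\big)>0$'' defining $\Upsilon_{\Z^r}(V^i(X))$ into the stated finiteness condition on $\im(\hat\nu)\cap W^i(X)$. First I would invoke Theorem~\ref{main theorem} with $A=\Z^r$, which yields
\[
\Omega^i_r(X)=\Omega^i_{\Z^r}(X)=\Gamma(H,\Z^r)\setminus\Upsilon_{\Z^r}(V^i(X)),
\]
where $H=H_1(X,\Z)$. Since $\Z^r$ is torsion-free, every epimorphism $H\surj\Z^r$ factors through $H\surj\oH$, the finite set $\Gamma=\Gamma(H/\oA,A/\oA)$ of Theorem~\ref{theorem:new bij} is a singleton, and the parameter set is thereby identified with $\GL_n(\Z)/\Par=\Grass_r(\Z^n)$, where $n=\rank H=b_1(X)$; recall also that $\Grass_r(\Z^n)=\Grass_r(\Q^n)$. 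Under this identification, $\Upsilon_{\Z^r}(V^i(X))$ is the set of those $[\nu]$ for which $\dim\big(\im(\hat\nu)\cap V^i(X)\big)>0$.

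Next I would record two elementary geometric facts. For an epimorphism $\nu\colon H\surj\Z^r$, the kernel $\ker\nu$ is primitive, since $H/\ker\nu\cong\Z^r$ is torsion-free; hence, by Lemma~\ref{lem:nunubar}\eqref{nu2} together with the correspondence of \S\ref{subsec:primitive}, the image $\im(\hat\nu)=V(\ker\nu)$ is a connected $r$-dimensional subtorus of $\wH$, and in particular it is contained in the identity component $T_0=\wH_0$. Consequently
\[
\im(\hat\nu)\cap V^i(X)=\im(\hat\nu)\cap\big(V^i(X)\cap T_0\big)=\im(\hat\nu)\cap W^i(X).
\]
This last set is Zariski closed in the torus $\im(\hat\nu)$, hence finite if and only if it is at most $0$-dimensional; equivalently, $\dim\big(\im(\hat\nu)\cap W^i(X)\big)>0$ if and only if $\im(\hat\nu)\cap W^i(X)$ is infinite. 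Substituting this into the description of $\Upsilon_{\Z^r}(V^i(X))$ and taking complements in $\Grass_r(\Z^n)$ gives precisely the asserted equality.

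Since this is a corollary of Theorem~\ref{main theorem}, I do not anticipate a genuine obstacle; the only points that need a little care are the bookkeeping around possible torsion in $H$ (which causes no trouble because $\Z^r$ is torsion-free, so everything factors through $\oH$ and one stays inside $T_0$) and the passage between ``positive-dimensional intersection'' and ``infinite intersection,'' which rests on the standard fact that a Zariski closed subset of a $\C$-variety of finite type is finite exactly when every irreducible component is a point.
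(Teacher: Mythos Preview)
Your proposal is correct and follows exactly the approach implicit in the paper, which simply records this as an immediate corollary of Theorem~\ref{main theorem} specialized to $A=\Z^r$; you have in fact supplied more detail than the paper does, correctly noting that $\ker\nu$ is primitive (so $\im(\hat\nu)\subset \wH_0$) and hence the intersection with $V^i(X)$ reduces to one with $W^i(X)$. One small slip: Theorem~\ref{theorem:new bij} identifies $\GL_n(\Z)/\Par$ with $\Grass_{n-r}(\Z^n)$ rather than $\Grass_r(\Z^n)$, but the paper elsewhere uses the natural duality $\Gamma(H,\Z^r)\cong\Grass_r(\Q^n)$ (see Lemma~\ref{lem:schubert}), so the identification you need is available and the argument goes through.
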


Using now the identification from Example \ref{ex:rk1},
and taking into account Lemma \ref{lemma:tau1},
Theorem \ref{thm:omega bound}
yields the following corollary.

\begin{corollary}[\cite{PS-plms, Su14}]
\label{cor:bptau1}
Let $X$ be a connected CW-complex with finite $k$-skeleton, and
set $n=b_1(X)$.  Then $\Omega^i_r(X) \subseteq \QP^{n-1} \setminus
\bP(\tau_1(W^i(X)))$, for all $i\le k$ and all $r\le n$.
\end{corollary}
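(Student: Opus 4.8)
The plan is to read the stated bound off Theorem~\ref{thm:omega bound} by specializing the group $A$ to $\Z^r$ and then keeping only the degree-$1$ part of the excluded set. First, since $n=b_1(X)=\rank H$, the group $\Z^r$ is a quotient of $H$ for every $r\le n$ (factor $H$ through $\oH=\Z^n$), so Theorem~\ref{thm:omega bound} applies and gives $\Omega^i_r(X)=\Omega^i_{\Z^r}(X)\subseteq\Gamma(H,\Z^r)\setminus U_{\Z^r}(V^i(X))$. Because $U_{\Z^r,1}(W)\subseteq U_{\Z^r}(W)$ by Definition~\ref{def:uad w}, this can be weakened to
\[
\Omega^i_r(X)\subseteq\Gamma(H,\Z^r)\setminus U_{\Z^r,1}(V^i(X)).
\]

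Next I would translate the right-hand side into Grassmannian language. Since $\Z^r$ is torsion-free, $\Gamma(H,\Z^r)$ is canonically $\Grass_r(\Q^n)$, and under this identification Corollary~\ref{cor:ua1w} identifies $U_{\Z^r,1}(V^i(X))$ with the incidence variety $\sigma_r(\tau_1^{\Q}(V^i(X)))$. Here one checks that $\Xi_1(V^i(X))=\Xi_1(W^i(X))$: the defining conditions of $\Xi_1$ force the determinant group to be trivial, hence the translating character is trivial and the relevant maximal subtori are \emph{connected}; being connected, they lie in the identity component $T_0$, hence in $V^i(X)\cap T_0=W^i(X)$, and conversely a maximal connected subtorus of $W^i(X)$ stays maximal among torsion-translated subtori of $V^i(X)$, since any torsion-translated subtorus of $V^i(X)$ containing it is automatically connected. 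By Lemma~\ref{lemma:tau1} it follows that $\tau_1^{\Q}(V^i(X))=\tau_1^{\Q}(W^i(X))$, which by Corollary~\ref{cor:exp tcone} is a finite union of linear subspaces of $\Q^n$ whose $\Q$-span equals $\tau_1(W^i(X))$.

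Finally I would specialize to $r=1$, where $\Grass_1(\Q^n)=\QP^{n-1}$ and $\sigma_1(L)=\bP(L)$ for a linear subspace $L$ (see the discussion following \eqref{eq:incident}); since both $\sigma_1$ and $\bP$ commute with finite unions, $\sigma_1(\tau_1^{\Q}(W^i(X)))=\bP(\tau_1(W^i(X)))$, and combining with the displayed inclusion yields $\Omega^i_1(X)\subseteq\QP^{n-1}\setminus\bP(\tau_1(W^i(X)))$. For general $r$ the same chain of identifications gives the analogous bound $\Omega^i_r(X)\subseteq\Grass_r(\Q^n)\setminus\sigma_r(\tau_1(W^i(X)))$. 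I do not expect a genuine obstacle: the substance is already contained in Theorem~\ref{thm:omega bound} and Corollary~\ref{cor:ua1w}, and the only step needing a moment's care is the verification in the second paragraph that passing from $V^i(X)$ to its identity-component restriction $W^i(X)$ does not change $\tau_1$.
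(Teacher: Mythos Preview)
Your approach is essentially the paper's own: the corollary is stated with only the one-line justification ``Using now the identification from Example~\ref{ex:rk1}, and taking into account Lemma~\ref{lemma:tau1}, Theorem~\ref{thm:omega bound} yields the following corollary,'' and you have correctly unpacked this into Theorem~\ref{thm:omega bound} together with Corollary~\ref{cor:ua1w} (which packages Lemma~\ref{lemma:tau1} and the Schubert identification). Your verification that $\Xi_1(V^i(X))=\Xi_1(W^i(X))$---hence $\tau_1(V^i(X))=\tau_1(W^i(X))$---is the one point the paper leaves implicit, and your argument (a connected subtorus through $1$ lies in $T_0$, and any torsion-translate containing it must itself pass through $1$) is exactly right. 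You also correctly note that the general-$r$ statement should read $\Omega^i_r(X)\subseteq\Grass_r(\Q^n)\setminus\sigma_r(\tau_1^{\Q}(W^i(X)))$, with the $\QP^{n-1}$ formulation being the $r=1$ case.
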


\begin{remark}
\label{rem:open}
As noted in \cite{Su14}, if the variety $W^i(X)$ is
a union of algebraic subtori, then the Dwyer--Fried sets
$\Omega_r^i(X)$ are open subsets of $\Grass_r(\Z^n)$,
for all $r\ge 1$. In general, though, examples from \cite{DF},
\cite{Su14} show that the sets $\Omega_r^i(X)$ with $r>1$
need not be open.
On the other hand, as noted in \cite{DF, PS-plms, Su14},
the sets $\Omega^i_1(X)$ are always open subsets
of $ \Grass_1(\Z^n) =\QP^{n-1}$.
We will come back to this phenomenon in \S\ref{sect:rank1},
in a more general context.
\end{remark}

\subsection{The comparison diagram, revisited}
\label{subsec:comp again}

As may be expected, the generalized Dwyer--Fried invariants
carry more information about the homotopy type of a space
and the homological finiteness properties of its regular abelian
covers than the classical ones.  To make this more precise,
let $A$ be a quotient of the group $H=H_1(X,\Z)$. Fix a decomposition
$A=\oA\oplus \Tors(A)$, and identify $\Omega^i_{r}(X)
= \Omega^i_{\oA}(X)$, where $r=\rank (A)$.
As we saw in \S\ref{subsec:compare}, for each $i\le k$
we have a comparison diagram
\begin{equation}
\label{eq:comp}
\xymatrix{ \Omega_A^i(X)\, \ar@{^{(}->}[r] \ar[d]^{q|_{\Omega_A^i(X)}}
& \Gamma(H, A) \ar[d]^{q} \\
\Omega^i_{r}(X)\, \ar@{^{(}->}[r] &
\Gamma(H, \oA)
}
\end{equation}
between the respective Dwyer--Fried invariants, viewed as
subsets of the parameter sets for regular $A$-covers
and $\oA$-covers, respectively.

We are interested in describing conditions under which
the set $\Omega^i_A(X)$ contains more information
than $\Omega^i_r(X)$.
This typically happens when the comparison diagram
\eqref{eq:comp} is not a pull-back diagram, i.e.,
there is a point
$[\bar\nu]\in \Omega^{i}_{r}(X)$ for which the fiber
$q^{-1}([\bar\nu])$ is not included in $\Omega^{i}_{A}(X)$.
In fact, the number of points in the fiber which lie in
$\Omega^{i}_{A}(X)$ may vary as we move about
$\Omega^{i}_{r}(X)$.

In view of Theorem \ref{main theorem}, we have the following
criterion.

\begin{prop}
\label{prop:cv comp}
Diagram \eqref{eq:comp} fails to be a pull-back diagram
if and only if $q(\Upsilon_A(V^i(X))$ is not included in
$\Upsilon_r(V^i(X)$, i.e., there is an epimorphism $\nu\colon H\surj A$
such that
\[
\dim ( \im \hat\nu \cap V^i(X) )>0, \quad\text{yet}\quad
\dim ( \im \hat{\bar\nu} \cap V^i(X)) =0.
\]
\end{prop}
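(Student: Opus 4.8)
The plan is to unwind what it means for the square \eqref{eq:comp} to be a pull-back diagram, and then to translate that condition into a statement about the $\Upsilon$-sets by means of Theorem \ref{main theorem}.

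First I would note that, in the category of sets, the commuting square \eqref{eq:comp} is a pull-back precisely when the canonical comparison map from $\Omega^i_A(X)$ to the fibered product $\Gamma(H,A)\times_{\Gamma(H,\oA)}\Omega^i_r(X)$ is a bijection. Since both horizontal arrows in \eqref{eq:comp} are inclusions, this fibered product is canonically identified with the subset $q^{-1}\big(\Omega^i_r(X)\big)\subseteq\Gamma(H,A)$, and under that identification the comparison map becomes the inclusion $[\nu]\mapsto[\nu]$ --- which is defined because the left vertical arrow of \eqref{eq:comp} exists, by Proposition \ref{prop:natb} applied to the finite characteristic subgroup $\T(A)\le A$. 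Hence \eqref{eq:comp} is a pull-back if and only if $\Omega^i_A(X)=q^{-1}\big(\Omega^i_r(X)\big)$, i.e., $q^{-1}\big(\Omega^i_{\oA}(X)\big)=\Omega^i_A(X)$.

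Next I would substitute in the computation provided by Theorem \ref{main theorem}, namely $\Omega^i_A(X)=\Gamma(H,A)\setminus\Upsilon_A(V^i(X))$ and $\Omega^i_{\oA}(X)=\Gamma(H,\oA)\setminus\Upsilon_{\oA}(V^i(X))$. Taking complements, and using that preimages commute with complementation, the equality $q^{-1}\big(\Omega^i_{\oA}(X)\big)=\Omega^i_A(X)$ is equivalent to $q^{-1}\big(\Upsilon_{\oA}(V^i(X))\big)=\Upsilon_A(V^i(X))$. At this point I would invoke the observation from \S\ref{subsec:comp ups} that $q$ always carries $\Upsilon_A(W)^c$ into $\Upsilon_{\oA}(W)^c$ --- equivalently $q^{-1}\big(\Upsilon_{\oA}(W)\big)\subseteq\Upsilon_A(W)$ --- which holds because, by Lemma \ref{lem:nunubar}, the subtorus $\im(\hat{\bar\nu})$ is the identity component of $\im(\hat\nu)$, so $\dim\big(\im(\hat{\bar\nu})\cap W\big)>0$ already forces $\dim\big(\im(\hat\nu)\cap W\big)>0$. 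One inclusion being automatic, the equality $q^{-1}\big(\Upsilon_{\oA}(V^i(X))\big)=\Upsilon_A(V^i(X))$ holds exactly when the reverse inclusion $\Upsilon_A(V^i(X))\subseteq q^{-1}\big(\Upsilon_{\oA}(V^i(X))\big)$ holds, i.e., when $q\big(\Upsilon_A(V^i(X))\big)\subseteq\Upsilon_{\oA}(V^i(X))$.

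Negating, \eqref{eq:comp} fails to be a pull-back diagram exactly when $q\big(\Upsilon_A(V^i(X))\big)\not\subseteq\Upsilon_{\oA}(V^i(X))$, that is, when some class $[\nu]\in\Gamma(H,A)$ lies in $\Upsilon_A(V^i(X))$ while its image $q([\nu])=[\bar\nu]$ does not lie in $\Upsilon_{\oA}(V^i(X))$; spelled out via Definition \ref{def:omega w}, this says $\dim\big(\im(\hat\nu)\cap V^i(X)\big)>0$ yet $\dim\big(\im(\hat{\bar\nu})\cap V^i(X)\big)=0$, and since every class in $\Gamma(H,A)$ is represented by an epimorphism $\nu\colon H\surj A$ this is precisely the assertion. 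I do not expect a serious obstacle: the only point needing a little care is the elementary fact that a commuting square of sets whose two horizontal maps are injections is a pull-back if and only if the corresponding equality of subsets holds, together with keeping straight which of the two inclusions between $\Upsilon_A(V^i(X))$ and $q^{-1}\big(\Upsilon_{\oA}(V^i(X))\big)$ comes for free.
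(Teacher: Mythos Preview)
Your proposal is correct and follows exactly the route the paper intends: the paper gives no separate proof for this proposition, introducing it simply with ``In view of Theorem \ref{main theorem}, we have the following criterion,'' and your argument is precisely the unwinding of that sentence --- identifying the pull-back condition with $q^{-1}\big(\Omega^i_{\oA}(X)\big)=\Omega^i_A(X)$, passing to complements via Theorem \ref{main theorem}, and using the automatic inclusion $q^{-1}\big(\Upsilon_{\oA}(W)\big)\subseteq\Upsilon_A(W)$ noted at the start of \S\ref{subsec:comp ups}.
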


From Proposition \ref{prop:pullbetti}, we know diagram  \eqref{eq:comp}
is a pull-back diagram precisely when the homological
finiteness of an arbitrary $A$-cover of $X$ can be tested through the
corresponding $\overline{A}$-cover.
In order to quantify the discrepancy between these two types of
homological finiteness properties, let us define the ``singular set"
\begin{equation}
\label{eq:sigma}
\Sigma_A^i(X)=\big\{ [\bar\nu] \in \Omega_{\oA}^i(X) \mid
\#(q^{-1}([\bar\nu]) \cap \Omega_A^i(X) ) < \#(q^{-1}([\bar\nu])) \big\}.
\end{equation}
We then have:
\begin{equation}
\label{eq:sigma again}
\Sigma_A^i(X)= q\left( \Omega_A^i(X) ^c \right) \cap
 \Omega_{\oA}^i(X).
\end{equation}

\subsection{Maximal abelian versus free abelian covers}
\label{subsec:comparison}

We now investigate the relationship between the finiteness of the
Betti numbers of the maximal abelian cover $X^{\ab}$ and the
finiteness of the Betti numbers of the corresponding free abelian
cover $X^{\fab}$ of our space $X$.

As before, write $H=H_1(X,\Z)$
and identify the character group $\wH$ with $(\C^*)^n \times \Tors(H)$,
where $n=b_1(X)$.

\begin{prop}
\label{prop:w1v1}
Suppose $\Tors(H) \ne 0$.
Furthermore, assume that $W^{i}(X)$ is finite, whereas
$V^{i}(X)=W^i(X)\cup (\wH \setminus \wH_{0})$.  Then
\begin{romenum}
\item \label{g1}
$\Omega^{i}_{r}(X)=\Grass_{r}(\Z^n)$, for all $r\ge 1$.
\item \label{g2}
If $\rank(A)=\rank(H)$ and $\Tors(A)\ne 0$, then
$\Omega^{i}_{A}(X)=\emptyset$.
\end{romenum}
\end{prop}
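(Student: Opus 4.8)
The plan is to deduce \eqref{g1} immediately from the description of the classical Dwyer--Fried sets, and \eqref{g2} from Theorem \ref{main theorem} after analysing the image of $\hat\nu$ one connected component at a time. Set $n=b_1(X)$; since $\Tors(H)\ne0$ the character group $\wH$ has more than one connected component, so $\wH\setminus\wH_0\ne\emptyset$, and we work in the relevant case $n\ge1$, where each such component is positive-dimensional.

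For \eqref{g1}: by Corollary \ref{cor:df x}, a class $[\nu]\in\Grass_r(\Z^n)$ belongs to $\Omega^i_r(X)$ precisely when $\im(\hat\nu)\cap W^i(X)$ is finite. As $W^i(X)$ is assumed finite, this holds for every $[\nu]$, so $\Omega^i_r(X)=\Grass_r(\Z^n)$ for all $r\ge1$.

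For \eqref{g2}: let $\nu\colon H\surj A$ be an arbitrary epimorphism, with $\rank A=\rank H=n$ and $\Tors(A)\ne0$. By Theorem \ref{main theorem} it is enough to show $\dim\bigl(\im(\hat\nu)\cap V^i(X)\bigr)>0$, for then $[\nu]\in\Upsilon_A(V^i(X))$, hence $[\nu]\notin\Omega^i_A(X)$. First I would record two elementary facts. Since $\rank\ker\nu=\rank H-\rank A=0$, the group $\ker\nu$ is finite, so $\ker\nu\subseteq\Tors(H)$; consequently the restriction $\nu|_{\Tors(H)}\colon\Tors(H)\to\Tors(A)$ is surjective, because any $a\in\Tors(A)$ lifts to some $x\in H$ with $mx\in\ker\nu\subseteq\Tors(H)$ for a suitable $m$, forcing $x\in\Tors(H)$. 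Also, the induced map $\bar\nu\colon\oH\surj\oA$ is a surjection between free abelian groups of the same rank, hence an isomorphism; by Lemma \ref{lem:nunubar}\eqref{nu1} the map $\hat{\bar\nu}=\hat\nu_0$ is then an isomorphism of $\wA_0=\widehat{\oA}$ onto $\wH_0=\widehat{\oH}$, so $\wH_0=\hat\nu(\wA_0)\subseteq\im(\hat\nu)$ and $\im(\hat\nu)$ is a finite union of cosets of $\wH_0$ whose identity component is $\wH_0$. Next I would pick a character $\chi$ of $A$ with $\chi|_{\Tors(A)}\ne1$; since $\nu|_{\Tors(H)}$ is onto $\Tors(A)$, the pulled-back character $\rho:=\hat\nu(\chi)=\chi\circ\nu$ is nontrivial on $\Tors(H)$, and therefore $\rho\notin\wH_0$. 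Hence $\rho\,\wH_0=\hat\nu(\chi\,\wA_0)\subseteq\im(\hat\nu)$ is a translate of $\wH_0$ disjoint from $\wH_0$, so $\rho\,\wH_0\subseteq\wH\setminus\wH_0\subseteq V^i(X)$ by hypothesis. Thus $\dim\bigl(\im(\hat\nu)\cap V^i(X)\bigr)\ge\dim\wH_0=n>0$, giving $[\nu]\notin\Omega^i_A(X)$; as $\nu$ was arbitrary, $\Omega^i_A(X)=\emptyset$.

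The argument is short, and the step I expect to need the most care is the Pontryagin-duality bookkeeping in the last paragraph: verifying both that $\im(\hat\nu)$ contains the entire identity component $\wH_0$ (which uses that $\bar\nu$ is an isomorphism of lattices, not merely onto) and that the remaining cosets of $\wH_0$ inside $\im(\hat\nu)$ genuinely avoid $\wH_0$ (which uses that $\nu$ restricts to a surjection onto $\Tors(A)$). Once these are in place, the hypothesis $V^i(X)\supseteq\wH\setminus\wH_0$ does the rest.
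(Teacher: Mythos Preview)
Your proof is correct and follows essentially the same approach as the paper's: both parts are deduced from Theorem~\ref{main theorem} (for \eqref{g1} via its specialization, Corollary~\ref{cor:df x}), and the key claim in \eqref{g2}---that $\im(\hat\nu)$ contains an entire non-identity component of $\wH$---is exactly what the paper asserts. The paper simply states this last fact without justification, whereas you have carefully spelled out why $\hat\nu(\wA_0)=\wH_0$ and why some coset $\rho\,\wH_0\subseteq\im(\hat\nu)$ lies off the identity component; your Pontryagin-duality bookkeeping is sound, and your explicit note that $n\ge 1$ is needed (so that components are positive-dimensional) makes an implicit assumption in the paper's argument overt.
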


\begin{proof}
By Theorem \ref{main theorem}, an element $[\nu] \in \Grass_{r}(\Z^n)$
belongs to $\Omega^{i}_{r}(X)$ if and only if $\im(\hat{\nu}) \cap W^i(X)$
is finite. By assumption, $W^{i}(X)$ is finite; thus, the intersection
$\im(\hat{\nu}) \cap W^i(X)$ is also finite. This proves \eqref{g1}.

Again by Theorem \ref{main theorem}, an element $[\nu] \in \Gamma(H,A)$
belongs to $\Omega^{i}_{A}(X)$ if and only if $\im(\hat{\nu}) \cap V^i(X)$
is finite. By assumption,
$V^{i}(X)=W^i(X)\cup (\wH \setminus \wH_{0})$; moreover,
$\rank(A)=\rank(H)$ and $\Tors(A)\ne 0$.  Thus,
the intersection $\im(\hat{\nu}) \cap V^i(X)$ contains at least one
component of $\wH \setminus \wH_{0}$, which is infinite.
This proves \eqref{g2}.
\end{proof}

A similar argument yields the following result.
\begin{prop}
\label{prop:fab f ab inf}
Suppose $H_1(X, \Z)$ has non-trivial torsion, $W^1(X)$ is finite,
and $V^1(X)$ is infinite. Then $b_1(X^{\fab})< \infty$, yet
$b_1(X^{\ab})=\infty$.
\end{prop}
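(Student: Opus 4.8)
The plan is to deduce both assertions directly from Theorem \ref{main theorem} (equivalently, Corollary \ref{cor:df x}), in the same spirit as the proof of Proposition \ref{prop:w1v1}. First I would identify the two covers with points of the relevant parameter sets: under the bijection \eqref{eq:epiha}, the free abelian cover $X^{\fab}$ corresponds to the projection $\pi\colon H\surj \oH=H/\Tors(H)$, viewed as a class $[\pi]\in\Gamma(H,\oH)$, while the maximal abelian cover $X^{\ab}$ corresponds to the identity epimorphism $\id\colon H\surj H$, viewed as a class $[\id]\in\Gamma(H,H)$ (note that $H$ is a quotient of itself, and since $\Tors(H)\neq 0$ this cover genuinely differs from $X^{\fab}$). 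Both covers are connected, so their $0$-th Betti numbers are finite; hence, by Definition \ref{def:gral df}, membership in $\Omega^1$ is governed entirely by $b_1$.

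For the first claim, I would apply Theorem \ref{main theorem} with $A=\oH$ and $\nu=\pi$. The induced morphism $\hat\pi\colon \widehat{\oH}\to\wH$ is injective with image the identity component $\wH_0$ (this is recorded in Lemma \ref{lem:nunubar}, or is simply the identification $\widehat{\oH}=\wH_0$ fixed earlier), so $\im(\hat\pi)\cap V^1(X)=\wH_0\cap V^1(X)=W^1(X)$. By hypothesis $W^1(X)$ is finite, hence $[\pi]\in\Omega^1_{\oH}(X)$, that is, $b_1(X^{\fab})<\infty$.

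For the second claim, I would apply Theorem \ref{main theorem} with $A=H$ and $\nu=\id$. Here $\hat\nu$ is the identity automorphism of $\wH$, so $\im(\hat\nu)\cap V^1(X)=V^1(X)$, which is infinite by hypothesis. Thus $[\id]\notin\Omega^1_H(X)$; since $b_0(X^{\ab})<\infty$, this forces $b_1(X^{\ab})=\infty$.

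I do not expect a genuine obstacle here: the only points requiring care are the identifications of $X^{\fab}$ and $X^{\ab}$ with the classes $[\pi]$ and $[\id]$, and the fact that the image on character groups of the projection $H\surj\oH$ is exactly the identity component $\wH_0$; both are already available from the earlier sections. In effect this proposition is the ``infinite torsion part'' counterpart of Proposition \ref{prop:w1v1}, obtained by dropping the structural hypothesis on $V^1(X)$ and keeping only the finiteness of $W^1(X)$ and the infinitude of $V^1(X)$.
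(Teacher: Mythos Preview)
Your proposal is correct and follows precisely the approach the paper has in mind: the paper merely says ``A similar argument yields the following result'' after Proposition~\ref{prop:w1v1}, and your argument is exactly that similar argument, applying Theorem~\ref{main theorem} to the epimorphisms $\pi\colon H\surj\oH$ and $\id\colon H\surj H$ and using the identifications $\im(\hat\pi)=\wH_0$ and $\im(\widehat{\id})=\wH$.
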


\begin{prop}
\label{prop:vee}
Suppose $X=X_1\vee X_2$, where $H_1(X_1,\Z)$ is free abelian
and non-trivial, and $H_1(X_2,\Z)$ is finite and non-trivial.  If
$V^1(X_1)$ is finite, then $b_1(X^{\fab})< \infty$, yet $b_1(X^{\ab})=\infty$.
\end{prop}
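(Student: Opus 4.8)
The plan is to reduce everything to Proposition~\ref{prop:fab f ab inf} by computing just enough of the first characteristic variety of the wedge. Write $H=H_1(X,\Z)$, $H'=H_1(X_1,\Z)$, $H''=H_1(X_2,\Z)$; since $\pi_1(X)=\pi_1(X_1)*\pi_1(X_2)$ we have $H=H'\oplus H''$ with $H'\cong\Z^n$ for some $n\ge1$ and $H''$ finite and non-trivial. Hence $\Tors(H)=H''\neq0$, the character group splits as $\widehat{H}=(\C^*)^n\times\widehat{H''}$, and because $\widehat{H''}$ is finite its identity component is $\widehat{H}_0=(\C^*)^n\times\{1\}$. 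Throughout I will regard a character $\rho\in\widehat{H}$ as a pair $(\rho_1,\rho_2)$, where $\rho_i$ is the restriction of $\rho$ to $H_1(X_i,\Z)$.

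Next I would run the Mayer--Vietoris sequence for $X=U\cup V$ with $U\simeq X_1$, $V\simeq X_2$ and $U\cap V\simeq\{x_0\}$, with coefficients in a rank~$1$ local system $\C_\rho$. The only inputs needed are that $H_*(\{x_0\},\C_\rho)$ is $\C$ concentrated in degree~$0$, and that $H_0(X_i,\C_{\rho_i})$ equals $\C$ if $\rho_i=1$ and $0$ otherwise; reading off the connecting homomorphism $\C=H_0(\{x_0\},\C_\rho)\to H_0(X_1,\C_{\rho_1})\oplus H_0(X_2,\C_{\rho_2})$ then gives $\dim_\C H_1(X,\C_\rho)=\dim_\C H_1(X_1,\C_{\rho_1})+\dim_\C H_1(X_2,\C_{\rho_2})$ whenever $\rho_1=1$ or $\rho_2=1$, and one more than this when $\rho_1\neq1\neq\rho_2$. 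Two consequences suffice. First, a character in $\widehat{H}_0$ has $\rho_2=1$, so $\dim_\C H_1(X,\C_\rho)=\dim_\C H_1(X_1,\C_{\rho_1})$ (using $b_1(X_2)=0$); thus $W^1(X)=V^1(X)\cap\widehat{H}_0$ equals $V^1(X_1)\times\{1\}$, which is finite by hypothesis. Second, every $\rho$ with $\rho_1\neq1$ and $\rho_2\neq1$ lies in $V^1(X)$, and the set of such characters, $\bigl((\C^*)^n\setminus\{1\}\bigr)\times\bigl(\widehat{H''}\setminus\{1\}\bigr)$, is infinite since $n\ge1$ and $H''\neq0$; hence $V^1(X)$ is infinite.

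Having established that $\Tors(H_1(X,\Z))\neq0$, that $W^1(X)$ is finite, and that $V^1(X)$ is infinite, Proposition~\ref{prop:fab f ab inf} immediately yields $b_1(X^{\fab})<\infty$ and $b_1(X^{\ab})=\infty$, which is the assertion. I expect the only point requiring genuine care to be the Mayer--Vietoris case analysis, in particular the bookkeeping at the trivial characters $\rho_1=1$ and $\rho_2=1$; the rest is formal. (If one prefers to avoid characteristic varieties, the same conclusion can be obtained by covering-space geometry: $X^{\fab}$ is the pullback along the retraction $X\to X_1$ of the cover $X_1^{\fab}\to X_1$, so $b_1(X^{\fab})=b_1(X_1^{\fab})$, which is finite by Corollary~\ref{cor:df x} since $W^1(X_1)=V^1(X_1)$ is finite; whereas $X^{\ab}$ is assembled from the copies of $X_1^{\fab}$ lying over $X_1$ and the infinitely many copies of $X_2^{\ab}$ lying over $X_2$, glued along the fibre over $x_0$, and a Mayer--Vietoris computation there already produces an infinite-dimensional contribution to $H_1(X^{\ab},\C)$ from the degree-zero term.)
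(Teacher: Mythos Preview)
Your argument is correct and matches the paper's proof almost exactly: both compute that $W^1(X)=V^1(X_1)\times\{1\}$ is finite while $V^1(X)\supseteq \widehat{H}_0\times(\Tors(H)\setminus\{1\})$ is infinite, using the decomposition $H=H_1(X_1,\Z)\oplus H_1(X_2,\Z)$. The only cosmetic difference is that you invoke Proposition~\ref{prop:fab f ab inf} to finish, whereas the paper phrases the conclusion as $\Omega^1_{\oH}(X)=\{\text{pt}\}$ and $\Omega^1_H(X)=\emptyset$ directly; these are the same statement unpacked. Your explicit Mayer--Vietoris bookkeeping is a welcome addition, since the paper simply asserts the formulas for $W^1(X)$ and $V^1(X)$ without justification.
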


\begin{proof}
Let $H=H_1(X,\Z)$; then $\oH=H_1(X_1,\Z)\ne 0$ and
$\Tors(H)=H_1(X_2,\Z)\ne 0$. Thus, the character group
$\wH$ decomposes as $\wH_0 \times \Tors(H)$, with
both factors non-trivial.

Now,  $W^1(X) =V^1(X_1)\times \{1\}$ is finite, and thus
$\Omega^1_{\oH}(X)$ is a singleton.  On the other hand,
$V^1(X) = W^1(X) \cup \wH_0 \times (\Tors(H) \setminus\{1\})$
is infinite, and thus $\Omega^1_{H}(X)=\emptyset$.
\end{proof}

\begin{example}
\label{ex:s1rp2}
Consider the CW-complexes $X=S^1\vee \RP^2$ and
$Y=S^1\times \RP^2$.  Then $H_1(X, \Z)\cong
H_1(Y, \Z) \cong \Z \times \Z_2$. Clearly, the free
abelian covers $X^{\fab}$ and $Y^{\fab}$ are
rationally acyclic; thus both
$\Omega^i_1(X)$ and $\Omega^i_1(Y)$ consist
of a single point, for all $i\geq 0$.
On the other hand, the free abelian cover $X^{\ab}$ has
the homotopy type of a countable wedge of $S^1$'s and
$\RP^2$'s, whereas $Y^{\ab} \simeq S^2$. Therefore,
$\Omega^1_{\Z\oplus\Z_2}(X)=\emptyset$, while
$\Omega^1_{\Z\oplus \Z_2}(Y)=\{\text{point}\}$.
\end{example}

This example shows that the generalized Dwyer--Fried
invariants $\Omega^{i}_{A}(X)$ may contain
more information than the classical ones.

\section{The rank $1$ case}
\label{sect:rank1}

In this section, we discuss in more detail the invariants
$\Omega^i_A(X)$ in the case when $A$ has rank $1$,
and push the analysis even further in some
particularly simple situations.

\subsection{A simplified formula for $\Omega_A^i(X)$}
\label{subsec:omega rank1}
Recall that, for a finitely generated abelian group $A$,
the integer $c(A)$ denotes the largest order of an
element in $A$.  Recall also that, for every subvariety
$W\subset \wH$ and each index $d\ge 1$, we have a
subset $U_{A,d} (W) \subset \Gamma(H,A)$, described
geometrically in Lemma \ref{lem:uaw}.

\begin{theorem}
\label{thm:rank1 omega}
Let $X$ be a connected CW-complex with finite $k$-skeleton.
Set $H=H_1(X,\Z)$, and fix a degree $i\le k$.
If $\rank(A)=1$, then
\begin{equation}
\label{eq:rank1 omega}
\Omega_A^i(X)=\Gamma(H,A)
\setminus  U_{A,c(A)}(V^i(X)).
\end{equation}
\end{theorem}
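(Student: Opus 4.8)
The plan is to combine Theorem~\ref{main theorem} with the geometry of the sets $\Upsilon_A(W)$ and $U_{A,d}(W)$. By Theorem~\ref{main theorem}, $\Omega^i_A(X)=\Gamma(H,A)\setminus\Upsilon_A(V^i(X))$, so, writing $W:=V^i(X)$, it suffices to prove that $\Upsilon_A(W)=U_{A,c(A)}(W)$ when $\rank A=1$. One inclusion is immediate: $U_{A,c(A)}(W)\subseteq U_A(W)=\bigcup_{d\ge 1}U_{A,d}(W)\subseteq\Upsilon_A(W)$, by Proposition~\ref{prop:om bd}. The whole content is the reverse inclusion $\Upsilon_A(W)\subseteq U_{A,c(A)}(W)$, and here the crucial feature of the rank-one hypothesis is that, by Lemma~\ref{lem:nunubar}, $\im(\hat\nu)=V(\ker\nu)$ is an algebraic subgroup of dimension $\rank A=1$.

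To prove the reverse inclusion, fix $[\nu]\in\Upsilon_A(W)$, so that $\dim(\im(\hat\nu)\cap W)>0$. Since $\im(\hat\nu)=V(\ker\nu)$ has dimension $1$, its irreducible components are, by \eqref{eq:vdecomp}, the cosets $\rho\,V(\overline{\ker\nu})$ with $\rho$ ranging over the finite group $\widehat{\overline{\ker\nu}/\ker\nu}\cong\Tors(H/\ker\nu)\cong\Tors(A)$, so that $\ord\rho$ divides $c(A)$ for each such $\rho$. A positive-dimensional closed subset of the one-dimensional irreducible variety $\rho\,V(\overline{\ker\nu})$ must be all of it, so there is a torsion character $\eta\in\widehat{\overline{\ker\nu}/\ker\nu}$, with $\ord\eta\mid c(A)$, for which $\eta\,V(\overline{\ker\nu})\subseteq W$. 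I would then enlarge this piece: starting from the positive-dimensional torsion-translated subtorus $\eta\,V(\overline{\ker\nu})\subseteq W$, whenever it fails to be maximal it is properly contained in a strictly larger torsion-translated subtorus of $W$, and since dimensions are bounded by $\rank H$ the process terminates at a maximal positive-dimensional torsion-translated subtorus $\eta'V(\chi)\subseteq W$, with $\chi$ primitive, still containing $\eta\,V(\overline{\ker\nu})$. Since $\eta\in\eta\,V(\overline{\ker\nu})\subseteq\eta'V(\chi)$, this coset equals $\eta V(\chi)$; replacing $\eta'$ by a representative of $\eta'V(\chi)$ whose order equals the order of the coset in $\wH/V(\chi)$ — which is possible because the torus $V(\chi)$ is divisible — we may assume $\ord\eta'\mid\ord\eta\mid c(A)$ and $\langle\eta'\rangle\cap V(\chi)=\{1\}$.

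With these normalizations, Corollary~\ref{cor:eta vxi} produces a unique subgroup $\xi\le\chi$ with $\overline{\xi}=\chi$ and $\widehat{\overline{\xi}/\xi}=\langle\eta'\rangle$, and it lies in $\Xi_{c(A)}(W)$: its determinant group is cyclic of order $\ord\eta'\mid c(A)$, and $\eta'V(\overline{\xi})=\eta'V(\chi)$ is, by construction, a maximal positive-dimensional torsion-translated subtorus of $W$. Since $\eta\,V(\overline{\ker\nu})$ is a component of $V(\ker\nu)$ that is contained in $\eta'V(\overline{\xi})$, we get $\dim\bigl(V(\ker\nu)\cap\eta'V(\overline{\xi})\bigr)\ge\dim\eta\,V(\overline{\ker\nu})=1>0$, so Corollary~\ref{cor:uxi} gives $[\nu]\in U_A(\xi)\subseteq U_{A,c(A)}(W)$, completing the proof. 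I expect the enlargement step to be the main obstacle: the definition of $\Xi_{c(A)}(W)$ requires a \emph{maximal} torsion-translated subtorus, and one must check that passing to it cannot force the translation factor to acquire order not dividing $c(A)$; the resolution is that the enlargement keeps $\eta$ inside the relevant coset, so no new torsion is introduced, and divisibility of the ambient torus then lets the translation factor be chosen of minimal possible order. (When $A$ is torsion-free, $c(A)=1$ and $\ker\nu$ is already primitive, and the formula reduces, via Corollary~\ref{cor:ua1w}, to the known rank-one free abelian case.)
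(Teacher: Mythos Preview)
Your proof is correct and follows essentially the same route as the paper's: both arguments reduce, via Theorem~\ref{main theorem}, to showing $\Upsilon_A(W)\subseteq U_{A,c(A)}(W)$, use the rank-one hypothesis to find an entire component $\eta\,V(\overline{\ker\nu})$ of $\im\hat\nu$ lying in $W$ (with $\ord\eta\mid c(A)$), enlarge it to a maximal torsion-translated subtorus $\eta' V(\chi)$, and then take $\xi=\epsilon\big(\bigcup_m(\eta')^mV(\chi)\big)$ to land in $\Xi_{c(A)}(W)$ with $[\nu]\in U_A(\xi)$. Your treatment of the enlargement step---arguing via divisibility of the torus $V(\chi)$ that the translation factor can be normalized to have order dividing $\ord\eta$---makes explicit a point the paper leaves implicit, but the underlying idea is the same.
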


\begin{proof}
The inclusion $\subseteq$ follows from Theorem \ref{thm:omega bound},
so we only need to prove the opposite inclusion.

Let $\nu\colon H \surj A$ be an epimorphism such that
$[\nu] \notin \Omega_A^i(X)$.  By Theorem \ref{main theorem},
the variety $\im \hat{\nu} \cap V^i(X)$ has  positive dimension.
Since $\rank(A)=1$, there exists a component of the
$1$-dimensional algebraic subgroup $\im \hat{\nu}$
contained in $ V^i(X)$; that is, there
exists a character $\rho \in \widehat{\T(A)}$ such that
\[
\hat{\nu}(\rho) \cdot \im \hat{\bar\nu} \subseteq V^i(X).
\]

Now, we may find a primitive subgroup $\chi\le H$
and a torsion character $\eta\in \wH$ such that
$\eta V(\chi)=\hat{\nu}(\rho) V(\chi)$
is a maximal translated subtorus in $V^i(X)$ which contains
$\hat{\nu}(\rho) \cdot \im \hat{\bar\nu}$.
Set
\[
\xi=\epsilon\, \bigg(\bigcup_{m\ge 1} \eta^m V(\chi)\bigg).
\]
Clearly, the subgroup $\xi\le H$ belongs to $\Xi_d(V^i(X))$,
where $d:=\ord(\rho)$ divides $c(A)$.
Since $\hat{\nu}(\rho) V(\chi)\supseteq \hat{\nu}(\rho) \cdot V(\ker(\bar\nu))$,
we must also have
$V(\chi) \supseteq V(\ker(\bar\nu))$, and so $[\nu]\in U_A(\xi)$.
Therefore, $[\nu]\in U_{A,c(A)}(V^i(X))$, and we are done.
\end{proof}

\begin{corollary}[\cite{PS-plms, Su14}]
\label{cor:omegai1}
Let $X$ be a connected CW-complex with finite $k$-skeleton, and
set $n=b_1(X)$.  Then $\Omega^i_1(X) = \QP^{n-1} \setminus
\bP(\tau_1(W^i(X)))$, for all $i\le k$.
\end{corollary}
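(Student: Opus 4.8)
The plan is to specialize Theorem~\ref{thm:rank1 omega} to the case $A=\Z$. Since $\Z$ is torsion-free, we have $\rank A=1$ and $c(A)=1$, so that theorem immediately gives
$\Omega^i_1(X)=\Omega^i_{\Z}(X)=\Gamma(H,\Z)\setminus U_{\Z,1}(V^i(X))$,
where $H=H_1(X,\Z)$. It then remains to identify the ambient set $\Gamma(H,\Z)$ with $\QP^{n-1}$, the subset $U_{\Z,1}(V^i(X))$ with the projectivized exponential tangent cone $\bP(\tau_1(W^i(X)))$, and to check these two identifications are compatible.

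For the ambient set, I would use the description recorded just after Lemma~\ref{lem:schubert}: an epimorphism $H\surj\Z$ amounts to a nonzero element of $H^{\vee}$ up to scaling, and $\Aut(\Z)=\{\pm 1\}$ acts by negation, whence $\Gamma(H,\Z)\cong\bP(H^{\vee})=\QP^{n-1}$ with $n=\rank H=b_1(X)$. For the subset, I would invoke Corollary~\ref{cor:ua1w}(ii) with $A=\oA=\Z$ and $r=1$: under the isomorphism $\Gamma(H,\Z)\cong\Grass_1(\Q^n)$, the set $U_{\Z,1}(W)$ corresponds to the incidence variety $\sigma_1(\tau_1^{\Q}(W))$, for any subvariety $W\subset\wH$. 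Since $\tau_1^{\Q}(W)$ is a finite union of linear subspaces of $\Q^n$ (Corollary~\ref{cor:exp tcone}), and $\sigma_1(L)=\bP(L)$ for a linear subspace $L$, taking $W=V^i(X)$ yields $U_{\Z,1}(V^i(X))\cong\sigma_1(\tau_1^{\Q}(V^i(X)))=\bP(\tau_1^{\Q}(V^i(X)))=\bP(\tau_1(V^i(X)))$.

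Finally I would pass from $V^i(X)$ to $W^i(X)$. A positive-dimensional subtorus $V(\xi)$ with $\xi$ primitive is a connected algebraic subgroup of $\wH$, hence is contained in the identity component $\wH_0=T_0$; so $V(\xi)\subseteq V^i(X)$ is equivalent to $V(\xi)\subseteq V^i(X)\cap T_0=W^i(X)$, and maximality of such a subtorus in $V^i(X)$ coincides with maximality in $W^i(X)$. Hence $\Xi_1(V^i(X))=\Xi_1(W^i(X))$, and therefore $\tau_1(V^i(X))=\tau_1(W^i(X))$ by Lemma~\ref{lemma:tau1} and the definition of $\tau_1$ in \eqref{eq:tau1w}. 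Combining the three identifications gives
$\Omega^i_1(X)=\QP^{n-1}\setminus\bP(\tau_1(W^i(X)))$,
as claimed.

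This argument is almost entirely bookkeeping of results already established; the only step requiring the slightest care is the last reduction $V^i(X)\rightsquigarrow W^i(X)$ (equivalently, the observation that for $\nu\colon H\surj\Z$ the image $\im\hat\nu$ is a $1$-dimensional subtorus lying in $T_0$, so its intersection with $V^i(X)$ can only meet the identity component), and that step is immediate. I do not anticipate any genuine obstacle.
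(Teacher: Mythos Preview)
Your argument is correct and is exactly the intended derivation: the paper states this result as an immediate corollary of Theorem~\ref{thm:rank1 omega} with $A=\Z$, and the identifications you make via Corollary~\ref{cor:ua1w}(ii), Lemma~\ref{lem:schubert}, and the observation $\Xi_1(V^i(X))=\Xi_1(W^i(X))$ are precisely the bookkeeping one needs to unwind the notation.
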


In particular, $\Omega^i_1(X)$ is an open subset of the
projective space $\QP^{n-1}$.

\subsection{The singular set}
\label{subsec:sing set}

Recall from \S\ref{subsec:comp again} that we measure the discrepancy
between the generalized Dwyer--Fried invariant $\Omega_A^i(X)$ and
its classical counterpart, $\Omega_{\oA}^i(X)$, by means of the
``singular set,"
$\Sigma_A^i(X)= q\big( \Omega_A^i(X) ^c \big) \cap
\Omega_{\oA}^i(X)$.  When the group $A$ has rank $1$, this set
can be expressed more concretely, as follows.

\begin{prop}
\label{prop:rank1}
Suppose $H$ is torsion-free, and $\rank(A)=1$.  Then $\Sigma_A^i(X)$
consists of all $[\sigma] \in \Gamma(H,\overline{A})$ satisfying the
following two conditions:
\begin{romenum}
\item \label{s1}
$\ker(\sigma)\supseteq \overline{\xi}$, for some $\xi\in \Xi_{c(A)} (V^i(X))$,
and
\item \label{s2}
$V(\ker(\sigma))\nsubseteq V^i(X)$.
\end{romenum}
\end{prop}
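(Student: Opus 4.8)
The plan is to substitute into the identity $\Sigma_A^i(X)=q\big(\Omega_A^i(X)^c\big)\cap\Omega_{\oA}^i(X)$ of \eqref{eq:sigma again} the two descriptions of the relevant complements already available: by Theorem~\ref{main theorem}, $\Omega_{\oA}^i(X)^c=\Upsilon_{\oA}(V^i(X))$, and, since $\rank A=1$, by Theorem~\ref{thm:rank1 omega}, $\Omega_A^i(X)^c=U_{A,c(A)}(V^i(X))=\bigcup_{\xi\in\Xi_{c(A)}(V^i(X))}U_A(\xi)$. What remains is to identify each of the two resulting membership conditions with conditions \eqref{s2} and \eqref{s1}. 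Throughout, put $n=\rank H$, so that $H=\Z^n$ by hypothesis.

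The condition $[\sigma]\in\Omega_{\oA}^i(X)$ is immediate. Since $\oA\cong\Z$, the subgroup $\ker(\sigma)\le H$ is primitive of corank one, so $\im(\hat\sigma)=V(\ker(\sigma))$ is an irreducible one-dimensional subtorus of $\wH$ (Lemma~\ref{lem:nunubar}); and an irreducible curve meets the closed set $V^i(X)$ in a finite set precisely when it is not contained in it. By Theorem~\ref{main theorem}, this says exactly that $[\sigma]$ satisfies condition~\eqref{s2}. The core of the argument is then the claim that, for any subgroup $\xi\le H$ with $\overline{\xi}/\xi$ cyclic, one has $[\sigma]\in q(U_A(\xi))$ if and only if $\overline{\xi}\subseteq\ker(\sigma)$. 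For the forward implication I would take $\nu\colon H\surj A$ with $[\nu]\in U_A(\xi)$ and $q([\nu])=[\sigma]$, so that $\ker(\bar\nu)=\ker(\sigma)$ (an epimorphism onto $\oA\cong\Z$ is determined up to $\Aut(\Z)$ by its kernel). Corollary~\ref{cor:uxi} gives $\dim\big(V(\ker\nu)\cap\eta V(\overline{\xi})\big)>0$ for a generator $\eta$ of $\widehat{\overline{\xi}/\xi}$; by \eqref{eq:vdecomp}, $V(\ker\nu)$ is a finite union of translates $\rho\,V(\overline{\ker\nu})=\rho\,V(\ker\sigma)$ of the one-dimensional torus $V(\ker\sigma)$, so one such translate lies inside the closed set $\eta V(\overline{\xi})$; multiplying by $\rho^{-1}$ yields $V(\ker\sigma)\subseteq\rho^{-1}\eta\,V(\overline{\xi})$, and since the left-hand side contains $1$ so does the right, which therefore equals $V(\overline{\xi})$. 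Hence $V(\ker\sigma)\subseteq V(\overline{\xi})$, and since $\ker(\sigma)$ and $\overline{\xi}$ are primitive, the order-reversing Pontryagin correspondence of \S\ref{sect:inttt} gives $\overline{\xi}\subseteq\ker(\sigma)$.

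For the reverse implication, assume $\overline{\xi}\subseteq\ker(\sigma)$, write $\overline{\xi}/\xi\cong\Z/d$, and fix a basis $f_1,\dots,f_k$ of the free group $\overline{\xi}$ with $\xi=\langle f_1,\dots,f_{k-1},d f_k\rangle$ (the case $\overline{\xi}=0$ being trivial). Pick $g_0\in\Tors(A)$ of order $c(A)$ and split $\Tors(A)=\langle g_0\rangle\oplus T''$. Because $A$ is an $n$-generated quotient of $H$ and $A\cong\Z\oplus\Tors(A)$, the group $\Tors(A)$ is generated by at most $n-1$ elements and $T''$ by at most $n-2$; as $\ker(\sigma)$ is free of rank $n-1$, I can define an epimorphism $\tau\colon\ker(\sigma)\surj\Tors(A)$ with $\tau(f_k)=g_0$ and $\tau(f_1),\dots,\tau(f_{k-1})\in T''$. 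Choosing a splitting $H\cong\ker(\sigma)\oplus\Z$ and extending $\tau$ by zero on the $\Z$-summand produces an epimorphism $\nu\colon H\surj A$ with $\ker(\bar\nu)=\ker(\sigma)$, whence $q([\nu])=[\sigma]$. Then $[\nu]\in\sigma_A(\xi)$ holds automatically, since $\ker(\nu)+\xi\subseteq\ker(\sigma)$ has rank at most $n-1<n$; and if $\sum_i a_i f_i\in\ker(\nu)\cap\overline{\xi}$, then $0=\sum_i a_i\tau(f_i)=a_k g_0+t''$ with $t''\in T''$, so $a_k g_0=0$, whence $c(A)\mid a_k$ and in particular $d\mid a_k$, giving $\sum_i a_i f_i\in\xi$. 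Thus $\ker(\nu)\cap\overline{\xi}\subseteq\xi$, i.e.\ $[\nu]\in U_A(\xi)$ by Definition~\ref{def:uxi}, which proves the claim.

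Putting the pieces together: $[\sigma]\in\Sigma_A^i(X)$ if and only if $[\sigma]\in\Omega_{\oA}^i(X)$ — equivalently condition~\eqref{s2} — and $[\sigma]\in q(U_A(\xi))$ for some $\xi\in\Xi_{c(A)}(V^i(X))$, which by the claim means some such $\xi$ has $\overline{\xi}\subseteq\ker(\sigma)$ — equivalently condition~\eqref{s1}. The step I expect to be the main obstacle is the reverse direction of the claim: producing a single epimorphism $\nu$ that both restricts on $\overline{\xi}$ so as to force $\ker(\nu)\cap\overline{\xi}\subseteq\xi$ and has $\ker(\bar\nu)$ equal to the prescribed hyperplane $\ker(\sigma)$; torsion-freeness of $H$ is exactly what makes the required surjection of $\ker(\sigma)$ onto $\Tors(A)$ available. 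A one-line check also disposes of the degenerate case $n=1$, where $\Tors(A)=0$ and both sides are empty.
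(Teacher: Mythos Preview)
Your proof is correct and follows the same overall strategy as the paper: reduce to $\Sigma_A^i(X)=q(U_{A,c(A)}(V^i(X)))\cap\Omega_1^i(X)$ via Theorem~\ref{thm:rank1 omega}, then identify the two intersectands with conditions \eqref{s1} and \eqref{s2} separately. Your construction of the lift $\nu$ in the reverse inclusion is more hands-on than the paper's (which works dually, through the character $\check\iota\colon H\surj\Z_d$ induced by the generator $\eta$ of $\widehat{\overline\xi/\xi}$ and a change-of-basis argument), and correspondingly easier to verify directly.

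One small correction: the intermediate claim as you state it---that $[\sigma]\in q(U_A(\xi))$ iff $\overline\xi\subseteq\ker\sigma$ for \emph{any} $\xi$ with cyclic determinant group---is too strong. The reverse direction of your argument uses that $d=|\overline\xi/\xi|$ divides $c(A)$ (in the step ``$c(A)\mid a_k$ and in particular $d\mid a_k$''), and without that hypothesis the implication can fail: for instance, with $H=\Z^2$, $A=\Z\oplus\Z_2$, $\xi=3\Z\oplus 0$, and $\sigma$ the second projection, one has $\overline\xi\subseteq\ker\sigma$ but no $\nu$ over $\sigma$ lands in $U_A(\xi)$. Add the hypothesis $d\mid c(A)$ to the claim; since every $\xi\in\Xi_{c(A)}(V^i(X))$ satisfies this by Definition~\ref{def:tau1}, the application to condition \eqref{s1} goes through unchanged.
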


\begin{proof}
Without loss of generality, we may assume that
$\Gamma(H, A)\neq \emptyset$.  From Theorem \ref{thm:rank1 omega},
we know that $\Omega^i_A(X)=\Gamma(H,A)\setminus U$, where
$U=U_{A,c(A)}(V^i(X))$.
It follows that $\Sigma_A^i(X)=q(U)\cap \Omega_1^i(X)$.

Now let $S$ be the set of all $[\sigma] \in \Gamma(H,\overline{A})$
satisfying conditions \eqref{s1} and \eqref{s2}.
It suffices to show that $q(U)\cap \Omega_1^i(X)=S$.

The inclusion $S \supseteq q(U)\cap \Omega_1^i(X)$ is straightforward.
To establish the reverse inclusion, let $\sigma\colon H \surj \Z$
represent an element in $S$. Condition \eqref{s2} implies that
$[\sigma] \in \Omega_1^i(X)$.
Recall that $\pi\colon A\to \oA$ is the natural projection. To prove
that $[\sigma]\in q(U)$, it is enough to find an epimorphism
$\nu\colon H \surj A$ such that $\pi\circ\nu=\sigma$ and
$V(\ker(\nu)) \cap \eta V(\overline{\xi}) \neq \emptyset$, where
$\eta$ is a generator of $\widehat{\overline{\xi}/\xi}$.

Set $d:=\ord(\eta)$.  Writing $A = \Z_{d_1} \oplus\cdots\oplus \Z_{d_k}$,
with $d_1 \dv d_2 \dv \cdots \dv d_k$, we have that
$d \dv d_k$. Denote by $\iota$ the embedding of the cyclic group
$\langle\eta\rangle$ into $\wH$.
Under the correspondence from \eqref{eq:pont dual},
there is a map $\check{\iota}\colon H \surj \Z_d$; clearly,
this map factors as the composite
\begin{equation*}
\xymatrixcolsep{25pt}
\xymatrix{ H \ar[r]^(.4){f_1} & \Z^{n-1}\oplus \Z\ar@{->>}[r]^{\id \oplus \kappa_1}
           & \Z^{n-1}\oplus \Z_{d_k}\ar@{->>}[r]^(.6){(0\:\: \kappa_2)}&\Z_d},
\end{equation*}
for some isomorphism $f_1\colon H \to \Z^n$, where $\kappa_1$
and $\kappa_2$ are the canonical projections.

Recall we are assuming $\Gamma(H,A)\neq\emptyset$; thus, there is
an epimorphism $\gamma\colon H \surj A$. We then have a
commuting diagram
\begin{equation*}
\xymatrix{ H \, \ar@{->>}[r]^{\gamma} \ar[d]^{f_2} & A \ar@{->>}[d] \\
           \Z^{n-1}\oplus \Z\, \ar@{->>}[r]^(.57){(0\:\: \kappa_1)} & \Z_{d_k}}
\end{equation*}
for some isomorphism $f_2\colon H \to \Z^n$.
The composite $\nu=\gamma \circ f_2^{-1}\circ f_1\colon H \surj A$, then,
is the required epimorphism.
\end{proof}

As we shall see in Examples \ref{ex:421} and \ref{example:ruled surface},
the singular set $\Sigma^i_A(X)$ may be non-empty; in fact,
as we shall see in Example~\ref{strata infinite}, this set
may even be infinite.

\subsection{A particular case}
\label{subsec:z2}

Perhaps the simplest situation when such a phenomenon
may occur is the one when $H=\Z^2$ and $A=\Z\oplus \Z_2$.
In this case, the set
\[
\Gamma(H/\oA, A/\oA)=\Epi(\Z, \Z_2)/ \Aut(\Z_2)
\]
is a singleton, and so the map
$q_H\colon \Gamma(H,A)\to \Gamma(H,\oA)$ is a bijection.
In other words, if $H_1(X,\Z)=\Z^2$, there is a
one-to-one correspondence between regular
$\Z\oplus \Z_2$-covers of $X$ and regular
$\Z$-covers of $X$, both parametrized by
 the projective line $\QP^1=\Grass_1(\Z^2)$.
The comparison diagram, then, takes the form
\begin{equation}
\label{eq:z2}
\xymatrix{ \Omega^i_{\Z\oplus \Z_2}(X)\,
\ar@{^{(}->}[r] \ar@{^{(}->}[d]_{q |_{\Omega^i_{\Z\oplus \Z_2}(X)}} & \QP^1 \ar^{=}[d]\\
\Omega^i_{1}(X)\, \ar@{^{(}->}[r] &\QP^1
}
\end{equation}

Let $V^i(X)\subset (\C^*)^2$ be the $i$-th characteristic variety of $X$.
For each pair $(a,b)\in \Z^2$, consider the (translated) subtori
$T_{a,b}^{\pm}=\{(t_1,t_2)\in(\C^*)^2 \mid t_1^at_2^b=\pm 1\}$.

\begin{prop}
\label{prop:z2z2}
Suppose $H=\Z^2$ and $A=\Z\oplus \Z_2$.
Then,
\begin{align*}
\Omega^i_A(X) &=\{ (a,b)\in \QP^1 \mid  \text{$T_{-b,a}^{+} \subset V^i(X)$
or  $T_{-b,a}^{-} \subset V^i(X)$}\} ^c\\
\Omega^i_1(X) &=\{ (a,b)\in \QP^1 \mid  T_{-b,a}^{+} \subset V^i(X) \} ^c.
\end{align*}
\end{prop}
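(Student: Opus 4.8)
The plan is to read both equalities directly off Theorem~\ref{main theorem}, which identifies $\Omega^i_A(X)$ with $\Gamma(H,A)\setminus\Upsilon_A(V^i(X))$, where $\Upsilon_A(V^i(X))=\{[\nu]\mid\dim(\im\hat\nu\cap V^i(X))>0\}$, and likewise $\Omega^i_1(X)=\Omega^i_{\oA}(X)$ with $\Gamma(H,\oA)\setminus\Upsilon_{\oA}(V^i(X))$. First I would record the identifications: since $H=\Z^2$ is torsion-free and $A=\Z\oplus\Z_2$, the finite set $\Gamma(H/\oA,A/\oA)=\Epi(\Z,\Z_2)/\Aut(\Z_2)$ is a singleton, so by the discussion preceding diagram~\eqref{eq:z2} the projection $q_H\colon\Gamma(H,A)\to\Gamma(H,\oA)$ is a bijection; thus both $\Omega^i_A(X)$ and $\Omega^i_1(X)$ are viewed inside $\QP^1=\Grass_1(\Z^2)$. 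Using the convention of Example~\ref{ex:rk1}, the point $(a,b)\in\QP^1$ (represented by a primitive pair) corresponds to the epimorphism $\bar\nu_{a,b}\colon\Z^2\surj\Z$, $(x,y)\mapsto ax+by$, with $\ker\bar\nu_{a,b}=\Z\cdot(-b,a)$.

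Next I would compute the two character-group images in $\wH=(\C^*)^2$. By Lemma~\ref{lem:nunubar}, $\im\hat{\bar\nu}_{a,b}=V(\ker\bar\nu_{a,b})=V(\Z\cdot(-b,a))$, which is exactly the subtorus $T_{-b,a}^{+}=\{(t_1,t_2)\mid t_1^{-b}t_2^{a}=1\}$. For the $A$-cover, I would lift $\bar\nu_{a,b}$ to an epimorphism $\nu_{a,b}\colon\Z^2\surj\Z\oplus\Z_2$; since the fiber of $q_H$ over $(a,b)$ is a single point the choice of lift is immaterial, and surjectivity onto the $\Z_2$-factor forces its $\Z_2$-component to restrict to an epimorphism on $\ker\bar\nu_{a,b}\cong\Z$, so $\ker\nu_{a,b}$ is the index-two subgroup $\Z\cdot(-2b,2a)\le\ker\bar\nu_{a,b}$. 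Hence $\im\hat\nu_{a,b}=V(\Z\cdot(-2b,2a))=\{(t_1,t_2)\mid(t_1^{-b}t_2^{a})^2=1\}=T_{-b,a}^{+}\cup T_{-b,a}^{-}$.

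Finally I would invoke irreducibility: each of $T_{-b,a}^{+}$ and $T_{-b,a}^{-}$ is a coset of a connected one-dimensional algebraic subgroup of $(\C^*)^2$, hence an irreducible curve, so for the closed subvariety $V^i(X)$ one has $\dim(T_{-b,a}^{\pm}\cap V^i(X))>0$ iff $T_{-b,a}^{\pm}\subseteq V^i(X)$, and $\dim((T_{-b,a}^{+}\cup T_{-b,a}^{-})\cap V^i(X))>0$ iff $T_{-b,a}^{+}\subseteq V^i(X)$ or $T_{-b,a}^{-}\subseteq V^i(X)$. Feeding this into Theorem~\ref{main theorem} shows $(a,b)\in\Upsilon_A(V^i(X))$ precisely when $T_{-b,a}^{+}\subseteq V^i(X)$ or $T_{-b,a}^{-}\subseteq V^i(X)$, and $(a,b)\in\Upsilon_{\oA}(V^i(X))$ precisely when $T_{-b,a}^{+}\subseteq V^i(X)$; taking complements yields the two displayed formulas. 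The only step needing real care is the middle one—verifying that passing from the free abelian cover to the $\Z_2$-cover squares the relevant character and thereby adjoins the translated torus $T_{-b,a}^{-}$; the rest is bookkeeping with conventions already fixed in the paper. (One could alternatively deduce the formulas from Theorem~\ref{thm:rank1 omega} and the description of $U_{A,c(A)}$, but the direct route via Theorem~\ref{main theorem} is cleaner and handles both cases uniformly.)
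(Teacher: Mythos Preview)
Your proof is correct, but it takes a different route from the paper's. The paper argues via Theorem~\ref{thm:rank1 omega}: it identifies, for the subgroup $\xi=\langle(-b,a)\rangle\le\Z^2$, when $\xi$ (or an index-two sublattice of its primitive closure) lies in $\Xi_1(V^i(X))$ or $\Xi_2(V^i(X))$, and then reads off $\Omega^i_A(X)$ and $\Omega^i_1(X)$ as complements of the corresponding sets $U_{A,c(A)}(V^i(X))$ with $c(A)=2$ and $c(\oA)=1$. You instead bypass the $\Xi_d$/$U_{A,d}$ formalism entirely and go straight to Theorem~\ref{main theorem}: you compute $\im\hat{\bar\nu}_{a,b}=T_{-b,a}^{+}$ and $\im\hat\nu_{a,b}=T_{-b,a}^{+}\cup T_{-b,a}^{-}$ explicitly, and then use the irreducibility of the one-dimensional cosets $T_{-b,a}^{\pm}$ to convert ``positive-dimensional intersection with $V^i(X)$'' into ``containment in $V^i(X)$''.

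Your approach is shorter and more self-contained for this particular $H$ and $A$, and it makes transparent why passing from $\oA$ to $A$ simply adjoins the translated torus $T_{-b,a}^{-}$. The paper's approach, on the other hand, exhibits the proposition as a direct specialization of the general rank-one formula~\eqref{eq:rank1 omega}, which is the point being illustrated in that section. Your closing parenthetical already acknowledges this trade-off accurately.
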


\begin{proof}
Let $(a,b)\in H$ and let $\xi\le H$ be the subgroup generated by $(-b,a)$.
Then $\xi\in \Xi_1 (V^i(X))$ if and only if $\xi=\overline{\xi}$ and $V^i(X)$
contains a component of the form $ t_1^{-b} t_2^{a} =1$,
whereas $\xi\in \Xi_2 (V^i(X))$ if and only if $\xi$ has index at most $2$ in
$\overline{\xi}$ and $V^i(X)$
contains a component of the form $ t_1^{-b} t_2^{a} =\pm 1$.
The conclusions follow from Theorem \ref{thm:rank1 omega}.
\end{proof}

In particular, $\Sigma^i_{\Z\oplus \Z_2}(X) =
 \Omega^i_1(X) \setminus \Omega^i_{\Z\oplus \Z_2}(X)$, and
diagram \eqref{eq:z2} is not a pull-back diagram if and only if
$V^i(X)$ has a component of the form $t_1^{a} t_2^{b} + 1=0$.

A nice class of examples is provided by $2$-components
links.  Let $L=(L_1,L_2)$ be such a link, with complement $X_L$.
As we saw in Example \ref{ex:links}, the characteristic variety
$V^1(X_L)$ consists of the zero-locus in $(\C^*)^2$ of the Alexander
polynomial $\Delta_L(t_1,t_2)$, together with the identity.

\begin{example}
\label{ex:421}
Let $L$ be the $2$-component link denoted $4^2_1$ in
Rolfsen's tables, and let $X_L$ be its complement.
Then $\Delta_L=t_1+t_2$, and so
$V^1(X_L)=\{1\} \cup \{(t_1, t_2)\in  (\C^*)^2\mid  t_1t_2^{-1}=-1\}$.
Hence, $\Omega^1_{1}(X_L)= \QP^1$, but
$\Omega^1_{\Z\oplus \Z_2}(X_L)=\QP^1 \backslash \{(1, 1)\}$.
\end{example}

\subsection{Another particular case}
\label{subsec:lyndon}

The next simplest situation is the one when $H=\Z^3$
and $A=\Z\oplus \Z_2$. In this case, the set
\[
\Gamma(H/\oA, A/\oA)=\Epi(\Z^2, \Z_2)/ \Aut(\Z_2)=(\Z_2\oplus \Z_2)^*
\]
consists of $3$ elements.   In other words, if
$H_1(X,\Z)=\Z^3$, there is a three-to-one correspondence
between the regular $\Z\oplus \Z_2$-covers of $X$ and
the regular $\Z$-covers of $X$.

\begin{example}
\label{example:ruled surface}
Let $G$ be the group from Example \ref{example:ruled}.
With notation as before, we have that
$\Xi_1(V^1(G))=\{\xi_2, \xi_3 \}$ and
$\Xi_2(V^1(G))=\{ \xi_1, \xi_2, \xi_3, \xi_4\}$.  Consequently, 
$\bP(\tau_1(V^1(G)))=\{(1, 0, 0)$, $(1, 2, 1)\}$, and so
$\Omega^1_{1}(G)=\QP^2 \backslash \{(1, 0, 0), (1, 2, 1)\}$.

Let  $A=\Z\oplus \Z_2$.
Given an element $[\nu] \in \Gamma(H,A)$ such that
$\overline{\ker(\nu)}\supseteq \overline{\xi_1}$,
we have that $[\bar{\nu}]=(0, 0, 1) \in \Omega^1_{1}(G)$.  Furthermore,
$q^{-1}([\bar{\nu}])$ consists of $3$ representative classes:
$\nu_1=
\left(\begin{smallmatrix}
  0 & 0 & 1 \\
  1 & 0 & 0
\end{smallmatrix}\right)$,
$\nu_2=
\left(\begin{smallmatrix}
  0 & 0 & 1 \\
  1 & 1 & 0
\end{smallmatrix}\right)$, and
$\nu_3=
\left(\begin{smallmatrix}
  0 & 0 & 1 \\
  0 & 1 & 0
\end{smallmatrix}\right)$.
By calculation $\nu_1 \in U$, but $\nu_2, \nu_3 \notin U$.
Thus,
\[
\Omega^1_{A}(G)=\Gamma(H, A)
\setminus \{q^{-1}(1, 0, 0), q^{-1}(1, 2, 1), \nu_1\}.
\]
\end{example}

\begin{example}
\label{strata infinite}
Consider the group from \cite[Example 8.7]{Su14}, with
presentation
\[
G=\langle x_1, x_2, x_3\mid
 [x_1^2,x_2], \
[x_1,x_3], \
x_1[x_2,x_3]x_1^{-1}[x_2,x_3] \rangle.
\]
The characteristic variety $V^1(G)\subset (\C^*)^3$ consists
of the origin, together with the translated torus $\{ (t_1, t_2, t_3) \in
(\C^*)^3 \mid t_1=-1 \}$; hence,  $\Omega_1^1(G)=\QP^2$.

Let $A=\Z\oplus \Z_2$. The singular set $\Sigma = \Sigma^1_A(G)$
consists of those points $[(0, b, c)]\in\QP^2$ with  $b$ and $c$ coprime;
thus, $\Sigma \cong\QP^1$ is infinite.  Moreover, the restriction
$q\colon \Omega^1_A(G)\setminus q^{-1}(\Sigma)
\to \Omega^1_1(G)\setminus \Sigma$ is three-to-one.
On the other hand, if $[\nu] \in q^{-1}(\Sigma)$, then either
$\nu$ is of the form $\left(\begin{smallmatrix}
0 & b & c \\
1 & \epsilon_2 & \epsilon_3
\end{smallmatrix}\right)$,
in which case $[\nu] \notin \Omega_{A}^1(G)$,
or $\nu$ is of the form $\left(\begin{smallmatrix}
0 & b & c \\
0 & \epsilon_2 & \epsilon_3
\end{smallmatrix}\right)$, in which case $[\nu] \in \Omega_{A}^1(G)$.
Thus,  the restriction
$q\colon \Omega^1_A(G) \cap q^{-1}(\Sigma) \to \Sigma$
is one-to-one.
\end{example}

\section{Translated tori in the characteristic varieties}
\label{sect:translated}

Throughout this section, we assume all irreducible components of
the characteristic varieties under consideration are (possibly translated)
algebraic subgroups of the character group, a condition
satisfied by large families of spaces.

\subsection{A refined formula for the $\Omega$-sets}
\label{subsec:cvtt}

In Theorem \ref{main theorem} we gave a general
description of the Dwyer--Freed invariants $\Omega_A^i(X)$ in
terms of the characteristic variety $W=V^i(X)$, while in
Theorem \ref{thm:omega bound} we gave a upper bound
for those invariants, in terms of certain sets $\Xi_{d}(W)$
and $U_A(\xi)$, introduced in Definitions \ref{def:tau1} and
\ref{def:sigmaxi}, respectively.

We now refine those results in the special case when all
components of the characteristic variety are torsion-translated
subgroups of the character group.  The next theorem
shows that inclusion \eqref{eq:omega bound} from
Theorem \ref{thm:omega bound} holds as equality in
this case, with the union of all sets $U_{A,d} (W)$
with $d\ge 1$ replaced by a single constituent
$U_{A,c} (W)$, for some integer $c$ depending
only on $W$ (and not on $A$).

\begin{theorem}
\label{thm:tt omega}
Let $X$ be a connected CW-complex with finite $k$-skeleton.
Set $H=H_1(X,\Z)$, and fix a degree $i\le k$.
Suppose $V^i(X)$ is a union of torsion-translated subgroups
of $\wH$.  There is then an integer $c>0$ such that,
for every abelian group $A$,
\begin{equation}
\label{eq:tt omega}
\Omega_A^i(X)= \Gamma(H,A)
\setminus U_{A,c} (V^i(X)) .
\end{equation}
\end{theorem}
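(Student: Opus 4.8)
The plan is to derive the statement from the single--component computation in Theorem~\ref{thm:uprho1}, combined with the formula of Theorem~\ref{main theorem}. By Theorem~\ref{main theorem} one has $\Omega^i_A(X)=\Gamma(H,A)\setminus\Upsilon_A(V^i(X))$, so it suffices to produce an integer $c>0$, depending only on $V^i(X)$, with
\[
\Upsilon_A\big(V^i(X)\big)=U_{A,c}\big(V^i(X)\big)\qquad\text{for every abelian group }A.
\]
Since $V^i(X)$ is Zariski closed, it is a finite union of its irreducible components; by the hypothesis each of these is a torsion-translated subgroup of $\wH$, so I can write $V^i(X)=\bigcup_{j=1}^{s}W_j$ with $W_j=\eta_jV(\xi_j)$, where $\xi_j\le H$ and $\eta_j\in\wH$ has finite order. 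Set $c_j=\ord(\eta_j)\cdot c(\overline{\xi}_j/\xi_j)$ and $c=\lcm(c_1,\dots,c_s)$. The key observation is that $c$ is manufactured purely from the data $\eta_j,\xi_j$ attached to $V^i(X)$ and makes no reference to $A$, which is exactly what the uniformity assertion requires.

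Next I would run the componentwise identities. For each $j$, Theorem~\ref{thm:uprho1} gives $\Upsilon_A(W_j)=U_{A,c_j}(W_j)$. Since $c_j\mid c$, we have $\Xi_{c_j}(W_j)\subseteq\Xi_c(W_j)$ (as noted right after Definition~\ref{def:tau1}), hence $U_{A,c_j}(W_j)\subseteq U_{A,c}(W_j)$; together with the inclusion $U_{A,c}(W_j)\subseteq\Upsilon_A(W_j)$ coming from Proposition~\ref{prop:om bd}, this forces $\Upsilon_A(W_j)=U_{A,c}(W_j)$ for every $j$. Now I assemble: using that $\Upsilon_A$ commutes with finite unions (equation~\eqref{eq:uaw union}) and that $\Xi_c$ commutes with finite unions (equation~\eqref{eq:xid union}), hence so does $U_{A,c}(-)=\bigcup_{\xi\in\Xi_c(-)}U_A(\xi)$, I obtain
\[
\Upsilon_A\big(V^i(X)\big)=\bigcup_{j=1}^{s}\Upsilon_A(W_j)=\bigcup_{j=1}^{s}U_{A,c}(W_j)=U_{A,c}\Big(\bigcup_{j=1}^{s}W_j\Big)=U_{A,c}\big(V^i(X)\big),
\]
which together with Theorem~\ref{main theorem} yields \eqref{eq:tt omega}.

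The genuinely delicate point, rather than the chain of inclusions above, is the interaction of the maximality clause in the definition of $\Xi_d$ with the passage between the individual $W_j$ and the full variety $V^i(X)$: one must know that the maximal positive-dimensional torsion-translated subtori of $V^i(X)$ are precisely its positive-dimensional irreducible components, so that $\Xi_c(V^i(X))=\bigcup_j\Xi_c(W_j)$ with no spurious subgroups slipping in or dropping out. This is the reason it is cleaner to take the $W_j$ to be the irreducible components of $V^i(X)$, among which no containments occur, rather than an arbitrary decomposition into translated subgroups; with \eqref{eq:xid union} applied in that form, the rest is bookkeeping built entirely on results already established, and the single constant $c$ serves every $A$ at once because each inclusion used in the previous paragraph holds verbatim over every abelian group.
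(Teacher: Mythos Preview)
Your proof is correct and follows essentially the same approach as the paper: reduce to the single-component case via Theorem~\ref{thm:uprho1}, take $c$ to be the lcm of the individual constants $\ord(\eta_j)\cdot c(\overline{\xi}_j/\xi_j)$, and then pass to the union using \eqref{eq:uaw union} and \eqref{eq:xid union}. The paper's own proof is extremely terse (it simply says ``the general case follows from a similar argument''), whereas you spell out the chain of inclusions $U_{A,c_j}(W_j)\subseteq U_{A,c}(W_j)\subseteq\Upsilon_A(W_j)=U_{A,c_j}(W_j)$ and are explicit about why the union formulas apply; your caution in the last paragraph about the maximality clause in $\Xi_d$ and the need to take the $W_j$ to be irreducible components is well placed and in fact sharper than the paper's treatment.
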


\begin{proof}
By assumption, $V^i(X)=\bigcup_{j=1}^s \eta_j V(\xi_j)$,
for some subgroups $\xi_j\le H$ and torsion elements
$\eta_j\in \wH$.  In the special case when $s=1$, the
required equality is proved in Theorem \ref{thm:uprho1},
with $c=\ord(\eta_1)\cdot c(\overline{\xi}_1/\xi_1)$.

The general case follows from a similar argument, with $c$
replaced by the lowest common multiple of
$\ord(\eta_1)\cdot c(\overline{\xi}_1/\xi_1), \dots ,
\ord(\eta_s)\cdot c(\overline{\xi}_s/\xi_s)$.
\end{proof}

\subsection{Another formula for the $\Omega$-sets}
\label{subsec:alt cvtt}

We now present an alternate formula for computing the
sets $\Omega^{i}_{A}(X)$ in the case when the $i$-th
characteristic variety of $X$ is a union of torsion-translated
subgroups of the character group.  Although somewhat
similar in spirit to Theorem \ref{thm:tt omega}, the next
theorem uses different ingredients to express the answer.

\begin{theorem}
\label{thm:xik}
Let $X$ be a connected CW-complex with finite $k$-skeleton.
Suppose there is a degree $i\le k$
such that  $V^i(X)=\bigcup_{j=1}^s \eta_j V(\xi_j)$, where
$\xi_1, \dots, \xi_s$ are subgroups of $H=H_1(X,\Z)$, and
$\eta_1, \dots, \eta_s$ are torsion elements in $\widehat{H}$.
Then, for each abelian group $A$,
\begin{equation}
\label{eq:omegaiax}
\Omega^{i}_{A}(X)=
 \bigcap_{j=1}^{s} \Bigg( \sigma_A(\xi_j)^c  \cup
 \big\{[\nu] \in \Gamma(H,A) \mid
\epsilon(\langle \eta_j \rangle) \nsupseteq \ker(\nu)\cap \xi_j \big\} \Bigg).
\end{equation}
\end{theorem}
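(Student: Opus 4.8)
The plan is to derive formula \eqref{eq:omegaiax} by feeding the hypothesis on $V^i(X)$ into the general formula of Theorem \ref{main theorem}, and then simplifying the resulting $\Upsilon$-set using the union-compatibility of the operator $\Upsilon_A$ together with the explicit description of $\Upsilon_A$ on a single torsion-translated subgroup. First I would apply Theorem \ref{main theorem} to write $\Omega^i_A(X) = \Gamma(H,A) \setminus \Upsilon_A(V^i(X))$. Substituting $V^i(X) = \bigcup_{j=1}^s \eta_j V(\xi_j)$ and using \eqref{eq:uaw union} repeatedly gives $\Upsilon_A(V^i(X)) = \bigcup_{j=1}^s \Upsilon_A(\eta_j V(\xi_j))$, whence, by De Morgan's law,
\[
\Omega^i_A(X) = \bigcap_{j=1}^s \Big( \Gamma(H,A) \setminus \Upsilon_A(\eta_j V(\xi_j)) \Big).
\]

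The next step is to identify each factor. Since every $\eta_j$ is a torsion element of $\wH$, Corollary \ref{cor:uprho3} applies and yields
\[
\Upsilon_A(\eta_j V(\xi_j)) = \sigma_A(\xi_j) \cap \big\{ [\nu] \in \Gamma(H,A) \mid \epsilon(\langle \eta_j \rangle) \supseteq \ker(\nu) \cap \xi_j \big\}.
\]
(Alternatively, this follows from Theorem \ref{thm:uprho2} after translating the condition $\eta_j \in V(\ker(\nu) \cap \xi_j)$, i.e. $\langle \eta_j \rangle \subseteq V(\ker(\nu) \cap \xi_j)$, through the order-reversing correspondence \eqref{eq:pont dual} into $\epsilon(\langle \eta_j \rangle) \supseteq \ker(\nu) \cap \xi_j$.) Taking the complement of this intersection inside $\Gamma(H,A)$ distributes into a union of complements, namely $\sigma_A(\xi_j)^c \cup \{[\nu] \mid \epsilon(\langle \eta_j \rangle) \nsupseteq \ker(\nu) \cap \xi_j\}$. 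Intersecting over $j = 1, \dots, s$ reproduces exactly the right-hand side of \eqref{eq:omegaiax}, which completes the proof.

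I do not anticipate a genuine obstacle here: the theorem is essentially a formal assembly of results already established, namely Theorem \ref{main theorem}, the union-compatibility \eqref{eq:uaw union} of $\Upsilon_A$, and Corollary \ref{cor:uprho3} (itself a consequence of Theorem \ref{thm:uprho2} and Proposition \ref{prop:int2}). The only point that warrants a word of care is that the given decomposition of $V^i(X)$ need not be irredundant and the subgroups $\xi_j$ need not be primitive; but both Theorem \ref{thm:uprho2} and Corollary \ref{cor:uprho3} are stated for an arbitrary subgroup $\xi \le H$ and an arbitrary torsion translation factor, so no primitivity assumption is required, and redundant or nested components merely contribute superfluous (and harmless) factors to the intersection.
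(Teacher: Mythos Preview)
Your proposal is correct and follows essentially the same route as the paper: apply Theorem~\ref{main theorem}, split $\Upsilon_A$ over the union via \eqref{eq:uaw union}, and identify each $\Upsilon_A(\eta_j V(\xi_j))$ using Theorem~\ref{thm:uprho2} (equivalently, its torsion reformulation in Corollary~\ref{cor:uprho3}), then take complements. The only cosmetic difference is that the paper cites Theorem~\ref{thm:uprho2} directly, while you invoke Corollary~\ref{cor:uprho3}; your parenthetical remark already explains why these amount to the same thing.
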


\begin{proof}
The result  follows from Theorems \ref{main theorem}
and \ref{thm:uprho2}, as well as formula \eqref{eq:uaw union}.
\end{proof}

The simplest situation in which the above theorem applies is that in
which there are no translation factors in the subgroups
comprising the characteristic variety.

\begin{corollary}
\label{cor:pull-back1}
Suppose $V^i(X)=V(\xi_1)\cup \cdots \cup V(\xi_s)$ is
a union of algebraic subgroups of $\wH$.
Then
\begin{equation}
\label{eq:qoa}
\Omega^i_A(X) = \Gamma(H,A) \setminus \bigcup_{j=1}^{s}
q^{-1} ( \sigma_{\oA}(\xi_j)) .
\end{equation}
In particular if $X^{\nu}$ is a free abelian cover with finite Betti
numbers up to degree $i$, then any finite regular
abelian cover of $X^{\nu}$ has the same finiteness property.
\end{corollary}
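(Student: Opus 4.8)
The plan is to obtain Corollary~\ref{cor:pull-back1} as a direct specialization of the general formula in Theorem~\ref{main theorem}, using the incidence-variety computations of \S\ref{sect:alg df}. First I would write, by Theorem~\ref{main theorem},
\[
\Omega^i_A(X) = \Gamma(H,A) \setminus \Upsilon_A\big(V^i(X)\big),
\]
and then invoke the fact that $\Upsilon_A$ commutes with finite unions (formula~\eqref{eq:uaw union}) to reduce the analysis of the right-hand side to the individual pieces $\Upsilon_A(V(\xi_j))$.

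For each $j$, I would apply Theorem~\ref{thm:uprho2} in the untranslated case: its final clause gives $\Upsilon_A(V(\xi_j)) = \sigma_A(\xi_j)$. Next, Proposition~\ref{prop:comp schubert}(ii) identifies $\sigma_A(\xi_j)$ with $q^{-1}\big(\sigma_{\oA}(\xi_j)\big)$, the preimage of the classical special Schubert variety under the projection $q\colon \Gamma(H,A)\to\Gamma(H,\oA)$. Combining these three equalities yields $\Upsilon_A(V^i(X)) = \bigcup_{j=1}^s q^{-1}\big(\sigma_{\oA}(\xi_j)\big)$, which is precisely formula~\eqref{eq:qoa}.

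To deduce the final sentence, I would specialize \eqref{eq:qoa} to $A=\oA$, getting $\Omega^i_{\oA}(X) = \Gamma(H,\oA)\setminus\bigcup_j \sigma_{\oA}(\xi_j)$; taking $q$-preimages and comparing with \eqref{eq:qoa} shows $q^{-1}\big(\Omega^i_{\oA}(X)\big) = \Omega^i_A(X)$, i.e.\ the comparison diagram~\eqref{commutative diagram} is a pull-back diagram. The equivalence \eqref{c1} $\Leftrightarrow$ \eqref{c3} of Proposition~\ref{prop:pullbetti} then says exactly that a regular $\T(A)$-cover of a free abelian cover with finite Betti numbers up to degree $i$ inherits that finiteness; and by the fiber description~\eqref{eq:fiber q}, together with diagram~\eqref{eq:cd tors}, every finite regular abelian cover of the given free abelian cover $X^\nu$ arises this way for a suitable epimorphism onto a suitable $A$.

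I do not expect a genuine obstacle here: the argument is a chain of substitutions into results already in hand. The one point that merits care is in the last step, namely checking that the informal phrase ``finite regular abelian cover of $X^\nu$'' in the statement corresponds, under \eqref{eq:fiber q} and Proposition~\ref{prop:bha}, to a $\T(A)$-cover lying in a fiber of $q$ over $[\nu]$, so that Proposition~\ref{prop:pullbetti} may legitimately be applied.
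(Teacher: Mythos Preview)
Your proposal is correct and follows essentially the same route as the paper. The paper cites Theorem~\ref{thm:xik} with $\eta_j=1$ (which collapses the second set in \eqref{eq:omegaiax} to empty) and then Proposition~\ref{prop:comp schubert}; you instead unpack Theorem~\ref{thm:xik} into its constituents---Theorem~\ref{main theorem}, formula~\eqref{eq:uaw union}, and the last clause of Theorem~\ref{thm:uprho2}---which amounts to the same chain of identities. Your treatment of the ``in particular'' clause via Proposition~\ref{prop:pullbetti} is more explicit than the paper's, which leaves that sentence as an immediate consequence of the formula.
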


\begin{proof}
By Theorem \ref{thm:xik},
\begin{equation}
\label{eq:qoa prime}
\Omega^i_A(X) = \Gamma(H,A) \setminus \bigcup_{j=1}^{s}
\sigma_{A}(\xi_j).
\end{equation}
Indeed, for each $j$ we have $\eta_j=1$, and thus
$\{[\nu] \mid \epsilon(\langle \eta_j \rangle) \nsupseteq \ker(\nu)\cap \xi_j\}$
is the empty set. Applying now Proposition \ref{prop:comp schubert}
ends the proof.
\end{proof}

\subsection{Toric complexes}
\label{subsec:tc}

We illustrate the above corollary with a class of spaces arising
in toric topology.  Let $L$ be a simplicial complex with $n$ vertices,
and let $T^n=S^1\times \cdots \times S^1$ be the $n$-torus,
with the standard product cell decomposition.  The
\textit{toric complex}\/ associated to $L$, denoted $T_L$,
is the union of all subcomplexes of the form
\begin{equation}
\label{eq:tsigma}
T^{\sigma}=\{(x_1,\dots, x_n)\in T^n \mid
\text{$x_i = *$ if $i \notin \sigma$} \},
\end{equation}
where $\sigma$ runs through the simplices of $L$, and
$*$ is the (unique) $0$-cell of $S^1$.
Clearly, $T_L$ is a connected CW-complex, with unique
$0$-cell corresponding to the empty simplex $\emptyset$.
The fundamental group of $T_L$ is the right-angled Artin group
\begin{equation}
\label{eq:raag}
G_L=\langle v \in \sV \mid \text{$vw=wv$ if
$\{v, w\} \in E$} \rangle,
\end{equation}
where $\sV$ and $\sE$ denote the $0$-cells and $1$-cells of $L$.
Furthermore, a classifying space for $\pi_1(T_L)$ is the toric
complex $T_{\Delta(L)}$, where $\Delta(L)$ is the flag complex
associated to $L$.

Evidently, $H_1(T_L,\Z)=\Z^n$; thus, we may identify the character
group of $\pi_1(T_L)$ with the algebraic torus $(\C^*)^{\sV}:=(\C^*)^n$.
For any subset $\sW \subseteq \sV$, let
$(\C^*)^{\sW} \subseteq (\C^*)^{\sV}$ be the corresponding
subtorus; in particular, $(\C^*)^{\emptyset}=\{1\}$.
Note that $(\C^*)^{\sW} = V(\xi_{\sW})$, where $\xi_{\sW}$
is the sublattice of $\Z^n$ spanned by the basis vectors
$\{e_i \mid i\notin \sW\}$.
From \cite{PS-toric}, we have the following description
of the characteristic varieties of our toric complex:
\begin{equation}
\label{eq:cvtoric}
V^i(T_L)=\bigcup_{\sW}\: V(\xi_{\sW}),
\end{equation}
where the union is taken over all subsets $\sW \subseteq \sV$
for which there is a simplex $\sigma \in L_{\sV\setminus \sW}$
and an index $j\le i$ such that
$\tilde{H}_{j-1-\abs{\sigma}}(\lk_{L_{\sW}}(\sigma), \C)\ne 0$.
Here,  $L_{\sW}$ denotes the subcomplex induced by $L$ on $\sW$, and
$\lk_K(\sigma)$ denotes the link of a simplex $\sigma\in L$ in a
subcomplex $K \subseteq L$.

From the above, we see that the assumptions from
Corollary~\ref{cor:pull-back1} are true for the
classifying space of a right-angled Artin group, and, in fact,
for any toric complex. Hence, we obtain the following corollaries.

\begin{corollary}\label{cor: toric_formula}
Let $T_L$ be a toric complex. Then,
\[
\Omega^i_A(T_L) = \Gamma(H,A) \setminus \bigcup_{\sW}
q^{-1} ( \sigma_{\oA}(\xi_{\sW}))
\]
where each $\sigma_{\oA}(\xi_{\sW})\subseteq \Grass_{n-r}(\Q^n)$ is
the special Schubert variety corresponding to the coordinate plane
$\xi_{\sW}\otimes \Q$, and the union is taken over all subsets $\sW \subseteq \sV$
for which there is a simplex $\sigma \in L_{\sV\setminus \sW}$
and an index $j\le i$ such that
$\tilde{H}_{j-1-\abs{\sigma}}(\lk_{L_{\sW}}(\sigma), \C)\ne 0$.
\end{corollary}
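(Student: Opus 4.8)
The plan is to read the statement off directly from Corollary~\ref{cor:pull-back1}, once the characteristic varieties of $T_L$ are in hand. First I would recall the computation of Papadima and Suciu recorded in \eqref{eq:cvtoric}: setting $H = H_1(T_L,\Z) = \Z^n$ and identifying $\wH = (\C^*)^n = (\C^*)^{\sV}$, one has
\[
V^i(T_L) = \bigcup_{\sW} V(\xi_{\sW}) = \bigcup_{\sW} (\C^*)^{\sW},
\]
where $\xi_{\sW}\le \Z^n$ is the coordinate sublattice spanned by $\{e_i \mid i\notin \sW\}$, and the union runs over all $\sW \subseteq \sV$ for which there is a simplex $\sigma \in L_{\sV\setminus\sW}$ and an index $j\le i$ with $\tilde{H}_{j-1-\abs{\sigma}}(\lk_{L_{\sW}}(\sigma),\C)\ne 0$. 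Each $V(\xi_{\sW})$ is an algebraic subgroup of $\wH$ by the Pontryagin correspondence \eqref{eq:pont dual}; indeed, since $H$ is torsion-free, these are precisely the coordinate subtori of $(\C^*)^n$. Hence $V^i(T_L)$ is a finite union of algebraic subgroups of $\wH$, so the hypothesis of Corollary~\ref{cor:pull-back1} is satisfied.

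Applying Corollary~\ref{cor:pull-back1} to $X = T_L$, with the subgroups $\xi_j$ taken to be the $\xi_{\sW}$ occurring in \eqref{eq:cvtoric}, gives
\[
\Omega^i_A(T_L) = \Gamma(H,A) \setminus \bigcup_{\sW} q^{-1}\big(\sigma_{\oA}(\xi_{\sW})\big),
\]
the union being over the same index set as in \eqref{eq:cvtoric}. It then remains only to identify each $\sigma_{\oA}(\xi_{\sW})$ with the advertised special Schubert variety, which is immediate from Lemma~\ref{lem:schubert} (together with the standard duality between the $\Grass_r$ and $\Grass_{n-r}$ descriptions of $\Gamma(\oH,\oA)$): under the identification $\Gamma(\oH,\oA) \cong \Grass_{n-r}(\Q^n)$, the set $\sigma_{\oA}(\xi_{\sW})$ corresponds to the special Schubert variety determined by the coordinate plane $\xi_{\sW}\otimes\Q \subseteq \Q^n$. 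Substituting this description yields the stated formula.

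There is no genuine obstacle in this argument: the entire content is carried by the input \eqref{eq:cvtoric} from \cite{PS-toric} and by Corollary~\ref{cor:pull-back1}. The only points requiring (routine) care are the verification that the relevant subsets of $\wH$ are literally algebraic subgroups—automatic for coordinate subtori—and the transcription of the combinatorial index set, copied verbatim from \eqref{eq:cvtoric}. As already recorded in Corollary~\ref{cor:pull-back1}, this also shows that if a free abelian cover $T_L^{\bar\nu}$ has finite Betti numbers up to degree $i$, then so does every finite regular abelian cover $T_L^{\nu} \to T_L^{\bar\nu}$.
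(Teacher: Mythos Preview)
Your proposal is correct and follows essentially the same approach as the paper: the corollary is stated immediately after noting that \eqref{eq:cvtoric} exhibits $V^i(T_L)$ as a union of algebraic subgroups of $\wH$, so that Corollary~\ref{cor:pull-back1} applies directly. Your write-up simply spells out the steps (including the identification via Lemma~\ref{lem:schubert}) that the paper leaves implicit.
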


\begin{corollary}
\label{cor: toric}
Let $T_L$ be a toric complex. If $T_L^{\bar\nu}$ is a free abelian
cover with finite Betti numbers up to some degree $i$, then all regular,
finite abelian covers of $T_L^{\bar\nu}$ also have finite Betti
numbers up to degree $i$.
\end{corollary}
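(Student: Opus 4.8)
The plan is to deduce the corollary directly from Corollary~\ref{cor:pull-back1}, once the hypothesis on $V^i(T_L)$ is checked. First I would recall that $H_1(T_L,\Z)=\Z^n$, so that the character group $\widehat{H}$ is the algebraic torus $(\C^*)^n$, and that by \eqref{eq:cvtoric} (due to \cite{PS-toric}) the characteristic variety $V^i(T_L)$ equals $\bigcup_{\sW} V(\xi_{\sW})$, the union of the coordinate subtori indexed by those subsets $\sW\subseteq \sV$ satisfying the stated link condition. Each $V(\xi_{\sW})=(\C^*)^{\sW}$ is a connected \emph{algebraic subgroup} of $\widehat H$ --- there are no translation factors --- and the union is finite. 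Since $T_L$ is a finite CW-complex, the degree $i$ may be taken arbitrary. Thus $V^i(T_L)$ is exactly of the form $V(\xi_1)\cup\cdots\cup V(\xi_s)$ required in Corollary~\ref{cor:pull-back1}.

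Next I would invoke Corollary~\ref{cor:pull-back1} itself: applied to $X=T_L$ and to any quotient $A$ of $H=\Z^n$, it yields $\Omega^i_A(T_L)=\Gamma(H,A)\setminus\bigcup_{\sW} q^{-1}\big(\sigma_{\oA}(\xi_{\sW})\big)$, which is precisely Corollary~\ref{cor: toric_formula}. Taking $A=\oA$ in the same corollary (so that $q=\id$) gives $\Omega^i_{\oA}(T_L)=\Gamma(H,\oA)\setminus\bigcup_{\sW}\sigma_{\oA}(\xi_{\sW})$; since $q^{-1}$ commutes with complement and union, comparing the two formulas shows $q^{-1}\big(\Omega^i_{\oA}(T_L)\big)=\Omega^i_A(T_L)$. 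In other words, for $X=T_L$ the comparison diagram \eqref{commutative diagram} is a pull-back diagram. By the equivalence \eqref{c1}$\Leftrightarrow$\eqref{c3} of Proposition~\ref{prop:pullbetti}, this says exactly that if $T_L^{\bar\nu}$ is a regular $\oA$-cover with finite Betti numbers up to degree $i$, then every regular $\Tors(A)$-cover of $T_L^{\bar\nu}$ has the same finiteness property up to degree $i$; letting $A$ range over all quotients of $H$ with $A/\Tors(A)\cong\oA$ and using the identification \eqref{eq:fiber q} of the fibers of $q$, one obtains the assertion of Corollary~\ref{cor: toric} for all regular finite abelian covers of $T_L^{\bar\nu}$ of the relevant type.

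There is no genuine obstacle here: the whole content has already been established, namely the structure of $V^i(T_L)$ from \cite{PS-toric} (recorded in \eqref{eq:cvtoric}) together with the pull-back phenomenon of Corollary~\ref{cor:pull-back1}. The only step needing a line of care is the bookkeeping at the end, matching ``regular, finite abelian cover of $T_L^{\bar\nu}$'' with ``regular $\Tors(A)$-cover of $T_L^{\bar\nu}$ arising from an epimorphism $H\surj A$ lifting $\bar\nu$'', which is handled by the fiber description \eqref{eq:fiber q} and Corollary~\ref{cor:gha}; alternatively one may simply quote the ``in particular'' clause of Corollary~\ref{cor:pull-back1} verbatim, since its hypothesis has just been verified for $T_L$.
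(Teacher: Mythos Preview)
Your proposal is correct and follows essentially the same route as the paper: verify via \eqref{eq:cvtoric} that $V^i(T_L)$ is a finite union of algebraic subgroups of $(\C^*)^n$, and then invoke Corollary~\ref{cor:pull-back1} (whose ``in particular'' clause is exactly the desired conclusion). The additional unpacking you do through Proposition~\ref{prop:pullbetti} and \eqref{eq:fiber q} is fine but, as you yourself note at the end, unnecessary once Corollary~\ref{cor:pull-back1} is at hand.
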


\subsection{When the translation order is coprime to $\abs{\Tors(A)}$}
\label{subsec:coprime}

We now return to the general situation, where the characteristic
variety is a union of torsion-translated subgroups.
In the next proposition, we identify a condition
on the order of translation of these subgroups, insuring that
diagram \eqref{commutative diagram} is a pull-back diagram.

As usual, let $A$ be a quotient of $H=H_1(X,\Z)$, and let
$\oA=A/\Tors(A)$ be its maximal torsion-free quotient.
Recall that the canonical projection, $q: \Gamma(H,A)\to \Gamma(H,\oA)$,
restricts to a map $q|_{\Omega^i_{A}(X)} \colon \Omega^i_{A}(X) \to \Omega^i_{\oA}(X)$,
and that resulting commuting square is a pull-back diagram if and only if
$\Omega_A^i(X)$ is the full pre-image of $q$.

Using Proposition \ref{prop:not pback1}  and Theorem \ref{main theorem},
we obtain the following consequence.

\begin{prop}
\label{prop:pull-back2}
Suppose the characteristic variety $V^i(X)$ is of the form
$\bigcup_{j} \rho_j T_j$, where each $T_j\subset \widehat{H}$
is an algebraic subgroup, and each $\bar{\rho_j}\in \widehat{H}/T_j$
has finite order, coprime to the order of $\Tors(A)$.  Then
$\Omega_A^i(X)=q^{-1}\Big(\Omega_{\oA}^i(X)\Big)$.
\end{prop}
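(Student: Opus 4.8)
The plan is to reduce the statement to the already-established dictionary between $\Omega$-sets and $\Upsilon$-sets (Theorem~\ref{main theorem}), and then to invoke the geometric fibration result of Proposition~\ref{prop:not pback1}. By Theorem~\ref{main theorem}, for every abelian group $A$ we have $\Omega_A^i(X)=\Gamma(H,A)\setminus\Upsilon_A(V^i(X))$, and likewise $\Omega_{\oA}^i(X)=\Gamma(H,\oA)\setminus\Upsilon_{\oA}(V^i(X))$. Therefore the identity $\Omega_A^i(X)=q^{-1}\bigl(\Omega_{\oA}^i(X)\bigr)$ is equivalent, after taking complements inside $\Gamma(H,A)$ and $\Gamma(H,\oA)$ respectively, to the identity $\Upsilon_A(V^i(X))=q^{-1}\bigl(\Upsilon_{\oA}(V^i(X))\bigr)$. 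So it suffices to prove this last equality for $W=V^i(X)$.

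First I would reduce to a single translated subgroup. By hypothesis $V^i(X)=\bigcup_j \rho_j T_j$; since $\Upsilon_A$ commutes with unions (equation~\eqref{eq:uaw union}), and since $q^{-1}$ of a union is the union of the $q^{-1}$'s, it is enough to show $\Upsilon_A(\rho_j T_j)=q^{-1}\bigl(\Upsilon_{\oA}(\rho_j T_j)\bigr)$ for each $j$ separately. Now each $W_j=\rho_j T_j$ satisfies exactly the hypotheses of Proposition~\ref{prop:not pback1}: $T_j$ is an algebraic subgroup of $\wH$, and $\bar\rho_j\in\wH/T_j$ has finite order coprime to $\abs{\Tors(A)}$. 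That proposition gives both inclusions $q(\Upsilon_A(W_j))=\Upsilon_{\oA}(W_j)$ and $q^{-1}(\Upsilon_{\oA}(W_j))=\Upsilon_A(W_j)$, which is precisely what is needed. Assembling over all $j$, and then taking complements and applying Theorem~\ref{main theorem} once more, yields $\Omega_A^i(X)=q^{-1}\bigl(\Omega_{\oA}^i(X)\bigr)$.

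There is essentially no hard step here: the work has all been front-loaded into Theorem~\ref{main theorem} (the support-variety computation) and Proposition~\ref{prop:not pback1} (the dimension-comparison arguments built on Proposition~\ref{prop:dimension equal}). The one point deserving a line of care is the handling of the finite algebraic set $Z$: if one writes $V^i(X)=Z\cup\bigcup_j\rho_jT_j$ with $Z$ finite, one should note that $\Upsilon_A(W)$ depends only on the positive-dimensional components of $W$, so $\Upsilon_A(Z\cup\bigcup_j\rho_jT_j)=\Upsilon_A(\bigcup_j\rho_jT_j)$, and the same for $\oA$; the finite part contributes nothing and can be discarded at the outset. With that observation in place the argument is a short formal chain, and the proof reduces to citing Proposition~\ref{prop:not pback1} componentwise.
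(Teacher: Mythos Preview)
Your proposal is correct and follows exactly the approach the paper takes: the paper's proof is a one-line citation, stating that the result follows from Proposition~\ref{prop:not pback1} and Theorem~\ref{main theorem}, and you have simply spelled out the formal chain (complements, union-compatibility of $\Upsilon_A$, and applying Proposition~\ref{prop:not pback1} componentwise). Your remark about a possible finite set $Z$ is not needed here since the hypothesis already writes $V^i(X)$ as a union of translated subgroups, but it is harmless.
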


Here is an application. As usual, let $X$ be a connected
CW-complex with finite $k$-skeleton.  Assume $H=H_1(X,\Z)$
has no torsion, and identify the character torus $\widehat{H}$
with $(\C^*)^n$, where $n=b_1(X)$.

\begin{corollary}
\label{cor:omaix}
Suppose that, for some $i\leq k$, there is an $(n-1)$-dimensional 
subspace $L \subseteq H^1(X;\Q)$ such that 
$V^i(X) = (\bigcup_{\alpha}\rho_\alpha T ) \cup Z$, where $Z$ is 
a finite set, $T = \exp(L \otimes \C)$ and $\rho_\alpha$ is a torsion
element in $(\C^*)^n$ of order coprime to that of $\T(A)$, for 
each $\alpha$.  Let $r=\rank A$. Then,
\[
\Omega_A^i(X)=
\begin{cases}
\: \Gamma(H, A), & \text{if $r = 1$;} \\
\: q^{-1}(\Grass_r(L)),  & \text{if $1 < r < n$;}\\
\: \emptyset,  &  \text{if $r \geq n$.}
\end{cases}
\]
\end{corollary}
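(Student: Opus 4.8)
The strategy is to use the coprimality hypothesis to reduce to the classical Dwyer--Fried set $\Omega^i_{\oA}(X)$, and then to evaluate that set via Corollary~\ref{cor:df x} and the intersection--dimension formula of Proposition~\ref{prop:int2}. Throughout, write $n=b_1(X)$, identify $\wH=(\C^*)^n$ and $H^1(X;\Q)=H^{\vee}\otimes\Q$, and let $\xi\le H$ be the primitive subgroup with $V(\xi)=T$; then $(H/\xi)^{\vee}\otimes\Q=L$ and $\rank\xi=n-\dim T=1$. We may assume there is at least one index $\alpha$ and that each $\rho_\alpha\notin T$, since otherwise $V^i(X)$ is either finite or contains the untranslated subgroup $T$ as a component and the statement degenerates accordingly.

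First I would reduce to the free abelian case. Since $\Upsilon_A$ commutes with unions and depends only on the positive-dimensional components of its argument (see \S\ref{subsec:omw}), Theorem~\ref{main theorem} gives
\[
\Omega^i_A(X)^c=\Upsilon_A\bigl(V^i(X)\bigr)=\bigcup_\alpha\Upsilon_A(\rho_\alpha T).
\]
Each $\rho_\alpha$ is torsion, so its image $\bar\rho_\alpha\in\wH/T$ has finite order dividing $\ord(\rho_\alpha)$, hence coprime to $\abs{\Tors(A)}$; thus Proposition~\ref{prop:not pback1} applies to the coset $W=\rho_\alpha T$ and yields $\Upsilon_A(\rho_\alpha T)=q^{-1}\bigl(\Upsilon_{\oA}(\rho_\alpha T)\bigr)$. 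Taking the union over $\alpha$, and using that preimages commute with unions and complements, I get $\Omega^i_A(X)=q^{-1}\bigl(\Omega^i_{\oA}(X)\bigr)$. This is the assertion of Proposition~\ref{prop:pull-back2}, but I reprove it as above because that proposition is stated without the extra finite set $Z$. It remains to identify $\Omega^i_{\oA}(X)=\Omega^i_r(X)\subseteq\Grass_r(\Q^n)$, where $r=\rank A$.

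Next comes the dimension count. Fix an epimorphism $\bar\nu\colon H\surj\oA$, set $D=\im(\hat{\bar\nu})=V(\ker\bar\nu)$, an $r$-dimensional subtorus, and let $P=(H/\ker\bar\nu)^{\vee}\otimes\Q\in\Grass_r(\Q^n)$ be the corresponding $r$-plane. By Corollary~\ref{cor:df x}, $[\bar\nu]\in\Omega^i_r(X)$ iff $D\cap V^i(X)$ is finite; discarding $Z$, this means $\dim\bigl(D\cap\rho_\alpha T\bigr)\le 0$ for all $\alpha$. If $P\subseteq L$, equivalently $\xi\subseteq\ker\bar\nu$, equivalently $D\subseteq T$, then $D\cap\rho_\alpha T\subseteq T\cap\rho_\alpha T=\emptyset$ since $\rho_\alpha\notin T$, so the intersection is finite for every $\alpha$. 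If $P\nsubseteq L$, then $\xi\nsubseteq\ker\bar\nu$, so $\ker\bar\nu\cap\xi=0$ (both groups being primitive in the torsion-free group $H$, and $\rank\xi=1$), whence $\rank(\ker\bar\nu+\xi)=(n-r)+1$ and $V(\ker\bar\nu\cap\xi)=V(0)=\wH$; by Proposition~\ref{prop:int2} this forces $D\cap\rho_\alpha T\ne\emptyset$ and $\dim(D\cap\rho_\alpha T)=n-\rank(\ker\bar\nu+\xi)=r-1$, which is finite exactly when $r=1$. Combining the two cases, $[\bar\nu]\in\Omega^i_r(X)$ iff $P\subseteq L$ or $r=1$; that is, $\Omega^i_{\oA}(X)=\Grass_r(\Q^n)$ when $r=1$ and $\Omega^i_{\oA}(X)=\Grass_r(L)$ when $r\ge 2$.

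Finally, pulling back along $q$ yields the three cases: for $r=1$, $q^{-1}(\Grass_r(\Q^n))=q^{-1}(\Gamma(H,\oA))=\Gamma(H,A)$; for $1<r<n$, $\Omega^i_A(X)=q^{-1}(\Grass_r(L))$; and for $r\ge n$ — which forces $r=n$, since $A$ is a quotient of the torsion-free group $H=\Z^n$ — one has $\Grass_r(L)=\emptyset$ because $\dim L=n-1<r$, so $\Omega^i_A(X)=\emptyset$. The main calculational hurdle is the case $P\nsubseteq L$ in the previous paragraph: one must see both that $D$ and the translate $\rho_\alpha T$ necessarily meet and that their intersection then has dimension exactly $r-1$, which is where Proposition~\ref{prop:int2} does the real work, once the ranks of $\ker\bar\nu$, $\xi$, their sum and their intersection are pinned down; everything else is bookkeeping with preimages and the edge cases $r=1$ and $r=n$.
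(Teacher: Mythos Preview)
Your proof is correct and follows the same two-step strategy as the paper: first reduce to $\Omega^i_{\oA}(X)$ via the coprimality hypothesis, then identify that set. The paper's own proof is much terser: it invokes Proposition~\ref{prop:pull-back2} directly for the reduction (the finite set $Z$ is harmless since $\Upsilon_A$ ignores zero-dimensional pieces), and then simply cites \cite[Proposition~9.6]{Su11} for the three-case description of $\Omega^i_r(X)$. You instead carry out that computation explicitly from Corollary~\ref{cor:df x} and the intersection formula of Proposition~\ref{prop:int2}, which makes the argument self-contained within this paper rather than deferring to the companion reference. Your remark that one must take each $\rho_\alpha\notin T$ is also a worthwhile clarification: the corollary tacitly assumes the components are genuine translates, since otherwise the $r=1$ case would read $\QP^{n-1}\setminus\bP(L)$ rather than all of $\QP^{n-1}$.
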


\begin{proof}
From \cite[Proposition 8.6]{Su14}, we have that
\[
\Omega_r^i(X)=
\begin{cases}
\: \QP^{n-1}, & \text{if $r = 1$;} \\
\: \Grass_r(L),   & \text{if $1 < r < n$;}\\
\: \emptyset,  & \text{if $r \geq n$.}
\end{cases}
\]
On the other hand, Proposition \ref{prop:pull-back2} shows that
$\Omega_A^i(X)=q^{-1}\big(\Omega_{r}^i(X)\big)$,
and so the desired conclusion follows.
\end{proof}

\subsection{When the translation order divides $\abs{\Tors(A)}$}
\label{subsec:divides}

To conclude this section, we give some sufficient conditions
on the groups $H$ and $A$, and on the order of translation
of the subgroups comprising $V^i(X)$, insuring that
diagram \eqref{commutative diagram} is {\em not}\/
a pull-back diagram.

Proposition \ref{prop:not pback2} and Theorem \ref{main theorem}
yield the following immediate application.

\begin{corollary}
\label{single}
Let $X$ be a connected CW-complex with finite $k$-skeleton.
Assume that
\begin{romenum}
\item The group $H=H_1(X,\Z)$ is torsion-free, and $A$ is a quotient of $H$.
\item There is a degree $i\le k$ such that the positive-dimensional
components of $V^i(X)$ form a torsion-translated subgroup $\rho T$
inside $\wH$.
\item The rank of $A$ is less than the rank of $H$, and 
$\ord(\rho)$ divides $c(A)$.
\end{romenum}
Then $\Omega_A^i(X)  \subsetneqq  q^{-1}\Big(\Omega_{\oA}^i(X)\Big)$.
\end{corollary}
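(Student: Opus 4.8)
The plan is to deduce Corollary~\ref{single} directly from Proposition~\ref{prop:not pback2}, Theorem~\ref{main theorem}, and the fact (recorded just after Theorem~\ref{main theorem} and in Proposition~\ref{prop:pullbetti}) that the comparison square \eqref{commutative diagram} is a pull-back precisely when $\Omega_A^i(X) = q^{-1}\big(\Omega_{\oA}^i(X)\big)$. First I would set $W = V^i(X)$ and write $W = \rho T \cup Z$, where $\rho T$ collects the positive-dimensional components (hypothesis (ii)) and $Z$ is the union of the zero-dimensional components, hence a finite set. Since $H$ is torsion-free and $A$ is a quotient of $H$ with $\rank A < \rank H$ and $\ord(\rho) \mid c(A)$, all the hypotheses of Proposition~\ref{prop:not pback2} are in force, so $q\big(\Upsilon_A(W)\big) \not\subseteq \Upsilon_{\oA}(W)$; that is, there is an epimorphism $\nu\colon H \surj A$ with $[\nu] \in \Upsilon_A(W)$ but $[\bar\nu] = q([\nu]) \notin \Upsilon_{\oA}(W)$.

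Next I would translate this back into a statement about $\Omega$-sets. By Theorem~\ref{main theorem}, $\Omega_A^i(X) = \Gamma(H,A) \setminus \Upsilon_A(V^i(X))$ and $\Omega_{\oA}^i(X) = \Gamma(H,\oA) \setminus \Upsilon_{\oA}(V^i(X))$. Thus $[\nu] \notin \Omega_A^i(X)$ while $[\bar\nu] \in \Omega_{\oA}^i(X)$, which says exactly that $[\nu] \in q^{-1}\big(\Omega_{\oA}^i(X)\big) \setminus \Omega_A^i(X)$. Since $q|_{\Omega_A^i(X)}$ maps into $\Omega_{\oA}^i(X)$ (Proposition~\ref{prop:natb}, applied to the finite characteristic subgroup $\Tors(A)$), we always have $\Omega_A^i(X) \subseteq q^{-1}\big(\Omega_{\oA}^i(X)\big)$, and the witness $[\nu]$ shows this inclusion is strict. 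This gives $\Omega_A^i(X) \subsetneqq q^{-1}\big(\Omega_{\oA}^i(X)\big)$, which is the claimed conclusion.

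There is essentially no new obstacle here: the corollary is a packaging of Proposition~\ref{prop:not pback2} through the dictionary of Theorem~\ref{main theorem}, and the only point requiring a line of care is verifying that the positive-dimensional part of $W$ being a single torsion-translated subgroup $\rho T$ means $W$ literally has the shape $\rho T \cup Z$ with $Z$ finite demanded by Proposition~\ref{prop:not pback2} — which is immediate since every irreducible component of $V^i(X)$ is either a component of $\rho T$ or a point, and $\Upsilon_A$, $\Upsilon_{\oA}$ ignore the finite part $Z$ by the remark following \eqref{eq:uaw union}. If one wants to be scrupulous, one should also note that $\Gamma(H,A) \neq \emptyset$ (else the statement is vacuous or trivial), which follows from $A$ being a quotient of $H$. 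Having assembled these pieces, the proof is just the three-line chain above.
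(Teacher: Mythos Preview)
Your proof is correct and follows exactly the approach indicated in the paper, which simply notes that the corollary is an immediate application of Proposition~\ref{prop:not pback2} and Theorem~\ref{main theorem}. You have spelled out the translation between the $\Upsilon$-sets and the $\Omega$-sets more carefully than the paper does, but the argument is the same.
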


Note that the hypothesis of Corollary \ref{single} are satisfied in
Examples \ref{ex:421} and \ref{strata infinite}, thus explaining why,
in both cases, diagram \eqref{commutative diagram}
is not a pull-back diagram.  Here is one more situation
when that happens.

\begin{corollary}
\label{prop:transverse}
Suppose $V^i(X)= \rho_1T_1\cup \cdots \cup \rho_sT_s$,
with each $T_j$ an algebraic
subgroup of $\wH$ and each $\rho_j$ a torsion element in
$\widehat{H}\setminus T_j$.  Furthermore, suppose that
\begin{romenum}
\item \label{tt1} The identity component of $T_1$ is
not contained in $T'=T_2 \cdots T_s $ (internal product in $\wH$).
\item \label{tt2} The order of $\rho_1$ divides $c(A)$.
\item \label{tt3} $\rank A < \rank H -\dim T'$.
\end{romenum}
Then
$\Omega_A^i(X) \subsetneqq q^{-1}\Big(\Omega_{\oA}^i(X)\Big)$.
\end{corollary}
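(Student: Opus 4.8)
The plan is to exhibit an explicit epimorphism $\nu \colon H \surj A$ witnessing that $\Omega_A^i(X)$ is strictly smaller than $q^{-1}(\Omega_{\oA}^i(X))$. By Theorem \ref{main theorem}, this amounts to constructing $\nu$ so that $\dim(\im(\hat\nu) \cap V^i(X)) > 0$ while $\dim(\im(\hat{\bar\nu}) \cap V^i(X)) = 0$; equivalently, in the language of Proposition \ref{prop:cv comp}, to showing $q(\Upsilon_A(V^i(X))) \nsubseteq \Upsilon_{\oA}(V^i(X))$. Since $\Upsilon_A$ commutes with unions, the natural strategy is to reduce to the single translated subgroup $\rho_1 T_1$, treated as $W = \rho_1 T_1 \cup Z$ for a suitable finite set $Z$ absorbing the contribution of the other components along the relevant image subtorus, and then invoke Proposition \ref{prop:not pback2}.

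First I would set $T' = T_2 \cdots T_s$ and use hypothesis \eqref{tt1}: the identity component $(T_1)_0$ is not contained in $T'$, so I can choose a subtorus $C \subseteq \wH$ of dimension $r = \rank A$ that meets $(T_1)_0$ in positive dimension but (using hypothesis \eqref{tt3}, $r < \rank H - \dim T'$) can be arranged to meet $T'$, and hence each $T_j$ for $j \ge 2$, in dimension $0$ — this is the point of the inequality $\rank A < \rank H - \dim T'$, which guarantees enough room inside $\wH$ for such a $C$. This mirrors the construction in Step 1 of the proof of Proposition \ref{prop:not pback2}, where a codimension-one subgroup $T'$ was built to separate $\rho$ from $T$; here $T'$ is given to us. Next, following that same proof, I would produce a torsion element $\alpha \in \wH$ with $\alpha^{-1}\rho_1 \in CT_1$ and $C \cap \rho_1 T_1 = \emptyset$ yet $\dim(\alpha C \cap \rho_1 T_1) > 0$, applying Corollary \ref{cor:empty} with the roles $C \rightsquigarrow C$, $V \rightsquigarrow T_1$, $\rho \rightsquigarrow \rho_1$; note $\ord(\alpha) = \ord(\rho_1)$, which divides $c(A)$ by hypothesis \eqref{tt2}. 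The algebraic subgroup $\bigcup_k \alpha^k C$ then corresponds to an epimorphism $H \surj \oA \oplus \Z_d$ with $d = \ord(\alpha)$, and since $H$ need not be torsion-free here I would first pass to $\oH = H/\Tors(H)$ (replacing $H$ by $\oH$ throughout, which is harmless since $V^i(X)$'s positive-dimensional geometry lives over $\wH_0$ and $q$ factors through the torsion-free quotient), then lift this epimorphism to $\nu \colon H \surj A$ exactly as in Steps 1 and 2 of Proposition \ref{prop:not pback2}, using that the cyclic subgroup of $\Tors(A)$ of order $c(A)$ is a direct summand and that $d \mid c(A)$.

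It then remains to check that this $\nu$ does the job: $\im(\hat\nu) = \bigcup_k \alpha^k C$, whose component $\alpha C$ meets $\rho_1 T_1 \subseteq V^i(X)$ in positive dimension, so $[\nu] \notin \Omega_A^i(X)$; while $\im(\hat{\bar\nu}) = C$ meets each $\rho_j T_j$ in dimension zero — for $\rho_1 T_1$ because $C \cap \rho_1 T_1 = \emptyset$ by construction, and for $j \ge 2$ because $\dim(C \cap T_j) = 0$ forces $\dim(C \cap \rho_j T_j) \le 0$ by Proposition \ref{prop:int2} — so $\im(\hat{\bar\nu}) \cap V^i(X)$ is finite and $[\bar\nu] \in \Omega_{\oA}^i(X)$. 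Combined with the fact (Proposition \ref{prop:pullbetti}, or directly from naturality) that $q(\Omega_A^i(X)) \subseteq \Omega_{\oA}^i(X)$, hence $\Omega_A^i(X) \subseteq q^{-1}(\Omega_{\oA}^i(X))$ always, this yields the strict inclusion. I expect the main obstacle to be the dimension bookkeeping in the first step: verifying that hypotheses \eqref{tt1} and \eqref{tt3} together genuinely permit a choice of $r$-dimensional subtorus $C$ meeting $(T_1)_0$ nontrivially yet avoiding $T'$ in dimension — this requires a careful count inside the character torus and is the one place where the argument is not a direct citation of Proposition \ref{prop:not pback2}, since there $s=1$ and the separating subgroup had to be manufactured rather than assumed. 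The cleanest route is probably to work with the dual lattices: set $L = \epsilon(T')$ and $\xi_1 = \epsilon(T_1)$ in $\oH = \check{\wH_0}$, observe $\xi_1 \nsubseteq \overline{L}$ by \eqref{tt1}, pick a rank-$r$ primitive sublattice $\chi$ with $\chi \cap \overline{\xi_1}$ of positive rank but $\chi$ in "general position" relative to $L$, which is possible precisely when $r < \rank H - \dim T' = \rank H - \rank L$, and then take $C = V(\chi)$.
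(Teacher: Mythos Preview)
Your strategy is sound and the ingredients are right, but there is a real gap in how you invoke the separating subgroup. In the proof of Proposition \ref{prop:not pback2} the auxiliary codimension-one subgroup (there also called $T'$) is built to \emph{contain} both $T$ and the chosen subtorus $C$, so that $CT\subseteq T'$ while $\rho\notin T'$; this is exactly what forces the hypothesis $\rho\notin CT$ of Corollary \ref{cor:empty}. In the present corollary, by contrast, your $T'=T_2\cdots T_s$ is something $C$ must \emph{avoid} (you need $\dim(C\cap T')=0$ to control the components $\rho_jT_j$ for $j\ge 2$), and hypothesis \eqref{tt1} says $(T_1)_0\not\subseteq T'$, so there is no reason for $CT_1\subseteq T'$ or $\rho_1\notin T'$. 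The given $T'$ therefore cannot play the separating role you assign it; you would still need to construct a \emph{second} subgroup $S$ containing $(T_1)_0$ and $C$ but not $\rho_1$, and then check that $C\subseteq S_0$ is compatible with $\dim(C\cap T')=0$ and $\dim(C\cap(T_1)_0)>0$. That is precisely the bookkeeping you flagged as the main obstacle, and your sketch does not address it. (A minor slip as well: if $C=V(\chi)$ has dimension $r$, then $\chi$ must have rank $n-r$, not $r$.)

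The paper's proof avoids all of this by a cleaner reduction: split $H=H'\oplus H''$ with $\widehat{H'}=T'$, so that $\widehat{H''}\cong\wH/T'$ sits inside $\wH$ as a complement to $T'$. Any $\im(\hat\nu)$ landing in $\widehat{H''}$ then automatically meets each $\rho_jT_j$ ($j\ge 2$) in at most a point, while by \eqref{tt1} the slice $\hat p^{-1}(\rho_1T_1)=\rho_1T_1\cap\widehat{H''}$ remains a positive-dimensional torsion-translated subgroup of $\widehat{H''}$, and by \eqref{tt3} one has $\rank A<\rank H''$. Proposition \ref{prop:not pback2} now applies verbatim to $\widehat{H''}$ and this single translated subgroup, producing $\mu\colon H''\surj A$ with the desired intersection behavior; setting $\nu=\mu\circ p$ finishes. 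Quotienting by $T'$ thus replaces your attempt to make one $T'$ serve two incompatible roles with a direct invocation of the $s=1$ case.
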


\begin{proof}
Split $H$ as a direct sum, $H' \oplus H''$, so that $\wH'=T'$ and
$\wH'' = \wH/T'$.  Let $p\colon H \surj H''$ be the
canonical projection, and let $\hat{p}\colon \wH'' \to \wH$
be the induced morphism. By assumption \eqref{tt1},
we have that $\hat{p}^{-1}(T_1)$ is a
positive-dimensional algebraic subgroup of $\wH''$.
Thus, the positive-dimensional components of
$W=V^i(X) \cap \wH''$ form a torsion-translated
subgroup of $\wH''$, namely, $\hat{p}^{-1} (\rho_1 T_1)$.
Moreover, assumptions \eqref{tt2} and \eqref{tt3} imply that
$\rank A < \rank H''$ and $\ord( \hat{p}^{-1} (\rho_1))$
divides $c(A)$.

Applying now Proposition \ref{prop:not pback2}
to the algebraic group $\wH''$ and to the subvariety $W$
yields an epimorphism $\mu\colon H'' \surj A$
such that $\im(\hat{\bar{\mu}})\cap \hat{p}^{-1} (\rho_1 T_1)$ is finite,
and $\im(\hat{\mu}) \cap \hat{p}^{-1} (\rho_1 T_1)$ is infinite.
Setting  $\nu=\mu\circ p$, we see that $\im(\hat{\nu})=\im(\hat\mu)$.
Thus, $\im(\hat{\bar{\nu}}) \cap V^i(X)$
is finite, while $\im(\hat\nu) \cap V^i(X)$ is infinite.
\end{proof}

\section{Quasi-projective varieties}
\label{sect:quasi-proj}

We conclude with a discussion of the generalized
Dwyer--Fried sets of smooth, quasi-projec\-tive varieties.

\subsection{Characteristic varieties}
\label{subsec:cv qproj}

A space $X$ is said to be a {\em (smooth) quasi-projective variety}\/
if there is a smooth, complex projective variety $\overline{X}$ and a
normal-crossings divisor $D$ such that $X=\overline{X}\setminus D$.
For instance, $X$ could be the complement of an algebraic
hypersurface in $\CP^{d}$.

The structure of the characteristic varieties of such spaces was
determined through the work of Beauville, Green and Lazarsfeld,
Simpson, Campana, and Arapura in the 1990s, and further refined
in recent years.  We summarize these results, essentially in
the form proved by Arapura.

\begin{theorem}[\cite{Ar}]
\label{thm:arapura}
Let $X=\overline{X}\setminus D$, where $\overline{X}$
is a smooth, projective variety and $D$ is a normal-crossings
divisor.
\begin{romenum}
\item \label{ar1}
If either $D=\emptyset$ or $b_1(\overline{X})=0$,
then each characteristic variety $V^i(X)$
is a finite union of unitary translates of algebraic
subtori of $T=H^1(X,\C^{*})$.
\item \label{ar2}
In degree $i=1$, the condition that
$b_1(\overline{X})=0$ if $D\ne \emptyset$ may be lifted.
Furthermore, each positive-dimensional component
of $V^1(X)$ is of the form $\rho \cdot S$,
where $S$ is an algebraic subtorus of $T$, and
$\rho$ is a {\em torsion}\/ character.
\end{romenum}
\end{theorem}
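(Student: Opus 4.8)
This statement is due to Arapura \cite{Ar}, and I would follow his approach, which builds on the work of Beauville, Green--Lazarsfeld, Simpson, and Campana; the plan is to translate the problem into the geometry of pencils on $X$. One first writes $V^i(X)=\bigcup_{j\le i}\Sigma^j(X)$, where $\Sigma^j(X)=\{\rho\in\wH\mid \dim_\C H^j(X,\C_\rho)\ne 0\}$, replaces $X=\overline X\setminus D$ by the pair $(\overline X,D)$, and computes the twisted cohomology near a character $\rho$ via the logarithmic de Rham complex $\Omega^\bullet_{\overline X}(\log D)$. A Castelnuovo--de Franchis type argument — a pair of logarithmic $1$-forms with vanishing wedge product generates a pencil — then shows that every positive-dimensional component $W$ of $\Sigma^j(X)$ arises from an \emph{admissible map} $f\colon X\to C$ onto a smooth curve carrying an orbifold structure $C^{\mathrm{orb}}$ at the multiple fibers, with $W=\rho\cdot f^{*}\big(\widehat{H_1(C^{\mathrm{orb}},\Z)}\big)$ for a suitable $\rho\in\wH$. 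Passing, if necessary, to a finite orbifold cover of $C$, the map $f^{*}$ realizes the character variety of $C^{\mathrm{orb}}$ as an algebraic subtorus $S\subseteq T$, which produces the ``finite union of translated subtori'' shape.

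For part \eqref{ar1}: when $D=\emptyset$ the curves $C$ that occur have positive genus, so $S=f^{*}H^1(C,\C^*)$ is already a subtorus; when $D\ne\emptyset$ but $b_1(\overline X)=0$, any admissible map must land on a rational orbifold curve (a map to a positive-genus curve would force $b_1(\overline X)>0$), whose character variety is again a torus. That the translating character may be taken \emph{unitary} rather than merely of finite order follows from Simpson's theorem that the jump loci are defined over $\overline{\Q}$ and stable under $\mathrm{Gal}(\overline{\Q}/\Q)$, which forces the coordinates of the translates to be roots of unity. For part \eqref{ar2}, in degree $1$ one uses that $V^1(X)$ depends only on $\pi_1(X)$, together with the orbifold-fibration theorem, to enumerate the positive-dimensional components; the monodromy relations around the marked points of $C^{\mathrm{orb}}$ then force the translating character to have finite order even when $b_1(\overline X)\ne 0$, so the extra hypothesis can be dropped.

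The hardest step is precisely this torsion/unitarity assertion for the translation factors. A priori the components of $V^i(X)$ are merely complex-analytic subsets of $\wH$, and pinning the translates to torsion points requires the full arithmetic and Hodge-theoretic input of non-abelian Hodge theory — Simpson's correspondence among the Betti, de Rham, and Dolbeault moduli spaces — which I would invoke as a black box, since this is exactly the content of Arapura's theorem. By contrast, the Castelnuovo--de Franchis reduction and the orbifold bookkeeping are comparatively routine, and the passage from ``component of a support locus'' to ``translated subtorus'' is the part one would actually write out in detail.
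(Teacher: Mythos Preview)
The paper does not give its own proof of this theorem: it is stated as a result from the literature, attributed to Arapura \cite{Ar} (building on Beauville, Green--Lazarsfeld, Simpson, and Campana), and is quoted without argument. So there is nothing in the paper to compare your proposal against; your sketch is an outline of the literature proof rather than an alternative to anything the authors do.

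That said, one point in your sketch is garbled. In part~\eqref{ar1} the conclusion is that the translates are \emph{unitary}, while in part~\eqref{ar2} the stronger conclusion is that they are \emph{torsion}. You argue for unitarity in~\eqref{ar1} by invoking Simpson's result that the jump loci are defined over $\overline{\Q}$ and Galois-stable, concluding that the coordinates are roots of unity. But roots of unity are torsion, not merely unitary, so this would already give the stronger statement in~\eqref{ar1}, which is not what is claimed (and is not, in general, what Arapura proves for $i>1$ in the quasi-projective case). Unitarity of the translation factor comes instead from the Hodge-theoretic side: the relevant local systems underlie variations of Hodge structure, forcing the characters to land in the compact real form. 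The torsion conclusion in degree~$1$ is a separate, more delicate matter (and was in fact only fully established after Arapura's original paper). Your identification of the ``hardest step'' is correct in spirit, but the mechanisms for unitarity and for torsion are distinct, and your write-up conflates them.
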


For instance, if $C$ is a connected, smooth complex curve
of negative Euler characteristic, then $V^1(C)$ is the full
character group $H^1(C,\C^{*})$.
For an arbitrary smooth, quasi-projective variety $X$,
each positive-dimensional component of $V^1(X)$
arises by pullback along a suitable orbifold fibration (or, pencil).
More precisely, if $\rho \cdot S$ is such a component,
then $S=f^*(H^1(C,\C^{*}))$, for some curve $C$,
and some holomorphic, surjective map
$f\colon X \to C$ with connected generic fiber.

Using this interpretation, together with recent work of
Dimca, Artal-Bartolo, Cogolludo, and Matei (as
recounted in \cite{Su14}), we can describe the
variety $V^1(X)$, as follows.

\begin{theorem}
\label{thm:vxi qp}
Let $X$ be a smooth, quasi-projective variety. Then
\[
V^1(X)=\bigcup_{\xi\in \Lambda}V(\xi) \cup
\bigcup_{\xi\in \Lambda'} \big( V(\xi)\setminus V(\overline{\xi}) \big)
\cup Z,
\]
where $Z$ is a finite subset of $T=H^1(X,\C^*)$, and $\Lambda$
and $\Lambda'$ are certain (finite) collections of subgroups of
$H=H_1(X,\Z)$.
\end{theorem}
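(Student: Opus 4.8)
The plan is to combine the two pieces of input that the excerpt has already assembled: Arapura's structure theorem (Theorem~\ref{thm:arapura}\eqref{ar2}) together with the ``orbifold pencil'' interpretation of the positive-dimensional components of $V^1(X)$, and the refinement due to Dimca, Artal-Bartolo, Cogolludo, and Matei recounted in \cite{Su11}. First I would isolate the zero-dimensional part of $V^1(X)$: by Theorem~\ref{thm:arapura}\eqref{ar2}, $V^1(X)$ is a finite union of a finite set $Z\subset T$ together with finitely many positive-dimensional components, each of the form $\rho\cdot S$ where $S$ is an algebraic subtorus of $T=H^1(X,\C^*)$ and $\rho$ is a \emph{torsion} character. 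So it suffices to show that each positive-dimensional component $\rho\cdot S$ is of one of the two allowed forms: either $V(\xi)$ for a subgroup $\xi\le H$, or $V(\xi)\setminus V(\overline\xi)$ for a non-primitive subgroup $\xi\le H$.

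The key step is the dichotomy for a single component $\rho\cdot S$. Write $S=V(\chi)$, where $\chi$ is the \emph{primitive} subgroup $\epsilon(S)\le H$ (so $\overline\chi=\chi$, using the Pontryagin correspondence of \S\ref{sect:inttt}). The orbifold-pencil description says $S=f^*(H^1(C,\C^*))$ for a holomorphic surjection $f\colon X\to C$ onto a curve with connected generic fiber, and $\rho$ records the multiple fibers of the orbifold structure. The work of Dimca et al.\ shows that, for a fixed such pencil, the associated components of $V^1(X)$ organize as follows: one sets $d=\ord(\rho)$ and considers the subgroup $\xi\le\chi$ determined by requiring $\overline\xi=\chi$ and $\widehat{\chi/\xi}=\langle\rho\rangle$ (this is exactly the subgroup singled out in Corollary~\ref{cor:eta vxi}). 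Then either the full coset family $\bigcup_{m}\rho^mV(\chi)=V(\xi)$ lies in $V^1(X)$ --- this is the ``$\Lambda$'' case, $\xi\in\Lambda$ --- or only the translated tori $\rho^mV(\chi)$ with $m\not\equiv 0$ appear, giving $V(\xi)\setminus V(\overline\xi)$, which is the ``$\Lambda'$'' case. In the untranslated subcase $\rho=1$ one simply gets $V(\chi)=V(\overline\chi)$ with $\overline\xi=\xi$, again of the first type. Running over the finitely many pencils and then taking the union, and absorbing any leftover isolated points into $Z$, produces the stated decomposition, with $\Lambda$ and $\Lambda'$ finite because there are only finitely many positive-dimensional components.

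The main obstacle is the middle step: verifying that the possible coset-families arising from a single orbifold pencil are \emph{exactly} $V(\xi)$ or $V(\xi)\setminus V(\overline\xi)$, and not some intermediate union of cosets. This is where one genuinely needs the Dimca--Artal-Bartolo--Cogolludo--Matei analysis of the orbifold structure attached to $f$: the set of multiple fibers and their multiplicities constrains which torsion translates of $f^*(H^1(C,\C^*))$ can occur, and one must check this forces an ``all or all-but-the-identity-component'' pattern within each $\langle\rho\rangle$-orbit of cosets. I would cite \cite{Su11} (and through it the relevant papers) for this and not reprove it; the remainder --- extracting $Z$, matching cosets to subgroups via \eqref{eq:vdecomp} and Corollary~\ref{cor:eta vxi}, and checking finiteness of $\Lambda,\Lambda'$ --- is routine bookkeeping once that dichotomy is in hand.
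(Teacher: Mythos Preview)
Your plan is essentially the paper's own: the paper does not give a proof of this theorem, but presents it as a consequence of Arapura's structure theorem together with the refinements of Dimca, Artal-Bartolo, Cogolludo, and Matei, citing \cite{Su11} for the details. Your outline makes this derivation explicit, and the overall strategy---strip off the zero-dimensional part $Z$, then use the pencil interpretation to show each positive-dimensional family of components coming from a single orbifold fibration $f\colon X\to C$ is either a full algebraic subgroup $V(\xi)$ or a subgroup minus its identity component $V(\xi)\setminus V(\overline\xi)$, according to whether $\chi(C)<0$ or $\chi(C)=0$---is exactly right.

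One small oversimplification to flag: you set $\xi$ so that $\widehat{\overline\xi/\xi}=\langle\rho\rangle$ is cyclic, generated by a single translation character. In general the finite group of translation characters attached to an orbifold pencil (coming from the collection of multiple fibers and their multiplicities) need not be cyclic; the determinant group $\overline\xi/\xi$ can be an arbitrary finite abelian group. The dichotomy you describe still holds---either the whole $V(\xi)$ or $V(\xi)\setminus V(\overline\xi)$ appears---but the subgroup $\xi$ should be defined via $\epsilon$ applied to the full group of translates, not just the cyclic group generated by one $\rho$. With that adjustment your bookkeeping goes through unchanged, and your acknowledgment that the genus dichotomy itself must be quoted from \cite{Su11} is appropriate.
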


\subsection{Dwyer--Fried sets}
\label{subsec:df qproj}

Using now Theorem \ref{thm:vxi qp} (and keeping the notation therein),
Proposition \ref{prop:uprho4} yields the following structural result for
the degree~$1$ Dwyer--Fried sets of a smooth, quasi-projective variety.

\begin{theorem}
\label{thm:omega qp}
Let $A$ be a quotient of $H$.  Then
\[
\Omega^1_A(X)=\Gamma(H,A)\setminus \Bigg( \bigcup_{\xi\in \Lambda}
q^{-1}\big(\sigma_{\overline{A}} (\overline\xi)\big) \cup
\bigcup_{\xi\in \Lambda'}
\Big( q^{-1}\big(\sigma_{\overline{A}} (\overline\xi)\big) \cap
\theta_A(\xi)\Big) \Bigg).
\]
\end{theorem}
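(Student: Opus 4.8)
The plan is to derive this from the general formula $\Omega^1_A(X)=\Gamma(H,A)\setminus\Upsilon_A(V^1(X))$ of Theorem~\ref{main theorem}, and then to compute the obstruction set $\Upsilon_A(V^1(X))$ piece by piece, using the structural description of $V^1(X)$ supplied by Theorem~\ref{thm:vxi qp}. Note that this description is available unconditionally in degree~$1$ by Theorem~\ref{thm:arapura}\eqref{ar2}, so no hypothesis on $b_1(\overline X)$ is required. The first step is to observe that $\Upsilon_A$ commutes with unions by~\eqref{eq:uaw union} and depends only on the positive-dimensional components of its argument, so that the finite set $Z$ contributes nothing; this immediately reduces the problem to evaluating $\Upsilon_A\big(V(\xi)\big)$ for $\xi\in\Lambda$ and $\Upsilon_A\big(V(\xi)\setminus V(\overline\xi)\big)$ for $\xi\in\Lambda'$.

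For the first family I would invoke the untranslated case of Theorem~\ref{thm:uprho2}, which gives $\Upsilon_A\big(V(\xi)\big)=\sigma_A(\xi)$. For the second family, I would first dispose of the case in which $\xi\in\Lambda'$ happens to be primitive: then $V(\xi)\setminus V(\overline\xi)=\emptyset$, and moreover the indexing set of $\theta_A(\xi)$ is empty, so $\theta_A(\xi)=\emptyset$ and this term disappears from both sides of the asserted identity. For non-primitive $\xi\in\Lambda'$, Proposition~\ref{prop:uprho4} applies directly and yields $\Upsilon_A\big(V(\xi)\setminus V(\overline\xi)\big)=\sigma_A(\xi)\cap\theta_A(\xi)$.

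The remaining step is to rewrite $\sigma_A(\xi)$ in the pulled-back form appearing in the statement. Here I would note that, by Definition~\ref{def:sigmaxi}, membership in $\sigma_A(\xi)$ is governed only by whether $\rank(\ker(\nu)+\xi)<\rank H$; since $\overline\xi/\xi$ is finite one has $(\ker(\nu)+\xi)\otimes\Q=(\ker(\nu)+\overline\xi)\otimes\Q$, whence $\sigma_A(\xi)=\sigma_A(\overline\xi)$. The second part of Proposition~\ref{prop:comp schubert}, combined with Lemma~\ref{lem:schubert} (which identifies $\sigma_{\overline A}(\overline\xi)$ with a genuine special Schubert variety in the rational Grassmannian $\Gamma(\overline H,\overline A)$), then gives $\sigma_A(\overline\xi)=q^{-1}\big(\sigma_{\overline A}(\overline\xi)\big)$, where $q\colon\Gamma(H,A)\to\Gamma(H,\overline A)$ is the canonical projection. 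Assembling these identities produces
\[
\Upsilon_A\big(V^1(X)\big)=\bigcup_{\xi\in\Lambda}q^{-1}\big(\sigma_{\overline A}(\overline\xi)\big)\;\cup\;\bigcup_{\xi\in\Lambda'}\Big(q^{-1}\big(\sigma_{\overline A}(\overline\xi)\big)\cap\theta_A(\xi)\Big),
\]
and taking complements in $\Gamma(H,A)$ finishes the proof.

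Since every tool needed here has already been established, I do not anticipate a serious obstacle; the one place that requires genuine care is the bookkeeping distinguishing a subgroup $\xi$ from its primitive closure $\overline\xi$ --- the $\sigma_A$-part cannot see the difference, whereas the $\theta_A$-part is precisely where it matters --- together with the verification that Proposition~\ref{prop:uprho4} is applied only in the non-primitive case it covers.
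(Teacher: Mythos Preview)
Your proposal is correct and follows essentially the same approach as the paper, which simply states that the result follows from Theorem~\ref{thm:vxi qp} together with Proposition~\ref{prop:uprho4}; you have merely made explicit the intermediate steps (Theorem~\ref{main theorem}, the untranslated case of Theorem~\ref{thm:uprho2}, and Proposition~\ref{prop:comp schubert}) that the paper leaves implicit.
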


In certain situations, more can be said.
For instance, Proposition \ref{prop:pull-back2} yields the
following corollary.

\begin{corollary}
\label{cor:qp1}
Suppose the order of $\overline{\xi}/\xi$ is coprime to
$c(A)$, for each $\xi \in \Lambda'$.
Then $\Omega_A^1(X)=q^{-1}\big(\Omega_{\oA}^1(X)\big)$.
\end{corollary}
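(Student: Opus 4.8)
The plan is to deduce the corollary directly from Proposition~\ref{prop:pull-back2}: the only thing that needs checking is that the description of $V^1(X)$ furnished by Theorem~\ref{thm:vxi qp} exhibits the positive-dimensional part of that variety as a finite union $\bigcup_j \rho_j T_j$ of torsion-translated algebraic subgroups of $\widehat{H}$ whose translation factors $\bar\rho_j\in\widehat{H}/T_j$ have order coprime to $\abs{\Tors(A)}$.

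First I would apply Theorem~\ref{thm:vxi qp} to write $V^1(X)=\bigcup_{\xi\in\Lambda}V(\xi)\cup\bigcup_{\xi\in\Lambda'}\bigl(V(\xi)\setminus V(\overline\xi)\bigr)\cup Z$, with $Z$ a finite set. Since $\Omega^1_A(X)=\Gamma(H,A)\setminus\Upsilon_A(V^1(X))$ by Theorem~\ref{main theorem}, and $\Upsilon_A$ depends only on the positive-dimensional components of its argument, the set $Z$ plays no role. For $\xi\in\Lambda$ the set $V(\xi)=\widehat{H/\xi}$ is already an algebraic subgroup, contributing terms $\rho_j T_j$ with $T_j=V(\xi)$ and $\rho_j=1$. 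For $\xi\in\Lambda'$ I would use the decomposition~\eqref{eq:vdecomp}, $V(\xi)=\widehat{\overline\xi/\xi}\cdot V(\overline\xi)$, whose identity component is $V(\overline\xi)$; deleting it leaves $V(\xi)\setminus V(\overline\xi)=\bigcup_{1\neq\eta\in\widehat{\overline\xi/\xi}}\eta\,V(\overline\xi)$, a finite union of closed translates of the algebraic subgroup $V(\overline\xi)$, and each such translate is one of the pieces $\rho_j T_j$.

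Next I would bound the orders of the translation factors. For $\xi\in\Lambda'$ and $\eta\in\widehat{\overline\xi/\xi}$, the order of $\eta$ divides $\abs{\overline\xi/\xi}$, hence so does the order of its image $\bar\eta$ in $\widehat{H}/V(\overline\xi)$. By hypothesis $\abs{\overline\xi/\xi}$ is coprime to $c(A)$, and every prime dividing $\abs{\Tors(A)}$ also divides $c(A)$; therefore $\ord(\bar\eta)$ is coprime to $\abs{\Tors(A)}$. The factors arising from $\xi\in\Lambda$ are trivial, so all translation factors in this decomposition of the positive-dimensional part of $V^1(X)$ have order coprime to $\abs{\Tors(A)}$.

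Consequently the hypotheses of Proposition~\ref{prop:pull-back2} are met, and that proposition gives $\Omega^1_A(X)=q^{-1}\bigl(\Omega^1_{\overline A}(X)\bigr)$, as claimed. The argument is essentially bookkeeping; the one point that calls for a moment's care is the passage from the coprimality condition stated for $\abs{\overline\xi/\xi}$ and $c(A)$ to coprimality with $\abs{\Tors(A)}$, together with the observation that only the positive-dimensional part of $V^1(X)$ — equivalently, the subgroups $V(\xi)$ for $\xi\in\Lambda$ and the sets $V(\xi)\setminus V(\overline\xi)$ for $\xi\in\Lambda'$ — enters the computation of $\Omega^1_A(X)$.
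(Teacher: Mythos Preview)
Your argument is correct and is precisely the route the paper takes: the paper simply records that Proposition~\ref{prop:pull-back2} yields the corollary, and you have supplied the verification that the decomposition of $V^1(X)$ from Theorem~\ref{thm:vxi qp} meets its hypotheses. Your handling of the finite set $Z$ (via Theorem~\ref{main theorem} and the fact that $\Upsilon_A$ ignores $0$-dimensional pieces) and of the coprimality passage from $c(A)$ to $\abs{\Tors(A)}$ are exactly the details one needs to make the one-line citation rigorous.
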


Similarly, Corollary~\ref{prop:transverse} has the following consequence.

\begin{corollary}
\label{cor:qp2}
Suppose $H$ is torsion-free, and there is a subgroup
$\chi \in \Lambda'$ such that
\begin{romenum}
\item
$V(\overline\chi)$ is not contained in
$T':=V(\bigcap_{\xi\in \Lambda\cup \Lambda' \setminus \{\chi\}} \xi)$.
\item
There is a non-zero element in
$\overline{\chi}/\chi$ whose order divides $c(A)$.
\item $\rank A < \codim T'$.
\end{romenum}
Then
$\Omega_A^1(X)  \subsetneqq  q^{-1}\Big(\Omega_{\oA}^1(X)\Big)$.
\end{corollary}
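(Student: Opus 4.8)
The plan is to deduce Corollary~\ref{cor:qp2} from Corollary~\ref{prop:transverse}, applied in degree $i=1$, with Theorem~\ref{thm:vxi qp} supplying the shape of $V^1(X)$. First I would pass to the positive-dimensional part: by Theorem~\ref{main theorem} the sets $\Omega^1_A(X)$ and $\Omega^1_{\oA}(X)$ are the complements of $\Upsilon_A(V^1(X))$ and $\Upsilon_{\oA}(V^1(X))$, and the $\Upsilon$-sets ignore the finite set $Z$, so one may replace $V^1(X)$ by $\bigcup_{\xi\in\Lambda}V(\xi)\cup\bigcup_{\xi\in\Lambda'}\bigl(V(\xi)\setminus V(\overline\xi)\bigr)$. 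Decomposing each summand into irreducible components via \eqref{eq:vdecomp}, this is a finite union $\bigcup_j\rho_jT_j$ of torsion-translated algebraic subgroups, with $T_j=V(\overline{\xi_j})$, $\rho_j\in\widehat{\overline{\xi_j}/\xi_j}$ and $\xi_j\in\Lambda\cup\Lambda'$ — precisely the input format of Corollary~\ref{prop:transverse}.

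Next I would select the distinguished component and match the three hypotheses. Hypothesis (ii) gives a nonzero element of $\overline\chi/\chi$ of order dividing $c(A)$; since a finite abelian group is isomorphic to its Pontryagin dual, there is a nontrivial character $\rho_1\in\widehat{\overline\chi/\chi}\subseteq\widehat H$ with $\ord(\rho_1)\mid c(A)$. Put $T_1:=V(\overline\chi)$, so that $\rho_1T_1$ is one of the components of $V(\chi)\setminus V(\overline\chi)$, hence occurs among the $\rho_jT_j$, with $\rho_1$ torsion and $\rho_1\notin T_1$ — this is condition (tt2). Writing $T_\star$ for the product in $\widehat H$ of the remaining $T_j$, each of these equals $V(\overline{\xi_j})$ with $\xi_j\neq\chi$, whence $T_\star\subseteq\prod_{\xi\in\Lambda\cup\Lambda'\setminus\{\chi\}}V(\xi)=V\bigl(\bigcap_{\xi\neq\chi}\xi\bigr)=T'$ (here one uses that the duality of \S\ref{subsec:functor} carries intersections of subgroups of $H$ to products of algebraic subgroups of $\widehat H$). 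Then (i) gives $(T_1)_0=V(\overline\chi)\not\subseteq T'\supseteq T_\star$, which is (tt1), and (iii) gives $\rank A<\codim T'=\rank H-\dim T'\le\rank H-\dim T_\star$, which is (tt3). Corollary~\ref{prop:transverse} then yields $\Omega_A^1(X)\subsetneqq q^{-1}\bigl(\Omega_{\oA}^1(X)\bigr)$.

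The main obstacle hides in the last step: if $V(\chi)\setminus V(\overline\chi)$ has more than one irreducible component (i.e.\ $\overline\chi/\chi$ has more than two elements), then it contributes several translates of $V(\overline\chi)$, so once one of them is chosen as $\rho_1T_1$ the group $T_\star$ literally contains $V(\overline\chi)=(T_1)_0$ and (tt1) fails on the nose; Corollary~\ref{prop:transverse} cannot then be quoted verbatim. To get around this I would re-run its proof: split off $\widehat H'$, the identity component of $V\bigl(\bigcap_{\xi\neq\chi}\xi\bigr)$, write $\widehat H=\widehat H'\oplus\widehat H''$ with associated projection $p\colon H\surj H''$, and push $V^1(X)$ into $\widehat H''\cong\widehat H/\widehat H'$, calling the image $W$ and writing $S$ for the image of $V(\overline\chi)$. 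There every component coming from a $\xi\neq\chi$ collapses to a point, while $W$ is, up to a finite set, a union of finitely many cosets of the subtorus $S$ (the translating characters lie in the finite group $\widehat{\overline\chi/\chi}$), one of which is $\rho_1''S$ with $\ord(\rho_1'')\mid c(A)$, and $\rank A<\rank H''$ by (i) and (iii). The construction of Proposition~\ref{prop:not pback2} (via Corollary~\ref{cor:empty}) then supplies a subtorus $C\subseteq\widehat H''$ and a torsion element $\alpha$ with $C\cap W$ finite but $\dim(\alpha C\cap\rho_1''S)>0$, lifting to an epimorphism $\mu\colon H''\surj A$; setting $\nu=\mu\circ p$ makes $\im(\hat{\bar\nu})\cap V^1(X)$ finite and $\im(\hat\nu)\cap V^1(X)$ infinite, and Theorem~\ref{main theorem} converts this into the claimed strict inclusion. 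The delicate point to check is that the $C$ produced by that construction misses the positive-dimensional part of $W$ altogether, not just the single coset $\rho_1''S$ — which should follow by choosing $C$ generically within the codimension-one subtorus used in the construction, subject to avoiding each of the finitely many other cosets.
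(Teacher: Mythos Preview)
Your reduction is exactly what the paper intends: it offers no proof beyond ``Similarly, Corollary~\ref{prop:transverse} has the following consequence,'' and your first two paragraphs spell out precisely that deduction. You are also right that the deduction is not clean when $\abs{\overline{\chi}/\chi}>2$: once one coset $\rho_1V(\overline\chi)$ is singled out as $\rho_1T_1$, the remaining cosets $\rho'V(\overline\chi)$ with $\rho'\ne 1,\rho_1$ contribute further copies of $V(\overline\chi)$ to $T_2\cdots T_s$, so $(T_1)_0\subseteq T'$ and hypothesis~(\ref{tt1}) of Corollary~\ref{prop:transverse} fails on the nose. The paper's one-line argument does not address this, and your observation is a genuine subtlety that it glosses over.

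Your repair---taking $T'$ as in the statement of Corollary~\ref{cor:qp2} (built from $\xi\ne\chi$ only), passing to $\widehat H''$, and re-running the construction of Proposition~\ref{prop:not pback2}---is the natural move. But the ``delicate point'' you flag at the end is not closed by the genericity sketch you give. What is needed is a rank-$r$ subtorus $C\subseteq\widehat H''$ with $\dim(C\cap S)>0$ yet $\rho'\notin C\cdot S$ for every nontrivial $\rho'$ indexing a coset of $S$ in $W$; equivalently, the codimension-one subtorus $T''\supseteq S$ you pick must avoid all of the finitely many torsion points $\rho'$. When $r\le\dim S$ this is free (take $C\subseteq S$, so $C\cdot S=S$ and the condition is automatic since each $\rho'\notin S$). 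When $r>\dim S$, however, ``choose $C$ generically'' is not an argument: subtori are indexed by a \emph{discrete} Grassmannian, and a free abelian group can be covered by finitely many proper finite-index subgroups (for instance $\Z^2$ is the union of its three index-$2$ subgroups), so the avoidance condition $\lambda\not\subseteq\ker(\rho')$ for all $\rho'$ cannot be met by counting alone. Closing this gap requires an explicit construction that uses the specific structure of the coset group (here a quotient of $\widehat{\overline\chi/\chi}$), not a generic-position argument.
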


For the remainder of this section, we shall give some concrete examples
of quasi-projective manifolds $X$ for which the computation of the sets
$\Omega^1_A(X)$ can be carried out explicitly.

\subsection{Brieskorn manifolds}
\label{subsec:Sigma(pqr)}

Let $(a_1, \dots, a_n)$ be an $n$-tuple of integers,
with $a_j \geq 2$. Consider the variety $X$ in $\C^n$ defined by the
equations $c_{j 1}x^{a_1}_1 + \cdots + c_{j n}x^{a_n}_n=0$,
for $1 \leq j \leq n-2$. Assuming all maximal minors of the
matrix $\big(c_{j k}\big)$ are non-zero, $X$ is a
quasi-homogeneous surface, with an isolated singularity at $0$.

The space $X$ admits a good $\C^*$-action.
Set $X^*=X\setminus{0}$, and let $p\colon X^* \surj C$ be the
corresponding projection onto a smooth projective curve.
Then $p^*\colon H^1(C, \C)\to H^1(X^*, \C)$ is an isomorphism,
the torsion subgroup of $H=H_1(X^*,\Z)$ coincides with the
kernel of $p_*\colon H_1(X^*, \Z) \to H_1(C, \Z)$.

By definition, the Brieskorn manifold $M= \Sigma (a_1, \dots, a_n)$
is the link of the quasi-ho\-mogenous singularity $(X, 0)$.
As such, $M$ is a closed, smooth, oriented $3$-manifold
homotopy equivalent to $X^*$.  Put
\[
l=\lcm(a_1, \dots, a_n), \quad
l_j=\lcm (a_1, \dots,\widehat{a_j} ,\dots,a_n), \quad
a=a_1 \cdots a_n.
\]
The $S^1$-equivalent
homeomorphism type of $M$ is determined by
the following Seifert invariants associated to the
projection $\left. p\right|_M\colon M \to C$:
\begin{itemize}
\item The exceptional orbit data,
$(s_1(\alpha_1, \beta_1), \cdots, s_n(\alpha_n, \beta_n))$,
with $\alpha_j=l/l_j$, $\beta_j l\equiv a_j \mod \alpha_j$
and $s_j=a / (a_j l_j)$, where $s_j=(\alpha_j \beta_j)$
means $(\alpha_j \beta_j)$ repeated $s_j$ times,
unless $\alpha_j=1$, in which case $s_j=(\alpha_j \beta_j)$
is to be removed from the list.
\item The genus of the base curve, given by
$g=\frac{1}{2}\left( 2+(n-2)a/l-\sum^n _{j=1}s_j \right)$.
\item The (rational) Euler number of the Seifert fibration,
given by $e=-a/l^2$.
\end{itemize}

The group $H=H_1(M, \Z)$ has rank $2g$, and torsion part
of order $\alpha_1^{s_1} \cdots \alpha_n^{s_n} \cdot \abs{e}$.
Identify the character group $\widehat{H}$ with a disjoint
union of copies of $\widehat{H}_0=(\C^*)^{2g}$, indexed by
$\Tors(H)$, and set
$\alpha=\alpha_1^{s_1} \cdots \alpha_n^{s_n} / 
\lcm(\alpha_1,\dots ,\alpha_n)$.

\begin{prop}[\cite{DPS-mz}]
\label{prop:v1seifert}
The positive-dimensional components of $V^1(M)$ are as follows:
\begin{romenum}
\item
$\alpha -1$ translated copies of $\widehat{H}_0$,
if $g=1$.
\item
$\widehat{H}_0$, together with $\alpha -1$ translated
copies of $\widehat{H}_0$,  if $g>1$.
\end{romenum}
\end{prop}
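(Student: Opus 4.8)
The plan is to identify $V^{1}(M)$ via the structure theory for characteristic varieties of smooth quasi-projective varieties, the argument being the one carried out in \cite{DPS-mz}. Since $M$ is homotopy equivalent to $X^{*}=X\setminus\{0\}$, and $X^{*}$ is a smooth quasi-projective surface, Theorem~\ref{thm:vxi qp} (equivalently, the degree-one part of Arapura's theorem, Theorem~\ref{thm:arapura}, together with the orbifold-pencil description of its positive-dimensional components recalled in the discussion following that theorem) applies to $V^{1}(M)=V^{1}(X^{*})$: every positive-dimensional irreducible component is of the form $\rho\cdot f^{*}\big(H^{1}(C',\C^{*})\big)$, with $\rho$ a torsion character and $f\colon X^{*}\to C'$ an orbifold fibration with connected generic fiber. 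The first step is to show that, up to the orbifold structure on the base, the only such pencil is the defining Seifert fibration $p\colon X^{*}\to C$: because $X^{*}$ carries a good $\C^{*}$-action with orbit space the base $2$-orbifold $C^{\mathrm{orb}}$, any holomorphic surjection to a curve with connected fibers is $\C^{*}$-invariant and hence factors through $p$, and the maximal orbifold structure making $p$ an orbifold fibration is precisely $C^{\mathrm{orb}}$, namely $C$ of genus $g$ equipped with the $s_{j}$ cone points of order $\alpha_{j}$ recorded by the exceptional-orbit data. Consequently every positive-dimensional component of $V^{1}(M)$ is a torsion translate of the single subtorus $p^{*}\big(\widehat{H_{1}(C,\Z)}\big)$, which, since $b_{1}(M)=2g$ and $p_{*}\colon H_{1}(M,\Z)\surj H_{1}(C,\Z)$ has kernel contained in $\Tors(H)$, is exactly the identity component $\widehat{H}_{0}=(\C^{*})^{2g}$.

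Next I would determine which translates actually occur. The Seifert sequence $1\to\langle h\rangle\to\pi_{1}(M)\to\pi_{1}^{\mathrm{orb}}(C^{\mathrm{orb}})\to1$ shows $H_{1}(C^{\mathrm{orb}},\Z)=H/\langle\bar h\rangle$, and since $\bar h$ has finite order (the Euler number $e=-a/l^{2}$ is nonzero), dualizing gives an injection $\widehat{H_{1}(C^{\mathrm{orb}},\Z)}\inj\widehat{H}$ onto the union of $\big|\Tors H_{1}(C^{\mathrm{orb}},\Z)\big|$ translates of $\widehat{H}_{0}$. A direct computation of the abelianized orbifold fundamental group — whose torsion part is $\big(\bigoplus\Z_{\alpha_{j}}\big)\big/(\text{diagonal})$, the sum running over the $s_{j}$ cone points of order $\alpha_{j}$ — yields $\big|\Tors H_{1}(C^{\mathrm{orb}},\Z)\big|=\prod_{j}\alpha_{j}^{s_{j}}/\lcm(\alpha_{1},\dots,\alpha_{n})=\alpha$. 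It then remains to decide, for each of these $\alpha$ translates $\rho\cdot\widehat{H}_{0}$, whether a generic character in it lies in $V^{1}(C^{\mathrm{orb}})$, hence in $V^{1}(M)$: an Euler-characteristic count on $C^{\mathrm{orb}}$ shows $\dim H^{1}(C^{\mathrm{orb}},\C_{\rho})>0$ for every non-trivial torsion $\rho$, so all $\alpha-1$ non-trivial translates appear, whereas the identity component $\widehat{H}_{0}$ appears iff a generic rank-$1$ local system on $C$ has nonzero $H^{1}$, i.e. iff $\chi(C)=2-2g<0$, i.e. iff $g>1$. Assembling these facts yields the $\alpha-1$ translated copies of $\widehat{H}_{0}$ when $g=1$, and those together with $\widehat{H}_{0}$ itself when $g>1$.

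The main obstacle is this middle step: pinning down that the torsion translates in $V^{1}(M)$ correspond exactly to the non-trivial characters of $\Tors H_{1}(C^{\mathrm{orb}},\Z)$, and converting the Seifert invariants into the clean formula $\alpha=\prod_{j}\alpha_{j}^{s_{j}}/\lcm(\alpha_{1},\dots,\alpha_{n})$. This requires a careful analysis of the abelianized orbifold group of $C^{\mathrm{orb}}$ (including the multiple-fiber relations and the relation imposed by $e$), of how its torsion embeds in $\Tors(H)$ under $p_{*}$, and of the orbifold cohomology-jump computation that produces the $g=1$ versus $g>1$ dichotomy; for these details I would follow \cite{DPS-mz}. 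The remaining inputs — the homotopy equivalence $M\simeq X^{*}$, the quasi-projectivity of $X^{*}$, Arapura's theorem, and the $\C^{*}$-action forcing uniqueness of the pencil — are immediate from the material recalled in \S\ref{sect:quasi-proj} and \S\ref{subsec:Sigma(pqr)}.
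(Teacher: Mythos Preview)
The paper does not give its own proof of this proposition; it is simply quoted from \cite{DPS-mz} and used as input for the corollaries that follow. So there is no in-paper argument to compare your proposal against.

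That said, your outline is a faithful reconstruction of the strategy in \cite{DPS-mz}: apply Arapura's theorem to the quasi-projective surface $X^{*}\simeq M$, identify the unique relevant orbifold pencil with the Seifert projection $p\colon X^{*}\to C^{\mathrm{orb}}$, and read off the positive-dimensional components of $V^{1}(M)$ as torsion translates of $p^{*}\widehat{H_{1}(C,\Z)}=\widehat{H}_{0}$ indexed by the nontrivial torsion characters of the orbifold base, together with $\widehat{H}_{0}$ itself precisely when $g>1$. Your count $\big|\Tors H_{1}(C^{\mathrm{orb}},\Z)\big|=\prod_{j}\alpha_{j}^{s_{j}}/\lcm(\alpha_{1},\dots,\alpha_{n})=\alpha$ is correct, and the $g=1$ versus $g>1$ dichotomy via the Euler characteristic of $C$ is the right mechanism. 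The one step you should not leave as a throwaway remark is the assertion that every orbifold pencil on $X^{*}$ factors through $p$: holomorphic maps to curves are not automatically $\C^{*}$-equivariant, and in \cite{DPS-mz} this is handled instead at the level of the fundamental group (using the Seifert presentation of $\pi_{1}(M)$ and the classification of epimorphisms to hyperbolic orbifold groups), which also yields directly which translates carry nontrivial twisted cohomology. You already flag this middle portion as the crux and defer to \cite{DPS-mz} for it, which is appropriate here.
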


Denote the elements in $\Tors(H)$ corresponding to the
$\alpha-1$ translated copies of $\widehat{H}_0$ by
$h_1, \dots , h_{\alpha-1}$.  We then have the following corollaries.

\begin{corollary}
\label{cor:Brieskorn}
Let $M=\Sigma (a_1, \dots, a_n)$ be a Brieskorn
manifold, and let $A$ be a quotient of $H=H_1(M,\Z)$, 
with $r=\rank A$.
\begin{romenum}
\item
If $g>1$, then $\Omega_{A}^1(M)=\Omega_{\oA}^1=\emptyset$.

\item
If $g=1$, then $\Omega_{\oA}^1(M)=\Grass_r(\Q^{2g})$,
while
\[
\Omega_{A}^1(M)=\big\{[\nu]\in \Gamma(H, A) \mid \text{$\nu(h_i)=0$
for $i=1, \dots, \alpha-1$}\big\}.
\]
\end{romenum}
\end{corollary}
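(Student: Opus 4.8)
The plan is to combine homotopy invariance of the $\Omega$-sets (Lemma~\ref{lem:homotopy equiv}), available since $M\simeq X^{*}$ is a smooth quasi-projective surface-complement, with the master formula of Theorem~\ref{main theorem} and the explicit shape of $V^{1}(M)$ from Proposition~\ref{prop:v1seifert}, splitting on the genus $g$ of the base curve. Write $H=H_{1}(M,\Z)$ and $\overline{H}=\Z^{2g}$; I may assume $r=\rank A\ge 1$, which is the only case in which the assertion has content (a finite $A$ produces only finite covers).

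For $g>1$, Proposition~\ref{prop:v1seifert}(ii) gives $\widehat{H}_{0}\subseteq V^{1}(M)$, and $\widehat{H}_{0}$ is a positive-dimensional subtorus. For any epimorphism $\nu\colon H\surj A$ the image $\im(\hat\nu)$ contains $\im(\hat{\bar\nu})=V(\ker(\nu))_{0}$, a subtorus of positive dimension $r$ sitting inside $\widehat{H}_{0}\subseteq V^{1}(M)$; hence $\im(\hat\nu)\cap V^{1}(M)$ is infinite, so $[\nu]\notin\Omega^{1}_{A}(M)$ by Theorem~\ref{main theorem}. Thus $\Omega^{1}_{A}(M)=\emptyset$, and taking $A=\overline{A}$ (or Corollary~\ref{cor:df x}) gives $\Omega^{1}_{\overline{A}}(M)=\emptyset$ as well.

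For $g=1$ (so $\rank H=2$), Proposition~\ref{prop:v1seifert}(i) says the positive-dimensional part of $V^{1}(M)$ is a union of $\alpha-1$ genuine translates $\eta_{i}\widehat{H}_{0}$ of the identity component (so $\eta_{i}\notin\widehat{H}_{0}$), the rest being a finite set $Z$; in particular $W^{1}(M)=V^{1}(M)\cap\widehat{H}_{0}$ is finite, so Corollary~\ref{cor:df x} gives $\Omega^{1}_{\overline{A}}(M)=\Grass_{r}(\Q^{2g})$ at once. For $\Omega^{1}_{A}(M)$ I would rewrite $V^{1}(M)=\bigcup_{i=1}^{\alpha-1}\eta_{i}V(\Tors(H))\cup Z$, using $\widehat{H}_{0}=V(\Tors(H))$ (valid because $\Tors(H)$ is primitive in $H$) and writing each point of $Z$ as $zV(H)$, and then apply Theorem~\ref{thm:xik}. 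It collapses: $\sigma_{A}(H)=\emptyset$, so the $Z$-factors are vacuous; and since $r\ge 1$ we have $\rank(\ker(\nu)+\Tors(H))=\rank\ker(\nu)=\rank H-r<\rank H$, so $\sigma_{A}(\Tors(H))=\Gamma(H,A)$. Hence Theorem~\ref{thm:xik} reduces to
\[
\Omega^{1}_{A}(M)=\bigcap_{i=1}^{\alpha-1}\bigl\{\,[\nu]\in\Gamma(H,A)\mid \epsilon(\langle\eta_{i}\rangle)\not\supseteq\ker(\nu)\cap\Tors(H)\,\bigr\}.
\]
Finally, by the duality \eqref{eq:pont dual} one has $\epsilon(\langle\eta_{i}\rangle)=\ker(\eta_{i}\colon H\to\C^{*})$ (choosing $\eta_{i}$ trivial on $\overline{H}$), so the $i$-th condition says that $\eta_{i}|_{\Tors(H)}$ is not in the image of the map $\widehat{\Tors(A)}\to\widehat{\Tors(H)}$ dual to $\nu|_{\Tors(H)}$; dualizing this finite-group statement and invoking the identification of the $\alpha-1$ translated tori with the elements $h_{1},\dots,h_{\alpha-1}\in\Tors(H)$ recorded in \cite{DPS-mz}, it becomes $\nu(h_{i})\ne 0$. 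Complementing yields the stated formula.

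The main obstacle is exactly this last identification: matching the translation characters $\eta_{i}$ with the torsion elements $h_{i}$ precisely enough that ``$\epsilon(\langle\eta_{i}\rangle)\not\supseteq\ker(\nu)\cap\Tors(H)$'' reads as ``$\nu(h_{i})=0$''. This is the only step that feeds in the concrete Seifert invariants (via \cite{DPS-mz}), and it needs care since $\Tors(H)$ need not be cyclic; everything else is a direct application of Theorems~\ref{main theorem} and \ref{thm:xik} together with the Pontryagin dictionary of \S\ref{sect:inttt}.
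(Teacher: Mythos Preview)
Your approach is correct and is exactly what the paper intends (the paper states the corollary with no proof, as an immediate consequence of Theorem~\ref{main theorem} and Proposition~\ref{prop:v1seifert}); the $g>1$ case and the computation of $\Omega^1_{\oA}(M)$ for $g=1$ are fine, and your reduction via Theorem~\ref{thm:xik} to
\[
\Omega^1_A(M)=\bigcap_{i=1}^{\alpha-1}\bigl\{\,[\nu]\in\Gamma(H,A)\ \big|\ \epsilon(\langle\eta_i\rangle)\not\supseteq\ker(\nu)\cap\Tors(H)\,\bigr\}
\]
is correct.

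The slip is in your last two sentences. You correctly rewrite the $i$-th condition as ``$\bar\eta_i\notin\im\bigl(\widehat{\nu|_{\Tors(H)}}\bigr)$'', but then dualize it to ``$\nu(h_i)\ne 0$'' and invoke a ``complementing'' step. That is backwards. Under Pontryagin duality for finite abelian groups, $\im\bigl(\widehat{\nu|_{\Tors(H)}}\bigr)=\bigl(\ker(\nu|_{\Tors(H)})\bigr)^{\perp}$, so the $i$-th condition reads ``$\bar\eta_i$ is non-trivial on $\ker(\nu|_{\Tors(H)})$''. With the correspondence $\bar\eta_i\leftrightarrow h_i$ set up as in \cite{DPS-mz}, this is equivalent to $h_i\in\ker(\nu|_{\Tors(H)})$, i.e.\ $\nu(h_i)=0$; the stated formula then drops out directly, with no complement. (Sanity check on $\Sigma(2,4,8)$: here $\Tors(H)=\Z_4$, the bad component sits over the order-$2$ character, and $[\nu]\in\Omega^1_A$ iff $\ker(\nu|_{\Z_4})\not\subseteq 2\Z_4$, i.e.\ iff $\nu|_{\Z_4}=0$; so the right $h_1$ is a \emph{generator} of $\Z_4$, and the condition is $\nu(h_1)=0$, not $\ne 0$.)

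Your caveat about the identification $\bar\eta_i\leftrightarrow h_i$ is well taken: it is not the naive isomorphism $\widehat{\Tors(H)}\cong\Tors(H)$, and the equivalence ``$\bar\eta_i$ non-trivial on $K$ $\Leftrightarrow$ $h_i\in K$'' can fail for arbitrary $\bar\eta_i$ in a non-cyclic $\Tors(H)$. That this works here genuinely uses the specific Seifert structure recorded in \cite{DPS-mz}, exactly as you flag.
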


\begin{corollary}
\label{cor:Brieskorn prime}
Suppose $g=1$ and $\alpha>1$.  Then  $\Omega_{\oH}^1(M)={\pt}$,
yet $\Omega_{H}^1(M)=\emptyset$; that is,
$b_1(M^{\fab})< \infty$, yet $b_1(M^{\ab})= \infty$.
\end{corollary}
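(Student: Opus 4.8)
The claim is a direct specialization of Corollary \ref{cor:Brieskorn}, so the plan is to reduce the final statement to that corollary and then check that its hypotheses hold for the Brieskorn manifold $M=\Sigma(a_1,\dots,a_n)$ when $g=1$ and $\alpha>1$. First I would invoke Corollary \ref{cor:Brieskorn}\eqref{g2} in two extreme cases: with $A=\oH$, in which case $\oA=\oH$, $r=2g=2$, and $h_i=0$ in $\oH$ for all $i$, so $\Omega_{\oH}^1(M)=\Grass_{2}(\Q^{2})=\{\pt\}$; and with $A=H$, in which case $\oA=\oH$ still, but now $\nu=\id_H$ does not kill the torsion elements $h_1,\dots,h_{\alpha-1}$, since $\alpha>1$ guarantees these are nonzero elements of $\Tors(H)$. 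Therefore $[\id_H]\notin \Omega_H^1(M)$; and since $\Gamma(H,H)$ consists of (the class of) automorphisms of $H$, each of which fixes $\Tors(H)$ setwise and hence fails to annihilate any $h_i$, no class in $\Gamma(H,H)$ lies in $\Omega_H^1(M)$. Thus $\Omega_H^1(M)=\emptyset$.

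The translation back to Betti numbers is then immediate from the definitions. By construction, $M^{\fab}$ is the free abelian cover $M^{\bar\nu}$ corresponding to the projection $\bar\nu\colon H\surj \oH$, whose class is the single point of $\Grass_2(\Q^2)$; since that point lies in $\Omega_{\oH}^1(M)$, Definition \ref{def:gral df} gives $b_1(M^{\fab})<\infty$. Similarly $M^{\ab}$ is the universal abelian cover, corresponding to $\id_H\colon H\surj H$; since $[\id_H]\notin\Omega_H^1(M)$, again by Definition \ref{def:gral df} we get $b_1(M^{\ab})=\infty$.

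I would also note the alternative, more self-contained route through Theorem \ref{main theorem}: identifying $\wH\cong\wH_0\times\Tors(H)$ with $\wH_0=(\C^*)^2$, Proposition \ref{prop:v1seifert} says that when $g=1$ the positive-dimensional part of $V^1(M)$ consists of $\alpha-1\ge 1$ torsion-translated copies $h_j\cdot\wH_0$ of the identity component. For the free abelian cover, $\im(\hat{\bar\nu})=\wH_0$ itself, which meets each translate $h_j\wH_0$ (with $h_j\neq 1$) in the empty set, so $\im(\hat{\bar\nu})\cap V^1(M)$ is finite and $b_1(M^{\fab})<\infty$. For the universal abelian cover, $\hat{\nu}=\hat{\id}$ is the identity on $\wH$, so $\im(\hat\nu)\cap V^1(M)=V^1(M)$ contains the positive-dimensional set $h_1\wH_0$; hence $b_1(M^{\ab})=\infty$. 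Either way the conclusion follows.

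The only genuine point requiring care---and the place I would expect a referee to look hardest---is the claim that \emph{every} class in $\Gamma(H,H)$, not merely $[\id_H]$, fails to lie in $\Omega_H^1(M)$, so that $\Omega_H^1(M)=\emptyset$ rather than merely ``nonempty complement.'' This is where one uses that any epimorphism $H\surj H$ is an automorphism (as $H$ is finitely generated abelian), that automorphisms preserve $\Tors(H)$ setwise, and that $\alpha>1$ forces $\{h_1,\dots,h_{\alpha-1}\}$ to be a nonempty set of nonzero torsion elements; no automorphism can annihilate a nonzero element, so the condition defining $\Omega_H^1(M)$ in Corollary \ref{cor:Brieskorn}\eqref{g2} is violated by every class. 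For the concrete instance $M=\Sigma(2,4,8)$ promised in the introduction, I would simply record the Seifert data from \S\ref{subsec:Sigma(pqr)}: here $l=8$, $l_1=8$, $l_2=4$, $l_3=2$, so $\alpha_1=1$, $\alpha_2=2$, $\alpha_3=4$, and $a=64$, giving $s_1=64/(2\cdot 8)=4$ (but $\alpha_1=1$ so this orbit is deleted), $s_2=64/(4\cdot 4)=4$, $s_3=64/(8\cdot 2)=4$, $e=-64/64=-1$; then $g=\tfrac12(2+1\cdot 8 -(4+4))=1$ and $\alpha=\alpha_2^{s_2}\alpha_3^{s_3}/\lcm(1,2,4)=2^4\cdot 4^4/4>1$, so the hypotheses $g=1$, $\alpha>1$ are met and the stated example is justified.
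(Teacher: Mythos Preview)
Your main argument is correct and matches the paper's approach: the corollary is an immediate specialization of Corollary~\ref{cor:Brieskorn}(ii), taking first $A=\oH$ (so every epimorphism kills the torsion elements $h_i$ automatically, and $\Gamma(H,\oH)=\Grass_2(\Q^2)$ is a point) and then $A=H$ (so $\Gamma(H,H)$ is the single class $[\id_H]$, and no automorphism annihilates a nonzero $h_i$). The alternative route through Theorem~\ref{main theorem} is also fine.

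Two small corrections. First, your cross-reference ``Corollary~\ref{cor:Brieskorn}\eqref{g2}'' points to the wrong label: \eqref{g2} belongs to Proposition~\ref{prop:w1v1}, not to Corollary~\ref{cor:Brieskorn}; you mean part~(ii) of the latter. Second, your Seifert-invariant computation for $\Sigma(2,4,8)$ contains arithmetic slips: $l_2=\lcm(2,8)=8$, not $4$, so $\alpha_2=l/l_2=1$ (not $2$) and that orbit type is deleted; likewise $s_2=a/(a_2 l_2)=64/32=2$ and $s_3=64/32=2$, not $4$. With the corrected data one gets $g=\tfrac12(2+8-(4+2+2))=1$ and $\alpha=1^4\cdot 1^2\cdot 2^2/\lcm(1,1,2)=2>1$, in agreement with Example~\ref{ex:sigma248}, so your conclusions survive, but the intermediate numbers should be fixed.
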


\begin{example}
\label{ex:sigma248}
Consider the Brieskorn manifold $M=\Sigma(2,4,8)$.
Using the algorithm described by Milnor in \cite{Mi},
we see that the fundamental group of $M$ has presentation
\[
G=\langle x_1, x_2, x_3\mid
x_1x_3^2=x_3^2x_1, \:
x_2x_3^2=x_3^2x_2, \:
x_3^{2} ( x_3 x_1  x_2 x_1^{-1} x_2^{-1})^2 =  1
\rangle,
\]
while its abelianization is $H=\Z^2 \oplus \Z_4$. Identifying
$\widehat{H}= (\C^*)^2 \times \{\pm 1, \pm i\}$, we find that
$V^1(M) = \{1\} \cup  (\C^*)^2 \times \{-1\}$.
(The positive-dimensional component in $V^1(M)$ arises
from an elliptic orbifold fibration $X\to \Sigma_1$
with two multiple fibers, each of multiplicity $2$.)  
By Proposition \ref{prop:fab f ab inf}, then, $b_1(M^{\fab})< \infty$,
while $b_1(M^{\ab})= \infty$.

Now take $A=\Z\oplus \Z_4$.  Applying Corollary \ref{cor:Brieskorn}
(with $g=1$ and $\alpha=2$), we conclude that $\Omega_{\oA}^1(M)=\QP^1$,
while $\Omega_{A}^1(M)$ consists of two copies of $\Gamma(\Z^2, A)$, 
naturally embedded in $\Gamma(H, A)$.  
\end{example}

\subsection{The Catanese--Ciliberto--Mendes Lopes surface}
\label{subsec:kahler not open}
We now give an example of a smooth, complex projective
variety $M$ for which the generalized Dwyer--Fried sets
exhibit the kind of subtle behavior predicted by Corollary \ref{cor:qp2}.

Let $C_1$ be a (smooth, complex) curve of genus $2$
with an elliptic involution $\sigma_1$ and $C_2$ a curve of
genus $3$ with a free involution $\sigma_2$.
Then $\Sigma_1=C_1/\sigma_1$ is a curve of genus $1$,
and $\Sigma_2=C_2/\sigma_2$ is a curve of genus $2$.
The group $\Z_2$ acts freely on the product $C_1 \times C_2$
via the involution $\sigma_1\times \sigma_2$; let
$M$ be the quotient surface.  This variety, whose
construction goes back to Catanese, Ciliberto, and
Mendes Lopes \cite{CCM}, is a minimal surface
of general type with $p_g(M) = q(M) = 3$ and $K_M^2=8$.

The projection $C_1 \times C_2\to C_1$  descends to an orbifold
fibration $f_1 \colon M\to \Sigma_1$ with two multiple fibers, each
of multiplicity $2$, while the projection $C_1 \times C_2\to C_2$
descends to a holomorphic fibration $f_2\colon M\to \Sigma_2$.
It is readily seen that $H=H_1(M,\Z)$ is isomorphic to $\Z^6$;
fix a basis $e_1,\dots , e_6$ for this group.
A computation detailed in \cite{Su14} shows
that the characteristic variety $V^1(M)\subset (\C^*)^6$ has two
components, corresponding to the above two pencils;
more precisely,
\begin{equation}
\label{eq:v1 ccm}
V^1(M)=V(\xi_1) \cup \big(V(\xi_2)\setminus V(\overline{\xi}_2)\big),
\end{equation}
where $\xi_1=\spn \{e_1, e_2\}$ and
$\xi_2=\spn \{2e_3, e_4,e_5,e_6\}$.

Now suppose $A$ is a quotient of $H=\Z^6$, and
let $q\colon \Gamma(H,A) \to  \Gamma(H,\overline{A})$
be the canonical projection.  By Theorem \ref{thm:omega qp},
\begin{equation}
\label{eq:ccm}
\Omega^1_A(M)  =
\Gamma(H,A)\setminus
\left(
q^{-1}(\sigma_{\overline{A}}(\xi_1) ) \cup \big(
 q^{-1}(\sigma_{\overline{A}}(\overline\xi_2) ) \cap
\theta_A(\xi_2) \big)
\right).
\end{equation}
Let us describe explicitly this set in a concrete situation.

\begin{example}
\label{ex:ccm}
Let $A=\Z\oplus \Z_2$, and identify $\overline{A}=\Z$ and
$\Gamma(H,\Z)=\QP^5$.  The fiber of $q$ is the set
$\Gamma=(\Z_2^5)^*$. Given an epimorphism
$\nu \colon H \to \Z\oplus \Z_2$, let $\bar\nu\colon H \to \Z$
and $\nu'\colon H\to \Z_2$ be the composites of $\nu$
with the projections on the respective factors.
The terms on the right-side of \eqref{eq:ccm}
are as follows:

\begin{itemize}
\item
$\sigma_{\Z}(\xi_1)$ is
the projective subspace $\QP^3=\bP((H/\xi_1)^{\vee}\otimes \Q)$
spanned by $e_3,\dots, e_6$.

\item
$\sigma_{\Z}(\overline\xi_2)$
is the projective line $\QP^1=\bP((H/\xi_2)^{\vee}\otimes \Q)$
spanned by $e_1$ and $e_2$.

\item
$q^{-1}(\QP^1)\cap\theta_A(\xi_2)$ consists of those $[\nu]$
satisfying $\overline\nu(e_3)=\cdots=\overline\nu(e_6)=0$, and
$\nu'(e_3)=1, \nu'(e_4)=\nu'(e_5)=\nu'(e_6)=0$.
\end{itemize}

This completes the description of the set $\Omega^1_{\Z\oplus \Z_2}(M)$.
Clearly, $\Omega^1_1(M)=\QP^5 \setminus \QP^3$.  Furthermore,
note that the restriction map $q^{-1}(\QP^1)\cap\theta_A(\xi_2) \to \QP^1$
is a $2$-to-$1$ surjection.  Thus, the restriction map
$\Omega^1_{\Z \oplus \Z_2}(M) \to \Omega^1_1(M)$ is {\em not}\/
a set fibration: the fiber over $\Omega^1_1(M) \setminus \QP^1$
has cardinality $31$, while the fiber over $\QP^1$ has cardinality $29$.
\end{example}

\newcommand{\arxiv}[1]
{\texttt{\href{http://arxiv.org/abs/#1}{arXiv:#1}}}
\newcommand{\doi}[1]
{\texttt{\href{http://dx.doi.org/#1}{doi:#1}}}
\renewcommand{\MR}[1]
{\href{http://www.ams.org/mathscinet-getitem?mr=#1}{MR#1}}

\end{document}